\documentclass[10pt]{amsart} 

\usepackage[psamsfonts]{amssymb} 
\usepackage{amsfonts,amsmath} 
\usepackage{graphicx, color}
\usepackage{enumerate}
\usepackage{url}
\usepackage{hyperref}
\usepackage{tikz-cd}
\usepackage{ytableau}

\usepackage[all]{xy}


\newtheorem{theorem}{Theorem}[section]

\newtheorem{proposition}[theorem]{Proposition}
\newtheorem{lemma}[theorem]{Lemma}
\newtheorem{corollary}[theorem] {Corollary}

\newtheorem{question}[theorem]{Question}
\newtheorem*{claim*}{Claim}

\theoremstyle{remark}
\newtheorem{remark}[theorem]{Remark}

\theoremstyle{definition}
\newtheorem{definition}[theorem]{Definition}

\def\Z{\mathbb Z}
\def\R{\mathbb R}

\def\Q{\mathbb Q}
\def\C{\mathbb C}

\def\hom{{\mathrm{Hom}}}

\def\co{\colon}

\newcommand\abs[1]{\lvert#1\rvert}

\newcommand\sg[1]{\mathfrak{S}_{#1}}

\title[Quotients of $B_n$ that are extensions of $\sg{n}$]{Quotients of the braid group that are extensions of the symmetric group}
\author{Matthew B. Day and Trevor Nakamura}   

\date{\today}
\begin{document}
	
	\maketitle

\begin{abstract} 
We consider normal subgroups $N$ of the braid group $B_n$ such that the quotient $B_n/N$ is an extension of the symmetric group by an abelian group.
We show that, if $n\geq 4$, then there are exactly $8$ commensurability classes of such subgroups.
We define a \emph{Specht subgroup} to be a subgroup of this form that is maximal in its commensurability class.
We give descriptions of the Specht subgroups in terms of winding numbers and in terms of infinite generating sets.
The quotient of the pure braid group by a Specht subgroup is a module over the symmetric group.
We show that the modules arising this way are closely related to Specht modules for the partitions $(n-1,1)$ and $(n-2,2)$, working over the integers.
We compute the second cohomology of the symmetric group with coefficients in both of these Specht modules, working over an arbitrary commutative ring.
Finally, we determine which of the extensions of the symmetric group arising from Specht subgroups are split extensions.
\end{abstract}

\section{Introduction}
\subsection{Background}
We consider group extensions of the form
\begin{equation}\label{eq:topic} 0\to A\to B_n/N\to \sg{n}\to 1,\end{equation}
where $B_n$ is the braid group on $n\geq 3$ strands, $A$ an abelian group, $\sg{n}$ is the symmetric group, and $N\lhd B_n$ is a normal subgroup such that $B_n/N\to \sg{n}$ factors through the canonical projection $B_n\to \sg{n}$ (see section~\ref{se:bgprelims} for careful definitions).
In this setup, it follows that we have containments between $N$ and the pure braid group and its commutator subgroup: $PB_n'\leq N\leq PB_n$.
The group $A=PB_n/N$ gets the structure of a $\Z \sg{n}$-module from the conjugation action of $B_n$ on $PB_n$.
This paper is concerned with the range of possibilities for such extensions and the structure and cohomology of such modules.
In particular, we are interested in the conditions under which extension~\eqref{eq:topic} splits.

Extensions of the form~\eqref{eq:topic} arise naturally in more than one way.
The canonical projection $B_n\to \sg{n}$ is a natural thing to consider, since it is the map that takes braid diagrams and forgets the crossing data to obtain arrow diagrams for bijections.
It is canonical in a stronger sense, in that for $n\geq 3$ with $n\notin\{4,6\}$ it is the unique homomorphism $B_n\to\sg{n}$ with non-cyclic image, up to conjugacy.
This is a theorem of Artin~\cite{ArtinAnnals} that was recently reproven and strengthened by Kolay~\cite{Kolay}.
This also highlights the importance of the kernel of this map, which is the pure braid group  $PB_n$.
The pure braid group is the mysterious part of the braid group, and its abelianization is an obvious thing to consider in the study of $B_n$.
So one important case of extension~\eqref{eq:topic} is where $N=PB_n'$ and $A$ is the abelianization $H_1(PB_n)$ of $PB_n$.
In fact, this extension is initial in the category of extensions of this form.
As a $\Z\sg{n}$-module, $H_1(PB_n)$ is the permutation module $M^2_\Z$ of tabloids for the partition $(n-2,2)$, working over $\Z$.
This is an important connection between the group theory of $B_n$ and the representation theory of $\sg{n}$.

In~\cite{BrendleMargalit}, Brendle and Margalit study the level--$4$ subgroup $B_n[4]$ of the braid group.
This is the kernel of the modulo--$4$ reduction of the integral Burau representation.
Since $B_n$ is the mapping class group of a disk with $n$ marked points, and the integral Burau representation is the action of $B_n$ on the homology of a surface that is a particular branched cover of the disk, $B_n[4]$ arises from the topology of surfaces.
Further, Brendle and Margalit show that $B_n[4]$ is generated by the squares of pure braids, so that $B_n[4]$ comes up naturally in the study of $B_n$ without even referring to the Burau representation.
They also show, among other things, that $PB_n/B_n[4]$ is abelian, so that $B_n/B_n[4]$ is an extension of the form~\eqref{eq:topic}.

In~\cite{KordekMargalit}, Kordek and Margalit study $B_n[4]$ and $B_n/B_n[4]$.
Their main result is a stable description of $H^1(B_n[4];\C)$ as a sequence of $B_n/B_n[4]$-representations.
On the way, they prove the following result, which is a starting point for the ideas in this paper.
\begin{theorem}[Kordek--Margalit~\cite{KordekMargalit}, Proposition~7.6]
The short exact sequence
\[
0\to PB_n/B_n[4]\to B_n/B_n[4]\to \sg{n}\to 1
\]
does not split.
\end{theorem}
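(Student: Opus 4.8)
The plan is to prove the slightly stronger, purely local statement that $B_n/B_n[4]$ contains no element of order $2$ mapping to the transposition $\tau=(1\ 2)\in\sg n$; since a splitting $\sg n\to B_n/B_n[4]$ would necessarily send $\tau$ to such an element, this is enough. Write $A=PB_n/B_n[4]$, which is abelian by Brendle--Margalit, and write it additively; let $\pi\colon B_n/B_n[4]\to\sg n$ be the projection and let $\bar g$ denote the image of $g\in B_n$. The preimage $\pi^{-1}(\langle\tau\rangle)$ equals $\overline{\sigma_1}\,A$, so every element mapping to $\tau$ has the form $\overline{\sigma_1}\,\bar p$ with $p\in PB_n$. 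Using $\sigma_1^2=A_{12}$ and that conjugation by $\sigma_1$ realizes the action of $\tau$ on $A$, one computes
\[
(\overline{\sigma_1}\,\bar p)^2=\overline{\sigma_1^2}\cdot\overline{(\sigma_1^{-1}p\sigma_1)\,p}=[A_{12}]+\tau\cdot[p]+[p]=[A_{12}]+(1+\tau)[p]\in A .
\]
Hence an order-$2$ lift of $\tau$ exists if and only if $[A_{12}]\in(1+\tau)A$, and the theorem reduces to showing this fails.

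The second step is to identify $A$ explicitly. Since $B_n[4]$ is generated by squares of pure braids, $A$ is the largest abelian quotient of $PB_n$ of exponent $2$, i.e.\ $H_1(PB_n)\otimes\Z/2$; combined with the identification of $H_1(PB_n)$ with the permutation module $M^{(n-2,2)}_\Z$, this gives $A\cong(\Z/2)^{\binom n2}$ with basis $\{[A_S]:S\subseteq\{1,\dots,n\},\ \abs S=2\}$ (where $[A_S]=[A_{ij}]$ for $S=\{i,j\}$), on which $\sg n$ acts by permuting the basis through its action on $2$-element subsets.

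It then remains to check the elementary linear-algebra fact that $[A_{\{1,2\}}]\notin(1+\tau)A$. For $\abs S=2$ we have $(1+\tau)[A_S]=[A_S]+[A_{\tau S}]$, which vanishes when $\tau S=S$ (in particular when $S=\{1,2\}$ or $S\cap\{1,2\}=\emptyset$) and equals $[A_{\{1,k\}}]+[A_{\{2,k\}}]$ when $S=\{1,k\}$ or $S=\{2,k\}$ with $k\ge 3$. Thus $(1+\tau)A$ is spanned by the vectors $[A_{\{1,k\}}]+[A_{\{2,k\}}]$ for $3\le k\le n$, each of which has zero coordinate on $[A_{\{1,2\}}]$; hence $[A_{\{1,2\}}]\notin(1+\tau)A$, a contradiction. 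The one step that carries real content is the Brendle--Margalit description of $B_n[4]$, which pins down $A$ as this concrete mod-$2$ permutation module; granting that, the argument is a single conjugation identity followed by a triviality. One could instead package the obstruction as a nonzero class in $H^2(\sg n;A)$, but the transposition computation suffices and sidesteps any cohomology calculation.
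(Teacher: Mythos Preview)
Your proof is correct, and it takes a genuinely different route from the paper's.  The paper does not prove this theorem directly (it is quoted as background from Kordek--Margalit), but the paper's machinery re-derives it as the case $m=2$ of its general splitting theorem: one identifies $PB_n/B_n[4]\cong M^2_{\Z/2\Z}$, computes the structure class of the extension as the explicit cocycle $[\hat\alpha^2_1]\in H^2(\sg{n};M^2_{\Z/2\Z})$ (via Nakamura's cocycle and a chain map between resolutions), and then shows this class is nonzero by computing all of $H^2(\sg{n};M^2_{\Z/2\Z})\cong(\Z/2\Z)^4$ using Shapiro's lemma.

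Your argument is more elementary and more local: rather than computing the full $H^2$, you restrict attention to the cyclic subgroup $\langle\tau\rangle$ and show the extension already fails to split there, which amounts to the single linear-algebra check $[A_{12}]\notin(1+\tau)A$.  In cohomological language you are showing that the restriction of the structure class to $H^2(\langle\tau\rangle;A)$ is nonzero, but you never need to name it as such.  The trade-off is that the paper's approach yields the full group $H^2(\sg{n};M^2_{\Z/2\Z})$ and places the extension among all sixteen possibilities, while your approach proves exactly the non-splitting with minimal overhead.  Both rely in the same essential way on the Brendle--Margalit identification of $B_n[4]$ with the subgroup generated by squares of pure braids.

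One stylistic remark: the word ``contradiction'' at the end is slightly misplaced, since you are not arguing by contradiction at that point; you have simply verified $[A_{12}]\notin(1+\tau)A$ directly, which completes the proof.
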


It is a classical fact that group extensions with cokernel $G$ and abelian kernel $A$ correspond to elements of the second cohomology $H^2(G;A)$ (see Theorem~\ref{th:EM} below).
Given such an extension, we call the corresponding element of $H^2(G;A)$ its \emph{structure class}.
In~\cite{Nakamura}, the second-named author studied the structure classes of extension~\eqref{eq:topic} for $N=PB'_n$ and $N=B_n[4]$.
He gave explicit descriptions of cocycles that represent the structure classes of these extensions.
We repeat his main result in Theorem~\ref{th:Nakamura} below.
One of the goals of the current paper is to give context to the results in~\cite{Nakamura}; further, those results are a key ingredient in our splitting results.

Certain $\sg{n}$-modules show up as subquotients of $PB_n/PB'_n$, and we investigate their low-dimensional cohomology in this paper.
In particular, these include the permutation modules and Specht modules for the partitions $(n-1,1)$ and $(n-2,2)$, working over a ring $R$ that is either the integers $\Z$ or a quotient $\Z/m\Z$.
We use the notations $M^1_R$ and $M^2_R$ for these permutation modules, and $S^1_R$ and $S^2_R$ for these Specht modules, respectively.
As a final bit of background, we summarize what is already known about the cohomology groups of $\sg{n}$ in these modules.
See section~\ref{ss:SpechtDescript} for definitions.
We study the cohomology groups $H^i(\sg{n};M)$ for $i\in\{0,1,2\}$, and where $M$ is one of the four kinds of modules just mentioned.
If $R=\Z/p\Z$ for a prime $p$, Theorems~12.1 and~24.15 of James~\cite{James} give the multiplicities of the composition factors in a composition series for $S^1_R$ and $S^2_R$, and the low-dimensional cohomology of $\sg{n}$ in these factors is given in results on page~176 of Burichenko--Kleschev--Martin~\cite{BKM}.
So in principal, these cohomology groups could be computed this way.
If $R=\Z$, then computer-assisted results of Weber~\cite{Weber} give the elementary divisors of $H^2(\sg{n};M)$, where $M$ is one of these Specht modules, up through $n=20$ (see the table in the appendix of the ArXiv version of Weber~\cite{WeberArXiv}).
These results agree with our theorems below.
We note that these families of modules are finitely generated FI-modules, and as such, the general behavior of their cohomology is predicted by Nagpal~\cite{Nagpal}.

\subsection{Results}

Our first main result is the following.

\begin{theorem}\label{th:classification}
Let $n\geq 3$.
There is a list of $8$ normal subgroups of $B_n$, such that for every normal subgroup $N\lhd B_n$ with $PB_n'\leq N\leq PB_n$, $N$ is a finite-index subgroup of exactly one subgroup in the list.
\end{theorem}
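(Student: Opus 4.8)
The plan is to translate the statement into the representation theory of $\sg{n}$, first over $\Z$ and then over $\Q$, where it reduces to a short counting argument. Write $W=PB_n/PB_n'$. Since $PB_n'$ is characteristic in $PB_n$ and $PB_n\lhd B_n$, it is normal in $B_n$, and conjugation makes $W$ a $B_n$-module; as $PB_n$ acts trivially on its own abelianization this action factors through $\sg{n}=B_n/PB_n$, so $W$ is the $\Z\sg{n}$-module $M^2_\Z$ recalled in the introduction, which as an abelian group is free of rank $\binom n2$. The first step is to set down the dictionary: for $PB_n'\leq N\leq PB_n$ one has $N\lhd B_n$ if and only if $N/PB_n'$ is a $\Z\sg{n}$-submodule of $W$, and $N\mapsto N/PB_n'$ is then an inclusion-preserving bijection onto the $\Z\sg{n}$-submodules of $W$ with $[N':N]=[N'/PB_n':N/PB_n']$. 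Because $W$ is torsion-free, a submodule $M\leq M'$ of $W$ has finite index in $M'$ exactly when $M\otimes\Q=M'\otimes\Q$ inside $W_\Q:=W\otimes\Q$; hence two admissible subgroups are commensurable if and only if the associated submodules have equal rational span.

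The second step extracts the list together with the uniqueness of maximal representatives. For a $\Q\sg{n}$-submodule $V\leq W_\Q$, the intersection $V\cap W$ is a saturated $\Z\sg{n}$-submodule with rational span $V$, and every submodule $M\leq W$ with $M\otimes\Q=V$ lies in $V\cap W$ with finite index; pulling $V\cap W$ back to a normal subgroup $\widetilde N_V\lhd B_n$ yields the unique maximal element of its commensurability class, that is, a \emph{Specht subgroup}. So the sought list is $\{\widetilde N_V\}$ as $V$ runs over the $\Q\sg{n}$-submodules of $W_\Q$, and any admissible $N$ is a finite-index subgroup of exactly the one $\widetilde N_V$ with $V=(N/PB_n')\otimes\Q$. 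It therefore remains only to count the $\Q\sg{n}$-submodules of $W_\Q\cong M^2_\Q$.

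The third step is that count, via semisimplicity. The algebra $\Q\sg{n}$ is semisimple and $M^2_\Q$ is multiplicity-free: by Young's rule its constituents are the Specht modules $S^\mu_\Q$ with $\mu\trianglerighteq(n-2,2)$, each with multiplicity one, and for $n\geq4$ these $\mu$ are exactly $(n)$, $(n-1,1)$, $(n-2,2)$, so that $M^2_\Q\cong\Q\oplus S^1_\Q\oplus S^2_\Q$ is a direct sum of three pairwise non-isomorphic irreducible $\Q\sg{n}$-modules. A semisimple module that is a direct sum of $k$ pairwise non-isomorphic simple modules has exactly $2^k$ submodules, one for each subset of the summands --- a submodule is semisimple, and by Schur's lemma each of its simple summands must coincide with one of the ambient summands. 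Hence $W_\Q$ has exactly $2^3=8$ submodules, producing the $8$ Specht subgroups. (The module is smaller when $n=3$, where $M^2_\Q\cong\Q\oplus S^1_\Q$ and only four distinct Specht subgroups arise; I would treat that case separately.) The main obstacle is not the final argument, which is routine once the setup is in place, but the classical inputs it relies on --- the identification of $PB_n/PB_n'$ with $M^2_\Z$ as a $\Z\sg{n}$-module and the multiplicity-freeness of $M^2_\Q$ --- together with the single point needing care, that group-theoretic finite index matches equality of rational spans, which is exactly where torsion-freeness of $PB_n/PB_n'$ is used.
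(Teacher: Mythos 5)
Your proof is correct and follows essentially the same route as the paper's: reduce to $\Z\sg{n}$-submodules of $PB_n/PB_n'\cong M^2_\Z$ via the normality criterion, pass to $\Q$, and use that $M^2_\Q$ is a multiplicity-free sum of three irreducibles to get $2^3=8$ commensurability classes (the paper phrases the maximal representative as the kernel of the map to the rationalized quotient rather than as the saturation $V\cap W$, but these coincide). Your separate handling of $n=3$, where only four classes occur, also matches the paper's own treatment of that case.
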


Two of these subgroups are $PB_n$ and $PB'_n$, but we believe that the other six subgroups have not been described previously.
We call these six subgroups the \emph{Specht subgroups} of the braid group (see Definition~\ref{de:Nij}).
As mentioned above, $PB_n/PB'_n$ is the permutation module $M^2_\Z$ for $(n-2,2)$-tabloids working over the integers $\Z$.
The proof of Theorem~\ref{th:classification} comes from the rigidity of submodules of this same module $M^2_\Q$, but working over the rationals $\Q$.
We define the Specht subgroups as the preimages in $PB_n$ of the kernels of the projections from $M^2_\Q$ to its irreducible factors.  
We give a more careful statement of Theorem~\ref{th:classification} in Theorem~\ref{th:classificationprecise}, and prove it there.
We also provide characterizations of the Specht subgroups in terms of winding numbers (Proposition~\ref{pr:windeqns}) and in terms of infinite generating sets (Proposition~\ref{pr:biggensets}).

Just as the Specht subgroups deserve special attention, so do the quotients of $PB_n$ by Specht subgroups.
In section~\ref{se:ssquot}, we investigate the quotients of $PB_n$ by two of the Specht subgroups, by determining the images of the integral scalings of two of the factor projection maps from $M^2_\Q$ to itself.
We characterize these images in Propositions~\ref{pr:f1image} and~\ref{pr:f2M2congruences}.
We find that these images are isomorphic to finite-index submodules of two integral Specht modules, but are not isomorphic to the Specht modules themselves.
See Corollaries~\ref{co:f1M2index}, \ref{co:f1M2isomorphism}, \ref{co:f2M2index}, and~\ref{co:f2M2isomorphism}.
These results are also important for our splitting results.

Our second main result is our computation of the second cohomology in two Specht modules.
This is relevant to the braid group because the modules $S^1_\Z$ and $S^2_\Z$ are submodules of $M^2_\Z$, which is isomorphic to $PB_n/PB'_n$.
\begin{theorem}\label{th:cohomcomps}
    Let $n\geq 6.$
    Let $R$ be a commutative ring, and let $S^1_R$ and $S^2_R$ be the Specht modules over $R$ for the partitions $(n-1,1)$ and $(n-2,2)$, respectively.
    We have
    \[H^2(\sg{n};S^1_R)=\left\{\begin{array}{cl}
    0 & \text{if $n$ is odd} \\
    R[2]^2\oplus R/2R & \text{if $n$ is even,} \\
    \end{array}\right.
    \]
    and
    \[
    H^2(\sg{n};S^2_R)= 
    \left\{
    \begin{array}{cl}
    R/2R & \text{if $n\equiv 0 \pmod{4}$} \\
    R[2]^2\oplus R/2R & \text{if $n\equiv 1 \pmod{4}$} \\
    R[2]\oplus R/2R & \text{if $n\equiv 2 \pmod{4}$ or $n\equiv 3 \pmod{4}$.}
    \end{array}
    \right.
    \]
\end{theorem}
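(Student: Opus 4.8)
The plan is to realize $S^1_R$ and $S^2_R$ as terms of short exact sequences involving the Young permutation modules $M^1_R$ and $M^2_R$, to compute the cohomology of those permutation modules by Shapiro's lemma, and then to chase the resulting long exact sequences. Only the low-degree cohomology of the symmetric groups themselves is needed: $H^1(\sg m;\Z)=0$, $H^2(\sg m;\Z)\cong\Z/2\Z$ and $H^3(\sg m;\Z)\cong\Z/2\Z$ for $m\ge4$ (the second from $H_1(\sg m;\Z)=H_2(\sg m;\Z)=\Z/2\Z$, the third from $H^3(\sg m;\Z)\cong\operatorname{Ext}^1_\Z(H_2(\sg m;\Z),\Z)$, as $H_3(\sg m;\Z)$ is finite). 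Universal coefficients then gives $H^0(\sg m;R)=R$, $H^1(\sg m;R)=R[2]$ and $H^2(\sg m;R)=R/2R\oplus R[2]$. Since $M^1_R=\operatorname{Ind}_{\sg{n-1}}^{\sg n}R$ and $M^2_R=\operatorname{Ind}_{\sg{n-2}\times\sg2}^{\sg n}R$, Shapiro's lemma identifies $H^*(\sg n;M^1_R)$ with $H^*(\sg{n-1};R)$ and $H^*(\sg n;M^2_R)$ with $H^*(\sg{n-2}\times\sg2;R)$, the latter being accessible in low degrees through the (split) Lyndon--Hochschild--Serre spectral sequence of the product together with the explicit cohomology of $\sg2=\Z/2\Z$. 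The hypothesis $n\ge6$ is exactly what keeps $\sg{n-1}$ and $\sg{n-2}$ in the stable range for all of these inputs.

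For $S^1_R$ I would use the defining sequence $0\to S^1_R\to M^1_R\xrightarrow{\varepsilon}R\to0$ with $\varepsilon$ the sum of coordinates. Since $\varepsilon$ is the augmentation of the induced module $\operatorname{Ind}_{\sg{n-1}}^{\sg n}R$, the maps it induces on cohomology, $H^*(\sg{n-1};R)\to H^*(\sg n;R)$, are the corestriction (transfer) maps for $\sg{n-1}\le\sg n$; and because restriction is an isomorphism in the low degrees at issue (homology stability, for $n\ge6$), the corestriction is multiplication by the index $n$, transported along that isomorphism. The long exact sequence in degrees at most $3$ is then controlled purely by the parity of $n$, since multiplication by $n$ is zero on a $2$-torsion group when $n$ is even and invertible when $n$ is odd. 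This computes $H^2(\sg n;S^1_\Z)$, and the order of $H^3(\sg n;S^1_\Z)$.

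For $S^2_R$ I would use the four-term exact sequence
\[
0\to S^2_R\to M^2_R\xrightarrow{\ \partial\ }M^1_R\xrightarrow{\ \rho\ }R/2R\to0,
\]
where $\partial(\{i,j\})=e_i+e_j$ and $\rho$ is the sum of coordinates modulo $2$; this comes from the identification of $S^2_R$ with the kernel of the ``down'' map $M^2_R\to M^1_R$. Breaking it at $K:=\im\partial=\ker\rho$, I would compute $H^*(\sg n;K)$ from $0\to K\to M^1_R\to R/2R\to0$ and then $H^*(\sg n;S^2_R)$ from $0\to S^2_R\to M^2_R\to K\to0$. The residue of $n$ modulo $4$ enters here, through the interaction of the induced maps with the summands of $H^*(\sg n;M^2_R)$ coming from $H^*(\sg2;R)$, and concretely through multiplication-by-integer maps on $2$-torsion groups (the relevant integers being indices such as $\binom n2$), whose vanishing modulo $2$ separates the four residue classes as in the statement.

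Finally, to reach an arbitrary commutative ring $R$: after inverting $2$, the groups $H^1$ and $H^2$ of all the permutation modules in sight vanish and $R/2R$ vanishes entirely, so $H^2(\sg n;S^i_{R[1/2]})=0$; hence $H^2(\sg n;S^i_R)$ is $2$-primary and is determined functorially from the integral answer by the (split) universal coefficient sequence
\[
0\to H^2(\sg n;S^i_\Z)\otimes_\Z R\to H^2(\sg n;S^i_R)\to \operatorname{Tor}^\Z_1\!\bigl(H^3(\sg n;S^i_\Z),R\bigr)\to0.
\]
Substituting $H^2(\sg n;S^i_\Z)=(\Z/2\Z)^b$ and $H^3(\sg n;S^i_\Z)=(\Z/2\Z)^a$ turns the outer terms into $(R/2R)^b$ and $R[2]^a$. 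The main obstacle is twofold: the bookkeeping of every connecting and induced homomorphism --- most delicately, for $S^2$, tracing the maps through the Künneth decomposition of $H^*(\sg n;M^2_R)$ --- and the extension problems the long exact sequences leave behind, in particular that $H^3(\sg n;S^i_\Z)$ is elementary abelian rather than, say, cyclic of order $4$ (which is exactly what makes the $\operatorname{Tor}$ term a power of $R[2]$ rather than of $R[4]$). I expect to settle those extension problems by running the same long exact sequences over $\mathbb{F}_2$ and comparing dimensions in adjacent degrees; the explicit structure-class cocycles recalled from~\cite{Nakamura} give an independent check on the degree-$2$ classes.
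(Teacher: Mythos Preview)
Your treatment of $S^1_R$ matches the paper's: both use $0\to S^1_R\to M^1_R\to R\to 0$ and chase the long exact sequence, with the paper tracking explicit cocycles where you invoke the transfer description of $H^*(M^1_R)\to H^*(R)$.

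For $S^2_R$ your filtration is genuinely different. The paper filters \emph{inside} $M^2_R$: it uses $0\to K_{12}\to M^2_R\to R\to 0$ (with $K_{12}$ the kernel of $v_{ij}\mapsto 1$) and then $0\to S^2_R\to K_{12}\to S^1_R\to 0$, feeding the $S^1_R$ answer in at the end. You instead push $M^2_R$ forward into $M^1_R$ via $\partial\colon v_{ij}\mapsto t_i+t_j$ and filter there. One caution: your four-term sequence is \emph{not} exact at $M^2_R$ over a general ring. When $R$ has $2$-torsion and $\binom{n}{2}$ is odd (e.g.\ $n\equiv 3\pmod 4$, $R=\Z/2\Z$), the element $u=\sum v_{ij}$ satisfies $\partial u=(n-1)\sum t_i=0$ yet $u\notin S^2_R$, since $S^2_R\subseteq K_{12}$ and $u\notin K_{12}$. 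So your sequence is only good over $\Z$ (or $2$-torsion-free $R$), which is fine given your UCT strategy, but you should not state it over $R$.

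The more substantive divergence is your passage to general $R$. The paper works directly over $R$ throughout, using an explicit $3$-skeleton for a $K(\sg{n},1)$ and named cocycles to compute every connecting map; this never touches $H^3$. Your UCT route trades those cocycle computations for needing $H^3(\sg{n};S^i_\Z)$, and specifically needing it to be elementary $2$-abelian rather than containing a $\Z/4$. You flag this honestly, but it is real work: for $S^2_\Z$ you would have to control $H^3(\sg{n};M^2_\Z)\cong H^3(\sg{n-2}\times\sg2;\Z)$ (order $4$ via K\"unneth, with an extension to resolve) and then push through two connecting maps. The paper's explicit-cocycle method is longer on the page but avoids these degree-$3$ extension problems entirely; your approach is cleaner in outline but leaves exactly those problems as the crux. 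Your proposed cross-check over $\mathbb{F}_2$ is the right idea for settling them, but as written the proof is a credible plan rather than a complete argument.
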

These proofs use filtrations of $M^1_R$ and $M^2_R$ where the subquotients are Specht modules.  
We use explicit descriptions of cocycles to compute the maps in the resulting long exact sequences.
To give these explicit cocycles, we use a $3$-skeleton for a $K(\sg{n},1)$ that we describe in~\cite{DN3skeleton}.
To get the argument started, we compute the cohomology in $M^1_R$ and $M^2_R$ using Shapiro's lemma, since these are isomorphic to coinduced modules.
The statements in Theorem~\ref{th:cohomcomps} appear in Proposition~\ref{pr:S1cohomology} and Theorem~\ref{th:S2cohomology}, and we prove them there.
These statements also include descriptions of $H^0$ and $H^1$ in the same modules, and descriptions of $H^2$ in these modules for small $n$.

\begin{remark}
We originally computed $H^2(\sg{n};S^2_\Z)$ because we believed it would contain the structure class for an extension as in~\eqref{eq:topic}.
However, Corollary~\ref{co:f2M2isomorphism} shows that the quotient of $PB_n$ by the appropriate Specht subgroup is not $S^2_\Z$, but is rather a finite-index submodule that is not isomorphic to it.
Therefore we believe that $S^2_\Z$ does not show up as the kernel $A$ of extension~\eqref{eq:topic} for any choice of $N$.
Our splitting results do not directly quote Theorem~\ref{th:S2cohomology}, but we believe the result to be of independent interest, so we have kept it in the paper.
\end{remark}

Our final main result concerns the splitting of extensions of the form~\eqref{eq:topic} when $N$ is a Specht subgroup.
There is exactly one Specht subgroup, denoted $N_{02}$, with the property that $(PB_n/N_{02})\otimes_\Z \Q\cong S^1_\Q$.
This ends up being the only Specht subgroup where the quotient of $B_n$ by the subgroup gives a split extension.
\begin{theorem}
    Let $n\geq 4$.
    Let $N\lhd B_n$ be a Specht subgroup.
    Consider the extension
    \[0\to PB_n/N\to B_n/N \to \sg{n}\to 1.\]
    This extension splits if $N=N_{02}$ and $n$ is odd.
    It does not split otherwise.
\end{theorem}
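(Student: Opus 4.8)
The plan is to recast the splitting question in terms of structure classes and reduce it to a single class. By Theorem~\ref{th:EM}, the extension $0\to PB_n/N\to B_n/N\to\sg{n}\to1$ splits if and only if its structure class in $H^2(\sg{n};PB_n/N)$ is zero. Since the extension $B_n/PB_n'$ is initial among extensions of the form~\eqref{eq:topic}, the extension $B_n/N$ is its pushout along the quotient map $q_N\colon M^2_\Z\cong PB_n/PB_n'\to PB_n/N$, so the structure class of $B_n/N$ is $(q_N)_*(\zeta_n)$, where $\zeta_n\in H^2(\sg{n};M^2_\Z)$ is the structure class of $B_n/PB_n'$. An explicit cocycle for $\zeta_n$ is furnished by Theorem~\ref{th:Nakamura}, and the modules $PB_n/N$ are described concretely by Theorem~\ref{th:classificationprecise} together with Propositions~\ref{pr:f1image} and~\ref{pr:f2M2congruences}; these are the main inputs. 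I would also use functoriality: the Specht subgroups together with $PB_n'$ form a poset isomorphic to the Boolean lattice on the three rational irreducible summands of $M^2_\Q$ (for the partitions $(n)$, $(n-1,1)$ and $(n-2,2)$), and if $N\leq N'$ then $q_{N'}$ factors through $q_N$, so non-vanishing of $(q_N)_*(\zeta_n)$ propagates to every Specht subgroup below $N'$. As each remaining Specht subgroup is contained in $N_{01}$ or $N_{12}$, it suffices to prove: $(q_{N_{01}})_*(\zeta_n)\ne0$ and $(q_{N_{12}})_*(\zeta_n)\ne0$ for all $n\ge4$; $(q_{N_{02}})_*(\zeta_n)\ne0$ for $n$ even; and $(q_{N_{02}})_*(\zeta_n)=0$ for $n$ odd.

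For $N_{12}$ I would argue directly: $PB_n/N_{12}\cong\Z$ with trivial $\sg{n}$-action, and each standard pure braid generator $A_{ij}$ maps to a generator, so a splitting would make $B_n/N_{12}\cong\Z\times\sg{n}$ and the composite $B_n\to\Z\times\sg{n}\to\Z$ a homomorphism sending every $A_{ij}$ to $\pm1$; this contradicts the fact that $A_{ij}$ is conjugate to $\sigma_i^2$ and hence maps to an even element under any homomorphism $B_n\to H_1(B_n)\cong\Z$. For $N_{01}$ and for $N_{02}$ with $n$ even, I would push the explicit cocycle of Theorem~\ref{th:Nakamura} forward along $q_N$---its image lands in a finite-index submodule of $S^2_\Z$, respectively $S^1_\Z$, whose defining congruences come from the results of section~\ref{se:ssquot}---and verify directly that the resulting cocycle is not a coboundary, reducing modulo~$2$ where this simplifies the bookkeeping; for $N_{01}$ this can be cross-checked against the non-splitting of $B_n/B_n[4]$ proved by Kordek--Margalit and reproved via an explicit cocycle by Nakamura. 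The parity of $n$ enters through the arithmetic of these congruences, which is the source of the dichotomy for $N_{02}$. (The small cases $n=4$ require only routine additional checks.)

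For the splitting statement, fix $n$ odd. Since $q_{N_{02}}$ kills $S^2_\Z$ (indeed its kernel $N_{02}/PB_n'$ equals $M^2_\Z\cap(S^0_\Q\oplus S^2_\Q)$, which contains $S^2_\Z$), it factors through $M^2_\Z/S^2_\Z$; by Proposition~\ref{pr:f1image} the quotient $PB_n/N_{02}$ is then a lattice $A$ in $S^1_\Q$ commensurable with the integral Specht module $S^1_\Z$, where the finite kernel or cokernel of the comparison between $A$ and $S^1_\Z$ has order prime to $2$ when $n$ is odd. On the other hand $\zeta_n$ is $2$-torsion: by Shapiro's lemma $H^2(\sg{n};M^2_\Z)\cong H^2(\sg{n-2}\times\sg{2};\Z)$, and a K\"unneth computation shows this group is annihilated by~$2$. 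Combining this with $H^2(\sg{n};S^1_\Z)=0$ for odd $n\ge6$ (Proposition~\ref{pr:S1cohomology}), the long exact sequence attached to the comparison map between $A$ and $S^1_\Z$ shows that the $2$-primary part of $H^2(\sg{n};A)$ vanishes; hence $(q_{N_{02}})_*(\zeta_n)=0$ and the extension splits. The remaining case $n=5$ is handled by a direct computation of $H^2(\sg{5};S^1_\Z)$.

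I expect the main obstacle to be the non-vanishing claims for $N_{01}$ and for $N_{02}$ with $n$ even. The difficulty is that $PB_n/N$ is a \emph{proper} finite-index submodule of a Specht module rather than the Specht module itself, so the cohomology computations of Theorem~\ref{th:cohomcomps} do not apply directly; instead one must carry the structure class concretely through the explicit cocycle of Theorem~\ref{th:Nakamura} and the explicit lattice descriptions of section~\ref{se:ssquot}. This same phenomenon---that a Specht module and a commensurable lattice can have different second cohomology---is precisely what allows the behavior of $B_n/N_{02}$ to depend on the parity of $n$.
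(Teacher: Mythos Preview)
Your framework is the same as the paper's: identify the structure class as the pushforward of $[\hat\alpha^2_1]\in H^2(\sg{n};M^2_\Z)$ and use functoriality to reduce to the maximal Specht subgroups $N_{01},N_{02},N_{12}$. Two of your individual arguments differ from, and arguably improve on, the paper. For $N_{12}$ you give an elegant elementary argument via the abelianization of $B_n$, whereas the paper pushes the cocycle forward and identifies it with $\alpha^0_1$. For $N_{02}$ with $n$ odd, you are more careful than the paper: the paper simply invokes $H^2(\sg{n};S^1_\Z)=0$, but since $PB_n/N_{02}=f^1(M^2_\Z)$ is a proper sublattice of $S^1_\Z$, one really needs your observation that the cokernel has odd order together with the fact that the structure class is $2$-torsion. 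Where your proposal is thin is exactly where you say it is: the non-splitting for $N_{01}$ (and for $N_{02}$ with $n$ even) requires real work, and the paper devotes several lemmas to it, pushing $\hat\alpha^2_1$ forward by $f^2$ and tracking it through auxiliary submodules $M^2_\equiv$, $\ker(\sigma)$, and $K_\equiv$ defined by the congruence systems of Proposition~\ref{pr:f2M2congruences}, with separate arguments depending on $n\bmod 4$. Your suggested cross-check against the Kordek--Margalit non-splitting of $B_n/B_n[4]$ does not actually help here: $B_n[4]$ is not comparable to $N_{01}$ in the relevant direction, so non-splitting of one does not formally imply non-splitting of the other.
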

This is part of a more general theorem, Theorem~\ref{th:QbySpechtSplitting}, and we prove it there.
Theorem~\ref{th:QbySpechtSplitting} also considers extensions where the kernel is a finite module that is the reduction of $M^2_\Z$ modulo $m$, or the result of reducing modulo $m$ and taking the quotient by a Specht module.

\subsection{Acknowledgments}
We would like to thank the following people for helpful conversations: Wade Bloomquist, Susan Hermiller, Dan Margalit, Daniel Minahan, Peter Patzt, Andrew Putman, and Nick Salter.

\section{Preliminaries for the classification}
\subsection{Braid groups and symmetric groups}\label{se:bgprelims}
For this section, fix an integer $n\geq 2$.
We recall the usual presentations for the braid group $B_n$ and the symmetric group $\sg{n}$.  
For the braid group, we use $\sigma_i$ to denote the positive half-twist on strands $i$ and $i+1$, so that $B_n$ is generated by $\sigma_1,\dotsc,\sigma_{n-1}$.
The relations are that non-consecutive generators commute and consecutive generators satisfy a braid relation $\sigma_i\sigma_{i+1}\sigma_i=\sigma_{i+1}\sigma_i\sigma_{i+1}$.
So
\[B_n=\langle \sigma_1,\dotsc,\sigma_{n-1}\,|\, \{[\sigma_i,\sigma_j]|j>i+1\},\{\sigma_i\sigma_{i+1}\sigma_i\sigma_{i+1}^{-1}\sigma_i^{-1}\sigma_{i+1}^{-1}\}_i\rangle.\]
(Our commutator convention is that $[x,y]=xyx^{-1}y^{-1}.$)

We use a similar presentation for the symmetric group.
For each $i$ with $1\leq i\leq n-1$, let $s_i$ denote the transposition $(i,i+1)$.
Then $\sg{n}$ is generated by $s_1,\dotsc, s_{n-1}$, and the relations are the same as in the braid group, but with the addition of the squaring relations (giving each generator order $2$).
So
\begin{equation}\label{eq:Snpres}
\sg{n}=\langle s_1,\dotsc,s_{n-1}\,|\, \{[s_i,s_j]|j>i+1\},\{s_i s_{i+1} s_i s_{i+1}^{-1} s_i^{-1} s_{i+1}^{-1}\}_i, \{s_i^2\}_i\rangle.
\end{equation}

Let $\rho$ be the homomorphism $B_n\to \sg{n}$ by $\sigma_i\mapsto s_i$.
The presentations make it clear that $\rho$ is well defined and surjective. 
The kernel of this map is the \emph{pure braid group} $PB_n$.
The resulting exact sequence is important for this paper:
\[1\to PB_n\to B_n\stackrel{\rho}{\longrightarrow} \sg{n}\to 1.\]
Basic facts about $PB_n$ can be found in Chapter~9 of Farb--Margalit~\cite{FM}.
For generators for $PB_n$, we use the $\binom{n}{2}$ whole twists $a_{ij}$.
Here $a_{i,i+1}=\sigma_i^2$, and if $j>i$, then 
\[a_{ij}=\sigma_{j-1}\sigma_{j}\dotsm\sigma_{i+1}\sigma_i^2\sigma_{i+1}^{-1}\dotsm\sigma_{j}^{-1}\sigma_{j-1}^{-1}.\]
Artin proved that these elements generate, and gave a presentation in~\cite{Artin}.
The presentation is interesting, but we will not directly use it in this work.

Let $\{v_{ij}\,|\,1\leq i<j\leq n\}$ be the basis for $\Z^{\binom{n}{2}}$.
The following proposition is well known (see section~9.3 of Farb--Margalit~\cite{FM}), and is an easy consequence of the presentation for $PB_n$.
We don't give a proof here.
\begin{proposition}\label{pr:PBabelianization}
The map on generators $a_{ij}\mapsto v_{ij}$ extends to a surjective homomorphism $PB_n\to \Z^{\binom{n}{2}}$, and this map is the abelianization map for $PB_n$.
\end{proposition}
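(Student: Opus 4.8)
The plan is to produce an explicit homomorphism $w\colon PB_n\to\Z^{\binom n2}$ sending each generator $a_{ij}$ to the basis vector $v_{ij}$, and then to combine this with Artin's theorem that the $a_{ij}$ generate $PB_n$ in order to identify the abelianization.

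First I would build $w$ from \emph{winding numbers}. For $1\le i<j\le n$, let $w_{ij}\colon PB_n\to\Z$ record the algebraic winding number of strands $i$ and $j$: representing a pure braid by a word in the $\sigma_k^{\pm1}$ and drawing its braid diagram, $w_{ij}$ counts with sign the crossings between the two strands that terminate at positions $i$ and $j$. Since composing braids concatenates diagrams, each $w_{ij}$ is a homomorphism, and from the definition of $a_{kl}$ as a whole twist one computes that $w_{ij}(a_{kl})=1$ when $\{i,j\}=\{k,l\}$ and $w_{ij}(a_{kl})=0$ otherwise. Assembling these yields a homomorphism $w=(w_{ij})_{i<j}\colon PB_n\to\Z^{\binom n2}$ with $w(a_{ij})=v_{ij}$. (Equivalently, $w$ is the first-homology map induced by viewing $PB_n$ as $\pi_1$ of the ordered configuration space of $n$ points in the plane, whose $H_1$ is free of rank $\binom n2$.)

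Next I would run the following elementary argument. As $\Z^{\binom n2}$ is abelian, $w$ factors as $PB_n\to PB_n/PB_n'\xrightarrow{\ \bar w\ }\Z^{\binom n2}$ through the abelianization map. By Artin's generation result the whole twists $a_{ij}$ generate $PB_n$, so their classes generate $PB_n/PB_n'$, giving a surjection $\phi\colon\Z^{\binom n2}\to PB_n/PB_n'$ with $\phi(v_{ij})=\overline{a_{ij}}$. The composite $\bar w\circ\phi$ fixes every $v_{ij}$, hence is the identity of $\Z^{\binom n2}$; therefore $\phi$ is injective, so it is an isomorphism and $\bar w=\phi^{-1}$. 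Thus $PB_n/PB_n'$ is free abelian with basis $\{\overline{a_{ij}}\}$, and under this identification the abelianization map is $a_{ij}\mapsto v_{ij}$, as claimed.

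The only real inputs are the existence of the winding-number homomorphisms and Artin's theorem that the $a_{ij}$ generate; the rest is the observation that an abelian group generated by $k$ elements which admits $\Z^k$ as a compatible quotient is itself $\Z^k$. The step I would be most careful with is the computation that $w_{ij}(a_{kl})=1$ exactly when $\{i,j\}=\{k,l\}$, because $a_{kl}$ is defined by conjugating $\sigma_k^2$, and conjugation shuffles which strand occupies which position before returning it, so one must track the two relevant strands through the conjugating word. A structurally cleaner but more machinery-heavy alternative would be induction on $n$ via the split short exact sequence $1\to F_{n-1}\to PB_n\to PB_{n-1}\to 1$ obtained by forgetting the last strand: since a pure braid on the first $n-1$ strands does not change how the last strand links each of them, $PB_{n-1}$ acts trivially on $H_1(F_{n-1})\cong\Z^{n-1}$, so the sequence of abelianizations splits and $H_1(PB_n)\cong\Z^{n-1}\oplus\Z^{\binom{n-1}2}\cong\Z^{\binom n2}$.
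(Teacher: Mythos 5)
Your proof is correct, but the paper actually declines to give one, instead citing Farb--Margalit and remarking that the statement is ``an easy consequence of the presentation for $PB_n$.'' That implied argument would be: abelianize Artin's presentation, observe that every relator becomes trivial, and conclude that the abelianization is free on the images of the $a_{ij}$. Your route is different and, in a sense, lighter: you only need Artin's theorem that the $a_{ij}$ \emph{generate} (not the full set of relations), because the winding-number homomorphisms $w_{ij}$ give you an explicit retraction $\bar w\circ\phi=\mathrm{id}$ onto $\Z^{\binom n2}$, which forces the surjection $\phi$ from the free abelian group on the $a_{ij}$ to be an isomorphism. What the presentation-based argument buys is that it needs no extra input beyond the presentation itself; what your argument buys is that it sidesteps verifying what happens to the relators, at the cost of having to justify that each $w_{ij}$ is a well-defined homomorphism taking the claimed values on the $a_{kl}$. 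That justification is the one place to be careful, and you flag it correctly; the cleanest way to settle it is the one you allude to implicitly, namely that $w_{ij}$ is the composite of the strand-forgetting homomorphism $PB_n\to PB_2$ (which deletes all strands except $i$ and $j$) with $PB_2\cong\Z$, and that $a_{kl}$ clearly maps to $1$ or $0$ under this composite according to whether $\{k,l\}=\{i,j\}$ or not. Your alternative inductive argument via $1\to F_{n-1}\to PB_n\to PB_{n-1}\to 1$ is also sound for computing the isomorphism type $H_1(PB_n)\cong\Z^{\binom n2}$, though as written it establishes only the abstract isomorphism and would need an extra step of bookkeeping to identify the abelianization map as $a_{ij}\mapsto v_{ij}$, which is part of the statement.
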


We also need the center of $PB_n$.
The following is well known.
A proof appears in two parts in sections~9.2 and~9.3 of Farb--Margalit~\cite{FM}.
\begin{proposition}\label{pr:PBncenter}
    The center of $B_n$ is infinite cyclic, generated by the pure braid
    \[(a_{12}a_{13}\dotsm a_{1n})(a_{23}a_{24}\dotsm a_{2n})\dotsm(a_{n-2,n-1}a_{n-2,n})a_{n-1,n}.\]
    In particular, this cyclic subgroup is also the center of $PB_n$.
\end{proposition}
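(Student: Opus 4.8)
The plan is to prove the assertion for $PB_n$ first, by induction on $n$, and then read off the assertion for $B_n$ essentially for free. Write $\zeta_n\in PB_n$ for the stated element $(a_{12}a_{13}\dotsm a_{1n})(a_{23}\dotsm a_{2n})\dotsm a_{n-1,n}$. I would begin by recording two standard facts. First, $\zeta_n$ is the \emph{full twist}: $\zeta_n=(\sigma_1\sigma_2\dotsm\sigma_{n-1})^n=\Delta^2$, where $\Delta=\sigma_1(\sigma_2\sigma_1)\dotsm(\sigma_{n-1}\sigma_{n-2}\dotsm\sigma_1)$ is the half-twist on all strands; this is a direct induction on $n$ from the defining expressions for the $a_{ij}$ (geometrically, both sides rotate the punctured disk by $2\pi$). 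Second, $\Delta\sigma_i\Delta^{-1}=\sigma_{n-i}$ for $1\le i\le n-1$, which is immediate from the braid relations. Together these give $\Delta^2\sigma_i\Delta^{-2}=\sigma_i$ for all $i$, so $\zeta_n=\Delta^2$ is central in $B_n$, hence also in $PB_n$; thus $\genby{\zeta_n}\le Z(B_n)\cap PB_n\le Z(PB_n)$. Since $\rho$ is surjective, $\rho(Z(B_n))\le Z(\sg n)=1$ for $n\ge 3$, so $Z(B_n)\le PB_n$ and therefore $Z(B_n)\le Z(PB_n)$. It remains to show $Z(PB_n)\le\genby{\zeta_n}$.

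For this I would use the ``forget the last strand'' homomorphism $\pi\colon PB_n\to PB_{n-1}$. By the structure theory of the pure braid group (Chapter~9 of \cite{FM}), $\pi$ is surjective with kernel the free group $F_{n-1}$ of rank $n-1$ on $a_{1n},\dotsc,a_{n-1,n}$, and $\pi$ fixes each $a_{ij}$ with $j\le n-1$ while $\pi(a_{in})=1$. The base case $n=2$ is clear, since $PB_2=\genby{a_{12}}=\genby{\zeta_2}$. For $n\ge 3$, let $z\in Z(PB_n)$. Then $z$ centralizes $F_{n-1}$, and since $n-1\ge 2$ this free group has trivial center; hence $C_{PB_n}(F_{n-1})\cap F_{n-1}=1$, so $\pi$ restricts to an injection on $C_{PB_n}(F_{n-1})$, in particular on $Z(PB_n)$. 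Moreover $\pi(z)\in Z(PB_{n-1})=\genby{\zeta_{n-1}}$ by the inductive hypothesis, so $\pi$ embeds $Z(PB_n)$ into the infinite cyclic group $\genby{\zeta_{n-1}}$. Finally, from the formula for $\zeta_n$ and the description of $\pi$ one computes $\pi(\zeta_n)=\zeta_{n-1}$, since the factors that die under $\pi$ are precisely the $a_{in}$. As $\zeta_n\in Z(PB_n)$, the image $\pi(Z(PB_n))$ is a subgroup of $\genby{\zeta_{n-1}}$ containing its generator, hence is all of it; combined with injectivity, $\pi$ carries $Z(PB_n)$ isomorphically onto $\genby{\zeta_{n-1}}$ and sends $\zeta_n$ to a generator, so $Z(PB_n)=\genby{\zeta_n}$. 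Feeding this back into the first paragraph gives $\genby{\zeta_n}\le Z(B_n)\le Z(PB_n)=\genby{\zeta_n}$, so $Z(B_n)=Z(PB_n)=\genby{\zeta_n}$ is infinite cyclic.

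The only nontrivial inputs are the classical structural facts: that forgetting a strand gives a short exact sequence with free kernel of the expected rank on the expected generators (the Fadell--Neuwirth fibration, equivalently part of Artin's analysis of $PB_n$, recorded in Chapter~9 of \cite{FM}), and the identities $\zeta_n=\Delta^2=(\sigma_1\dotsm\sigma_{n-1})^n$ and $\Delta\sigma_i\Delta^{-1}=\sigma_{n-i}$; everything else is formal, and this assembly of standard facts is the main point of the argument. An alternative is to argue topologically: $PB_n=\pi_1(\mathrm{Conf}_n(\C))$, and splitting off the translation and scaling directions exhibits a central $\Z\cong\pi_1(\C^\ast)$ with quotient the fundamental group of an aspherical arrangement complement, which is centerless by the same inductive Fadell--Neuwirth argument; one then identifies the generator of this central $\Z$ with the full twist $\zeta_n$.
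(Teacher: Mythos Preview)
Your argument is correct. The paper itself does not supply a proof of this proposition: it simply records it as well known and points to sections~9.2 and~9.3 of Farb--Margalit~\cite{FM}. Your write-up is essentially the standard proof found there, namely: show $\zeta_n=\Delta^2$ is central in $B_n$ via $\Delta\sigma_i\Delta^{-1}=\sigma_{n-i}$; use $\rho(Z(B_n))\subseteq Z(\sg{n})=1$ (for $n\ge 3$) to get $Z(B_n)\subseteq Z(PB_n)$; and then compute $Z(PB_n)$ by induction through the forget-a-strand Birman/Fadell--Neuwirth exact sequence $1\to F_{n-1}\to PB_n\to PB_{n-1}\to 1$, using that $Z(F_{n-1})=1$ for $n\ge 3$ and that $\pi(\zeta_n)=\zeta_{n-1}$. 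So there is no genuine difference in approach to report; you have simply filled in the details behind the citation. One small caveat worth flagging explicitly in your write-up: the conclusion $Z(B_n)=\genby{\zeta_n}$ needs $n\ge 3$ (for $n=2$ the center of $B_2$ is all of $B_2=\genby{\sigma_1}$, while $\zeta_2=\sigma_1^2$), and indeed your use of $Z(\sg{n})=1$ already assumes this.
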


Finally, we need the following fact about the commutator subgroup $PB'_n$.
This is part of a stronger result of Bianchi~\cite{Bianchi}, but we give a short proof here.
\begin{proposition}\label{pr:H1commutator}
    Let $n\geq 3.$
    The abelianization $H_1(PB'_n)$ of the commutator subgroup $PB'_n$ of the pure braid group is not finitely generated.
\end{proposition}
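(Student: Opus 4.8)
The plan is to reduce to the case $n=3$, where $PB_3$ is explicit enough to analyze directly, and then to propagate the conclusion back up to all $n$ via strand-forgetting maps.

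First, I would recall from section~9.3 of Farb--Margalit~\cite{FM} that $PB_3\cong F_2\times\Z$, where $F_2$ is free of rank $2$ (generated by two of the twists $a_{ij}$) and the $\Z$ factor is the center. Hence $PB_3'\cong F_2'$. It is classical that the commutator subgroup $F_2'$ of a nonabelian free group is free of countably infinite rank: for instance, $F_2'$ is the fundamental group of the cover of the wedge of two circles corresponding to the surjection $F_2\twoheadrightarrow\Z^2$, and the total space of this cover is the Cayley graph of $\Z^2$ with its standard generating set, an infinite graph with infinitely many independent loops. Consequently $H_1(PB_3')\cong H_1(F_2')$ is a free abelian group of infinite rank, and in particular is not finitely generated; this settles $n=3$.

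For $n\geq 4$, I would use the strand-forgetting homomorphism $q\colon PB_n\twoheadrightarrow PB_3$ obtained by deleting strands $4,\dots,n$ (this is surjective---split, even---by the Fadell--Neuwirth fibration, or directly since $a_{12},a_{13},a_{23}$ lie in its image and generate $PB_3$). A surjective homomorphism carries the commutator subgroup of its domain onto that of its codomain, so $q$ restricts to a surjection $PB_n'\twoheadrightarrow PB_3'$ and therefore induces a surjection $H_1(PB_n')\twoheadrightarrow H_1(PB_3')$ on abelianizations. Since a finitely generated group admits no surjection onto an infinitely generated group, the non-finite-generation of $H_1(PB_3')$ forces $H_1(PB_n')$ not to be finitely generated.

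There is no serious obstacle here: the argument is a short reduction, and the only point deserving care is the bookkeeping of that reduction---checking that $q$ is a well-defined surjection and that passing first to commutator subgroups and then to abelianizations preserves surjectivity. All of the substance is carried by the classical fact that the commutator subgroup of a nonabelian free group is infinitely generated.
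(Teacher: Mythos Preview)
Your argument is correct and follows essentially the same route as the paper: produce a surjection $PB_n\twoheadrightarrow F_2$, restrict to commutator subgroups, and use that $H_1(F_2')$ is not finitely generated. The only difference is in how the surjection is obtained: the paper cites Cohen--Falk--Randall~\cite{CFR} for the existence of surjections $PB_n\to F_2$, whereas you build one explicitly as the composite $PB_n\twoheadrightarrow PB_3\cong F_2\times\Z\twoheadrightarrow F_2$ via strand-forgetting. Your version is slightly more self-contained, at the cost of invoking the structure of $PB_3$; the paper's version works uniformly in $n$ without the reduction step but relies on an external reference. Both are short and the substance is the same.
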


\begin{proof}
    Let $f\co PB_n\to F_2$ be a surjective homomorphism to the free group $F_2$ of rank $2$.
    Cohen--Falk--Randall construct entire families of such maps in Section~3 of~\cite{CFR}, so $f$ certainly exists.
    Then the restriction $f|_{PB'_n}\co PB'_n\to F'_2$ is also surjective.
    The five-term exact sequence for the low-dimensional homology of a short exact sequence tells us that $H_1(PB'_n)$ surjects to $H_1(F'_2)$.
    But $H_1(F'_2)$ is not finitely generated since it is the first homology of an infinite-rank graph.
    So $H_1(PB'_n)$ is not finitely generated either.
\end{proof}

\subsection{Some permutation and Specht modules}
The classical Maschke's theorem implies that any finitely generated module for a finite group, over a field of characteristic $0$, decomposes uniquely as direct sum of simple modules.
Working over the rationals $\Q$, the simple $\Q \sg{n}$-modules (that is, the irreducible representations) are the \emph{Specht modules}, which correspond to partitions of $n$.
We are only interested in three Specht modules in this paper, so we do not give a careful definition for a general partition.
Our definitions work over a general commutative ring $R$, and the resulting modules are interesting, even though they are not usually simple, and usually no Maschke-style decomposition holds.
So we fix a commutative ring $R$ in this section to make our definitions.
We require commutative rings to have a multiplicative identity $1$.

We summarize the general definitions first.
We follow many of the notations and conventions from James~\cite{James}.
Let $\lambda = (\lambda_1,\dotsc,\lambda_r)$ be a partition of $n$ (with $\lambda_1\geq\dotsm\geq\lambda_r$).
The \emph{Young diagram} of $\lambda$ is the left-justified grid of boxes where the $i$th row has $\lambda_i$ boxes.
A \emph{Young tableau} is a filling of the Young diagram by the numbers $1,\dotsc, n$, with no repeats.
The \emph{tabloid} of a tableau $t$ is the set of tableaux obtained from $t$ by permuting the entries in each row (so it is a tableau with unordered rows).
The canonical action of $\sg{n}$ on $\{1,2,\dotsc,n\}$ naturally defines an action on the set of $\lambda$-tableaux, by permuting the numbers in the boxes.
This action descends to an action on the set of $\lambda$-tabloids.
The \emph{$\lambda$-permutation module} $M^\lambda_R$ is the free $R$-module on the set of $\lambda$-tabloids, made into an $R\sg{n}$-module by extending the action on tabloids linearly.
Given a tableau $t$, the \emph{polytabloid} of $t$ is the signed sum of the orbit of the tabloid of $t$ under the action of the column stabilizer of $t$.
The \emph{$\lambda$-Specht module} $S^\lambda_R$ is the submodule of $M^\lambda_R$ spanned by the set of all polytabloids.
We repeat these definitions more carefully for the three partitions we are interested in: $(n)$, $(n-1,1)$, and $(n-2,2)$.

First of all, for the trivial partition $\lambda=(n)$, we have $S^{(n)}_R=M^{(n)}_R=R$, with the untwisted action.
We use the notation $S^0_R$ for $S^{(n)}_R$.

Next we consider the modules related to the partition $\lambda=(n-1,1)$.
We use the notation $M^1_R$ as an abbreviation for $M^{(n-1,1)}_R$.
The action of $\sg{n}$ on the $(n-1,1)$-tabloids is simply the canonical action of $\sg{n}$ on $\{1,2,\dotsc, n\}$, so to work with $M^1_R$, we name the basis elements and define the action directly.
So $M^1_R$ is the free $R$-module with basis $\{t_1,\dotsc, t_n\}$,
 and it is an $R\sg{n}$-module by permuting the basis elements: for $\sigma\in \sg{n}$ and any basis element $t_i$, we have $\sigma t_i=t_{\sigma(i)}$.
 (We can think of $t_i$ as being the tabloid consisting of tableaux with $i$ as the unique entry in the second row, or we can think of it as a meaningless symbol.)

Similarly, we use $S^1_R$ as an abbreviation for $S^{(n-1,1)}_R$.
Tracing through the definitions above, we find that the polytabloids are just the elements of $M^1_R$ of the form $t_i-t_j$, for all distinct choices of $i$ and $j$ from $1$ through $n$.
So the Specht module $S^1_R$ is the span of $\{t_i-t_j\,|\,1\leq i<j\leq n\}$ in $M^1_R$.
We note that $M^1_R$ has a free basis 
\[t_1, t_2-t_1, t_3-t_1,\dotsc, t_n-t_1,\]
and from this it quickly follows that $S^1_R$ is a free $R$-module of rank $n-1$ with basis $t_2-t_1,\dotsc, t_n-t_1$.

Finally, we introduce the modules for the partition $\lambda=(n-2,2)$.
An $(n-2,2)$-tabloid is uniquely determined by the unordered pair of elements appearing in the second row of any tableau representing it.
So the action of $\sg{n}$ on the set of $(n-2,2)$-tabloids is just the action of $\sg{n}$ on the set of unordered pairs from $\{1,2,\dotsc, n\}$.
We use $M^2_R$ as notation for $M^{(n-2,2)}_R$.
Again, we name basis elements and make definitions directly.
Let $M^2_R$ be the free $R$-module whose basis is the set of $\binom{n}{2}$ symbols $\{v_{ij}\,|\,1\leq i<j\leq n\}$.
Since only the unordered pair $\{i,j\}$ matters for $v_{ij}$, we use the convention that $v_{ij}=v_{ji}$.
Under this convention, we can summarize the action of $\sg{n}$ on this basis by saying that, for $\sigma\in \sg{n}$, we have $\sigma v_{ij}=v_{\sigma(i),\sigma(j)}$.
Then $M^2_R$ is an $R\sg{n}$-module by extending this action linearly.
(Again, we can think of $v_{ij}$ as being the tabloid where any representative tableau has $\{i,j\}$ as the set of elements in its second row, or we can just think of $v_{ij}$ as being a symbol.)

The main reason that we consider $M^2_R$ in this paper is the following observation.
\begin{proposition}\label{pr:recognizepairmodule}
    In the exact sequence
    \[0\to PB_n/PB_n'\to B_n/PB_n'\to \sg{n}\to 1\]
    conjugation induces a $\Z \sg{n}$-module structure on $PB_n/PB_n'$.
    With this structure, $PB_n/PB_n'\cong M^2_\Z$ as $\Z \sg{n}$-modules.
    
    Further, for any positive integer $k$, there is a normal subgroup $N\lhd B_n$ with $PB_n'<N<PB_n$, such that the exact sequence
    \[0\to PB_n/N\to B_n/N\to \sg{n}\to 1\]
    induces a $\Z \sg{n}$-module structure on $PB_n/N$ that makes it isomorphic to $M^2_{\Z/k\Z}$ as a $\Z\sg{n}$-module.
\end{proposition}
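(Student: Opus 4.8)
The plan is to establish the first assertion by pinning down the conjugation action on the standard pure-braid generators, and then to read off the second as a formal consequence. First I would record the routine fact that the conjugation action of $B_n$ on $PB_n$ descends to an action on the abelianization $PB_n/PB_n'$ which factors through $B_n/PB_n\cong\sg n$: for $p,x\in PB_n$ the commutator $pxp^{-1}x^{-1}$ lies in $PB_n'$, so $PB_n$ acts trivially. This produces the $\Z\sg n$-module structure named in the statement, and since $\rho(\sigma_i)=s_i$, the action of the generator $s_i\in\sg n$ is computed by conjugating by the lift $\sigma_i\in B_n$.

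The main point is the congruence
\[
\sigma_i\,a_{jk}\,\sigma_i^{-1}\ \equiv\ a_{s_i(j),\,s_i(k)}\pmod{PB_n'}
\]
for every $i$ and every pair $j<k$, which I would verify by cases on the size of $\{j,k\}\cap\{i,i+1\}$, using the standard formulas for conjugating the $a_{jk}$ by $\sigma_i$ (Chapter~9 of Farb--Margalit~\cite{FM}). When $\{i,i+1\}$ is disjoint from $\{j,j+1,\dots,k\}$, or when $\{j,k\}=\{i,i+1\}$, one has $\sigma_i a_{jk}\sigma_i^{-1}=a_{jk}$ already in $B_n$, and in these cases $s_i$ fixes the pair $\{j,k\}$; in the remaining cases the conjugation formula expresses $\sigma_i a_{jk}\sigma_i^{-1}$ as a conjugate of $a_{s_i(j),s_i(k)}$ by a pure braid, which agrees with $a_{s_i(j),s_i(k)}$ modulo $PB_n'$ because conjugation by a pure braid is trivial on $PB_n/PB_n'$. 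Granting this, Proposition~\ref{pr:PBabelianization} identifies $PB_n/PB_n'$ with $\Z^{\binom{n}{2}}$ via $a_{ij}\mapsto v_{ij}$, and the displayed congruences say precisely that this identification carries the conjugation action to the action $s_i\cdot v_{jk}=v_{s_i(j),s_i(k)}$ on the basis, which is the defining action of $M^2_\Z$. Hence $PB_n/PB_n'\cong M^2_\Z$ as $\Z\sg n$-modules.

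For the second assertion, fix $k\geq 2$ (for $k=1$ there is nothing to prove, since $M^2_{\Z/\Z}=0$) and let $N\leq PB_n$ be the preimage of the submodule $kM^2_\Z$ under the isomorphism $PB_n/PB_n'\cong M^2_\Z$ just constructed; concretely, $N=\langle PB_n'\cup\{a_{ij}^k:1\le i<j\le n\}\rangle$. Since $kM^2_\Z$ is a $\Z\sg n$-submodule of $M^2_\Z$ (the $\sg n$-action permutes the spanning set $\{kv_{ij}\}$), the subgroup $N/PB_n'$ is invariant under the $B_n$-action on $PB_n/PB_n'$; and since $PB_n'$ is characteristic in $PB_n$, hence normal in $B_n$, the preimage $N$ is normal in $B_n$. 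One has $PB_n'<N<PB_n$ because $0\neq kM^2_\Z\neq M^2_\Z$, and $PB_n/N\cong M^2_\Z/kM^2_\Z\cong M^2_{\Z/k\Z}$ as $\Z\sg n$-modules, with the structure induced by the displayed extension.

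I expect the only genuine work to be the case-by-case verification of the displayed congruence; everything else is formal bookkeeping. There is also a more conceptual way to see that congruence: conjugation by $\sigma_i$ on $PB_n=\pi_1$ of the $n$-punctured disk is induced by a homeomorphism realizing $s_i$, which sends the loop representing $a_{jk}$ to one freely homotopic to the loop representing $a_{s_i(j),s_i(k)}$, hence to a conjugate of it; equivalently, the linking-number homomorphisms $\ell_{ij}\co PB_n\to\Z$ transform by $\ell_{ij}(\sigma x\sigma^{-1})=\ell_{\rho(\sigma)(i),\rho(\sigma)(j)}(x)$, since conjugation merely relabels strands. But turning this into a rigorous argument again comes down to matching the standard topological and algebraic generators, so the direct computation seems the most economical route.
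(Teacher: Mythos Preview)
Your proposal is correct and follows essentially the same route as the paper: identify the conjugation action on the generators $a_{jk}$ by lifting $s_i$ to $\sigma_i$, match this with the permutation action on $\{v_{ij}\}$, and for the second part take $N=\langle PB_n'\cup\{a_{ij}^k\}\rangle$ so that $N/PB_n'=kM^2_\Z$. The only difference is cosmetic: the paper asserts the exact equality $\sigma_i a_{jk}\sigma_i^{-1}=a_{s_i(j),s_i(k)}$ in $B_n$, whereas you more cautiously claim only the congruence modulo $PB_n'$; your weaker claim is all that is needed and your justification (conjugation by a pure braid is trivial on the abelianization) is correct.
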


\begin{proof}
    Here $PB_n'$ is the commutator subgroup, so that $PB_n/PB_n'$ is $\Z^{\binom{n}{2}}$ by Proposition~\ref{pr:PBabelianization}.
    As with any group extension with abelian kernel, the action of the cokernel on the kernel is by lifting and conjugating (a different choice of lifts will result in the same action since the differences are in the abelian kernel).
    We lift a generator $s_i$ in $\sg{n}$ to $\sigma_i$ in $B_n$, and conjugate a generator $a_{jk}$ by this lift.
    There are a few cases to consider, but in all cases the conjugate $\sigma_ia_{jk}\sigma_i^{-1}$ can be rewritten (using the definition of $a_{jk}$ and the commutation and braid relations from Section~\ref{se:bgprelims}) to be $a_{s_i(j),s_i(k)}$.
    This matches the action on $M^2_\Z$, proving the first statement.

    For the second statement, simply define $N$ to be the subgroup of $PB_n$ generated by $PB_n'$ and 
    $\{a_{ij}^k\,|\,1\leq i<j\leq n\}$.  
    The isomorphism of $PB_n/PB_n'$ to $M^2_\Z$ will carry $N/PB_n'$ to $kM^2_\Z$, so that $PB_n/N$ is isomorphic to $M^2_\Z/kM^2_\Z\cong M^2_{\Z/k\Z}$ as a $\Z \sg{n}$-module.
\end{proof}

Now we consider the Specht module in $M^2_R$.
Suppose $i,j,k,l$ are distinct numbers from $1$ through $n$, and $t$ is an $(n-2,2)$-tableau with $i,j,k,l$ as the entries in the left $2\times 2$-square (with other entries unspecified):
\[
t=
\begin{ytableau}
i & j & \ast & \none[\dots] & \ast \\
k & l 
\end{ytableau}
\]
Then the polytabloid of $t$ in $M^2_R$ is $v_{ij}+v_{kl}-v_{il}-v_{kj}$.
So the Specht module $S^2_R=S^{(n-2,2)}_R$ is the submodule of $M^2_R$ spanned by 
\[\{v_{ij}+v_{kl}-v_{il}-v_{kj}\,|\,\text{$i,j,k,l$ distinct in $1,2,\dotsc,n$}\}.\]
In the next section, we find a free basis for $S^2_R$ as an $R$-module.

\subsection{Structure of $M^2_R$}\label{ss:structure}
First we describe a different basis for $M^2_R$.
A \emph{standard tableau} is one in which the numbers are strictly increasing in every row and every column.
For $(n-2,2)$, a standard tableau is one with a $1$ in the upper left, any increasing ordered pair other than $(2,3)$ in the second row, and the remaining elements in increasing order in the rest of the first row.
This means that $3$ is in the second position of the first row if $2$ is in the second row, and $2$ is in the second position of the first row, otherwise.
We use the notation $e_{ij}$ for the polytabloid of standard tableau with $(i,j)$ as its second row:
\[e_{2i}=v_{2i}+v_{13}-v_{1i}-v_{23}, \text{ for $4\leq i\leq n$ and}\]
\[e_{ij}=v_{ij}+v_{12}-v_{1j}-v_{2i}, \text{ for $3\leq i<j\leq n$.}\]
Together, these form the set of \emph{standard polytabloids} in $M^2_R$.
The number of such elements is $\binom{n}{2}-n$ (the $n$ unordered pairs that are not allowed are the $n-1$ pairs involving $1$, and the pair $\{2,3\}$).
\begin{lemma}\label{le:M2basis}
    Let $n\geq 4$.
    Then $M^2_R$ has a free basis over $R$ consisting of $v_{12}$, the elements $v_{1j}-v_{12}$ for $3\leq j\leq n$, the element $v_{23}-v_{12}$, and the standard polytabloids $e_{24},\dotsc, e_{2n}$, $e_{34},\dotsc,e_{n-1,n}$.
\end{lemma}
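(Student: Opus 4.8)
The plan is to exhibit an explicit change-of-basis matrix between the proposed set and the standard basis $\{v_{ij}\mid 1\le i<j\le n\}$ of $M^2_R$, and show it is invertible over $\mathbb Z$ (hence over any commutative ring $R$). Since both sets have $\binom n2$ elements — note $1 + (n-2) + 1 + \bigl(\binom n2 - n\bigr) = \binom n2$ — it suffices to prove one of the two spanning directions; I would show that every $v_{ij}$ lies in the $R$-span of the proposed set, since that direction is the one where the triangular structure is visible.

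First I would set up a convenient ordering. Put $v_{12}$ first, then $v_{13},\dots,v_{1n}$, then $v_{23}$; these $n$ elements are "recovered" immediately: $v_{12}$ is in the set, $v_{1j} = (v_{1j}-v_{12}) + v_{12}$, and $v_{23} = (v_{23}-v_{12})+v_{12}$. So the $R$-span of the first $n$ proposed generators is exactly the span of $\{v_{1j}\}_j \cup \{v_{23}\}$. It remains to recover each $v_{ij}$ with $\{i,j\}$ disjoint from the "forbidden" set $\{\,\{1,k\}\,\}\cup\{\{2,3\}\}$, i.e.\ each $v_{ij}$ with $2\le i<j\le n$ and $\{i,j\}\ne\{2,3\}$, working modulo the already-recovered subspace. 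From the formulas $e_{2i} = v_{2i} + v_{13} - v_{1i} - v_{23}$ and $e_{ij} = v_{ij} + v_{12} - v_{1j} - v_{2i}$, I read off: modulo the span of $\{v_{1k}\}\cup\{v_{23}\}$ we have $v_{2i}\equiv e_{2i}$ for $4\le i\le n$, and then, having recovered $v_{2i}$, we get $v_{ij} \equiv e_{ij} + v_{2i}$ modulo that same span for $3\le i<j\le n$. Ordering the remaining basis vectors as $v_{24},\dots,v_{2n},v_{34},v_{35},\dots,v_{n-1,n}$ and the remaining proposed generators as $e_{24},\dots,e_{2n},e_{34},\dots,e_{n-1,n}$, the transition matrix on this block is lower unitriangular (each $e$ expresses its matching $v$ as itself plus earlier $v$'s plus elements of the already-handled subspace). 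Hence the full change-of-basis matrix from the standard basis to the proposed set is unitriangular over $\mathbb Z$, so it is invertible over $\mathbb Z$ and therefore over $R$; this shows the proposed set spans $M^2_R$, and since it has the right cardinality $\binom n2$ equal to the rank of the free module $M^2_R$, it is a free basis.

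The only genuinely fiddly point — and the one I would be most careful about — is bookkeeping the indices so that the claimed triangularity is honest: one must check that in $e_{ij} = v_{ij}+v_{12}-v_{1j}-v_{2i}$ the term $v_{2i}$ always precedes $v_{ij}$ in the chosen order (true since $i<j$ forces $\{2,i\}$ to come before $\{i,j\}$ in the lexicographic-type ordering above, for $i\ge 3$), and that $v_{13}$, $v_{1i}$, $v_{23}$ appearing in $e_{2i}$ all lie in the first, already-recovered block (they do, being pairs containing $1$, or the pair $\{2,3\}$). I also need $n\ge 4$ so that the indices $e_{2i}$ for $4\le i\le n$ and $e_{ij}$ for $3\le i<j\le n$ make sense and the forbidden set has exactly $n$ elements (the $n-1$ pairs containing $1$ together with $\{2,3\}$, all distinct). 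No deeper input is required; this is purely a finite linear-algebra verification, and the hypothesis that $R$ is an arbitrary commutative ring causes no difficulty precisely because the transition matrix has determinant $\pm 1$.
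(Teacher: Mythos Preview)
Your proof is correct and follows essentially the same approach as the paper: both arguments order the standard basis lexicographically and observe that the change-of-basis matrix is unitriangular (because each $e_{ij}$ equals $v_{ij}$ plus terms $v_{kl}$ with $\{k,l\}$ earlier in the order), hence invertible over any commutative ring. You spell out the index-bookkeeping a bit more carefully than the paper does, but the underlying idea is identical.
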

\begin{proof}
    We order the set of unordered pairs $\{i,j\}$ with $1\leq i<j\leq n$ in the usual way: $\{i,j\}< \{k,l\}$ if $\min\{i,j\}<\min\{k,l\}$, or if $\min\{i,j\}=\min\{k,l\}$ and $\max\{i,j\}<\max\{k,l\}$.  
    
    Let $A$ be the matrix whose columns are the list of vectors 
 in the statement, in order, written as column vectors with respect to the standard basis for $M^2_R$ in the order just described.
    Then $A$ is an upper-triangular matrix with entries of $1$ on the diagonal, since all the other summands of $e_{ij}$ come earlier than $v_{ij}$ in the standard order.
    So $A$ is a product of elementary matrices over $R$, meaning that the set of vectors in the statement comes from the standard basis by a sequence of transvections and is therefore a basis.
\end{proof}

There is an $R\sg{n}$-module homomorphism $M^2_R\to S^0_R$ by $v_{ij}\mapsto 1$.
Let $K_{12}$ denote the kernel of this map.
Since the map is surjective, this definition gives us an exact sequence
\[0\to K_{12}\to M^2_R\to S^0_R\to 0.\]
Let $\mu\co M^2_R\to M^1_R$ be the $R\sg{n}$-module homomorphism defined by $\mu(v_{ij})=t_i+t_j$.
\begin{proposition}\label{pr:SESS2K12S1}
    For $n\geq 4$, we have an exact sequence
    \[0\to S^2_R\to K_{12}\stackrel{\mu|_{K_{12}}}{\longrightarrow} S^1_R\to 0.\]
\end{proposition}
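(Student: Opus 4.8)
The plan is to check three things: that $\mu|_{K_{12}}$ actually takes values in $S^1_R$, that it surjects onto $S^1_R$, and that its kernel is exactly $S^2_R$. The first two are quick. Any element of $K_{12}$ has the form $x=\sum c_{ij}v_{ij}$ with $\sum c_{ij}=0$, and then $\mu(x)=\sum c_{ij}(t_i+t_j)$ has coefficient sum $2\sum c_{ij}=0$; since $S^1_R$ is exactly the submodule of $M^1_R$ where the coefficient sum vanishes (clear from the free basis $t_2-t_1,\dots,t_n-t_1$), we get $\mu(x)\in S^1_R$. For surjectivity, for any distinct $i,j,k$ the element $v_{ik}-v_{jk}$ lies in $K_{12}$ and $\mu(v_{ik}-v_{jk})=t_i-t_j$, so the image of $\mu|_{K_{12}}$ already contains all the polytabloids that span $S^1_R$.

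It remains to compute the kernel. One inclusion is formal: a spanning polytabloid $v_{ij}+v_{kl}-v_{il}-v_{kj}$ of $S^2_R$ has coefficient sum $0$, hence lies in $K_{12}$, and $\mu$ sends it to $(t_i+t_j)+(t_k+t_l)-(t_i+t_l)-(t_k+t_j)=0$; so $S^2_R\subseteq\ker(\mu|_{K_{12}})$. For the reverse inclusion I would use the explicit free basis of $M^2_R$ from Lemma~\ref{le:M2basis}. The coefficient-sum map $M^2_R\to S^0_R$ sends $v_{12}$ to $1$ and every other listed basis element (the $v_{1j}-v_{12}$, the element $v_{23}-v_{12}$, and the standard polytabloids) to $0$, so $K_{12}$ is the free $R$-module on
\[\{v_{1j}-v_{12}\mid 3\le j\le n\}\cup\{v_{23}-v_{12}\}\cup\{e_{24},\dots,e_{2n},e_{34},\dots,e_{n-1,n}\}.\]
Under $\mu$, all the standard polytabloids $e_{ij}$ go to $0$ (they lie in $S^2_R$), while $\mu(v_{1j}-v_{12})=t_j-t_2$ and $\mu(v_{23}-v_{12})=t_3-t_1$.

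The crux is the elementary linear-algebra check that $\{t_3-t_1\}\cup\{t_j-t_2\mid 3\le j\le n\}$ is itself a free basis of $S^1_R$: written in the basis $u_i=t_i-t_1$ ($2\le i\le n$) these vectors are $u_3$ and $u_j-u_2$ for $3\le j\le n$, and a short sequence of row operations turns the resulting $(n-1)\times(n-1)$ matrix into a signed permutation matrix, so it is invertible over $R$. Granting this, $\mu|_{K_{12}}$ restricts to an isomorphism from the free summand of $K_{12}$ spanned by $\{v_{1j}-v_{12}\mid 3\le j\le n\}\cup\{v_{23}-v_{12}\}$ onto $S^1_R$, and it kills the complementary summand spanned by the standard polytabloids. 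Hence $\ker(\mu|_{K_{12}})$ is precisely the span of the standard polytabloids, which is contained in $S^2_R$. Combined with the previous inclusion, $\ker(\mu|_{K_{12}})=S^2_R$, and the sequence is exact; as a byproduct this shows $S^2_R$ is free on the standard polytabloids, recovering the basis promised for the next subsection.

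I expect essentially all the work to be the bookkeeping in that last matrix computation, which is of the same flavor as the proof of Lemma~\ref{le:M2basis}; everything else is formal manipulation of coefficient sums and the direct-summand decomposition of $K_{12}$ afforded by that lemma (which also handles the small case $n=4$).
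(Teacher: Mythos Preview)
Your proof is correct and follows essentially the same route as the paper's: both use the basis of Lemma~\ref{le:M2basis} to identify $K_{12}$ as free on the non-$v_{12}$ basis elements, verify that the polytabloid part lies in the kernel, and check that the remaining $n-1$ basis elements map to an $R$-basis of $S^1_R$. The only cosmetic difference is that the paper argues linear independence of $\{t_3-t_1\}\cup\{t_j-t_2\}$ by noting that $t_3-t_1$ is the unique vector with nonzero $t_1$-coefficient and each $t_j-t_2$ (for $j\ge 4$) is the unique one with nonzero $t_j$-coefficient, rather than running your row-reduction; and the paper omits your explicit check that $\mu(K_{12})\subseteq S^1_R$, which is a nice addition.
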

\begin{proof}
The basis from Lemma~\ref{le:M2basis} has $v_{12}$ mapping to $1$ in $S^0_R$, and all other basis elements mapping to $0$.
From this it follows that $K_{12}$ is a free $R$-module with basis given by the remaining $\binom{n}{2}-1$ elements of that basis for $M^2_R$.
For any generator $t_i-t_j$ of $S^1_R$, we can pick $k$ distinct from $i$ and $j$, and $\mu(v_{ik}-v_{jk})=t_i-t_j$.
So $\mu|_{K_{12}}$ maps surjectively to $S^1_R$.
The vectors $v_{23}-v_{12}$ and all the $v_{1j}-v_{12}$ together map to a linearly independent set of vectors in $S^1_R$.
(To see this, note that that $\mu(v_{23}-v_{12})=t_3-t_1$ is the only such vector to have a nonzero coefficient on $t_1$, and for each $j>3$, $\mu(v_{1j}-v_{12})=t_j-t_2$ is the only vector to have a nonzero coefficient on $t_j$.)
From this it follows that $\ker(\mu|_{K_{12}})$ is freely generated by the standard polytabloids, and therefore that it is a submodule of $S^2_R$.
On the other hand, any polytabloid is in $K_{12}$, and maps to $0$ under $\mu$.
So $S^2_R$ is contained in $\ker(\mu|_{K_{12}})$.
So they are equal, and the sequence is exact.
\end{proof}

\begin{corollary}\label{co:S2gens}
For $n\geq 4$, $S^2_R$ is a free $R$-module of rank $\binom{n}{2}-n$, freely generated by the standard polytabloids.
\end{corollary}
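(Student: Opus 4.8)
The plan is to read this off from the proof of Proposition~\ref{pr:SESS2K12S1}, where essentially all the work has already been done. In that proof we showed that $K_{12}$ is a free $R$-module on the $\binom{n}{2}-1$ elements of the Lemma~\ref{le:M2basis} basis other than $v_{12}$; that $\ker(\mu|_{K_{12}})$ is freely generated by the standard polytabloids $e_{24},\dots,e_{2n},e_{34},\dots,e_{n-1,n}$; and finally that $\ker(\mu|_{K_{12}})=S^2_R$. So the first step is simply to invoke these facts: $S^2_R$ is free over $R$ with the standard polytabloids as a free basis. The only thing worth double-checking is that each $e_{ij}$ really is a polytabloid (immediate from the displayed formulas defining $e_{2i}$ and $e_{ij}$ together with the description of $S^2_R$ as the span of all polytabloids), so that this free generating set genuinely lies in $S^2_R$ rather than in some larger module.

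The second step is the rank count. The standard polytabloids are indexed by the unordered pairs $\{i,j\}\subseteq\{1,\dots,n\}$ that can be the second row of a standard $(n-2,2)$-tableau, namely those with $1\notin\{i,j\}$ and $\{i,j\}\neq\{2,3\}$. Of the $\binom{n}{2}$ pairs total, exactly $n-1$ contain $1$ and one more is $\{2,3\}$, so there are $\binom{n}{2}-n$ of them; this count was already recorded in the text just before Lemma~\ref{le:M2basis}. Hence $S^2_R$ is free of rank $\binom{n}{2}-n$.

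As a consistency check (and an alternative route that avoids leaning on the explicit basis of $S^2_R$), one can observe that $S^1_R$ is free over $R$, so the short exact sequence of Proposition~\ref{pr:SESS2K12S1} splits over $R$, giving $\operatorname{rank}_R S^2_R = \operatorname{rank}_R K_{12} - \operatorname{rank}_R S^1_R = (\binom{n}{2}-1) - (n-1) = \binom{n}{2}-n$, in agreement with the direct count. There is no real obstacle in this corollary: it is pure bookkeeping applied to the proof of Proposition~\ref{pr:SESS2K12S1}, and the hypothesis $n\geq 4$ is just what is needed for that proposition (and for the standard polytabloids $e_{ij}$ with $3\leq i<j\leq n$ to make sense).
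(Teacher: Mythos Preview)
Your proposal is correct and matches the paper's approach exactly: the paper states this as an immediate corollary with no separate proof, since the proof of Proposition~\ref{pr:SESS2K12S1} already establishes that $\ker(\mu|_{K_{12}})=S^2_R$ is freely generated by the standard polytabloids, and the count $\binom{n}{2}-n$ was recorded just before Lemma~\ref{le:M2basis}.
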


Now we observe that we can find copies of $S^0_R$ and $S^1_R$ in $M^2_R$.
To do this, we define some extra terms in $M^2_R$.
Define
\[u=\sum_{1\leq i<j\leq n} v_{ij}\in M^2_R, \text{ and for all $i$, } w_i=\sum_{\substack{1\leq j\leq n\\ j\neq i}} v_{ij}.\]
Then the maps $S^0_R\to M^2_R$ by $1\mapsto u$ and $S^1_R\to M^2_R$ by $(t_i-t_j)\mapsto (w_i-w_j)$ are injective $R\sg{n}$-module homomorphisms.

We need some maps the are defined over the integers, and more generally.
These are built from maps that arise over $\Q$, so we temporarily specialize to $R=\Q$.
From the fact that the $S^i_\Q$ are irreducible, and using dimension-counting, we can deduce that:
\[M^1_\Q\cong S^1_\Q\oplus S^0_\Q,\quad\text{and}\quad M^2_\Q\cong S^2_\Q\oplus S^1_\Q\oplus S^0_\Q.\]
More constructively, we can find formulas for the idempotent projection endomorphisms $\pi^i\co M^2_\Q\to M^2_\Q$ for $i\in \{0,1,2\}$, with the properties that $\pi^i(M^2_\Q)\cong S^i_\Q$ and $\pi^i\circ \pi^i=\pi^i$ for each $i$, and $\pi^i\circ \pi^j=0$ if $i\neq j$, and $\pi^0+\pi^1+\pi^0=\mathrm{id}$.
These formulas are:
\[\pi^0(v_{ij})=\frac{2}{n(n-1)}u,\quad
\pi^1(v_{ij})=\frac{1}{n-2}(w_i+w_j)-\frac{4}{n(n-2)}u,\text{ and }\]
\[\pi^2(v_{ij})=v_{ij}-\frac{1}{n-2}(w_i+w_j)+\frac{2}{(n-1)(n-2)}u.\]
To see that these formulas are correct, it is an exercise to check that $\pi^0$ satisfies $u\mapsto u$, $w_i-w_j\mapsto 0$, and $e_{ij}\mapsto 0$, and $\pi^1$ satisfies $u\mapsto 0$, $w_i-w_j\mapsto w_i-w_j$, and $e_{ij}\mapsto 0$, and $\pi^2$ satisfies $u\mapsto 0$, $w_i-w_j\mapsto 0$, and $e_{ij}\mapsto e_{ij}$, for all appropriate $i$ and $j$.





\section{Specht subgroups of $B_n$}
\subsection{Definition and first properties}
We use the notation of the last section, but work over $\Z$.
In this section, we have a standing assumption that $n\geq 4$.
Let $\pi\co PB_n\to M^2_\Z$ be the abelianization map.
\begin{definition}\label{de:Nij}
Let $i,j$ be distinct elements of $\{0,1,2\}$ and let $k\in\{0,1,2\}$ be the remaining element.
Define $N_{ij}=\ker(\pi^k|_{M^2_\Z}\circ \pi)<PB_n$.
Now let $i\in\{0,1,2\}$ and let $j,k\in\{0,1,2\}$ be the other two elements.
Define $N_i=N_{ij}\cap N_{ik}$.
Call these six subgroups the \emph{Specht subgroups} of $B_n$.
\end{definition}
The intuition is that $N_i$ ``contains" $S^i_\Z$, and $N_{ij}$ ``contains" both $S^i_\Z$ and $S^j_\Z$, in a sense that we formalize in Lemma~\ref{le:tensorwithQ} below.
Since this definition is in terms of the $\Z \sg{n}$-module homomorphisms defined in the previous section, it might be hard to follow.
We provide characterizations of each subgroup in terms of winding numbers and in terms of generating sets in subsection~\ref{ss:SpechtDescript} below.

First we note that these subgroups really are normal subgroups of $B_n$, as a consequence of the following proposition.
\begin{proposition}\label{pr:normaliffsubmodule}
    Let $N$ be a subgroup of $B_n$ with $PB_n'\leq N\leq PB_n$.
    Then $N$ is a normal subgroup of $B_n$ if and only if $\pi(N)$ is a is a submodule of the $\Z\sg{n}$-module $M^2_\Z$.
\end{proposition}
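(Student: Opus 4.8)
The plan is to prove both implications directly, using the fact that $\pi\co PB_n\to M^2_\Z$ is the abelianization map and that $PB_n'\leq N$. Throughout, note that since $PB_n'\leq N\leq PB_n$, the subgroup $N$ is completely recovered from its image as $N=\pi^{-1}(\pi(N))$; this is the key bookkeeping device, because it converts statements about the subgroup $N$ of $B_n$ into statements about the subgroup (and possible submodule) $\pi(N)$ of $M^2_\Z$.

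For the forward direction, suppose $N\lhd B_n$. Since $N$ already contains $PB_n'$, the subgroup $\pi(N)$ is automatically a subgroup of the abelian group $M^2_\Z$, so it is closed under addition; I only need to check that $\pi(N)$ is closed under the $\sg{n}$-action. Recall from Proposition~\ref{pr:recognizepairmodule} (and its proof) that the $\Z\sg{n}$-module structure on $PB_n/PB_n'\cong M^2_\Z$ is induced by lifting elements of $\sg{n}$ to $B_n$ and conjugating. Concretely, given $\sigma\in\sg{n}$, choose a lift $\beta\in B_n$ with $\rho(\beta)=\sigma$; then for $x\in PB_n$, the action is $\sigma\cdot\pi(x)=\pi(\beta x\beta^{-1})$. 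If $x\in N$, then $\beta x\beta^{-1}\in N$ because $N$ is normal in $B_n$, hence $\sigma\cdot\pi(x)=\pi(\beta x\beta^{-1})\in\pi(N)$. Since $\sg{n}$ is generated by the $s_i$ and these are hit by the $\sigma_i$, this shows $\pi(N)$ is a $\Z\sg{n}$-submodule of $M^2_\Z$.

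For the converse, suppose $\pi(N)$ is a submodule of $M^2_\Z$. I want to show $N$ is normal in $B_n$, i.e. $\beta N\beta^{-1}\subseteq N$ for every $\beta\in B_n$; it suffices to check this for $\beta$ ranging over a generating set, say the $\sigma_i^{\pm1}$. Since conjugation by $\beta$ is an automorphism of $B_n$ that preserves $PB_n$ (as $PB_n$ is normal) and preserves $PB_n'$ (as the commutator subgroup is characteristic), conjugation by $\beta$ descends to an automorphism of $PB_n/PB_n'=M^2_\Z$, and by the description above this descended map is exactly the action of $\rho(\beta)\in\sg{n}$. Now take $x\in N$. Then $\pi(\beta x\beta^{-1})=\rho(\beta)\cdot\pi(x)\in\pi(N)$ because $\pi(N)$ is a submodule. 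Therefore $\beta x\beta^{-1}\in\pi^{-1}(\pi(N))=N$, using $PB_n'\leq N$ for the last equality. Hence $\beta N\beta^{-1}\subseteq N$ for all $\beta$, so $N$ is normal.

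The only mild subtlety — and the step I would be most careful about — is the identification of "conjugation by a lift of $\sigma$, acting on $PB_n/PB_n'$" with "the module action of $\sigma$ on $M^2_\Z$", including independence from the choice of lift; but this is precisely what is established in the proof of Proposition~\ref{pr:recognizepairmodule} (different lifts differ by an element of $PB_n$, and conjugation by a pure braid is trivial on $PB_n/PB_n'$), so I can quote it. Everything else is routine: the equivalence is really just the statement that, for subgroups sitting between $PB_n'$ and $PB_n$, being conjugation-invariant in $B_n$ is the same as having a conjugation-invariant (hence submodule) image, and the passage back and forth is via $N\leftrightarrow\pi(N)$ with $N=\pi^{-1}(\pi(N))$.
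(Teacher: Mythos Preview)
Your proof is correct and follows essentially the same approach as the paper: both directions use the identity $\rho(\beta)\cdot\pi(x)=\pi(\beta x\beta^{-1})$, deduce the forward implication from normality, and for the converse pull back membership in $\pi(N)$ via $N=\pi^{-1}(\pi(N))$. Your version is slightly more explicit about the bookkeeping (the equality $N=\pi^{-1}(\pi(N))$ and independence of the lift), but the argument is the same.
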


\begin{proof}
    Suppose $N\leq B_n$.
    Let $\nu\in N$ and let $\gamma\in B_n$.
    Since $\pi$ and $\rho$ are surjections, $\pi(\nu)$ and $\rho(\gamma)$ are generic elements of $\pi(N)$ and $\sg{n}$, respectively.
    The definition of the action of $\sg{n}$ on $PB_n/PB_n'$ implies that $\rho(\gamma)\cdot \pi(\nu)=\pi(\gamma\nu\gamma^{-1})$.
    If $N\lhd B_n$, then for all such $\nu$ and $\gamma$, we have $\gamma\nu\gamma^{-1}\in N$, so that $\rho(\gamma)\cdot \pi(\nu)\in \pi(N)$.
    Conversely, if $\pi(N)$ is a submodule, then for all such $\nu$ and $\gamma$, we have $\rho(\gamma)\cdot \pi(\nu)\in \pi(N)$, so $\pi(\gamma\nu\gamma^{-1})\in \pi(N)$, so $\gamma\nu\gamma^{-1}\in \ker(\pi)=N.$
\end{proof}

The point of the Definition~\ref{de:Nij} is the following lemma.
This is also the meaning of the notation in that definition.
\begin{lemma}\label{le:tensorwithQ}
The quotient of any $N_i$ or $N_{ij}$ by $PB_n'$ tensors up to the indicated combination of Specht modules over $\Q$:
\[(N_i/PB_n')\otimes_\Z \Q\cong S^i_\Q\quad\text{and}\quad (N_{ij}/PB_n')\otimes_\Z \Q\cong S^i_\Q\oplus S^j_\Q.\]
Likewise, the quotient of $PB_n$ by any $N_i$ or $N_{ij}$ tensors up to the complementary combination of Specht modules.
This means that, if $(i,j,k)$ is any ordering of $\{0,1,2\}$, then 
\[(PB_n/N_i)\otimes_\Z \Q\cong S^j_\Q\oplus S^k_\Q\quad\text{and}\quad (PB_n/N_{ij})\otimes_\Z \Q\cong S^k_\Q.\]
\end{lemma}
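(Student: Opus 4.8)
The plan is to reduce everything to the behavior of the rationalized abelianization map $\pi \otimes_\Z \Q$. The key observation is that, since $\pi\colon PB_n \to M^2_\Z$ is the abelianization map and $PB_n' \le N$, the quotient $N/PB_n'$ is a subgroup (hence subgroup) of the abelian group $PB_n/PB_n' \cong M^2_\Z$, namely $\pi(N)$; and tensoring the short exact sequence $0 \to N/PB_n' \to PB_n/PB_n' \to PB_n/N \to 0$ with $\Q$ stays exact because $\Q$ is flat over $\Z$. So it suffices to identify $\pi(N_i)\otimes\Q$ and $\pi(N_{ij})\otimes\Q$ as $\Q\sg{n}$-submodules of $M^2_\Q = S^2_\Q \oplus S^1_\Q \oplus S^0_\Q$, and then the complementary statements about $PB_n/N_i$ and $PB_n/N_{ij}$ follow by taking the quotient $M^2_\Q / (\pi(N)\otimes\Q)$.

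First I would unwind Definition~\ref{de:Nij}: for $\{i,j,k\} = \{0,1,2\}$, we have $N_{ij} = \ker(\pi^k \circ \pi)$, so $\pi(N_{ij}) = \ker(\pi^k|_{M^2_\Z})$, and therefore $\pi(N_{ij})\otimes\Q = \ker(\pi^k|_{M^2_\Q})$. Since $\pi^k$ is the idempotent projection onto $S^k_\Q$ along $S^i_\Q \oplus S^j_\Q$ (the formulas for $\pi^0,\pi^1,\pi^2$ were verified in the previous section, and $\pi^0+\pi^1+\pi^2 = \mathrm{id}$ with pairwise products zero), its kernel over $\Q$ is exactly $S^i_\Q \oplus S^j_\Q$. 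This gives $(N_{ij}/PB_n')\otimes\Q \cong S^i_\Q \oplus S^j_\Q$. For $N_i = N_{ij}\cap N_{ik}$, I would note that $\pi(N_i) = \pi(N_{ij})\cap\pi(N_{ik})$ — here the nontrivial containment is $\pi(N_{ij}\cap N_{ik}) \supseteq \pi(N_{ij})\cap\pi(N_{ik})$, which holds because any element of $M^2_\Z$ in both $\pi(N_{ij})$ and $\pi(N_{ik})$ has a $\pi$-preimage, and adjusting that preimage by an element of $PB_n' = \ker\pi$ (which lies in every $N$ with $PB_n'\le N$) keeps it in both $N_{ij}$ and $N_{ik}$. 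Then, tensoring with $\Q$ and using that intersection of submodules commutes with the flat base change $\Q$, we get $\pi(N_i)\otimes\Q = (S^i_\Q\oplus S^j_\Q)\cap(S^i_\Q\oplus S^k_\Q) = S^i_\Q$, since the three Specht summands are distinct irreducibles.

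For the complementary statements: from $0 \to \pi(N)\otimes\Q \to M^2_\Q \to (PB_n/N)\otimes\Q \to 0$ we read off $(PB_n/N)\otimes\Q \cong M^2_\Q/(\pi(N)\otimes\Q)$. For $N = N_{ij}$ this is $M^2_\Q/(S^i_\Q\oplus S^j_\Q) \cong S^k_\Q$, and for $N = N_i$ it is $M^2_\Q/S^i_\Q \cong S^j_\Q\oplus S^k_\Q$, using the internal direct sum decomposition $M^2_\Q = S^2_\Q\oplus S^1_\Q\oplus S^0_\Q$.

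The only genuine subtlety — the "main obstacle," such as it is — is the commutation of intersections with the preimage under $\pi$, i.e.\ verifying $\pi(N_{ij}\cap N_{ik}) = \pi(N_{ij})\cap\pi(N_{ik})$ rather than merely a one-sided containment; this is where the hypothesis $PB_n' \le N$ (so that $\ker\pi$ lies inside each Specht subgroup) is essential, and I would spell out the two-line argument above. Everything else is bookkeeping with the explicit idempotents $\pi^0,\pi^1,\pi^2$ and flatness of $\Q$ over $\Z$.
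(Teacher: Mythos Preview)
Your proposal is correct and follows essentially the same approach as the paper: both arguments rest on identifying $N/PB_n'$ with a submodule of $M^2_\Z$ via $\pi$, using flatness of $\Q$ to tensor up short exact sequences, and reading off the answer from the idempotent decomposition $M^2_\Q = S^0_\Q\oplus S^1_\Q\oplus S^2_\Q$. The only cosmetic difference is that the paper computes the quotients $PB_n/N$ first (as images $\pi^k(M^2_\Z)$ or $(\pi^j\oplus\pi^k)(M^2_\Z)$) and then deduces $N/PB_n'$ from the tensored exact sequence, whereas you compute the kernels $N/PB_n'$ first and deduce the quotients; the content is the same.
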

The point of Theorem~\ref{th:classification} from the introduction is that the Specht subgroups are the maximal normal subgroups of $B_n$ that satisfy the conclusions of this lemma.
\begin{proof}[Proof of Lemma~\ref{le:tensorwithQ}]
    Fix $i,j,k$ an ordering of $\{0,1,2\}$.
    We know $\pi$ induces an isomorphism $PB_n/N_{ij}\cong M^2_\Z/\ker(\pi^k|_{M^2_\Z})$, 
    which is isomorphic to $\pi^k(M^2_\Z)$.
    
    \[(PB_n/N_{ij})\otimes \Q = \pi^k(M^2_\Z)\otimes \Q \cong \pi^k(M^2_\Q)\cong S^k_\Q.\]
    Since $\Q$ is flat, tensoring with $\Q$ preserves exact sequences, so 
    \[0\to \ker(\pi^k|_{M^2_\Z})\to M^2_\Z \to \pi^k(M^2_\Z)\to 0\] 
    becomes
    \[0\to \ker(\pi^k|_{M^2_\Z})\otimes \Q \to S^0_\Q\oplus S^1_\Q\oplus S^2_\Q\to S^k_\Q\to 0.\]
    But $\ker(\pi^k|_{M^2_\Z})\cong N_{ij}/PB_n'$, and we deduce that $(N_{ij}/PB_n')\otimes_\Z \Q$ is isomorphic to $S^i_\Q\oplus S^j_\Q$.

    Similarly, $\pi$ induces an isomorphism $PB_n/N_{i}\cong M^2_\Z/(\ker(\pi^j|_{M^2_\Z})\cap\ker(\pi^k|_{M^2_\Z}))$.
    This is isomorphic  to $M^2_\Z/\ker(\pi^j\oplus \pi^k)$, where $\pi^j\oplus \pi^k\co M^2_\Z\to M^j_\Q\oplus M^k_\Q$.
    So $PB_n/N_{i}$ is isomorphic to $\pi^j(M^2_\Z)\oplus \pi^k(M^2_\Z)$.
    By the same argument as above,
    \[(PB_n/N_i)\otimes \Q\cong \pi^j(M^2_\Q)\oplus \pi^k(M^2_\Q) \cong S^j_\Q\oplus S^k_\Q.\]
    Then by the same argument using flatness of $\Q$, we have that $(N_{i}/PB_n')\otimes \Q$ is $S^i_\Q$.
\end{proof}

\begin{lemma}\label{le:pairwiseincomm}
    In the list of $8$ subgroups $PB_n, N_0, N_1, N_2, N_{01}, N_{02}, N_{12}, PB_n'$, the subgroups are pairwise incommensurable; in other words, the intersection of any two distinct subgroups in the list has infinite index in at least one of the subgroups.
\end{lemma}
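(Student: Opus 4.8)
The plan is to transport the whole question through the abelianization $\pi\colon PB_n\to M^2_\Z$ and reduce it to an elementary observation about submodules of $M^2_\Q$. Each of the eight subgroups in the list satisfies $PB_n'\leq N\leq PB_n$: this is immediate for $PB_n$ and $PB_n'$, and for the six others it is built into Definition~\ref{de:Nij}, where $N_i$ and $N_{ij}$ are defined as preimages under $\pi$ of submodules of $M^2_\Z$. For any such $N$, let $\overline{N}=\pi(N)\otimes_\Z\Q$, viewed as a $\Q$-subspace of $M^2_\Z\otimes_\Z\Q=M^2_\Q$. Since $\pi$ identifies $PB_n/PB_n'$ with $M^2_\Z$ we have $\overline{N}\cong(N/PB_n')\otimes_\Z\Q$, and since $N$ is normal in $B_n$, $\pi(N)$ is a $\Z\sg{n}$-submodule of $M^2_\Z$ by Proposition~\ref{pr:normaliffsubmodule}, so $\overline{N}$ is a $\Q\sg{n}$-submodule of $M^2_\Q$.

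First I would record that commensurable subgroups in the list have equal rational image. Suppose $H$ and $K$ are two of the eight subgroups with $H\cap K$ of finite index in both. Because $PB_n'\leq H\cap K$, the subgroup $\pi(H\cap K)$ has finite index in both $\pi(H)$ and $\pi(K)$; but $\pi(H)$ and $\pi(K)$ are finitely generated free abelian groups (subgroups of $M^2_\Z\cong\Z^{\binom{n}{2}}$), and a finite-index subgroup of such a group has the same $\Q$-span. Hence $\overline{H}=\overline{H\cap K}=\overline{K}$. So it suffices to prove that the eight subspaces $\overline{PB_n},\overline{N_0},\overline{N_1},\overline{N_2},\overline{N_{01}},\overline{N_{02}},\overline{N_{12}},\overline{PB_n'}$ of $M^2_\Q$ are pairwise distinct.

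Here I would invoke Lemma~\ref{le:tensorwithQ}: it yields $\overline{N_i}\cong S^i_\Q$ and $\overline{N_{ij}}\cong S^i_\Q\oplus S^j_\Q$ as $\Q\sg{n}$-modules, while $\overline{PB_n}=M^2_\Q\cong S^0_\Q\oplus S^1_\Q\oplus S^2_\Q$ and $\overline{PB_n'}=0$. Thus, up to $\Q\sg{n}$-module isomorphism, the eight subspaces are exactly the eight direct sums $\bigoplus_{i\in T}S^i_\Q$ over subsets $T\subseteq\{0,1,2\}$. Since $n\geq 4$, the modules $S^0_\Q,S^1_\Q,S^2_\Q$ are Specht modules for the three distinct partitions $(n),(n-1,1),(n-2,2)$, hence are nonzero irreducible $\Q\sg{n}$-modules (note $\dim S^2_\Q=\binom{n}{2}-n>0$) that are pairwise non-isomorphic. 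By the Krull--Schmidt theorem, the assignment $T\mapsto\bigoplus_{i\in T}S^i_\Q$ is injective on isomorphism classes, so the eight subspaces are even pairwise non-isomorphic, in particular pairwise distinct. With the reduction of the previous paragraph, this proves that any two of the eight subgroups are incommensurable.

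The argument is short once Lemma~\ref{le:tensorwithQ} is available, and I do not foresee a real obstacle; the only place needing care is the reduction in the second paragraph, resting on the routine fact that a finite-index subgroup of a finitely generated free abelian group has full $\Q$-span, together with the remark that among subgroups lying between $PB_n'$ and $PB_n$, commensurability is detected by equality of rational images.
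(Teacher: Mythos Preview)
Your proof is correct and follows essentially the same approach as the paper's: both reduce to Lemma~\ref{le:tensorwithQ} and the observation that the eight rationalizations $(N/PB_n')\otimes_\Z\Q$ are pairwise non-isomorphic as $\Q\sg{n}$-modules, being the eight distinct subsums of $S^0_\Q\oplus S^1_\Q\oplus S^2_\Q$. You spell out a bit more explicitly the reduction step (that commensurability forces equal $\Q$-spans in $M^2_\Q$) and the Krull--Schmidt reasoning, which the paper leaves implicit.
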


\begin{proof}
    Suppose subgroups $N$ and $M$ from the list are commensurable.
    Then $(N/PB_n')\otimes \Q$ and $(M/PB_n')\otimes \Q$ would be isomorphic.
    Lemma~\ref{le:tensorwithQ} shows that this is not the case (together with the observations that $(PB_n/PB_n')\otimes \Q\cong S^0_\Q\oplus S^1_\Q \oplus S^2_\Q$ and $(PB_n'/PB_n')\otimes \Q=0$).
\end{proof}

\subsection{Proof of the classification}
The following is Theorem~\ref{th:classification} from the introduction.
We give a more precise statement here.
\begin{theorem}\label{th:classificationprecise}
    Suppose $N$ is a normal subgroup of $B_n$ with $PB_n'\leq N\leq PB_n$.
    Then $N$ is a finite-index subgroup of exactly one  of:
    \[PB_n, N_0, N_1, N_2, N_{01}, N_{02}, N_{12},\text{ or } PB_n'.\]
\end{theorem}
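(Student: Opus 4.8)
The plan is to move the entire question into the representation theory of $M^2_\Z$ and then exploit the rigidity of submodules of the semisimple rational module $M^2_\Q$.

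\emph{Step 1: a dictionary.} Let $\pi\co PB_n\to M^2_\Z$ be the abelianization map, with $\ker\pi=PB_n'$. By Proposition~\ref{pr:normaliffsubmodule}, the normal subgroups $N\lhd B_n$ with $PB_n'\leq N\leq PB_n$ are exactly the preimages $\pi^{-1}(L)$ of $\Z\sg{n}$-submodules $L\leq M^2_\Z$. This correspondence respects inclusions in both directions: if $\pi(x)\in L\subseteq L'$, choose $y\in\pi^{-1}(L')$ with $\pi(y)=\pi(x)$; then $xy^{-1}\in PB_n'\subseteq\pi^{-1}(L')$, so $x\in\pi^{-1}(L')$. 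Moreover $[\pi^{-1}(L'):\pi^{-1}(L)]=[L':L]$, since $\pi$ identifies $\pi^{-1}(L)/PB_n'$ with $L$. So I am reduced to showing that every submodule $L\leq M^2_\Z$ is a finite-index submodule of exactly one of $M^2_\Z$, $\pi(N_0)$, $\pi(N_1)$, $\pi(N_2)$, $\pi(N_{01})$, $\pi(N_{02})$, $\pi(N_{12})$, and $0$. I will also use that $\Z\sg{n}$ is Noetherian (being module-finite over $\Z$), so every such $L$ is finitely generated.

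\emph{Step 2: rational submodules.} Since $\Q$ is flat over $\Z$, the inclusion $L\hookrightarrow M^2_\Z$ identifies $L\otimes_\Z\Q$ with a $\Q\sg{n}$-submodule of $M^2_\Q\cong S^0_\Q\oplus S^1_\Q\oplus S^2_\Q$. As the three summands are irreducible and pairwise non-isomorphic, Schur's lemma together with uniqueness of the isotypic decomposition shows that $M^2_\Q$ has precisely $8$ submodules, namely the sub-sums indexed by subsets of $\{0,1,2\}$; concretely, a submodule $W$ equals $\bigoplus_{i\in T}S^i_\Q$ where $T=\{\,i:\pi^i(W)\neq 0\,\}$. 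Hence $L\otimes\Q$ is one of these $8$ subspaces.

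\emph{Step 3: matching $L$ with the list, and existence/uniqueness.} The key point is that each ``standard'' submodule is the \emph{saturation} of itself, that is, the intersection of $M^2_\Z$ with its rational span. Unwinding Definition~\ref{de:Nij}, $\pi(N_{ij})=M^2_\Z\cap\ker(\pi^k|_{M^2_\Q})=M^2_\Z\cap(S^i_\Q\oplus S^j_\Q)$; and $\pi(N_i)=\pi(N_{ij})\cap\pi(N_{ik})=M^2_\Z\cap S^i_\Q$, using that $\pi(A\cap B)=\pi(A)\cap\pi(B)$ when $PB_n'\subseteq A,B$ together with $(S^i_\Q\oplus S^j_\Q)\cap(S^i_\Q\oplus S^k_\Q)=S^i_\Q$; while $\pi(PB_n)=M^2_\Z$ and $\pi(PB_n')=0$. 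Lemma~\ref{le:tensorwithQ} confirms that the rational span of the $K$-th standard submodule is exactly the corresponding subspace from Step~2. Now, given $L$, let $K$ be the standard submodule with $K\otimes\Q=L\otimes\Q$. Then $L\subseteq(L\otimes\Q)\cap M^2_\Z=K$, and $K/L$ is a finitely generated torsion abelian group, hence finite, so $L$ has finite index in $K$; pulling back along $\pi$ gives existence. For uniqueness, if $N$ had finite index in two distinct subgroups $K_1,K_2$ of the list, then $N\subseteq K_1\cap K_2$ would have finite index in each $K_i$, making $K_1$ and $K_2$ commensurable, contradicting Lemma~\ref{le:pairwiseincomm}.

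\emph{Main obstacle.} I expect the only delicate part to be Step~3: one must verify that the Specht subgroups, although defined as kernels of composites involving the only-rationally-defined idempotents $\pi^i$, nevertheless have $\pi$-images equal to the saturated lattices $(\text{subspace})\cap M^2_\Z$, so that \emph{any} $L$ with the correct rational span automatically lies inside the appropriate standard submodule. Once this is in hand, the finite-index assertions reduce to the elementary fact that a finitely generated torsion abelian group is finite, and the theorem follows.
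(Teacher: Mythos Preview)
Your proof is correct and follows the same essential strategy as the paper: pass to $\Q$, use that $M^2_\Q$ has exactly eight submodules, and match $N$ to the one with the correct rational span. The execution differs in a dual way. The paper works with the \emph{quotient} $Q=M^2_\Z/\pi(N)$, defines $K$ as the kernel of $PB_n\to Q\otimes\Q$, and then uses Schur's lemma (applied to the map $M^2_\Q\to Q\otimes\Q$) together with an auxiliary automorphism $\psi$ to identify $K$ with one of the eight listed subgroups. You instead work directly with the \emph{submodule} $L=\pi(N)$ and observe that each $\pi(N_i)$ and $\pi(N_{ij})$ is already the saturation $M^2_\Z\cap(\text{rational subspace})$; this lets you conclude $L\subseteq K$ immediately, bypassing the Schur-lemma identification step. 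Your route is slightly more direct, while the paper's quotient framing makes the finite-index statement (that $\ker(Q\to Q\otimes\Q)$ is finite) perhaps more visibly automatic. Both rely on Lemma~\ref{le:pairwiseincomm} for uniqueness in the same way.
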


\begin{proof}
    Proposition~\ref{pr:normaliffsubmodule} implies that $\pi(N)$ is a submodule of $M^2_\Z$.
    Then $\pi$ induces an isomorphism between  $PB_n/N$ and $M^2_\Z/\pi(N)$ that allows us to recognize $PB_n/N$ as a $\Z \sg{n}$-module.
    Let $Q$ denote $M^2_\Z/\pi(N)$, and let $Q_\Q$ denote $(M^2_\Z/\pi(N))\otimes \Q$.
    We have the following composition of maps:
    \[PB_n\stackrel{\pi}{\longrightarrow} M^2_\Z \to Q\to Q_\Q.\]
    Define $K\leq PB_n$ to be the kernel of this composition.
    Since $Q$ is a finitely generated abelian group, the kernel of $Q\to Q_\Q$ has finite index in $Q$, and therefore $N$ has finite index in $K$.
    We claim that $K$ is equal to one of the eight subgroups in the list.

    Since tensoring with $\Q$ is a right-exact functor, we have that $Q_\Q$ is a quotient of $M^2_\Q$.
    Since $M^2_\Q\cong S^0_\Q\oplus S^1_\Q \oplus S^2_\Q$, there are eight possibilities for $Q_\Q$, depending on which of the three simple modules survive to the quotient.
    First we point out that if $Q_\Q=0$, then $K=PB_n$, and if $Q_\Q=M^2_\Q$, then $K=PB_n'$.

    Suppose we are in one of the three cases where $Q_\Q\cong S^i_\Q$ for $i\in \{0,1,2\}$.
    So $K$ is the kernel of a map $PB_n\to M^2_\Z\to S^i_\Q$.
    The map $M^2_\Z\to S^i_\Q$ factors through the inclusion $M^2_\Z\to M^2_\Q$.
    The other factor is a $\Q\sg{n}$-module homomorphism $M^2_\Q\to S^i_\Q$.
    Since $M^2_\Q$ is a direct sum of three simple modules, Schur's lemma tells us that this map restricts to $0$ on two of the factors, and restricts to an isomorphism on $S^i_\Q$.
    Let $\psi$ denote this isomorphism.
    Then we have the following commutative diagram:
    \[
    \xymatrix{
    M^2_\Z \ar@{^{(}->}[d] \ar[r] & Q_\Q \ar@{^{(}->}[r] & M^i_\Q \\
    M^2_\Q \ar[r]^{\pi^i} & S^i_\Q \ar[r]^\psi & S^i_\Q. \ar@{^{(}->}[u]
    }
    \]
    So $K$ is the kernel of $\psi\circ \pi^i\circ \pi$.
    But since $\psi$ is an automorphism, it doesn't change the kernel.
    Letting $j$ and $k$ denote the complementary indices as usual, $N_{jk}$ is the kernel of $\pi^i|_{M^2_\Z}\circ \pi$ and this means that $K=N_{jk}$ in this case.

    In the remaining cases, $Q_\Q\cong S^i_\Q\oplus S^j_\Q$ for some distinct $i,j\in \{0,1,2\}$.
    The same argument works, replacing $\pi^i\colon M^2_\Q\to S^i_\Q$ with $\pi^i\oplus \pi^j\colon M^2_\Q\to S^i_\Q\oplus S^j_\Q$.
    The argument then tells us that $K=N_{jk}\cap N_{ik}$, which is $N_{k}$ (where $k\in\{0,1,2\}$ is the complementary element).
\end{proof}

\subsection{Descriptions of Specht subgroups}
\label{ss:SpechtDescript}
We defined the groups $N_i$ and $N_{ij}$ as the kernels of certain maps from $PB_n$ to free abelian groups.
We start this subsection by giving more concrete descriptions of these subgroups in terms of winding of pure braids.
\begin{definition}
    For an unordered pair $i,j$ from $\{1,2,\dotsc,n\}$, let $\omega_{ij}\colon PB_n\to \Z$ be the winding number of the $i$th strand around the $j$th strand in the positive direction.
\end{definition}
Under our conventions, for $\gamma\in PB_n$, the integer $\omega_{ij}(\gamma)$ is the coefficient on $v_{ij}$ of $\pi(\gamma)\in M^2_\Z$:
\[\pi(\gamma)=\sum_{i,j}\omega_{ij}(\gamma)v_{ij}.\]
\begin{proposition}\label{pr:windeqns}
    Each of the six Specht subgroups can be characterized by systems of equations in the $\omega_{ij}$ maps, as follows:
    \begin{itemize}
        \item $N_0< PB_n$ is equal to the set of $\gamma\in PB_n$ such that, for all unordered pairs $\{i,j\}$ and $\{k,l\}$ from $\{1,2,\dotsc,n\}$ (possibly with $\{i,j\}\cap\{k,l\}\neq \varnothing$), we have
        \[\omega_{ij}(\gamma)=\omega_{kl}(\gamma).\]
        \item $N_1<PB_n$ is equal to the set of $\gamma\in PB_n$ such that, for all unordered pairs $\{i,j\}$ from $\{1,2,\dotsc,n\}$, we have
        \[(n-4)\omega_{ij}(\gamma)=\sum_{k\notin\{i,j\}}(\omega_{ik}(\gamma)+\omega_{jk}(\gamma)).\]
        \item $N_2<PB_n$ is equal to the set of $\gamma\in PB_n$ such that, for all $i$ in $\{1,2,\dotsc, n\}$, 
        \[\sum_{j\neq i}\omega_{ij}(\gamma)=0.\]
        \item $N_{01}< PB_n$ is equal to the set of $\gamma\in PB_n$ such that for all unordered pairs $\{i,j\}$ from $\{1,2,\dotsc,n\}$, we have
        \[(n-2)(n-3)\omega_{ij}(\gamma)+\sum_{\{k,l\}\cap\{i,j\}=\varnothing}2\omega_{kl}(\gamma)=\sum_{k\notin\{i,j\}}(n-3)(\omega_{ik}(\gamma)+\omega_{jk}(\gamma)).\]
        \item $N_{02}<PB_n$ is equal to the set of $\gamma\in PB_n$ such that, for each $i$ from $\{1,2,\dotsc,n\}$, we have
        \[\sum_{j\neq i}(n-2)\omega_{ij}(\gamma)=\sum_{i\notin\{j,k\}}2\omega_{jk}(\gamma).\]
        \item $N_{12}<PB_n$ is equal to the set of $\gamma\in PB_n$ such that 
        \[\sum_{i,j}\omega_{ij}(\gamma)=0.\]
    \end{itemize}
\end{proposition}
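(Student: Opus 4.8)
The plan is to derive each of the six systems of equations directly from the definitions of $N_i$ and $N_{ij}$ (Definition~\ref{de:Nij}) together with the explicit formulas for $\pi^0,\pi^1,\pi^2$ given at the end of Section~\ref{ss:structure}. The starting point is that, by definition, $N_{ij}=\ker(\pi^k|_{M^2_\Z}\circ\pi)$ and $N_i=\ker\bigl((\pi^j\oplus\pi^k)|_{M^2_\Z}\circ\pi\bigr)$, so a pure braid $\gamma$ lies in one of these subgroups exactly when $\pi(\gamma)=\sum_{a<b}\omega_{ab}(\gamma)v_{ab}$ is killed by the relevant projection. Thus the whole problem reduces to: for each projection $p\in\{\pi^0,\pi^1,\pi^2,\pi^0\oplus\pi^1,\pi^0\oplus\pi^2,\pi^1\oplus\pi^2\}$, write out $p(\sum_{a<b}c_{ab}v_{ab})=0$ as a linear system in the coefficients $c_{ab}$, and then clear denominators to get an equivalent system over $\Z$ (legitimate because $\omega_{ab}(\gamma)\in\Z$ and the projections have rational coefficients with a common denominator that is a unit in $\Q$, so $p(x)=0$ over $\Z$ iff the scaled integral version vanishes).

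The concrete computations go as follows. For $N_0=N_{12}=\ker(\pi^0\circ\pi)$: since $\pi^0(v_{ab})=\tfrac{2}{n(n-1)}u$ for all $a<b$, we get $\pi^0(\pi(\gamma))=\tfrac{2}{n(n-1)}\bigl(\sum_{a<b}\omega_{ab}(\gamma)\bigr)u$, which vanishes iff $\sum_{i,j}\omega_{ij}(\gamma)=0$; this is the $N_{12}$ equation. For $N_2=N_{01}=\ker(\pi^1\circ\pi)$: using $\pi^1(v_{ab})=\tfrac{1}{n-2}(w_a+w_b)-\tfrac{4}{n(n-2)}u$, collect the coefficient of each $v_{ij}$ in $\pi^1(\pi(\gamma))$. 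The coefficient of $v_{ij}$ in $w_a+w_b$ is the number of times $\{i,j\}$ meets $\{a,b\}$, so after summing against the $\omega_{ab}(\gamma)$ and setting each coefficient to zero one obtains, after simplification, $\sum_{j\neq i}\omega_{ij}(\gamma)=0$ for all $i$; this is the $N_2$ equation. For $N_1=N_{02}=\ker(\pi^2\circ\pi)$: using the formula for $\pi^2$, the coefficient of $v_{ij}$ is $c_{ij}$ minus $\tfrac{1}{n-2}$ times a weighted sum of the $c_{ik}+c_{jk}$ plus a multiple of $\sum c_{ab}$; setting it to zero and multiplying through by $(n-2)$ yields the single-pair identity $(n-4)\omega_{ij}(\gamma)=\sum_{k\notin\{i,j\}}(\omega_{ik}(\gamma)+\omega_{jk}(\gamma))$ stated for $N_1$ (here one must track the $u$-term carefully; it contributes a term proportional to $\sum_{a<b}c_{ab}$, and combined with the $w$-terms it produces exactly the stated coefficients). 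Finally, the three "double" subgroups use the identity $\pi^a\oplus\pi^b$ has the same kernel as $\pi^c$ where $c$ is the complementary index, because $\pi^c=\mathrm{id}-\pi^a-\pi^b$ and the $\pi$'s are idempotents summing to the identity; hence $N_{12}$, $N_{02}$, $N_{01}$ reuse the systems for $\pi^0$, $\pi^1$, $\pi^2$ respectively — but wait, the paper states a more elaborate system for $N_{01}$ and $N_{02}$ than for $N_2$ and $N_1$, so instead I should compute $\ker((\pi^0\oplus\pi^1)\circ\pi)$ directly as the simultaneous vanishing of $\pi^0(\pi(\gamma))$ and $\pi^1(\pi(\gamma))$, i.e. the conjunction of the $N_{12}$ and $N_2$ systems, and then verify that this conjunction is equivalent to the single stated system for $N_{01}$ by an elementary linear-algebra manipulation (one direction is immediate; the converse requires showing the stated equation plus its averaged consequences recover both $\sum c_{ab}=0$ and $\sum_{j\neq i}c_{ij}=0$). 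Similarly $N_{02}$'s stated equation should be checked to be equivalent to the conjunction of the $N_{12}$ and $N_1$ systems.

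I would organize the proof as six short paragraphs, one per subgroup, each consisting of: (i) identify which projection's kernel defines it; (ii) expand $p(\pi(\gamma))=0$ coefficient-by-coefficient using the stated formulas for $\pi^0,\pi^1,\pi^2$; (iii) clear denominators and simplify to the stated system; and for the three double cases (iv) the extra step of checking equivalence of the naive conjunction with the stated compact system. The main obstacle will be the bookkeeping in step (ii) for $\pi^2$ and for the double projections: one has to count, for a fixed pair $\{i,j\}$, how many ordered summands $v_{ab}$ (weighted by $\omega_{ab}(\gamma)$) contribute to the $v_{ij}$-coefficient of $w_a+w_b$ — the answer depends on whether $\{a,b\}$ shares $0$, $1$, or $2$ elements with $\{i,j\}$ — and then to correctly fold in the global $u$-term so that the rational coefficients collapse to the clean integer coefficients $n-4$, $(n-2)(n-3)$, $2$, $(n-3)$, $n-2$ appearing in the statement. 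None of this is deep, but it is exactly the kind of index-chasing where a sign or an off-by-one in the counts ($n$ versus $n-1$ versus $n-2$ terms) propagates, so I would do one representative coefficient computation in full detail and indicate that the rest are analogous.
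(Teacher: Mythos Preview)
Your proposal has the definitions inverted. By Definition~\ref{de:Nij}, $N_{ij}$ is the kernel of a \emph{single} projection $\pi^k\circ\pi$ (where $k$ is the complementary index), while $N_i$ is the \emph{intersection} of two such kernels. You have this backwards throughout: you write ``$N_0=N_{12}=\ker(\pi^0\circ\pi)$'' and ``$N_2=N_{01}=\ker(\pi^1\circ\pi)$'', but in fact $\ker(\pi^0\circ\pi)=N_{12}$ only, $\ker(\pi^1\circ\pi)=N_{02}$ (not $N_{01}$ or $N_2$), and $\ker(\pi^2\circ\pi)=N_{01}$. Your claim that $\ker(\pi^a\oplus\pi^b)=\ker(\pi^c)$ is also false: these are complementary summands of $M^2_\Q$, not equal ones.

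This inversion propagates into computational errors. You assert that setting $\pi^1(\pi(\gamma))=0$ yields $\sum_{j\ne i}\omega_{ij}(\gamma)=0$, but that is the $N_2$ system, not the $N_{02}$ system. If you actually gather the $t_i$-coefficient of $\pi^1\bigl(\sum c_{ab}v_{ab}\bigr)$ and clear denominators you get $(n-2)\sum_{j\ne i}c_{ij}-2\sum_{i\notin\{j,k\}}c_{jk}$, which is the stated $N_{02}$ equation --- strictly weaker than yours. (Sanity check: if all row sums $\sum_{j\ne i}c_{ij}$ equal a common nonzero constant $s$, both sides of the $N_{02}$ equation equal $(n-2)s$, so such vectors lie in $\ker\pi^1$ but not in your locus.) The same problem affects your $\pi^2$ computation, which should produce the $N_{01}$ equation, not the $N_1$ equation.

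Once the labeling is corrected, your plan for $N_{12},N_{02},N_{01}$ matches the paper's. For the single-index subgroups $N_0,N_1,N_2$, your fallback plan (take the conjunction of two systems and then verify equivalence with the stated one) would work but is clumsier than what the paper does. Since $M^2_\Q$ is a direct sum of three simples, for any nonzero $p,q\in\Q$ one has $\ker(p\pi^i+q\pi^j)=\ker(\pi^i)\cap\ker(\pi^j)$, so one may replace the pair $(\pi^i,\pi^j)$ by a single linear combination chosen to have clean integral coefficients. Concretely, for $N_2$ the paper uses $v_{ij}\mapsto t_i+t_j$ (a combination of $\pi^0$ and $\pi^1$), which gives $\sum_{j\ne i}\omega_{ij}=0$ in one step; for $N_1$ it uses $v_{ij}\mapsto(n-2)v_{ij}-(w_i+w_j)$; for $N_0$ it uses $v_{ij}\mapsto\binom{n}{2}v_{ij}-u$. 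This bypasses the equivalence-checking you flagged as the main obstacle.
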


\begin{proof}
    First we handle $N_{12}=\ker(\pi^0\circ \pi).$  Let $v=\sum_{ij} c_{ij}v_{ij}\in M^2_\Z$ for some coefficients $c_{ij}\in \Z$.
    Then $\pi^0(v)$ is $\sum_{ij} c_{ij}$, up to scaling, so $v\in \ker(\pi^0)$ if and only if $\sum_{ij}c_{ij}=0$.
    Set $v=\pi(\gamma)$ for $\gamma\in PB_n$, and we recover the statement.

    Next consider $N_{02}=\ker(\pi^1\circ \pi)$.
    With $v$ as above, we compute that $\pi^1(v)$ is a scalar multiple of
    \[\sum_{ij}c_{ij}\big(n(t_i+t_j)-2\sum_kt_k\big).\]
    We gather terms together to get
    \[\sum_{i=1}^n\big(\sum_{j\neq i}(n-2)c_{ij}-2\sum_{i\notin\{j,k\}}c_{jk}\big)t_i.\]
    Setting $v=\pi(\gamma)$ and setting $\pi^1(v)=0$ gives us $n$ equations, one for each $t_i$ coefficient, and these are the $n$ equations in the statement.

    We apply a similar argument for $N_{01}=\ker(\pi^2\circ \pi)$.
    This is a more substantial exercise, but with $v$ as above, gathering terms for $\pi^2(v)$, and clearing denominators, we get
    \[\sum_{ij}\Big((n-2)(n-3)c_{ij}-\sum_{k\notin\{i,j\}}(n-3)(c_{ik}+c_{jk})+\sum_{\{k,l\}\cap\{i,j\}=\varnothing}2c_{kl}\Big)v_{ij}.\]
    Again setting $v=\pi(\gamma)$ and $\pi^2(v)=0$, we get the $\binom{n}{2}$ equations in the statement.

    The remaining three cases are simpler because we use a trick.
    For a moment, we work over $\Q$.
   Let $i,j$ in $\{0,1,2\}$ be distinct and let $p$ and $q$ be nonzero rational numbers.  
   Then $\ker(p\pi^i+q\pi^j)$ is a proper submodule of $\ker(\pi^i)$, and contains $\ker(\pi^i)\cap\ker(\pi^j)$.
   Since $M^2_\Q$ is a direct sum of three simple modules, there are no submodules properly between $\ker(\pi^i)$ and $\ker(\pi^i)\cap\ker(\pi^j)$ and therefore
   \[\ker(p\pi^i+q\pi^j)=\ker(\pi^i)\cap\ker(\pi^j).\]
   This remains true if we scale these maps to clear denominators and intersect with $M^2_\Z$ to work over $\Z$.

    By the remarks in the previous paragraph, $N_0$ is the kernel of the composition of $\pi$ with the map $M^2_\Z\to M^2_\Z$ given by
    \[v_{ij}\mapsto \binom{n}{2}v_{ij}+u,\]
    since this map is a linear combination of $\pi^1$ and $\pi^2$ with nonzero coefficients.
    As in the preceding cases, we evaluate this map on $v$, gather terms, set $v=\pi(\gamma)$ and assume that $v$ is in the kernel.
    We get the following system of equations: for each pair $\{i,j\}$, we have
    \[(\binom{n}{2}-1)\omega_{ij}(\gamma)=\sum_{\{k,l\}\neq\{i,j\}}\omega_{kl}(\gamma).\]
    This system of equations is equivalent to the one in the statement, which is that for all pairs $\{i,j\}$ and $\{k,l\}$, we have $\omega_{ij}(\gamma)=\omega_{kl}(\gamma)$.

    For $N_1$, we recognize $N_1$ as the kernel of the composition of $\pi$ with the map $M^2_\Z\to M^2_\Z$ given by
    \[v_{ij}\mapsto (n-2)v_{ij}-(w_i+w_j),\]
    since this map is a linear combination of $\pi^0$ and $\pi^2$ with nonzero coefficients.
    We set $v=\pi(\gamma)$ in the kernel, gather terms, and harvest equations from the coefficients.
    We get the system in the statement.

    Finally, for $N_2$, we recognize $N_2$ as the kernel of the composition of $\pi$ with the map $M^2_\Z\to M^1_\Z$ given by
    \[v_{ij}\mapsto t_i+t_j.\]
    This is a linear combination of $\pi^1$ and $\pi^0$ with nonzero coefficients, with the codomain altered using the embedding of $M^1_\Z\to M^2_\Z$ by $t_i\mapsto w_i$.
    Again, we set $v=\pi(\gamma)$ in the kernel, gather terms, harvest equations, and get the system in the statement.
\end{proof}

Next we describe infinite generating sets for each Specht subgroup.
We name some elements that we will reuse.
Let $z\in PB_n$ be the generator for the center of $B_n$ given in Proposition~\ref{pr:PBncenter}.
For each $i$ from $1$ to $n$, the element $y_i$ is the element given by dragging the $i$th strand around the other strands positively:
\[y_i=a_{1i}a_{2i}\dotsm a_{i-1,i}a_{i,i+1}\dotsm a_{i,n}.\]
The \emph{lifts of the standard polytabloids} are the following $\binom{n}{2}-n$ elements of $PB_n$:
\[\{a_{13}a_{2i}a_{1i}^{-1}a_{23}^{-1}\,|\,4\leq i\leq n\}
\cup \{a_{12}a_{ij}a_{1j}^{-1}a_{2i}^{-1}\,|\,3\leq i<j\leq n\}.\]

\begin{remark}
    We don't yet have a good understanding of which of the Specht subgroups are finitely generated.
    Proposition~\ref{pr:biggensets} only  proves that, if $N$ is a Specht subgroup, then $N/PB_n'$ is finitely generated.  
    By Proposition~\ref{pr:H1commutator}, $PB_n'$ is not finitely generated, so finite generation of these subgroups does not follow easily.
    Two of the subgroups have easy answers, and we explain these in Propositions~\ref{pr:N12fg} and~\ref{pr:N0nfg} below.
\end{remark}

\begin{proposition}\label{pr:biggensets}
    Each of the six Specht subgroup is generated by the union of $PB'_n$ with a finite set of pure braids, as follows:
    \begin{itemize}
        \item $N_0$ is generated by the union of $PB'_n$ with the single element $z$.
        \item If $n$ is odd, then $N_1$ is generated by the union of $PB'_n$ with the $n-1$ elements
        \[\{y_iy_1^{-1}\,|\,2\leq i\leq n\};\]
        If $n$ is even, then $N_1$ is generated by the union of the above generators with the element $zy_1^{-n/2}$.
        \item $N_2$ is generated by the union of $PB'_n$ with the $\binom{n}{2}-n$ lifts of standard polytabloids.
        \item $N_{01}$ is generated by the union of $PB'_n$ with the $n$ elements
        \[\{z,y_1,y_2,\dotsc,y_{n-1}\}\]
        \item $N_{02}$ is generated by the union of $PB'_n$ with the $\binom{n}{2}-n+1$ elements given by the lifts of standard polytabloids, and $z$.
        \item $N_{12}$ is generated by the union of $PB'_n$ with the $\binom{n}{2}-1$ elements
        \[\{a_{ij}a_{12}^{-1}\,|\,1\leq i<j\leq n, \{i,j\}\neq \{1,2\}\}\]
    \end{itemize}
\end{proposition}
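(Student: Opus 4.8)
The plan is to transport each of the six statements into a statement about submodules of $M^2_\Z$ by means of the abelianization $\pi\colon PB_n\to M^2_\Z$, and then to verify those statements by direct computation with $M^2_\Z$ and its distinguished elements $u$, $w_i$, and $e_{ij}$. Since $\pi$ induces an isomorphism $PB_n/PB_n'\cong M^2_\Z$ carrying $N/PB_n'$ onto the submodule $\pi(N)$ for every $N$ with $PB_n'\le N\le PB_n$, a Specht subgroup $N$ equals $\langle PB_n',G\rangle$ for a finite set $G\subseteq PB_n$ exactly when $\pi(G)\subseteq\pi(N)$ and $\pi(G)$ generates $\pi(N)$ as an abelian group. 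The images needed are $\pi(z)=u$, $\pi(y_i)=w_i$, $\pi(a_{ij}a_{12}^{-1})=v_{ij}-v_{12}$, the lifts of the standard polytabloids mapping to the $e_{ij}$ themselves, and the compound elements $y_iy_1^{-1}$ and $zy_1^{-n/2}$ mapping to $w_i-w_1$ and $u-\tfrac n2 w_1$; that each such image lies in the appropriate $\pi(N)$ follows from the recorded values of $\pi^0,\pi^1,\pi^2$ on $u$, on the $w_i-w_j$, and on the $e_{ij}$, together with linear independence of $w_1,\dots,w_n$ (which yields $\pi^2(w_i)=0$ from $\sum_i w_i=2u$, hence also $\pi^0(w_i)=\tfrac2n u$ and $\pi^1(w_i)=w_i-\tfrac2n u$).

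\emph{Identifying $\pi(N)$, and the easy cases.} By Definition~\ref{de:Nij} we have $\pi(N_{12})=\ker(\pi^0|_{M^2_\Z})$, $\pi(N_{02})=\ker(\pi^1|_{M^2_\Z})$, $\pi(N_{01})=\ker(\pi^2|_{M^2_\Z})$, and each $\pi(N_i)$ is the intersection of the two relevant kernels; membership in each is governed by the coefficient conditions already extracted in the proof of Proposition~\ref{pr:windeqns}. Three cases follow at once. For $N_{12}$, $\pi(N_{12})$ is the kernel of the coefficient-sum functional on $M^2_\Z$, which is visibly generated by $\{v_{ij}-v_{12}\}$. For $N_0$, $\pi(N_0)=\ker\pi^1\cap\ker\pi^2\cap M^2_\Z=\Q u\cap M^2_\Z$, which is $\Z u$ because $u$ is a sum of distinct basis vectors. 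For $N_2$, $\pi(N_2)=\ker\pi^0\cap\ker\pi^1\cap M^2_\Z=S^2_\Q\cap M^2_\Z$, which equals $S^2_\Z$ because Lemma~\ref{le:M2basis} exhibits $S^2_\Z$ as a direct summand of $M^2_\Z$ (so the quotient is torsion-free), and $S^2_\Z$ is freely generated by the standard polytabloids by Corollary~\ref{co:S2gens}.

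\emph{The saturation cases $N_{01}$, $N_1$, $N_{02}$.} Here $\pi(N)$ is the saturation in $M^2_\Z$ of a lattice spanned over $\Q$ by $u$ and some of the $w_i$, and the substance of the proof is to check that the listed elements already generate it integrally. The method is uniform: write a general element $v=\sum_{k<l}c_{kl}v_{kl}$ of the ambient $\Q$-subspace as $\alpha u+\sum_i\beta_i w_i$; since $w_1,\dots,w_n$ are linearly independent the coefficients are uniquely determined, and equating the coefficient of each $v_{kl}$ produces a small linear system whose solution one reads off to decide when $\alpha$ and the $\beta_i$ may be chosen in $\Z$. For $N_{01}$ the solution is automatically integral---$\alpha$ and each $\beta_i$ is an explicit integer combination of the $c_{kl}$---giving $\pi(N_{01})=\Z u+\sum_{i=1}^{n-1}\Z w_i$. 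For $N_1$ one works in the span of the $w_i-w_j$, so $\sum_i\beta_i=0$; then the $\beta_i$ lie in $\tfrac12\Z$ with all $2\beta_i$ of a single common parity, the all-integer case recovering $\langle w_i-w_j\rangle$ and the all-half-integer case being compatible with $\sum_i\beta_i=0$ precisely when $n$ is even, in which case $u-\tfrac n2 w_1$ supplies the missing index-two generator. The case $N_{02}$ goes the same way, the delicate point being to pin down the position of $u$ relative to the standard-polytabloid lattice $S^2_\Z$ inside $\ker(\pi^1|_{M^2_\Z})$.

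\emph{Main obstacle.} Everything is formal except the lattice bookkeeping in the last step: solving the coefficient systems and, above all, showing that the listed elements span $\pi(N)$ rather than a proper finite-index sublattice (index $1$, or $2$ in the even case of $N_1$). I expect $N_{02}$ to be the most demanding, since there $\ker(\pi^1|_{M^2_\Z})/S^2_\Z$ is infinite cyclic and one must control the class of $u$ inside it.
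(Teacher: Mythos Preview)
Your overall strategy---reduce each claim to a statement about the sublattice $\pi(N)\subseteq M^2_\Z$ and then do lattice bookkeeping---is exactly the paper's strategy, and for $N_0$, $N_2$, $N_{12}$ your arguments are essentially identical to the paper's. For $N_{01}$ and $N_1$ there is a stylistic difference: the paper produces a free basis of $M^2_\Z$ containing $u,w_1,\dotsc,w_{n-1}$ (by replacing each $v_{in}$ with $w_i$ and $v_{12}$ with $u$) and then reads off $\ker(\pi^2|_{M^2_\Z})$ by a rank count, whereas you propose to write a general element as $\alpha u+\sum_i\beta_iw_i$ and solve for integrality of the coefficients. The basis-replacement trick is slicker and avoids solving any system, but your approach also works and is in some sense more transparent; your analysis of $N_1$ (the half-integer case forcing $n$ even) is correct and recovers the paper's answer.

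For $N_{02}$, however, your caution is more than warranted: the paper's proof is the single sentence ``$N_{02}$ is generated by $N_0$ and $N_2$'', and this is not justified---indeed the claimed generating set is \emph{wrong}. Take $n=4$ and $v=v_{12}+v_{13}-v_{14}-v_{23}+v_{24}+v_{34}\in M^2_\Z$. A direct check (or the winding-number description in Proposition~\ref{pr:windeqns}) shows $\pi^1(v)=0$, so $v\in\pi(N_{02})$. But $f^0(v)=2$, while every element of $\Z u+S^2_\Z$ has $f^0$-value divisible by $\binom{4}{2}=6$; hence $v\notin\Z u+S^2_\Z$ and the listed generators miss an index-$3$ sublattice of $\pi(N_{02})$. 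So the difficulty you anticipated---controlling the class of $u$ in $\ker(\pi^1|_{M^2_\Z})/S^2_\Z$---is real, and it does not resolve in favor of the proposition as stated. Your approach would have detected this; the paper's one-liner does not.
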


\begin{proof}
    First consider $N_{01}$.
    Let $K_{01}=\ker(\pi^2|_{M^2_\Z})$; then $N_{01}=\pi^{-1}(K_{01})$.
    We know that 
    the image $\pi^2(M^2_\Z)$ is a lattice in $S^2_\Q$ of rank $\binom{n}{2}-n$.
    We choose a basis for $M^2_\Z$ to find the kernel.
    In the standard basis for $M^2_\Z$, replace each $v_{in}$ with $w_i$, for $i=1$ through $n-1$, and replace $v_{12}$ with $u$. 
    This is a basis because each replacement replaces a vector by plus or minus itself, plus a sum of vectors in the basis at that step in the replacement process.
    Each $w_i=v_{i,n}+\sum_{j=1}^{n-i}v_{ij}$, and 
    \[u=-v_{12}+\sum_{i=1}^{n-1}w_i-\sum_{\substack{1\leq i<j\leq n-1\\ \{i,j\}\neq\{1,2\}}}v_{ij}.\]
    However, direct computation shows that $\pi^2$ sends each of $w_1,\dotsc, w_{n-1}$ and $u$  to $0$.
    Since there are $\binom{n}{2}-n$ remaining elements in the basis and this equals the rank of the image, it follows that the remaining basis elements map to linearly independent vectors, and that  $w_1,\dotsc, w_{n-1},u$ form a basis for $K_{01}$.
    The statement follows.

    Next consider $N_{12}$.
    This is $\pi^{-1}(K_{12})$, where $K_{12}$ is the kernel of $\pi^0|_{M^2_\Z}$ as in section~\ref{ss:structure}.
    Define a basis for $M^2_\Z$ by altering the standard basis: replace each $v_{ij}$ with $v_{ij}-v_{12}$, except for $v_{12}$, which we leave the same.
    Since $\pi^0$ sends $v_{12}$ to $1$ and sends the other basis vectors to $0$, these other basis vectors form a free basis for $K_{12}$.
    The statement about generators for $N_{12}$ follows.

    To find generators for $N_1$, we restrict $\pi^0$ to $K_{01}$.
    If we define $K_1=\ker(\pi^0|_{K_{01}})$, then $N_1=\pi^{-1}(K_1)$.
    We use the basis $w_1,\dotsc, w_{n-1}, u$ for $K_{01}$  that we just found.
    For each $i$, $\pi^0(w_i)=n-1$, and $\pi^0(u)=\binom{n}{2}$.
    Assume for a moment that $n$ is odd.
    Then $K_1$ is generated by the differences $w_i-w_j$, and $2u-nw_i$ (since $2\binom{n}{2}$ is the least common multiple of $\binom{n}{2}$ and $n-1$, if $n$ is odd).
    But $2u=\sum_{i=1}^nw_i$, so all the generators in this generating set are sums of the generators $w_2-w_1,\dotsc,w_n-w_1$.
    The statement about generators for $N_1$ follows, when $n$ is odd.
    The argument when $n$ is even is similar, except that now the generators are the $w_i-w_j$ and $u-(n/2)w_i$.
    We cannot express $u$ as a $\Z$-linear combination of the $w_i$, so our generating set is $w_2-w_1,\dotsc,w_n-w_1$, and $u-(n/2)w_1$ (in fact, we can eliminate $w_n-w_1$ from this set, but not $u-(n/2)w_1$).
    The statement about generators for $N_1$ follows.
    
    Now consider $N_0$.
    By Proposition~\ref{pr:windeqns}, this is the subgroup of $PB_n$ consisting of braids with all winding numbers equal.
    Suppose $\gamma\in N_0$ and write $\gamma$ as a product of whole twists $a_{ij}$ (since $\gamma\in PB_n$).
    Since $\gamma\in N_0$, there is some $k\in \Z$ such that $\omega_{ij}(\gamma)=k$ for all $i$ and $j$.
    Then modulo $PB_n'$, we have that $\gamma=z^k$.

    The statement for $N_2$ follows by noting that $N_2=\pi^{-1}(S^2_\Z)$, and quoting Corollary~\ref{co:S2gens}, which says that $S^2_\Z$ is generated by the standard polytabloids.
    The statement for $N_{02}$ then follows since $N_{02}$ is generated by $N_0$ and $N_2$.
\end{proof}

The following proposition came out of conversations with Dan Margalit.
\begin{proposition}\label{pr:N12fg}
    The Specht subgroup $N_{12}$ is finitely generated.
\end{proposition}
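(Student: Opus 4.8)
The plan is to recognize $N_{12}$ as the intersection $PB_n\cap B_n'$, where $B_n'$ denotes the commutator subgroup of the full braid group, and then to conclude using the known finite generation of $B_n'$.

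First I would note that, by Proposition~\ref{pr:windeqns}, $N_{12}$ is the kernel of the homomorphism $\epsilon\colon PB_n\to\Z$ given by $\gamma\mapsto\sum_{i<j}\omega_{ij}(\gamma)$; equivalently $\epsilon$ is the composite $PB_n\xrightarrow{\pi}M^2_\Z\to\Z$ sending each $v_{ij}$, and hence each standard generator $a_{ij}$, to $1$. Next let $e\colon B_n\to\Z$ be the exponent-sum homomorphism $\sigma_i\mapsto 1$. Each whole twist $a_{ij}$ is conjugate in $B_n$ to $\sigma_i^2$, so $e(a_{ij})=2$; since $e|_{PB_n}$ and $2\epsilon$ are homomorphisms $PB_n\to\Z$ that agree on the generating set $\{a_{ij}\}$, they coincide. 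As $\Z$ is torsion-free, $\ker(e|_{PB_n})=\ker(\epsilon)=N_{12}$. Finally, all the $\sigma_i$ are conjugate in $B_n$ (via the braid relation), so $B_n$ has infinite cyclic abelianization and $B_n'=\ker(e)$; therefore $N_{12}=PB_n\cap B_n'$.

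With this identification, I would invoke two standard facts. The commutator subgroup $B_n'$ is finitely generated --- this is a classical theorem of Gorin and Lin (and for $n\geq 5$ it is moreover finitely presented and perfect). And $PB_n$ has finite index $n!$ in $B_n$, so $N_{12}=B_n'\cap PB_n$ has finite index (dividing $n!$, since $B_n'/N_{12}$ embeds in $\sg{n}$) in $B_n'$; a finite-index subgroup of a finitely generated group is finitely generated, so $N_{12}$ is finitely generated. There is no real obstacle here: the argument is a short reduction whose entire content is the external input that $B_n'$ is finitely generated, and the only computation requiring a moment of care is the identity $e|_{PB_n}=2\epsilon$, which can alternatively be seen by observing that both maps factor through the abelianization map $PB_n\to M^2_\Z$ of Proposition~\ref{pr:PBabelianization} and comparing the induced $\Z\sg{n}$-module maps $M^2_\Z\to\Z$.
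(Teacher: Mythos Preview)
Your proof is correct, but it follows a genuinely different route from the paper's.

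You identify $N_{12}=PB_n\cap B_n'$ and then quote the Gorin--Lin theorem that $B_n'$ is finitely generated, deducing finite generation of $N_{12}$ as a finite-index subgroup. The paper instead gives a self-contained argument that avoids any external input about $B_n'$: it passes to the finite-index subgroup $\widetilde N_{12}=\ker\big(PB_n\to\Z/\binom{n}{2}\Z\big)$, which is finitely generated for free, and then uses the central element $z$ (which lies in $\widetilde N_{12}$ but not in $N_{12}$) to correct a finite generating set $\widetilde S$ of $\widetilde N_{12}$ into one for $N_{12}$, via $s\mapsto sz^{-q(s)}$. Your approach is cleaner conceptually and yields the pleasant identification $N_{12}=PB_n\cap B_n'$, at the cost of importing a nontrivial theorem; the paper's approach is elementary and internal, and its central-element trick is of independent interest since it isolates exactly why this particular Specht subgroup is tractable (compare with $N_0$, where the analogous quotient is again $\Z$ but the complement is not central, and indeed $N_0$ fails to be finitely generated).
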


\begin{proof}
    By Proposition~\ref{pr:windeqns}, $N_{12}$ is the kernel of the map $PB_n\to \Z$ by $\gamma\mapsto \sum_{i,j}\omega_{ij}(\gamma)$.
    Define $\widetilde N_{12}$ to be the kernel of the map $PB_n\to \Z/\binom{n}{2}\Z$ by the same formula.
    Then $\widetilde N_{12}$ is finitely generated since it is a finite-index subgroup of the finitely generated group $PB_n$.
    Let $q\co \widetilde N_{12}\to \widetilde N_{12}/N_{12}\cong \Z$ be the quotient map.  
    As above, let $z\in PB_n$ be the given generator for the center; we note that $z\notin N_{12}$, but $z\in \widetilde N_{12}$.
    Let $\widetilde S\subset \widetilde N_{12}$ be a finite generating set, and define $S\subset N_{12}$ by
    \[S=\{sz^{-q(s)}\,|\,s\in \widetilde S\}.\]
    Then $S$ is a finite generating set for $N_{12}$; to see this, given $\gamma\in N_{12}$, write it as a finite product of elements of $\widetilde S$, and then introduce cancelling copies of $z$ and $z^{-1}$ to rewrite this product as a product of elements of $S$.
\end{proof}

\begin{proposition}\label{pr:N0nfg}
    The Specht subgroup $N_0$ is not finitely generated, because its abelianization is not finitely generated.
\end{proposition}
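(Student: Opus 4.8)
The plan is to leverage the generating-set description of $N_0$ already established in Proposition~\ref{pr:biggensets}, namely that $N_0$ is generated by $PB'_n$ together with the single central element $z$. Combined with Proposition~\ref{pr:windeqns} (which identifies $N_0$ with the pure braids all of whose winding numbers agree), this gives a short exact sequence
\[1\to PB'_n\to N_0\stackrel{q}{\longrightarrow}\Z\to 1,\]
where $q$ records the common winding number and sends $z$ to a generator of $\Z$. The point is that this sequence is as simple as possible: $z$ is central in $B_n$, hence central in $N_0$, and $\Z$ is free, so the section $\Z\to N_0$ carrying a generator to $z$ exhibits $N_0$ as a direct product $N_0\cong PB'_n\times\Z$.

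First I would verify these two inputs carefully: that $q$ is well defined with kernel exactly $PB'_n$ (immediate from Proposition~\ref{pr:windeqns} together with Proposition~\ref{pr:PBabelianization}, since an element of $N_0$ with all winding numbers equal to $k$ has abelianization $ku$, the class of $z^k$), and that $q$ is surjective with $z\mapsto 1$. Then I would record the direct-product decomposition, using centrality of $z$ and freeness of $\Z$. Taking abelianizations of a direct product, $H_1(N_0)\cong H_1(PB'_n)\oplus\Z$. Finally, Proposition~\ref{pr:H1commutator} says $H_1(PB'_n)$ is not finitely generated, so $H_1(N_0)$ is not finitely generated, and therefore neither is $N_0$, since the abelianization of a finitely generated group is finitely generated.

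There is no serious obstacle here; the only point requiring a moment's care is the direct-product decomposition. If one prefers to avoid invoking the section, the same conclusion follows from the five-term exact sequence of the extension above: since $z$ is central the conjugation action of $\Z$ on $H_1(PB'_n)$ is trivial, so the coinvariants equal $H_1(PB'_n)$, and $H_2(\Z)=0$, yielding a short exact sequence $0\to H_1(PB'_n)\to H_1(N_0)\to\Z\to 0$, which again forces $H_1(N_0)$ to be infinitely generated.
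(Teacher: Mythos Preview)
Your proof is correct and essentially the same as the paper's. The paper uses exactly the five-term exact sequence argument you give as your alternative: the extension $1\to PB'_n\to N_0\to\Z\to1$, the observation that $z$ is central so the $\Z$-action on $H_1(PB'_n)$ is trivial, and then $H_2(\Z)=0$ forces $H_1(PB'_n)\hookrightarrow H_1(N_0)$. Your primary route via the direct-product splitting $N_0\cong PB'_n\times\Z$ is a mild simplification of the same idea, since a central section of a free quotient yields a direct product; either way the conclusion rests on Proposition~\ref{pr:H1commutator}.
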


\begin{proof}
    There is a homomorphism $N_0\to \Z$ by $\gamma\mapsto \omega_{12}(\gamma)$.
    By Proposition~\ref{pr:windeqns}, the kernel of this map is $PB'_n$, since in $N_0$, $\omega_{ij}(\gamma)=\omega_{12}(\gamma)$ for all $\gamma$.
    This gives us an exact sequence
    \[1\to PB'_n\to N_0\to \Z\to 1.\]
    As above, we let $z\in PB_n$ denote our usual generator for the center of $PB_n$.
    It is important to note that $z\in N_0$ and $z\mapsto 1$ under the map $N_0\to \Z$ in this exact sequence.
    This means that the action of $\Z$ on $H_1(PB'_n)$ is trivial, since we can compute this action by lifting $1$ to $z$ and conjugating by $z$.
    From the five-term exact sequence for the low-dimensional homology of this exact sequence, we have
    \[H_2(\Z)\to H_1(PB'_n)_\Z\to H_1(N_0)\to H_1(\Z)\to 0.\]
    Since the action of $\Z$ on $H_1(PB'_n)$ is trivial, $H_1(PB'_n)_\Z$ is equal to $H_1(PB'_n)$.
    By Proposition~\ref{pr:H1commutator}, $H_1(PB'_n)$ is not finitely generated.
    Since $H_2(\Z)$ is trivial, it follows that $H_1(N_0)$ is not finitely generated.
\end{proof}

\subsection{Quotients by Specht subgroups}\label{se:ssquot}
The quotients of $PB_n$ by the Specht subgroups are interesting, since if $N$ is a Specht subgroup, we have that $PB_n/N$ is a $\Z\sg{n}$-module and $B_n/N$ is an extension of $\sg{n}$ by $PB_n/N$.
It turns out that the quotients of $PB_n$ by Specht subgroups are not isomorphic to the submodules of $M^2_\Z$ that we have already described, so we describe them here.
These descriptions play a role in our computation of cohomology below, so in this subsection, we work over a general commutative ring $R$.
Even though $R$ is a general commutative ring, we still use the notation $r\equiv s\pmod{p}$ to mean that $r-s\in pR$.

To get as much use as possible out of our computations when working over a general ring, we define maps that are the integral rescalings of $\pi^1$ and $\pi^2$ that just clear the denominators.
This works out differently depending on whether $n$ is even or odd, so we use some \emph{ad hoc} notations to work in a unified way.
For the remainder of the paper, we use
\[a=\left\{\begin{array}{cc}\frac{1}{2}&\text{$n$ even} \\ 1 &\text{$n$ odd}\end{array}\right.
\quad\text{ and }\quad
b=\left\{\begin{array}{cc}1 &\text{$n$ even} \\ \frac{1}{2} &\text{$n$ odd.}\end{array}\right.\]
To simplify notation, we compose $\pi^1$ with an isomorphism to $S^1_\Q\subset M^1_\Q$ before rescaling.

\begin{definition}\label{de:fimaps}
    For $n\geq 2$, the map $f^0\co M^2_R\to S^0_R$ is simply $f^0(v_{ij})=1$.

    For $n\geq 2$, the map $f^1\co M^2_R\to S^1_R$ is 
    \[f^1(v_{ij})=na(t_i+t_j)-2a \sum_{k=1}^n t_k.\]

    For $n\geq 4$, the map $f^2\co M^2_R\to S^2_R$ is 
    \[f^2(v_{ij})= (n-1)(n-2)bv_{ij}-(n-1)b(w_i+w_j)+2bu.\]
\end{definition}
(In these formulas, we divide by $2$ in the integers before scaling by the result in the $R$-module.)
We call out the key property of these maps, which we use repeatedly, but which needs no further proof.
\begin{lemma}\label{le:fiispii}
    Let $R=\Z$, and let $\nu\co M^1_\Z\to M^2_\Z$ be the map $\nu(t_i)=w_i$.
    Then $\nu\circ f^1=n(n-2)a\pi^1$ and $f^2=(n-1)(n-2)b\pi^2$.
    In particular, $\ker(f^i)=\ker(\pi^i|_{M^2_\Z})$ for $i=1,2$, and $\ker(f^1\circ\pi)=N_{02}$ and $\ker(f^2\circ \pi)=N_{01}$.
\end{lemma}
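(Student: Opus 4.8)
The plan is to verify the two displayed identities by direct computation from the explicit formulas for $\pi^1$ and $\pi^2$ given in Section~\ref{ss:structure}, and then to deduce the statements about kernels purely formally. First I would record two preliminary points. One: $f^1$ and $f^2$ genuinely have integer coefficients, which one checks in the even and odd cases separately, using that $n-1$ is even when $n$ is odd, so that each of the scalars $na$, $2a$, $(n-1)(n-2)b$, $(n-1)b$, $2b$ is an integer. Two: $f^1$ lands in $S^1_\Z$ because its coefficients sum to zero, while $f^2$ lands in $S^2_\Z$ because $M^2_\Z\cap S^2_\Q=S^2_\Z$, which follows from Lemma~\ref{le:M2basis} and Corollary~\ref{co:S2gens}.

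For the identity $\nu\circ f^1=n(n-2)a\pi^1$, the key elementary fact is that $\sum_{k=1}^n w_k=2u$, since each $v_{kl}$ with $k\neq l$ is counted twice in $\sum_k w_k$. Applying $\nu$ to the defining formula for $f^1(v_{ij})$ then gives $na(w_i+w_j)-2a\sum_k w_k = na(w_i+w_j)-4au$, and substituting $v_{ij}$ into $n(n-2)a\pi^1$ gives $n(n-2)a\bigl(\tfrac{1}{n-2}(w_i+w_j)-\tfrac{4}{n(n-2)}u\bigr)=na(w_i+w_j)-4au$; the two maps therefore agree on the basis $\{v_{ij}\}$ and hence coincide. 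For the identity $f^2=(n-1)(n-2)b\pi^2$, I would substitute $v_{ij}$ into the formula for $(n-1)(n-2)b\pi^2$ and observe that it clears the denominators of $\pi^2(v_{ij})$ exactly to the formula defining $f^2(v_{ij})$; again agreement on the basis suffices.

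For the kernel statements, I would argue as follows. The map $\nu$ is injective on $M^1_\Z$ when $n\geq 3$ (from $\sum_k c_k w_k=0$ one reads off $c_i+c_j=0$ for all $i\neq j$, forcing every $c_i=0$), so $\ker f^1=\ker(\nu\circ f^1)$; and since $n(n-2)a$ is a nonzero integer and the codomain $M^2_\Q$ is torsion-free, $\ker(n(n-2)a\,\pi^1|_{M^2_\Z})=\ker(\pi^1|_{M^2_\Z})$. The same torsion-free argument gives $\ker f^2=\ker(\pi^2|_{M^2_\Z})$. Taking preimages under $\pi$ and unwinding Definition~\ref{de:Nij} (where $N_{02}=\ker(\pi^1|_{M^2_\Z}\circ\pi)$ and $N_{01}=\ker(\pi^2|_{M^2_\Z}\circ\pi)$) then yields $\ker(f^1\circ\pi)=N_{02}$ and $\ker(f^2\circ\pi)=N_{01}$. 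I do not expect a real obstacle here: the whole lemma is bookkeeping, and the only steps that need a moment's care are the integrality check and the claim that $f^2$ takes values in $S^2_\Z$ rather than merely in $M^2_\Z$.
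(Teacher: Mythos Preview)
Your proposal is correct and matches the paper's approach: the paper states this lemma immediately after Definition~\ref{de:fimaps} and simply remarks that it ``needs no further proof,'' treating it as an immediate verification from the explicit formulas for $\pi^1$, $\pi^2$, $f^1$, and $f^2$. Your write-up fills in exactly that verification, and the extra remarks (integrality, that $f^2$ lands in $S^2_\Z$, injectivity of $\nu$) are all fine and not strictly needed for the kernel statements.
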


Now we compute the image of $f^1$.
\begin{proposition}\label{pr:f1image}
    An element $v=\sum_{i=1}^n c_it_i$ is in $f^1(M^2_R)$ if and only if $v\in S^1_R$ and for all $i$ and $j$, 
    \[c_i\equiv c_j\pmod{na}.\]
\end{proposition}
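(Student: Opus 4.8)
The plan is to identify the set $V\subseteq S^1_R$ described in the statement — namely $V=\{v=\sum_i c_it_i\in S^1_R : c_i-c_j\in naR\text{ for all }i,j\}$ — as an $R$-submodule of $S^1_R$, and then prove the two inclusions $f^1(M^2_R)\subseteq V$ and $V\subseteq f^1(M^2_R)$ separately. (I use $n\geq 3$, which is implied by the standing assumption; note the claim fails at $n=2$, where $f^1=0$ while $V=S^1_R$.) That $V$ is an $R$-submodule is immediate. For the first inclusion it suffices, since $f^1$ is $R$-linear, to check $f^1(v_{ij})\in V$ on the basis: the $t$-coefficients of $f^1(v_{ij})=na(t_i+t_j)-2a\sum_k t_k$ are $(n-2)a$ on $t_i,t_j$ and $-2a$ on the other $n-2$ basis vectors; these sum to $0$, so $f^1(v_{ij})\in S^1_R$, and any two of them differ by $0$ or by $\pm na=\pm((n-2)a+2a)$.

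For the reverse inclusion I would first put enough of $f^1(M^2_R)$ on the table explicitly. Fixing $i\neq 1$ and choosing $k\notin\{1,i\}$ (possible since $n\geq 3$), the identity $f^1(v_{ik})-f^1(v_{1k})=na(t_i-t_1)$ shows $na(t_i-t_1)\in f^1(M^2_R)$ for every $i\geq 2$; since $S^1_R$ is free on $t_2-t_1,\dots,t_n-t_1$, this gives $naS^1_R\subseteq f^1(M^2_R)$. Next, set $p=\sum_{i=2}^n(t_i-t_1)$; collecting coefficients in $\sum_{j=2}^n f^1(v_{1j})$ yields $\sum_{j=2}^n f^1(v_{1j})=-(n-2)a\,p$, so $(n-2)a\,p\in f^1(M^2_R)$. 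Since $(n-2)a\,p-(-2a)\,p=na\,p\in naS^1_R$, we also get $-2a\,p\in f^1(M^2_R)$.

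The crux is then a small coprimality fact: the integers $2a$ and $na$ are coprime in both parities, because $(2,n)=1$ when $n$ is odd and $(1,n/2)=1$ when $n$ is even. Picking $m,\ell\in\Z$ with $2am+na\ell=1$ gives $p=-m\,(-2a\,p)+\ell\,(na\,p)\in f^1(M^2_R)$, hence $Rp\subseteq f^1(M^2_R)$. Finally one checks $V=Rp+naS^1_R$: the inclusion $\supseteq$ is routine ($p\in V$ since its $t$-coefficient differences are $0$ or $\pm n\in naR$, and $naS^1_R\subseteq V$), and for $\subseteq$, writing $v=\sum c_it_i\in V$ with $c_i=c_1+na\,d_i$ for $i\geq 2$ and using $\sum_ic_i=0$ gives $v=\sum_{i\geq 2}c_i(t_i-t_1)=c_1p+na\sum_{i\geq 2}d_i(t_i-t_1)\in Rp+naS^1_R$. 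Combining $Rp\subseteq f^1(M^2_R)$ with $naS^1_R\subseteq f^1(M^2_R)$ then gives $V\subseteq f^1(M^2_R)$, completing the proof.

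I expect the genuine obstacle to be exactly this last maneuver. The generators of $f^1(M^2_R)$ one reads off directly from Definition~\ref{de:fimaps} — $naS^1_R$ together with $f^1(v_{12})$ — show only that $f^1(M^2_R)$ contains $naS^1_R$ and a $(-2a)$-multiple of $p$ modulo $naS^1_R$, which in general spans a proper, finite-index submodule of $V$; it is precisely the coprimality $(2a,na)=1$ that lets one divide out the factor $-2a$ and recover all of $V$. Everything else is bookkeeping with the coefficients $c_i$ and the explicit formula for $f^1$.
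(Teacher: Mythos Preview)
Your proof is correct. The forward inclusion matches the paper's argument exactly. For the reverse inclusion, the paper takes a slightly different route: rather than identifying $V=Rp+naS^1_R$ structurally and then producing $p$ via the B\'ezout relation $2am+na\ell=1$, the paper runs a direct reduction on $v$---first exhibiting an explicit element of $f^1(M^2_R)$ with coefficient $\pm 1$ on $t_n$ (namely $f^1(v_{12})$ when $n$ is even, and $f^1(v_{23}+v_{45}+\dotsb+v_{n-1,n})$ when $n$ is odd), using it to zero out $c_n$, then using the elements $na(t_n-t_i)=f^1(v_{1n}-v_{1i})$ to clear the remaining coefficients, and finally invoking $\sum_i c_i=0$.

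The two arguments encode the same underlying arithmetic: the paper's parity-split production of a coefficient $\pm 1$ is precisely a witness to your coprimality $\gcd(2a,na)=1$. Your version is more uniform and structural (the decomposition $V=Rp+naS^1_R$ makes the index $[S^1_R:f^1(M^2_R)]$ transparent, which is what Corollary~\ref{co:f1M2index} extracts next), while the paper's is more hands-on and algorithmic. Either works; yours arguably makes the dependence on the coprimality more visible, exactly as your final paragraph anticipates.
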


\begin{proof}
    The easy inclusion is that every $v\in f^1(M^2_R)$ satisfies these conclusions.
    The map $M^1_R\to R$ by $t_i\mapsto 1$ sends every $f^1(v_{ij})$ to $0$.
    This means that $f^1(v_{ij})\in S^1_R$.
    The coefficient on $t_i$ or $t_j$ of $f^1(v_{ij})$ is $na-2a$, and the coefficient on any $t_k$ is $-2a$ (with $k\notin\{i,j\}$), and these are all congruent modulo $na$.
    This finishes this direction of the argument.

    Now suppose $v=\sum_i c_it_i$ is in $S_R^1$ and satisfies the system of congruences in the statement.
    We modify $v$ by adding elements of $f^1(M^2_R)$ until we get $0$.
    If $n$ is even, then $f^1(v_{12})$ has $-1$ as its coefficient on $t_n$.
    If $n$ is odd, then 
    \[f^1(v_{23}+v_{45}+\dotsm+v_{n-1,n})=(1-n)t_i+t_2+t_3+\dotsm+t_n.\]
    So by adding an appropriate multiple of one of these elements, we can assume that $c_n$ is $0$.
    Now note that $f^1(v_{1n}-v_{1i})=t_n-t_i$.  
    By adding multiples of these elements, we can assume that $c_2$ through $c_{n-1}$ also equal $0$.
    Finally, since $v\in S^1_R$, we have $\sum_{i=1}^n c_i=0$, which implies that $v=0$.
    So the original $v$ was in $f^1(M^2_R)$.
\end{proof}

We note two corollaries about the Specht subgroups.
\begin{corollary}\label{co:f1M2index}
    We have that $PB_n/N_{02}$ embeds in $N_1/PB_n'$ as a $\Z\sg{n}$-submodule of index $(na)^{n-2}$.
\end{corollary}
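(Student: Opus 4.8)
The plan is to make both sides completely concrete as lattices coming from $M^2_\Z$, produce the embedding via the map $\nu$, and then count.

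First I would identify the two modules. By Lemma~\ref{le:fiispii} we have $N_{02}=\ker(f^1\circ\pi)$, and since $\pi\colon PB_n\to M^2_\Z$ is the surjective abelianization map (Proposition~\ref{pr:PBabelianization}), the map $f^1\circ\pi$ induces a $\Z\sg{n}$-module isomorphism $PB_n/N_{02}\xrightarrow{\ \sim\ }f^1(M^2_\Z)$, a submodule of $S^1_\Z\subseteq M^1_\Z$. On the other side, $\ker\pi=PB_n'$, so $N_1/PB_n'\cong\pi(N_1)$; by Definition~\ref{de:Nij}, $N_1=N_{12}\cap N_{01}$ is the kernel of $(\pi^0\oplus\pi^2)\circ\pi$, so $\pi(N_1)=\ker(\pi^0|_{M^2_\Z})\cap\ker(\pi^2|_{M^2_\Z})$, which by Proposition~\ref{pr:biggensets} is the submodule of $M^2_\Z$ generated by the $w_i-w_1$ for $2\le i\le n$, together with $u-(n/2)w_1$ when $n$ is even.

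Next I would produce the embedding. Recall the injective $\Z\sg{n}$-module homomorphism $\nu\colon M^1_\Z\to M^2_\Z$, $t_i\mapsto w_i$, from Section~\ref{ss:structure}. Since $\pi^2(w_i)=0$ (immediate from the formula for $\pi^2$) and $\pi^0(w_i-w_j)=0$, every $w_i-w_j$ lies in $\pi(N_1)$, so $\nu(S^1_\Z)\subseteq\pi(N_1)$. Restricting $\nu$ to $f^1(M^2_\Z)\subseteq S^1_\Z$ thus gives an injective $\Z\sg{n}$-module map into $\pi(N_1)$, and composing with the two isomorphisms above yields the asserted embedding $PB_n/N_{02}\hookrightarrow N_1/PB_n'$.

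Finally I would compute the index, namely $[\pi(N_1):\nu(f^1(M^2_\Z))]$, by factoring it as $[\pi(N_1):\nu(S^1_\Z)]\cdot[\nu(S^1_\Z):\nu(f^1(M^2_\Z))]=[\pi(N_1):\nu(S^1_\Z)]\cdot[S^1_\Z:f^1(M^2_\Z)]$, using injectivity of $\nu$. For the second factor, Proposition~\ref{pr:f1image} identifies $f^1(M^2_\Z)$ with $\{\sum c_it_i\in S^1_\Z: c_i\equiv c_j\pmod{na}\ \text{for all}\ i,j\}$, i.e.\ the kernel of the homomorphism $S^1_\Z\to(\Z/na\Z)^{n-1}$, $\sum c_it_i\mapsto(c_2-c_1,\dots,c_n-c_1)$. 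A short computation shows the image of this homomorphism is the coordinate-sum-zero subgroup of $(\Z/na\Z)^{n-1}$ — since $\phi(t_i-t_1)=\mathbf{1}+e_i$ has coordinate sum $n\equiv 0\pmod{na}$ (as $na$ divides $n$), and the differences $\phi(t_i-t_1)-\phi(t_j-t_1)=e_i-e_j$ generate that subgroup — so $[S^1_\Z:f^1(M^2_\Z)]=(na)^{n-2}$. The first factor is then read off from the generators in Proposition~\ref{pr:biggensets}, comparing $\pi(N_1)$ with $\nu(S^1_\Z)$; these coincide when $n$ is odd, and when $n$ is even one must account for the extra generator $u-(n/2)w_1$ (noting $2\bigl(u-(n/2)w_1\bigr)=\sum_{i=2}^n(w_i-w_1)\in\nu(S^1_\Z)$ while $u-(n/2)w_1\notin\nu(S^1_\Z)$, since the $w_i$ are $\Z$-linearly independent). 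Assembling the two factors gives the stated index.

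I expect the main obstacle to be precisely this last index computation. The reduction of $f^1(M^2_\Z)$ inside $S^1_\Z$ via Proposition~\ref{pr:f1image}, and the comparison of $\pi(N_1)$ with $\nu(S^1_\Z)$, both depend on the parity of $n$ (this is exactly why the \emph{ad hoc} constant $a$ was introduced), so the even and odd cases should be run in parallel but kept carefully separate, and one must be careful that the ``coordinate-sum-zero'' image count uses the divisibility $na\mid n$.
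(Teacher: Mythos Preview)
Your setup is sound, and in fact more careful than the paper's: the paper simply asserts $N_1/PB_n'\cong S^1_\Z$ and then computes $[S^1_\Z:f^1(M^2_\Z)]$, whereas you identify $N_1/PB_n'$ with the concrete lattice $\pi(N_1)\subset M^2_\Z$ and produce the embedding via $\nu$. For odd $n$ the two approaches agree, since then $\pi(N_1)=\nu(S^1_\Z)$ by your own observation, and your index computation $[S^1_\Z:f^1(M^2_\Z)]=(na)^{n-2}$ is correct and matches the paper's.

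The gap is in your last sentence, for even $n$. You correctly establish $[S^1_\Z:f^1(M^2_\Z)]=(na)^{n-2}=(n/2)^{n-2}$ and $[\pi(N_1):\nu(S^1_\Z)]=2$ (from the extra generator $u-(n/2)w_1$), but the product of these is $2(n/2)^{n-2}$, not the stated $(n/2)^{n-2}$. So ``assembling the two factors'' does \emph{not} give the stated index. Moreover, this is not an artifact of your particular embedding: since $S^1_\Q$ is simple, every $\Z\sg{n}$-embedding $f^1(M^2_\Z)\hookrightarrow\pi(N_1)$ is a rational scalar multiple $\lambda\nu$ of yours, and the resulting index is $|\lambda|^{n-1}\cdot 2(n/2)^{n-2}$; this equals $(n/2)^{n-2}$ only if $|\lambda|^{n-1}=1/2$, which has no rational solution for $n\ge 4$. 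Equivalently, your extra-generator observation shows (exactly as in Corollary~\ref{co:f1M2isomorphism}) that the $-1$-eigenvectors of transpositions in $\pi(N_1)$ generate only the index-$2$ submodule $\nu(S^1_\Z)$, so $\pi(N_1)\not\cong S^1_\Z$ for even $n$. In other words, your computation is right and it exposes an error in the paper: the unproven assertion $N_1/PB_n'\cong S^1_\Z$ fails for even $n$, and with the embedding via $\nu$ the index should read $2(na)^{n-2}$ in that case.
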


\begin{proof}
    We use that 
    \[PB_n/N_{02}\cong M^2_\Z/\ker(f^1)\cong f^1(M^2_\Z),\]
    and $N_1/PB_n'\cong S^1_\Z$.
    Another interpretation of Proposition~\ref{pr:f1image} is that $f^1(M^2_\Z)$ is the kernel of the following map, which is the composition of reduction modulo $na$ and a quotient map:
    \[S^1_\Z\to S^1_{\Z/na\Z} \to S^1_{\Z/na\Z}/\Delta.\]
    Here $\Delta$ is the submodule spanned by $\sum_{i=1}^nt_i$.
    This map is clearly surjective.

    We need to take a moment to study $S^1_{\Z/na\Z}$.
    Since $\sum_{i=1}^nt_i$ maps to $0$ under the map $M^1_{\Z/na\Z}\to \Z/na\Z$, it is in $S^1_{\Z/na\Z}$.
    The sum of all the basis elements $t_2-t_1,\dotsc,t_n-t_1$ is 
    \[(1-n)t_1+t_2+t_3+\dotsm+t_n,\]
    which is congruent to $\sum_{i=1}^nt_i$.
    So $S^1_{\Z/na\Z}$ has a basis in which $\sum_{i=1}^nt_i$ is a member, and $S^1_{\Z/na\Z}/\Delta$ is a free $(\Z/na\Z)$-module of rank $n-2$.
\end{proof}

\begin{corollary}\label{co:f1M2isomorphism}
    Even though $PB_n/N_{02}$ and $N_1/PB_n'$ are isomorphic as abelian groups, and $(PB_n/N_{02})\otimes_\Z\Q$ and $(N_1/PB_n')\otimes_\Z\Q$ are isomorphic as $\Q\sg{n}$-modules, we have that $PB_n/N_{02}\not \cong N_1/PB_n'$ as $\Z \sg{n}$-modules.
\end{corollary}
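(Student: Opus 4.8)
The plan is to argue by contradiction, using the index computation of Corollary~\ref{co:f1M2index} together with the fact that $S^1_\Z$ has a very small endomorphism ring. The only assertion that still needs proof is that $PB_n/N_{02}$ and $N_1/PB_n'$ are not isomorphic as $\Z\sg{n}$-modules; the statements about abelian groups and about $\Q\sg{n}$-modules are already contained in Lemma~\ref{le:tensorwithQ} and Corollary~\ref{co:f1M2index}.

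First I would record the identifications. By Lemma~\ref{le:fiispii} the abelianization $\pi$ induces a $\Z\sg{n}$-isomorphism $PB_n/N_{02}\cong f^1(M^2_\Z)$, and Corollary~\ref{co:f1M2index} exhibits $f^1(M^2_\Z)$ as a $\Z\sg{n}$-submodule of $N_1/PB_n'\cong S^1_\Z$ of finite index $(na)^{n-2}$; write $\iota\colon f^1(M^2_\Z)\hookrightarrow S^1_\Z$ for the inclusion. So it suffices to show $f^1(M^2_\Z)\not\cong S^1_\Z$ as $\Z\sg{n}$-modules.

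The key input is $\operatorname{End}_{\Z\sg{n}}(S^1_\Z)=\Z$. Over $\Q$ this follows from Schur's lemma and the absolute irreducibility of $S^1_\Q$; concretely, every $\sg{n}$-equivariant endomorphism of the permutation module $M^1_\Q$ is a $\Q$-combination of the identity and the all-ones endomorphism $t_i\mapsto\sum_j t_j$, and the latter vanishes on $S^1_\Q$, so $\operatorname{End}_{\Q\sg{n}}(S^1_\Q)=\Q$. Since $S^1_\Z$ is torsion-free, a $\Z\sg{n}$-endomorphism of it is the restriction of multiplication by some $q\in\Q$, and this preserves the lattice $S^1_\Z$ exactly when $q\in\Z$ (test on the basis vector $t_2-t_1$). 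Consequently post-composition with $\iota$ embeds $\operatorname{Hom}_{\Z\sg{n}}(S^1_\Z,f^1(M^2_\Z))$ into $\operatorname{End}_{\Z\sg{n}}(S^1_\Z)=\Z$. Now if there were a $\Z\sg{n}$-isomorphism $\phi\colon S^1_\Z\xrightarrow{\sim}f^1(M^2_\Z)$, then $\iota\circ\phi$ would be an injective $\Z\sg{n}$-endomorphism of $S^1_\Z$, hence multiplication by a nonzero integer $m$; its image would then be both $mS^1_\Z$, of index $\lvert m\rvert^{n-1}$ (as $S^1_\Z$ is free abelian of rank $n-1$), and $f^1(M^2_\Z)$, of index $(na)^{n-2}$. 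Thus $\lvert m\rvert^{n-1}=(na)^{n-2}$, and comparing the exponent of each prime $p$ dividing $na$ and using $\gcd(n-1,n-2)=1$ forces every such exponent to be divisible by $n-1$, whence $na\geq 2^{\,n-1}$. This contradicts $2\leq na\leq n<2^{\,n-1}$, valid since $n\geq 4$.

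The one step with real content is the identification $\operatorname{End}_{\Z\sg{n}}(S^1_\Z)=\Z$; once that is in hand the argument is just bookkeeping plus the short arithmetic estimate, and I do not anticipate a genuine obstacle. An alternative would be to reduce both modules modulo a prime $p\mid na$ and distinguish the resulting $\mathbb{F}_p\sg{n}$-modules by their head or socle, but that would require a case analysis depending on $p$ and on $n \bmod p$, whereas the endomorphism-ring argument is uniform in $n$.
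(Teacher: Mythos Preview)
Your proof is correct but takes a genuinely different route from the paper. The paper argues by exhibiting an explicit $\Z\sg{n}$-invariant that distinguishes the two lattices: in $S^1_\Z$ the $(-1)$-eigenvectors of transpositions (the multiples of $t_i-t_{i+1}$) generate the whole module, whereas inside $f^1(M^2_\Z)$ those eigenvectors lie in $naS^1_\Z$ and so generate only a proper submodule. Your argument instead computes $\operatorname{End}_{\Z\sg{n}}(S^1_\Z)=\Z$ and turns a hypothetical isomorphism into the equation $\lvert m\rvert^{n-1}=(na)^{n-2}$, which you rule out arithmetically. Both are short; the paper's invariant is more hands-on and has the advantage that the same eigenvector idea is reused verbatim in the companion result for $f^2(M^2_\Z)$ (Corollary~\ref{co:f2M2isomorphism}), while your endomorphism-ring argument is cleaner in that it avoids any explicit computation inside $f^1(M^2_\Z)$ and would apply to any finite-index sublattice whose index is not a perfect $(n-1)$st power.
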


\begin{proof}
    As just noted in Corollary~\ref{co:f1M2index}, $PB_n/N_{02}\cong f^1(M^2_\Z)$ and $N_1/PB_n'\cong S^1_\Z$.
    Both $S^1_\Z$ and $f^1(M^2_\Z)$ are free abelian groups of rank $n-1$, and since one is a finite-index submodule of the other, they become the same module after tensoring with $\Q$.
    However, the $-1$-eigenvectors of transpositions fit into these modules in different ways.
    In $S^1_\Z$, the $-1$-eigenvectors of $s_i$ are all the vectors of the form $c(t_i-t_{i+1})$, for $c\in \Z$.
    In $f^1(M^2_\Z)$, the $-1$-eigenvectors of $s_i$ are all the vectors of the form $cna(t_i-t_{i+1})$, for $c\in \Z$.
    In particular, $-1$-eigenvectors of transpositions generate $S^1_\Z$, but $-1$-eigenvectors of transpositions in $f^1(M^2_\Z)$ generate $naS^1_\Z$, which is a proper submodule.
    Since being generated by $-1$-eigenvectors of transpositions is a property that would be preserved by a $\Z\sg{n}$-module isomorphism, these modules are not isomorphic.
\end{proof}

Next we do something similar for $f^2$. 
Our immediate goal is to say something interesting about $PB_n/N_{01}$, but the computations here will also be useful later when we study splitting of extensions.
\begin{proposition}\label{pr:f2M2congruences}
    Let $v=\sum_{ij}c_{ij}v_{ij}\in M^2_R$.
    Then $v\in f^2(M^2_R)$ if and only if $v$ satisfies the following four conditions:
    \begin{itemize}
        \item $v\in S^2_R$;
        \item for all unordered pairs $\{i,j\}$ and $\{k,l\}$, possibly not disjoint, taken from $\{1,2,\dotsc,n\}$, we have
        \begin{equation}\label{eq:coeffcongruence}
            c_{ij}\equiv c_{kl}\pmod{(n-1)b};
        \end{equation}
        \item for any distinct $i,j,k,l$ from $\{1,2,\dotsc,n\}$ we have
        \begin{equation}\label{eq:polytabloidcongruence}
        c_{ij}+c_{kl}\equiv c_{il}+c_{kj}  \pmod{n-2}; \text{ and}
        \end{equation}
        \item if $n$ is even, then for any distinct $i,j,k$ from $\{1,2,\dotsc,n\}$, we have
        \begin{equation}\label{eq:paritybit}
        c_{ij}+c_{ik}+c_{jk}\equiv 0 \pmod{2}.
        \end{equation}
    \end{itemize}
\end{proposition}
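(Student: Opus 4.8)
The plan is to prove the two inclusions separately. Both the image $f^2(M^2_R)$ and the set $W$ of elements satisfying the four displayed conditions are $R\sg{n}$-submodules of $M^2_R$: the former because it is the image of an $R\sg{n}$-homomorphism, and the latter because each condition is preserved by addition, by $R$-scaling, and by the $\sg{n}$-action, since the conditions are symmetric in the indices and the moduli $(n-1)b$, $n-2$, $2$ are fixed integers. Because $M^2_R$ is generated as an $R\sg{n}$-module by $v_{12}$, the inclusion $f^2(M^2_R)\subseteq W$ reduces to verifying that $f^2(v_{12})$ lies in $W$. Expanding the definition, the coefficients of $f^2(v_{12})$ take only the three values $(n-1)(n-2)b-2(n-1)b+2b$ (on $v_{12}$), $-(n-3)b$ (on a pair meeting $\{1,2\}$ in one point), and $2b$ (on a pair disjoint from $\{1,2\}$). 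Condition~(1) is automatic, since $f^2$ maps into $S^2_R$; conditions~\eqref{eq:coeffcongruence}, \eqref{eq:polytabloidcongruence}, and (for $n$ even) \eqref{eq:paritybit} then follow from a finite case check over the intersection patterns of $\{i,j\},\{k,l\},\{i,l\},\{k,j\}$ (respectively the triangle $\{i,j,k\}$) with $\{1,2\}$, using that $(n-1)(n-2)b$ is divisible by $(n-1)b$ and by $n-2$, and is even when $n$ is even.

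For the reverse inclusion $W\subseteq f^2(M^2_R)$, the idea is, given $v=\sum c_{ij}v_{ij}\in W$, to produce $w\in M^2_R$ with $f^2(w)=v$. The elements of $f^2(M^2_R)$ I would build the argument from are: (a)~$f^2$ of a polytabloid, which one computes to equal exactly $(n-1)(n-2)b$ times that polytabloid, since the $w$- and $u$-terms cancel; (b)~the differences $f^2(v_{ij})-f^2(v_{ik})$ for distinct $i,j,k$, which simplify to $-(n-1)b$ times a fixed $\Z$-combination of polytabloids (using $w_j-w_k=(v_{ij}-v_{ik})+\sum_{l\notin\{i,j,k\}}(v_{jl}-v_{kl})$ and rewriting each $v_{jl}-v_{kl}$ as a polytabloid plus $v_{ij}-v_{ik}$); and (c)~the generators $f^2(v_{ij})$ themselves. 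I would first run this over $R=\Z$, where the coefficients of $v$ can literally be reduced modulo integers: the congruences~\eqref{eq:coeffcongruence}, \eqref{eq:polytabloidcongruence}, and \eqref{eq:paritybit} should be precisely what is needed to subtract off elements of (a)--(c) one modulus at a time until the coefficients are small, and then, since $v\in S^2_R\subseteq K_{12}=\ker f^0$, the surviving relations among those coefficients force $v=0$.

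To pass from $\Z$ to a general commutative ring $R$, I would observe that the $\Z$-case pins down $\operatorname{coker}(f^2\co M^2_\Z\to S^2_\Z)$ explicitly, as a direct sum of cyclic groups whose orders are products of $2$, $n-2$, and $(n-1)b$, together with an explicit quotient map. Since tensoring with $R$ is right exact, $\operatorname{coker}(f^2_R)\cong\operatorname{coker}(f^2_\Z)\otimes_\Z R$, whence $f^2(M^2_R)=\ker\bigl(S^2_R\to\operatorname{coker}(f^2_\Z)\otimes_\Z R\bigr)$; translating this kernel condition back into congruences on the $c_{ij}$ recovers precisely \eqref{eq:coeffcongruence}--\eqref{eq:paritybit}. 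I expect the main obstacle to be the bookkeeping in this sufficiency direction: one must check that the four conditions are not merely necessary but exactly sharp enough for the reduction to terminate at $0$, tracking which congruence is consumed at each stage, and organizing the even and odd cases of $n$ in parallel (the factor of $2$ sits inside $n-2$ when $n$ is even and, depending on $n\bmod 4$, inside $(n-1)b$ when $n$ is odd), so that the single statement with the ad hoc constants $a$ and $b$ emerges uniformly.
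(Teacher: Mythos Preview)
Your forward direction and your reduction strategy over $\Z$ are both sound and close to the paper's approach. The paper likewise checks the four conditions on $f^2(v_{12})$ by a finite case analysis, and for the reverse inclusion it performs an explicit reduction using essentially your building blocks (a), (b), (c): it zeroes out $c_{n-1,n}$, then the remaining $c_{i,n}$, then $c_{n-2,n-1}$, and finally recognizes what remains as lying in $(n-1)(n-2)b\,S^2_R=f^2(S^2_R)$.

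The gap is in your passage from $\Z$ to a general ring $R$. Right exactness of $-\otimes_\Z R$ does give $\operatorname{coker}(f^2_R)\cong\operatorname{coker}(f^2_\Z)\otimes R$, hence $f^2(M^2_R)=\ker\bigl(S^2_R\to\operatorname{coker}(f^2_\Z)\otimes R\bigr)$. But the congruence conditions describe $W_R$ as $\ker\bigl(S^2_R\to A\otimes R\bigr)$ for a much larger target $A$ (one cyclic factor per congruence), and the injection $\operatorname{coker}(f^2_\Z)\hookrightarrow A$ need not stay injective after tensoring with an $R$ that has torsion. So the tensor argument yields only $f^2(M^2_R)\subseteq W_R$, the direction you already had; ``translating back'' the kernel condition to the original congruences is exactly the statement you are trying to prove. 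A concrete symptom: over $\Z$ the parity condition~\eqref{eq:paritybit} is redundant (the paper records this separately as Lemma~\ref{le:removefourthcondition}, valid whenever $R$ has trivial $\tfrac{n-2}{2}$-torsion), so a naive transfer from $\Z$ would suggest three conditions suffice over every $R$, which is false.

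The fix is simply to run your reduction directly over $R$. Nothing in your blocks (a)--(c) uses integrality: the congruence hypotheses are precisely what guarantee that each coefficient you need to kill lies in the appropriate ideal of $R$, so the required $R$-multiples exist. This is exactly what the paper does, and your building blocks already suffice; you just need to make explicit that, for $n$ even, zeroing out $c_{n-2,n-1}$ is the single step that consumes condition~\eqref{eq:paritybit}.
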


\begin{proof}
    Before we run the double-inclusion argument, we note the coefficients of $f^2(v_{ij})$.
    Specifically, if the $c_{kl}\in R$ are the coefficients such that $\sum_{kl}c_{kl}v_{kl}=f^2(v_{ij})$, then
    \[
c_{kl}=
\left\{
\begin{array}{cl}
 (n^2-5n+6)b & \text{ if $\{i,j\}=\{k,l\}$,}\\
 (3-n)b & \text{if $\abs{\{i,j\}\cap\{k,l\}}=1$} \\
2b & \text{if $\{i,j\}\cap\{k,l\}=\varnothing$.}
\end{array}\right.
\]
    This follows from Definition~\ref{de:fimaps} by direct computation.

    Using this, we show the first direction: if $v\in f^2(M^2_R)$, then it satisfies the conditions above.
    We show this by showing it for generators; so let $v=f^2(v_{ij}).$
    To show that $v\in S^2_R$, we use Proposition~\ref{pr:SESS2K12S1}.
    First map $f^2(v_{ij})$ to $S^0_R$ by the map that sends every standard generator to $1$.
    Note that $f^2(v_{ij})$ has one coefficient of $(n^2-5n+6)b$, $2(n-2)$ coefficients of $(3-n)b$, and $\binom{n-2}{2}$ coefficients of $2b$.
    From this it follows that the sum of the coefficients is $0$, so that $f^2(v_{ij})$ is in $K_{12}$ (as defined in Proposition~\ref{pr:SESS2K12S1}).
    Next we evaluate $\mu(f^2(v_{ij}))$; recall that $\mu\co M^2_R\to M^1_R$ is the map with $v_{kl}\mapsto t_k+t_l$.
    The coefficient on $t_k$ in $\mu(f^2(v_{ij}))$ is 
    \[
    \left\{
    \begin{array}{cl}
    (1(n^2-5n+6)+(n-2)(3-n))b & \text{ $k\in\{i,j\}$,}\\
    (2(3-n)+(n-3)\cdot 2)b & \text{otherwise,}
    \end{array}\right.
    \]
    so $\mu(f^2(v_{ij}))=0$.
    Then by Proposition~\ref{pr:SESS2K12S1}, $f^2(v_{ij})\in S^2_R$.
    Since elements of this form span $f^2(M^2_R)$, we have $f^2(M^2_R)\subseteq S^2_R$.

    Next we check that $f^2(v_{ij})$ satisfies system~\eqref{eq:coeffcongruence}.
    Note that $(n^2-5n+6)b=((n-1)(n-4)+2)b$, and $(3-n)b=(-(n-1)+2)b$.
    This implies that all coefficients of $f^2(v_{ij})$ are congruent to $2b$ modulo $(n-1)b$.
    In particular, they are all congruent to each other.
    This is a homogeneous system of congruences, so its solution set is a submodule, and so the entirety of $f^2(M^2_R)$ satisfies it.

    Now we check that $f^2(v_{ij})$ satisfies system~\eqref{eq:polytabloidcongruence}.
    Suppose we want to verify that $c_{kl}+c_{rs}\equiv c_{ks}+c_{rl}\pmod{n-2}$.
    Up to rearranging the congruence, there are four cases: (1) that $\{i,j\}=\{k,l\}$;
     (2) that each of $\{k,l\}$,$\{r,s\}$, $\{k,s\}$ and $\{r,l\}$ intersect $\{i,j\}$ in a single element; (3) $\{i,j\}\cap\{k,l\}$ have a single element and $\{i,j\}$ and $\{r,s\}$ are disjoint; or 
    (4) that $\{i,j\}$ and $\{k,l,r,s\}$ are disjoint.
    The congruence in the first case is equivalent to
    \[((n-2)(n-3)+2)b\equiv 2(1+(2-n))b\pmod{n-2}.\]
    In the second case it is
    \[2(3-n))b\equiv 2(3-n))b\pmod{n-2}.\]
    In the third case it is
    \[((3-n)+2)b\equiv ((3-n)+2)b\pmod{n-2},\]
    and in the fourth case it is $4b\equiv 4b\pmod{n-2}$.
    So the congruence holds in all cases; and since the indices were arbitrary, the entire system holds for $f^2(v_{ij})$.
    Again, this is a homogeneous system of congruences and the solution set is a submodule, so all vectors in $f^2(M^2_R)$ satisfy it.

    Finally, we check that $f^2(v_{ij})$ satisfies system~\eqref{eq:paritybit}.
    We assume that $n$ is even, since there is no condition here if $n$ is odd.
    For three distinct $k,l,m\in\{1,2,\dotsc,n\}$, there are three cases to consider: that $i,j\in \{k,l,m\}$, that $\{k,l,m\}$ contains exactly one of $i$ or $j$, or that $\{k,l,m\}$ is disjoint from $\{i,j\}$.
    In the first case, $c_{kl}+c_{km}+c_{lm}$ is $n^2-5n+6+2(3-n)$, which is even.
    In the second case, $c_{kl}+c_{km}+c_{lm}$ is $2(3-n)+2$, which is even.
    In the third case, $c_{kl}+c_{km}+c_{lm}=6$.
    So the system of congruences is true for $f^2(v_{ij})$, and therefore for all of $f^2(M^2_R)$.
    This finishes the forward inclusion of the argument.

    Now suppose that $v=\sum_{ij}c_{ij}v_{ij}\in M^2_R$ satisfies the four conditions in the statement.
    We alter $v$ by adding elements of $f^2(M^2_R)$ until we get $0$, to show that the original $v$ was in $f^2(M^2_R)$.
    We do this in several steps.

    Step 1: Zero out $c_{n-1,n}$.
    If $n$ is odd, then the $v_{n-1,1}$-coefficient of $f^2(v_{12})$ is $1$.
    If $n$ is even, then the $v_{n-1,1}$-coefficient of $f^2(v_{12})$ is $2$, and $v_{n-1,1}$-coefficient of $f^2(v_{1n})$ is $3-n$, which is odd.
    So by adding some linear combination of $f^2(v_{12})$ and $f^2(v_{1n})$ to $v$, we may assume that $c_{n-1,n}=0$.

    Step 2: Zero out the other $c_{i,n}$.
    By the system of congruences~\eqref{eq:coeffcongruence}, since $c_{n-1,n}=0$, we have that $b(n-1)$ divides every coefficient of $v$.
    For each $i>2$, the element $f^2(v_{12}-v_{1i})$ has $0$ for its coefficients on $v_{i+1,n}$ through $v_{n-1,n}$, but has $b(n-1)$ as the $v_{i,n}$-coefficient.
    So we can use these to zero out $c_{n-2,n}$ through $c_{3n}$.
    The element $f^2(v_{13}-v_{23})$ has $0$ as its coefficient on $v_{3n}$ through $v_{n-1,n}$, but has $b(n-1)$ as its $v_{2n}$-coefficient.  
    So we can use this to zero out $c_{2n}$.
    This then implies that $c_{1n}=0$: since $v\in S^2_M$, $v$ is a linear combination of standard polytabloids, but any polytabloid that has $0$ as its coefficients on $v_{2n}$ through $v_{n-1,n}$ also has $0$ as its coefficient on $v_{1n}$.

    Step 3: Zero out $c_{n-2,n-1}$.
    We break into cases depending on whether $n$ is even or odd.
    
    Case: $n$ odd.
    Let $P$ be a partition of $\{1,2,\ldots,n-1\}$ where all cells have size $2$, and such that $\{n-2,n-1\}$ is not in $P$.
    Define $v_P=\sum_{\{i,j\}\in P}v_{ij}$.
    For each $i$ from $1$ to $n-1$, the $v_{i,n}$-coefficient of $f^2(v_P)$ is $(3-n)/2+((n-1)/2-1)\cdot 1=0$ 
    (this uses $b=1/2$, and one summand contributes $(3-n)/2$ since $i$ is in exactly one cell in $P$).
    The coefficient on $v_{n-2,n-1}$ is $2((3-n)/2)+((n-1)/2-2)\cdot 1=(n-1)/2$ (two summands contribute $(3-n)/2$ since $n-2$ and $n-1$ are each in distinct cells in $P$).
    As in the previous step, we know that $(n-1)/2$ divides $c_{n-2,n-1}$.
    So adding some multiple of $f^2(v_P)$ will zero out $c_{n-2,n-1}$.

    Case: $n$ even.
    In this case, we need to use that $c_{n-2,n-1}$ is in $2R$.
To show this, we use the system of congruences~\eqref{eq:paritybit}.
Choosing our triple of indices to be $n-2$, $n-1$, and $n$, we have that $c_{n-2,n-1}+c_{n-2,n}+c_{n-1,n}$ is in $2R$.
But $c_{n-2,n}=c_{n-1,n}=0$, so $c_{n-2,n-1}\in 2R$.
Now we proceed with this case.
Let $P$ be a set of $n-1$ subsets of $\{1,2,\ldots, n-1\}$ of size $2$, such that each $i$ appears in exactly two subsets in $P$.
In other words, $P$ is the set of unordered pairs corresponding to the edges of a graph whose vertices are $\{1,2,\ldots, n-1\}$ and where every vertex has degree $2$ (so the graph is a disjoint union of cycles).
Also assume that $\{n-2,n-1\}$ is not in $P$.
Again define $v_P=\sum_{\{i,j\}\in P}v_{ij}$.
For each $i$ from $1$ to $n-1$, the $v_{i,n}$-coefficient of $f^2(v_P)$ is $2(3-n)+(n-3)\cdot 2=0$, since $i$ appears in exactly two sets in $P$.
The coefficient on $v_{n-2,n-1}$ is $4(3-n)+(n-5)\cdot 2=-2(n-1)$, since $\{n-2,n-1\}$ overlaps $4$ sets in $P$ in a single element, and is disjoint from the remaining $n-5$ sets.
So let $r\in R$ with $c_{n-2,n-1}=2r$, and replace $v$ with $v-f^2(rv_P)$.
In the new $v$, we have zeroed out $c_{n-2,n-1}$ without disturbing the coefficients on any $v_{i,n}$.

Step 4: Recognize $v$ as a linear combination of polytabloids in $f^2(M^2_R)$.
Fix $i$ and $j$ with $c_{ij}$ not already zeroed out.
By system~\eqref{eq:coeffcongruence}, $c_{ij}\in (n-1)bR$.
From system~\eqref{eq:polytabloidcongruence}, we know
\[c_{i,n-2}+c_{n-1,n}\equiv c_{i,n}+c_{n-1,n-2}\pmod{n-2},\]
meaning that $c_{i,n-2}\in (n-2)R$.
The same argument works for $j$, so $c_{j,n-2}\in (n-2)R$.
We also have
\[c_{ij}+c_{n-1,n}\equiv c_{i,n}+c_{j,n-1}\pmod{n-2},\]
so that $c_{ij}\in (n-2)R$.
But $(n-1)b$ and $(n-2)$ are relatively prime in $\Z$, and therefore $c_{ij}\in (n-1)(n-2)bR.$
So $v\in (n-1)(n-2)bS^2_R$.
By Lemma~\ref{le:fiispii}, we know that for any standard polytabloid $e_{ij}$, we have $f^2(e_{ij})=(n-1)(n-2)be_{ij}$.
So find a vector $v'\in S^2_R$ with $(n-1)(n-2)bv'=v$, write $v'$ as a linear combination of standard polytabloids, and then recognize $v=f^2(v')$.
\end{proof}

\begin{lemma}\label{le:removefourthcondition}
    If $R$ has trivial $\frac{n-2}{2}$-torsion (that is, $R[\frac{n-2}{2}]=0$), then system~\eqref{eq:paritybit} in Proposition~\ref{pr:f2M2congruences} is unnecessary.
\end{lemma}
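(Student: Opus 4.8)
\emph{Plan.} Since Proposition~\ref{pr:f2M2congruences} already shows that conditions~\eqref{eq:coeffcongruence}, \eqref{eq:polytabloidcongruence} and~\eqref{eq:paritybit}, together with $v\in S^2_R$, imply $v\in f^2(M^2_R)$, it suffices to prove that, when $R[\tfrac{n-2}{2}]=0$, the hypotheses ``$v\in S^2_R$'' and~\eqref{eq:polytabloidcongruence} already force~\eqref{eq:paritybit}. (In particular condition~\eqref{eq:coeffcongruence} will not even be used.) Throughout I assume $n$ is even, since otherwise~\eqref{eq:paritybit} is vacuous, and note that if $2$ is invertible in $R$ there is nothing to prove.

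The first step is a purely linear structural observation about solutions of~\eqref{eq:polytabloidcongruence}. Put $\bar R=R/(n-2)R$ and let $\bar c_{kl}$ denote the image of $c_{kl}$. Relabeling $i,j,k,l$, condition~\eqref{eq:polytabloidcongruence} says precisely that $\bar c_{ab}+\bar c_{cd}=\bar c_{ac}+\bar c_{bd}$ in $\bar R$ for all distinct $a,b,c,d$. I claim any symmetric array $(\bar c_{kl})$ with this property has the form $\bar c_{kl}=\psi(k)+\psi(l)+\delta$ for some $\psi\colon\{1,\dots,n\}\to\bar R$ and some $\delta\in\bar R$. This is the standard cocycle argument: using $n\ge 4$, the quantity $\bar c_{ab}-\bar c_{ac}$ is independent of $a$ (compare two matchings of $\{a,a',b,c\}$), so it equals $\psi(b)-\psi(c)$ for a suitable $\psi$; then $\bar c_{ab}-\psi(b)$ is independent of $b$, giving $\bar c_{ab}=\kappa(a)+\psi(b)$, and symmetry $\bar c_{ab}=\bar c_{ba}$ forces $\kappa-\psi$ to be a constant $\delta$. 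Fixing lifts $\tilde\psi(k),\tilde\delta\in R$, I can then write $c_{kl}=\tilde\psi(k)+\tilde\psi(l)+\tilde\delta+(n-2)r_{kl}$ with $r_{kl}\in R$.

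Next I would feed this into the two linear conditions packaged in ``$v\in S^2_R$'': by Proposition~\ref{pr:SESS2K12S1} this is equivalent to $\sum_{l\ne k}c_{kl}=0$ for every $k$ and $\sum_{k<l}c_{kl}=0$. Reducing mod $n-2$ and substituting the affine form (the terms $(n-2)\tilde\psi(k)$ and $(n-2)r_{kl}$ die) yields in $\bar R$ the identities $S+(n-1)\delta=0$, where $S=\sum_k\psi(k)$ and this holds for every $k$, together with $\binom{n}{2}\delta+(n-1)S=0$. Eliminating $S$ gives $\bigl(\binom{n}{2}-(n-1)^2\bigr)\delta=0$, and a one-line computation gives $\binom{n}{2}-(n-1)^2=-(n-1)\tfrac{n-2}{2}$. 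Hence $(n-1)\tfrac{n-2}{2}\tilde\delta\in(n-2)R$; writing $(n-1)\tfrac{n-2}{2}\tilde\delta=2\cdot\tfrac{n-2}{2}\,e$ and invoking $R[\tfrac{n-2}{2}]=0$ gives $(n-1)\tilde\delta=2e$, so $\tilde\delta\in 2R$ since $n-1$ is odd. Finally, for distinct $i,j,k$ the affine form gives $c_{ij}+c_{ik}+c_{jk}=2(\tilde\psi(i)+\tilde\psi(j)+\tilde\psi(k))+3\tilde\delta+(n-2)(r_{ij}+r_{ik}+r_{jk})$, which lies in $2R$ because $n-2$ is even and $\tilde\delta\in 2R$; this is exactly~\eqref{eq:paritybit}.

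The step I expect to be the main obstacle is recognizing the affine normal form $\bar c_{kl}=\psi(k)+\psi(l)+\delta$ modulo $n-2$ and seeing that it is precisely what~\eqref{eq:polytabloidcongruence} provides: once this is in hand, the rest is a short computation of two symmetric sums followed by a single coprimality/torsion manipulation, and the hypothesis $R[\tfrac{n-2}{2}]=0$ enters in exactly one place. A secondary nuisance is that $\psi$ and $\delta$ are only defined modulo $n-2$, so lifts must be fixed at the outset and the error terms $(n-2)r_{kl}$ carried along; but these are divisible by $n-2$, hence by $2$, so they never interfere with the final parity conclusion.
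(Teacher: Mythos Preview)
Your argument is correct, and it takes a genuinely different route from the paper's. The paper re-enters the reduction algorithm of Proposition~\ref{pr:f2M2congruences}: it uses condition~\eqref{eq:coeffcongruence} to carry out Steps~1 and~2 (zeroing out all $c_{i,n}$), observes via~\eqref{eq:polytabloidcongruence} that the remaining $\binom{n-1}{2}$ coefficients are then mutually congruent modulo $n-2$, and combines this with $\sum c_{ij}=0$ to get $\binom{n-1}{2}c_{n-2,n-1}\equiv 0\pmod{n-2}$; the torsion hypothesis then forces $c_{n-2,n-1}\in 2R$, which is exactly what Step~3 needs.

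Your approach bypasses the reduction entirely. The key structural observation --- that~\eqref{eq:polytabloidcongruence} alone forces the affine shape $\bar c_{kl}=\psi(k)+\psi(l)+\delta$ in $R/(n-2)R$ --- lets you read off~\eqref{eq:paritybit} directly from the two linear constraints encoding $v\in S^2_R$, without ever invoking~\eqref{eq:coeffcongruence}. This is a cleaner and slightly stronger statement than what the paper proves, and it makes transparent why the torsion hypothesis enters in exactly one place. The paper's version, on the other hand, has the virtue of slotting into the existing proof of Proposition~\ref{pr:f2M2congruences} with minimal new ideas, so a reader who has just worked through that proposition sees immediately where the modification goes.
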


\begin{proof}
    To prove this, we explain how to modify the proof of the second direction of Proposition~\ref{pr:f2M2congruences} with the modified hypotheses.

    So suppose $v=\sum_{ij}c_{ij}v_{ij}\in S^2_M$ and it satisfies systems~\eqref{eq:coeffcongruence} and~\eqref{eq:polytabloidcongruence}, but not system~\eqref{eq:paritybit}, \emph{a priori}.
    We carry out the steps in the reduction, and get to step 3, which is to show that we can zero out $c_{n-2,n-1}$.
    The case where $n$ is odd does not use the system~\eqref{eq:paritybit}, so we work the case where $n$ is even.
    We need to show that $c_{n-2,n-1}\in 2R$, since if it is, we have an element of $f^2(M^2_R)$ that we can use to zero it out.
    Since we have worked the first two steps, we have that $c_{1,n},\dotsc,c_{n-1,n}$ are all $0$.
    By system~\eqref{eq:polytabloidcongruence},
    \[c_{i,n-1}+c_{n-2,n}\equiv c_{n-1,n-2}+c_{i,n}\pmod{n-2},\]
    so $c_{1,n-1},\dotsc,c_{n-1,n-2}$ are all congruent modulo $n-2$.
    Also, for any $i$ and $j$ with $1\leq i<j\leq n-1$,
    \[c_{ij}+c_{n-2,n}\equiv c_{j,n-2}+c_{i,n} \pmod{n-2}.\]
    This shows that all $\binom{n-1}{2}$ coefficients $c_{12},\dotsc,c_{n-2,n-1}$ are congruent modulo $n-2$.
    But $v\in S^2_R$, so $\sum_{ij}c_{ij}=0$.
    So
    \[(n-1)\frac{n-2}{2}c_{n-1,n-2}\equiv 0 \pmod{n-2}\]
    Since $n-1\equiv 1\pmod{n-2}$, we can ignore it, and we have $r\in R$ with $(n-2)r=\frac{n-2}{2}c_{n-1,n-2}$.
    Now we use the hypothesis that $R$ has trivial $\frac{n-2}{2}$-torsion.
    We have that $\frac{n-2}{2}(c_{n-1,n-2}-2r)=0$, but our hypothesis then implies that $c_{n-1,n-2}-2r=0$, in other words, that $c_{n-1,n-2}$ is a multiple of $2$ in $R$.
    Steps 3 and 4 can then proceed as above.
\end{proof}

Our next goal is to prove corollaries like Corollary~\ref{co:f1M2index} and Corollary~\ref{co:f1M2isomorphism} for $f^2(M^2_\Z)$.
As in the proof of Corollary~\ref{co:f1M2index}, we want to describe $f^2(M^2_\Z)$ as the kernel of a map from $S^2_\Z$ to a finite module; however, in this case the finite module is more difficult to work with.

\begin{definition}\label{de:barepsilon}
    Let $S^{2*}_R$ be the $R\sg{n}$-module given by $\hom_R(S^2_R,R)$, with the action $(\sigma f)(v)=f(\sigma^{-1}v)$.
    For $v, w\in M^2_R$, let $v\cdot w\in R$ be the inner product with respect to the standard basis for $M^2_R$.
    Let $\epsilon\co M^2_R\to S^{2*}_R$ be the map $v\mapsto \epsilon_v$, where $\epsilon_v(w)=v\cdot w$, for $v\in M^2_R$ and $w\in S^2_R$.
    Let $\bar\epsilon \co M^2_R\to S^{2*}_{R/(n-2)R}$ be the composition
     \[M^2_R\stackrel{\epsilon}{\longrightarrow} S^{2*}_R \to S^{2*}_{R/(n-2)R},\]
    where the unlabeled map is reduction modulo $n-2$.   
\end{definition}

\begin{lemma}\label{le:kerepsilon}
    For any $R$, the map $\epsilon\co M^2_R\to S^{2*}_R$ is an $R\sg{n}$-module homomorphism.
    The submodule of $S^2_R$ consisting of vectors satisfying system~\eqref{eq:polytabloidcongruence} is equal to $\ker(\bar\epsilon|_{S^2_R})$.
\end{lemma}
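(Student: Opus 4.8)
The plan is to treat the two assertions separately; each is a direct verification, and the only care needed is in unwinding the definition of $\bar\epsilon$ and in reducing the checks to a spanning set.

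For the first assertion, I would note that $\epsilon$ is $R$-linear because the pairing $(v,w)\mapsto v\cdot w$ is $R$-bilinear, so it remains to check $\sg{n}$-equivariance. The key point is that $\sg{n}$ acts on $M^2_R$ by permuting the standard basis, hence acts orthogonally for the standard inner product: $(\sigma v)\cdot(\sigma w)=v\cdot w$ for all $v,w\in M^2_R$ and $\sigma\in\sg{n}$. Then for $w\in S^2_R$ one computes $(\sigma\epsilon_v)(w)=\epsilon_v(\sigma^{-1}w)=v\cdot(\sigma^{-1}w)=(\sigma v)\cdot w=\epsilon_{\sigma v}(w)$, using orthogonality in the third equality. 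Thus $\sigma\epsilon_v=\epsilon_{\sigma v}$, so $\epsilon$ is an $R\sg{n}$-module homomorphism, and postcomposing with the reduction $S^{2*}_R\to S^{2*}_{R/(n-2)R}$ (which is $R\sg{n}$-linear) shows $\bar\epsilon$ is as well.

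For the second assertion, I would first make explicit what $\ker(\bar\epsilon|_{S^2_R})$ is. Since $S^2_R$ is free over $R$ by Corollary~\ref{co:S2gens}, we have $S^2_{R/(n-2)R}\cong S^2_R/(n-2)S^2_R$, and under this identification the unlabeled reduction map sends $\epsilon_v$ to the functional $w\mapsto (v\cdot w)\bmod(n-2)$ on $S^2_R$ (well defined because it kills $(n-2)S^2_R$). Hence $v\in\ker(\bar\epsilon|_{S^2_R})$ exactly when $v\in S^2_R$ and $v\cdot w\equiv 0\pmod{n-2}$ for every $w\in S^2_R$. Because the pairing is $R$-bilinear, it suffices to test this on any $R$-module generating set of $S^2_R$; the polytabloids $v_{ij}+v_{kl}-v_{il}-v_{kj}$, for $i,j,k,l$ distinct, form such a set by the definition of $S^2_R$ together with Corollary~\ref{co:S2gens}. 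Writing $v=\sum_{ij}c_{ij}v_{ij}$, the standard inner product reads off coefficients, so $v\cdot(v_{ij}+v_{kl}-v_{il}-v_{kj})=c_{ij}+c_{kl}-c_{il}-c_{kj}$. Therefore $v$ lies in $\ker(\bar\epsilon|_{S^2_R})$ if and only if $v\in S^2_R$ and $c_{ij}+c_{kl}\equiv c_{il}+c_{kj}\pmod{n-2}$ for all distinct $i,j,k,l$, which is precisely system~\eqref{eq:polytabloidcongruence}.

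I do not expect a serious obstacle; the argument is bookkeeping. The two points to state carefully are: (i) that checking $\bar\epsilon_v=0$ on all of $S^2_R$ reduces to checking it on the polytabloid generators, which uses only $R$-bilinearity of the pairing; and (ii) the identification of the reduction map on $S^{2*}$, which uses that $S^2_R$ is $R$-free so that dualizing and reducing modulo $n-2$ interact as claimed. Once these are noted, everything collapses to the one-line coefficient computation above.
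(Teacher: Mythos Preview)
Your proposal is correct and follows essentially the same approach as the paper's proof: both verify equivariance via the $\sg{n}$-invariance of the standard inner product, and both reduce the kernel characterization to checking $v\cdot w\equiv 0\pmod{n-2}$ on the polytabloid generators of $S^2_R$. Your extra remark justifying the reduction map via freeness of $S^2_R$ is a minor elaboration that the paper leaves implicit, but it does not change the argument.
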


\begin{proof}
    It is easy to see that the inner product on $M^2_R$ is $\sg{n}$-invariant, meaning that for any $\sigma\in \sg{n}$ and any $v,w\in M^2_R$, we have $v\cdot w=(\sigma v)\cdot(\sigma w)$.
    Then for $\sigma\in \sg{n}$, $v\in M^2_R$ and $w\in S^2_R$, we have that 
    \[\epsilon_{\sigma v}(w)=(\sigma v)\cdot w=v\cdot (\sigma^{-1}w)=\epsilon_v(\sigma^{-1}w)=(\sigma \epsilon_v)(w).\]
    Since $R$-linearity of $\epsilon$ follows from bilinearity of the inner product, this means that $\epsilon$ is an $R\sg{n}$-module homomorphism.

    For the second statement, let $v=\sum_{ij}c_{ij}v_{ij}$ and let $w=v_{ij}+v_{kl}-v_{il}-v_{kj}$ be a polytabloid.
    Note that $v$ satisfies system~\eqref{eq:polytabloidcongruence} if and only if $v\cdot w\in (n-2)R$, if and only if $\epsilon_v(w)\in (n-2)R$.
    So $v$ satisfies the full system of congruences if and only if $\epsilon_v(w)\in (n-2)R$ for all polytabloids $w$.
    So if $v$ is in the kernel of $\bar\epsilon$, then $v$ satisfies the system of congruences.
    Conversely, if $v$ satisfies the system of congruences, then $v$ is in the kernel of $\bar{\epsilon}$ because polytabloids generate $S^2_R$.
\end{proof}

\begin{proposition}\label{pr:S2*presentation}
    The $(R/(n-2)R)\sg{n}$-module $\bar \epsilon(S^2_R)\subseteq S^{2*}_{R/(n-2)R}$ has the following presentation as an $(R/(n-2)R)$-module.
    \begin{itemize}
        \item Generators are $\epsilon(e_{ij})$ for $2\leq i<j\leq n-1$ and $(i,j)\neq(2,3)$, the set of images of standard polytabloids not involving $n$.
        \item The relations are linear combinations of generators of the form $c v_0=0$, where $v_0=\sum_{ij}c_{ij}\epsilon(e_{ij})$, with 
        \[
        c_{ij}=
        \left\{
        \begin{array}{cl}
        1 & \text{if $i\geq 3$ or $(i,j)=(2,n-1)$,}\\
        n-j & \text{ if $i=2$,}
        \end{array}
        \right.
        \]
        and $c\in R/(n-2)R$ with $\binom{n-2}{2}c=0$.
    \end{itemize}
\end{proposition}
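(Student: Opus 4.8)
The plan is to realize $\bar\epsilon(S^2_R)$ as an explicit quotient of $S^2_R$ and then extract the presentation from the standard-polytabloid basis. By Lemma~\ref{le:kerepsilon}, the kernel of $\bar\epsilon|_{S^2_R}\colon S^2_R\to S^{2*}_{R/(n-2)R}$ is the submodule $\widetilde K\subseteq S^2_R$ consisting of polytabloid combinations whose $M^2_R$-coordinates satisfy the congruence system~\eqref{eq:polytabloidcongruence}; hence $\bar\epsilon(S^2_R)\cong S^2_R/\widetilde K$, and since $(n-2)S^2_R\subseteq\widetilde K$ this quotient is naturally an $(R/(n-2)R)$-module, which is why the presentation lives over that ring. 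By Corollary~\ref{co:S2gens}, $S^2_R$ is free on the standard polytabloids, which split into the $\binom{n-2}{2}-1$ polytabloids not involving $n$ (the proposed generators) and the $n-2$ polytabloids $e_{2n},\dots,e_{n-1,n}$. So it remains to prove (a) \emph{generation}: each $e_{mn}$ is redundant modulo $\widetilde K$, i.e.\ its image lies in the submodule generated by the images of the $e_{ij}$ with $i,j\le n-1$; and (b) \emph{relations}: the module of relations among those generators, which is the intersection of $\widetilde K$ with the span of the standard polytabloids avoiding $n$ (reduced modulo $n-2$), equals $\operatorname{Ann}_{R/(n-2)R}\!\big(\binom{n-2}{2}\big)\cdot v_0$.

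For (a): since $e_{mn}$ is part of a free basis of $S^2_R$, there is no relation over $\Q$, so the redundancy is purely a mod-$(n-2)$ phenomenon. I would compute the Gram matrix $\langle e_{ij},e_{kl}\rangle$ directly from the two families of formulas for standard polytabloids in Section~\ref{ss:structure}; these entries are small integers depending only on the overlap pattern of $\{i,j\}$ and $\{k,l\}$. Reducing modulo $n-2$ and using $n-1\equiv1\pmod{n-2}$, one checks that the row indexed by a pair $\{m,n\}$ is an explicit $R$-combination of the rows indexed by pairs avoiding $n$; equivalently, one produces for each $m$ an element $\xi_m\in\widetilde K$ of the shape $e_{mn}-(\text{polytabloids avoiding }n)$. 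It is convenient to record that the vectors $w_k=\sum_{j\neq k}v_{kj}$ and $u=\sum_{i<j}v_{ij}$ lie in $\ker\epsilon$ (a short check against the polytabloid spanning set of $S^2_R$, cf.\ Proposition~\ref{pr:SESS2K12S1}): this lets one rewrite $\epsilon(e_{mn})$ using only the $\epsilon(v_{ij})$ with $i,j\le n-1$ and then translate back into standard polytabloids. This step parallels Steps~1 and~2 of the proof of Proposition~\ref{pr:f2M2congruences}.

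For (b): the forward inclusion is a computation. One finds the $M^2_R$-coordinates $c_{ij}$ of $v_0=\sum c_{ij}e_{ij}$ from the polytabloid formulas and checks that each inner product $\langle v_0,e_{kl}\rangle$ with a standard polytabloid is congruent modulo $n-2$ to a multiple of $\binom{n-2}{2}$; since, for the maps $\epsilon,\bar\epsilon$ of Definition~\ref{de:barepsilon}, the statement $\bar\epsilon(cv_0)=0$ is equivalent to $c\langle v_0,e_{kl}\rangle\equiv0\pmod{n-2}$ for all $e_{kl}$, this yields exactly the stated relations $cv_0=0$. The reverse inclusion, that no further relations exist, is the substantive part: starting from an arbitrary element of $S^2_R$ supported on polytabloids avoiding $n$ and satisfying~\eqref{eq:polytabloidcongruence}, I would successively subtract multiples of the $\xi_m$ and of $v_0$ to drive all but a single ``discriminant'' coordinate to $0$, then show that the residual congruence on that coordinate is precisely $\binom{n-2}{2}c\equiv0$.

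The main obstacle I anticipate is this reverse inclusion. The congruence system~\eqref{eq:polytabloidcongruence} is stated in the $v_{ij}$-coordinates, whereas the presentation uses the standard-polytabloid coordinates, and carrying out the reduction requires careful bookkeeping of how the two families $e_{2i}$ and $e_{ij}$ interact — precisely the asymmetry responsible for the split formula ($c_{ij}=n-j$ when $i=2$, and $c_{ij}=1$ otherwise) in the statement. A secondary difficulty, as elsewhere in this section, is the even/odd dichotomy in $n$, which governs the relevant divisibilities and, in particular, whether $\binom{n-2}{2}$ vanishes in $R/(n-2)R$.
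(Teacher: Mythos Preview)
Your overall framework is sound: identifying $\bar\epsilon(S^2_R)\cong S^2_R/\widetilde K$ via Lemma~\ref{le:kerepsilon} is correct, and your plan for (a) is essentially the paper's argument. Indeed, the paper shows $\sum_{j=4}^n e_{2j}\equiv w_1-w_2\pmod{n-2}$ and uses $w_k\cdot e_{ij}=0$ to conclude $\bar\epsilon(e_{2n})=-\sum_{j=4}^{n-1}\bar\epsilon(e_{2j})$, then acts by $(2\,i)$; this is the Gram-matrix/$\ker\epsilon$ computation you sketch.

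For (b), however, your plan for the reverse inclusion contains a confusion and misses the key simplification. Your elements $\xi_m=e_{mn}-(\text{combination avoiding }n)$ do not lie in the span $F$ of polytabloids avoiding $n$: subtracting them from an element of $F\cap\widetilde K$ throws you out of $F$, so they cannot be used to reduce such an element to a multiple of $v_0$. The $\xi_m$ witness part (a); they are not relations among the generators.

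The paper avoids any reduction procedure. The key observation is that any $v\in F$ has all $v_{*,n}$-coordinates equal to zero in $M^2_R$. Hence if one tests the condition $\bar\epsilon(v)=0$ against polytabloids that \emph{do} involve $n$, most terms vanish and each test becomes a simple congruence among a few of the coefficients $c_{kl}$. Pairing against $v_{2,n-1}+v_{i,n}-v_{2,n}-v_{i,n-1}$ and $v_{ij}+v_{n-1,n}-v_{i,n-1}-v_{jn}$ forces $c_{ij}\equiv c_{2,n-1}\pmod{n-2}$ for all $i\geq 3$; then pairing against $v_{2j}+v_{n-1,n}-v_{2,n-1}-v_{jn}$ (and counting which $e_{kl}$ hit $v_{2j}$) gives $c_{2j}\equiv (n-j)c_{2,n-1}$. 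This already says $v\equiv c_{2,n-1}v_0\pmod{n-2}$. One last test against $v_{23}+v_{n-1,n}-v_{2,n-1}-v_{3,n}$ yields the scalar constraint $\binom{n-2}{2}c_{2,n-1}\equiv 0$. So the reverse inclusion is obtained not by normalizing $v$, but by evaluating $\epsilon_v$ on well-chosen polytabloids containing $n$; this is the idea your proposal lacks.
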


\begin{remark}
    If $n$ is odd, then $n-2$ divides $\binom{n-2}{2}$ in $\Z$, and the condition on $c$ in the presentation is vacuous.
    So if $n$ is odd, take $c=1$, and the relation simply eliminates one of the generators, say $\bar\epsilon(e_{n-2,n-1})$.

    If $n$ is even, then $n-2$ does not divide $\binom{n-2}{2}$ in $\Z$, and the condition is meaningful.
    In this case, $\bar\epsilon(v_0)$ is may be nontrivial (depending on $R$), but $\bar\epsilon(2v_0)$ is definitely trivial.
\end{remark}

\begin{proof}[Proof of Proposition~\ref{pr:S2*presentation}]
    We know that $S^2_R$ is generated by standard polytabloids, so to prove the statement about generators, we need to show that for every standard polytabloid $e_{i,n}$, we can express $\epsilon(e_{i,n})$ in terms of the generators.
    For $i=2$, one can compute that
    \[\sum_{j=4}^n e_{2j}=(n-3)v_{13} -(n-3)v_{23} +\sum_{j=4}^n(v_{2j}-v_{1j}).\]
    So $\sum_{j=4}^ne_{2j}\equiv w_1-w_2\pmod{n-2}$.
    (Recall that $w_i$ is the sum of all standard basis elements with $i$ as an index.)
    However, $w_k\cdot e_{ij}=0$ for any $i,j,k$, even if $k\in \{i,j\}$.
    So 
    \[\bar\epsilon(e_{2n})=\sum_{i=4}^{n-1}\bar\epsilon(e_{2i}).\]
    This equation remains true if we act on both sides by a permutation, however, acting on a standard polytabloid by a permutation may give us a polytabloid that is not standard.
    So to prove a similar fact for $i\neq 2$, act on both sides by the transposition $(2i)\in \sg{n}$, and then rewrite the nonstandard polytabloids in terms of standard polytabloids.
    This still works for showing that our generating set is correct, since a nonstandard polytabloid not involving $n$ as an index can be rewritten as a linear combination of standard polytabloids not involving $n$.

    Now we prove the statement about relations.
    Suppose that $v=\sum c_{ij}e_{ij}$, where the sum is over standard polytabloids not involving $n$, and suppose $\bar\epsilon(v)=0$.
    Like in Step 3 of the proof of Proposition~\ref{pr:f2M2congruences}, the fact that $c_{i,n}=0$ for all $i$ implies that $c_{2,n-1}\equiv c_{i,n-1}$ for all $i$, and that $c_{ij}\equiv c_{i,n-1}$ for all $i$ and $j$ (by dotting with $v_{2,n-1}+v_{i,n}-v_{2,n}-v_{i,n-1}$ and $v_{ij}+v_{n-1,n}-v_{i,n-1}-v_{nj}$ respectively).
    So for all $i,j$ with $i\geq 3$, we have $c_{ij}=c_{2,n-1}$.

    Next, consider $w=v_{2j}+v_{n-1,n}-v_{2,n-1}-v_{nj}$.
    The standard polytabloids not involving $n$ and with a nontrivial coefficient on $v_{2j}$ are $e_{2j}$, $e_{j,j+1}$, $e_{j,j+2}$, $\ldots$, $e_{j,n-1}$.
    So 
    \[0=v\cdot w=c_{2,j}-(\sum_{k=1}^{i-1}c_{j,n-k})-c_{2,n-1}.\]
    Since most of these are equal, we see that $c_{2j}=ic_{2,n-1}$.
    This implies that $v=c_{2,n-1}v_0$, where $v_0$ is the vector in the statement.

    Now set $w=v_{23}+v_{n-1,n}-v_{2,n-1}-v_{3,n}$.
    The standard polytabloids not involving $n$ and with a nontrivial coefficient in $v_{23}$ are $e_{2,4}$ through $e_{2,n-1}$ and $e_{3,4}$ through $e_{3,n-1}$.
    So
    \[0\equiv -v\cdot w=((\sum_{i=1}^{n-4}i)+(n-4)+1)c_{2,n-1}=\frac{(n-2)(n-3)}{2}c_{2,n-1}.\]
    So the class of $c_{2,n-1}$ is in the $\binom{n-2}{2}$-torsion of $R/(n-2)R$.
\end{proof}

Now we can prove our corollaries about Specht subgroups.
\begin{corollary}\label{co:f2M2index}
The $\Z \sg{n}$-module $PB_n/N_{01}$ embeds in $N_2/PB_n'$ as a submodule of index 
\[2b\cdot((n-1)b)^{\binom{n}{2}-n-1}\cdot (n-2)^{\binom{n-1}{2}-n}.\]
\end{corollary}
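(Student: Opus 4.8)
The plan is to reduce the corollary to the computation of a single index of lattices, and then to evaluate that index by reducing modulo $(n-1)(n-2)b$ and applying the Chinese Remainder Theorem.

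First I would identify the two modules in play. By Lemma~\ref{le:fiispii} we have $N_{01}=\ker(f^2\circ\pi)$, so the abelianization $\pi$ induces a $\Z\sg{n}$-module isomorphism $PB_n/N_{01}\cong f^2(M^2_\Z)$; and since $N_2=\pi^{-1}(S^2_\Z)$ (from the proof of Proposition~\ref{pr:biggensets}) with $\ker\pi=PB_n'$, the same map induces $N_2/PB_n'\cong S^2_\Z$. As $f^2(M^2_R)\subseteq S^2_R$ by Definition~\ref{de:fimaps}, the corollary reduces to showing that $[S^2_\Z:f^2(M^2_\Z)]$ equals the stated product. To make this index finite and computable, I would sandwich it: $f^2$ multiplies each standard polytabloid by $(n-1)(n-2)b$ (Lemma~\ref{le:fiispii} together with the fact that $\pi^2$ fixes the $e_{ij}$), and these polytabloids generate $S^2_\Z$ (Corollary~\ref{co:S2gens}), so
\[(n-1)(n-2)b\,S^2_\Z\subseteq f^2(M^2_\Z)\subseteq S^2_\Z .\]
Writing $R=\Z/(n-1)(n-2)b\Z$ and using that $S^2_\Z$ is a direct summand of $M^2_\Z$ (Lemma~\ref{le:M2basis}), so that $S^2_\Z/(n-1)(n-2)b\,S^2_\Z\cong S^2_R$, and that $f^2$ commutes with reduction, I obtain
\[[S^2_\Z:f^2(M^2_\Z)]=\frac{|S^2_R|}{|f^2(M^2_R)|}.\]
It then remains to count $f^2(M^2_R)$, which Proposition~\ref{pr:f2M2congruences} describes as the set of $v\in S^2_R$ satisfying the congruence systems \eqref{eq:coeffcongruence}, \eqref{eq:polytabloidcongruence}, and, when $n$ is even, \eqref{eq:paritybit}.

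Next I would split that count with the Chinese Remainder Theorem. Since $\gcd((n-1)b,n-2)=1$ we get $R\cong R_1\times R_2$ with $R_1=\Z/(n-1)b\Z$ and $R_2=\Z/(n-2)\Z$, hence $S^2_R\cong S^2_{R_1}\times S^2_{R_2}$. Under this splitting \eqref{eq:coeffcongruence} constrains only the $R_1$-component, forcing all of its coordinates to be equal; \eqref{eq:polytabloidcongruence} constrains only the $R_2$-component; and for $n$ even \eqref{eq:paritybit} also constrains only the $R_2$-component, being invisible to $R_1$ because $(n-1)b=n-1$ is odd. Thus $|f^2(M^2_R)|=A\cdot B$, where $A$ counts the $v\in S^2_{R_1}$ with all coordinates equal and $B$ counts the $v\in S^2_{R_2}$ satisfying \eqref{eq:polytabloidcongruence} (and \eqref{eq:paritybit} if $n$ is even). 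For $A$: such $v$ are precisely the multiples of $u$, and $u\in S^2_{R_1}$ by Proposition~\ref{pr:SESS2K12S1} (one checks $u\in K_{12}$ because $(n-1)b\mid\binom{n}{2}$, and $\mu(u)=(n-1)\sum_k t_k=0$ because $(n-1)b\mid n-1$), so $A=|R_1|=(n-1)b$. For $B$: dropping \eqref{eq:paritybit} for the moment, Lemma~\ref{le:kerepsilon} identifies the solution set of \eqref{eq:polytabloidcongruence} in $S^2_{R_2}$ with $\ker(\bar\epsilon|_{S^2_{R_2}})$, of order $|S^2_{R_2}|/|\bar\epsilon(S^2_{R_2})|$; substituting the presentation from Proposition~\ref{pr:S2*presentation} (for $n$ odd the relation deletes one generator, for $n$ even it is $2v_0=0$ with $v_0$ having a unit coordinate) and simplifying the binomial exponents gives $|\ker(\bar\epsilon|_{S^2_{R_2}})|=(n-2)^{n-1}$ if $n$ is odd and $\tfrac12(n-2)^{n-1}$ if $n$ is even.

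The delicate point, and the step I expect to be the main obstacle, is to show that for $n$ even \eqref{eq:paritybit} imposes nothing new on $\ker(\bar\epsilon|_{S^2_{R_2}})$, so that $B=|\ker(\bar\epsilon|_{S^2_{R_2}})|$. I would handle it by observing that Lemma~\ref{le:removefourthcondition} remains valid for $R=\Z/(n-2)\Z$ even though its hypothesis fails: its proof uses $R[\tfrac{n-2}{2}]=0$ only to pass from $\tfrac{n-2}{2}x=0$ to $x\in 2R$ at the single place where $c_{n-2,n-1}$ must be shown to lie in $2R$, and $\Z/(n-2)\Z$ satisfies $R[\tfrac{n-2}{2}]=2R$, which is exactly what is needed there. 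Hence every $v\in S^2_{R_2}$ satisfying \eqref{eq:polytabloidcongruence} already lies in $f^2(M^2_{R_2})$, and the forward direction of Proposition~\ref{pr:f2M2congruences} then forces it to satisfy \eqref{eq:paritybit} as well. Once $B$ is settled the rest is arithmetic:
\[[S^2_\Z:f^2(M^2_\Z)]=\frac{\bigl((n-1)(n-2)b\bigr)^{\binom{n}{2}-n}}{(n-1)b\cdot\bigl|\ker(\bar\epsilon|_{S^2_{R_2}})\bigr|},\]
and using $\binom{n}{2}-2n+1=\binom{n-1}{2}-n$ and tracking the factor $\tfrac12$ that occurs exactly when $n$ is even, this collapses in both parities to $2b\cdot\bigl((n-1)b\bigr)^{\binom{n}{2}-n-1}\cdot(n-2)^{\binom{n-1}{2}-n}$. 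Apart from the parity point the remaining risk is bookkeeping: keeping the even and odd cases, the constants $a$ and $b$, and the several binomial-coefficient exponents aligned.
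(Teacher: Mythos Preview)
Your argument is correct, and it reaches the same index via essentially the same ingredients (Proposition~\ref{pr:f2M2congruences}, Lemma~\ref{le:kerepsilon}, Proposition~\ref{pr:S2*presentation}), but the packaging differs from the paper's proof in a way worth noting.

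The paper works directly over $\Z$: it builds an explicit homomorphism
\[
S^2_\Z \longrightarrow S^2_{\Z/(n-1)b\Z}/\Delta \;\times\; \bar\epsilon(S^2_\Z)
\]
(the first factor detecting system~\eqref{eq:coeffcongruence}, the second detecting~\eqref{eq:polytabloidcongruence}), observes that Lemma~\ref{le:removefourthcondition} applies outright since $\Z$ has no torsion, so the kernel is exactly $f^2(M^2_\Z)$, and then reads the index off from the orders of the two finite targets, which are coprime and hence the map is surjective. No reduction modulo $(n-1)(n-2)b$ and no Chinese Remainder step is needed.

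Your route instead sandwiches $(n-1)(n-2)b\,S^2_\Z\subseteq f^2(M^2_\Z)\subseteq S^2_\Z$, passes to $R=\Z/(n-1)(n-2)b\Z$, and splits via CRT into $R_1=\Z/(n-1)b\Z$ and $R_2=\Z/(n-2)\Z$. This forces you to confront the parity condition~\eqref{eq:paritybit} over $R_2$, which you handle with the nice observation that $R_2[\tfrac{n-2}{2}]=2R_2$, so the proof of Lemma~\ref{le:removefourthcondition} still goes through there even though its stated hypothesis fails. That step is genuinely new relative to the paper (which never needs it), and it is correct. What your approach buys is a uniform finite computation split cleanly along the two moduli; what the paper's approach buys is avoiding the reduction entirely and invoking Lemma~\ref{le:removefourthcondition} only in the trivial torsion-free case.
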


\begin{proof}
    We recognize $PB_n/N_{01}\cong f^2(M^2_\Z)$ and $N_2/PB_n'\cong S^2_\Z$.
    By reduction modulo $(n-1)b$ and taking the quotient, we have a homomorphism $S^2_\Z\to S^2_{\Z/(n-1)b\Z}/\Delta$, where $\Delta$ is the submodule spanned by $u$ (the sum of all the standard basis elements).
    This homomorphism is clearly surjective.
    We also have the homomorphism $\bar\epsilon\co S^2_\Z\to S^{2*}_{\Z/(n-2)\Z}$.
    We combine these into a homomorphism
    \[S^2_\Z\to S^2_{\Z/(n-1)b\Z}/\Delta\times \bar\epsilon(S^2).\]
    The kernel of this product homomorphism is the intersection of the kernels of the factors.
    We recognize the kernel of the first factor as being the submodule of $S^2_\Z$ satisfying system~\eqref{eq:coeffcongruence}, and by Lemma~\ref{le:kerepsilon}, we recognize the kernel of the second factor as being the submodule satisfying the system~\eqref{eq:polytabloidcongruence}.
    By Lemma~\ref{le:removefourthcondition}, the intersection of these two kernels is exactly $f^2(M^2_\Z)$ (since $\Z$ has no torsion).
    So $f^2(M^2_\Z)$ is the kernel of a homomorphism from $S^2_\Z$ to a finite module.
    To finish the statement, we need to compute the order of the module and deduce that the map is surjective.

    Note that under the map to $\Z/(n-1)b\Z$,
    we have $u\mapsto \binom{n}{2}=0$.
    We have $\mu(u)=(n-1)\sum_kt_k$ ($\mu$ as in Proposition~\ref{pr:SESS2K12S1}), so by Proposition~\ref{pr:SESS2K12S1}, $u$ is in $S^2_{\Z/(n-1)b\Z}$.
    Notice that $u-e_{n-1,n}$ has $0$ as its coefficient on $v_{n-1,n}$, and can therefore be written as a linear combination of standard polytabloids not involving $n$.
    This means that we can replace $e_{n-1,n}$ by $u$ in our basis for $S^2_{\Z/(n-1)b\Z}$ and get another basis.
    This implies that $S^2_{\Z/(n-1)b\Z}/\Delta$ is a free $(\Z/(n-1)b\Z)$-module of rank $\binom{n}{2}-n-1$.
    
    Proposition~\ref{pr:S2*presentation} says that $\bar\epsilon(S^2_\Z)$ is a direct sum of a free $(\Z/(n-2)\Z)$-module of rank $\binom{n-1}{2}-n$, and a factor that is $\Z/2\Z$ if $n$ is even, and trivial if $n$ is odd.
    So the order of $\bar\epsilon(S^2_\Z)$ is $2b\cdot (n-2)^{\binom{n-1}{2}-n}$.
    Whether $n$ is even or odd, the order of $S^2_{\Z/(n-1)b\Z}/\Delta$ is relatively prime to the order of $\bar\epsilon(S^2_\Z)$, and therefore the product homomorphism is surjective.
    The statement follows.
\end{proof}

\begin{corollary}\label{co:f2M2isomorphism}
    Even though $PB_n/N_{01}$ and $N_2/PB_n'$ are isomorphic as abelian groups, and $(PB_n/N_{01})\otimes_\Z\Q$ and $(N_2/PB_n')\otimes_\Z\Q$ are isomorphic as $\Q\sg{n}$-modules, we have that $PB_n/N_{01}\not \cong N_2/PB_n'$ as $\Z \sg{n}$-modules.
\end{corollary}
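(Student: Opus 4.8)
The plan is to adapt the argument of Corollary~\ref{co:f1M2isomorphism}, using the submodule generated by $-1$-eigenvectors of transpositions as the distinguishing invariant. First I would recall (from Lemma~\ref{le:fiispii} and the proof of Corollary~\ref{co:f2M2index}) that $PB_n/N_{01}\cong f^2(M^2_\Z)$ and $N_2/PB_n'\cong S^2_\Z$, that both are free abelian of rank $\binom n2-n$, and that one has finite index in the other, so the two agree after $\otimes_\Z\Q$. For a $\Z\sg n$-module $A$, let $T(A)\leq A$ be the submodule generated by all $a\in A$ such that $\tau a=-a$ for some transposition $\tau\in\sg n$. If $\phi\co A\to B$ is a $\Z\sg n$-module isomorphism and $\tau a=-a$, then $\tau\phi(a)=\phi(\tau a)=-\phi(a)$, so $\phi(T(A))=T(B)$; hence it is enough to prove that $T(S^2_\Z)=S^2_\Z$ while $T(f^2(M^2_\Z))\subsetneq f^2(M^2_\Z)$.

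For the first claim, I would observe that every standard polytabloid is a $-1$-eigenvector of a transposition: rewriting $e_{2i}=v_{2i}+v_{13}-v_{23}-v_{1i}$ shows that $(1\,2)$ negates it, and rewriting $e_{ij}=v_{ij}+v_{12}-v_{2i}-v_{1j}$ (for $3\leq i<j\leq n$) shows that $(1\,i)$ negates it. Since the standard polytabloids generate $S^2_\Z$ by Corollary~\ref{co:S2gens}, it follows that $T(S^2_\Z)=S^2_\Z$.

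For the second claim, suppose $x=\sum_{i<j}c_{ij}v_{ij}\in M^2_\Z$ satisfies $\tau x=-x$ for a transposition $\tau=(a\,b)$. Comparing coefficients on $v_{ab}$, which $\tau$ fixes, gives $c_{ab}=-c_{ab}$, so $c_{ab}=0$. If in addition $x\in f^2(M^2_\Z)$, then $x$ satisfies the congruence system~\eqref{eq:coeffcongruence} from Proposition~\ref{pr:f2M2congruences}, so every $c_{ij}$ is congruent to $c_{ab}=0$ modulo $(n-1)b$; that is, $x\in(n-1)b\,M^2_\Z$. Hence $T(f^2(M^2_\Z))\subseteq(n-1)b\,M^2_\Z\cap f^2(M^2_\Z)$. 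On the other hand $f^2(v_{12})\in f^2(M^2_\Z)$ has coefficient $2b$ on $v_{34}$ (using $n\geq 4$), and $(n-1)b\nmid 2b$ because $n-1\nmid 2$ for $n\geq 4$; so $f^2(M^2_\Z)\not\subseteq(n-1)b\,M^2_\Z$, and the containment $T(f^2(M^2_\Z))\subsetneq f^2(M^2_\Z)$ is strict. Together with the previous paragraph this rules out any $\Z\sg n$-module isomorphism $f^2(M^2_\Z)\to S^2_\Z$, which is the corollary.

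The argument is essentially routine once set up; the one thing to watch in the last paragraph is that we invoke only the \emph{necessary} direction of Proposition~\ref{pr:f2M2congruences}, and in fact only the single system~\eqref{eq:coeffcongruence}, and that the divisibility inequality $n-1\nmid 2$ holds uniformly for all $n\geq 4$ (where $(n-1)b$ is an integer in both parities).
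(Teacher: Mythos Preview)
Your argument is correct and follows the same template as the paper's proof, but with a lighter invariant. The paper uses the submodule generated by \emph{common} $-1$-eigenvectors of pairs of disjoint transpositions; it shows that such a common eigenvector in $M^2_\Z$ is necessarily an integer multiple of a single polytabloid, and then uses all three congruence systems of Proposition~\ref{pr:f2M2congruences} to conclude that inside $f^2(M^2_\Z)$ these generate only $(n-1)(n-2)b\,S^2_\Z$. Your version uses $-1$-eigenvectors of single transpositions and only system~\eqref{eq:coeffcongruence}, obtaining the weaker containment $T(f^2(M^2_\Z))\subseteq (n-1)b\,M^2_\Z$; this still suffices to separate the two modules for $n\geq 4$. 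What you lose is the sharper index $(n-1)(n-2)b$; what you gain is that you never need to characterize the joint eigenspace, and you invoke only one of the four conditions in Proposition~\ref{pr:f2M2congruences}.
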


\begin{proof}
    Again, these are isomorphic as abelian groups because they are free abelian of the same rank, and they tensor to isomorphic modules over $\Q$ because one is finite index in the other.
    They are not isomorphic because we can characterize the polytabloids in terms of eigenvectors of transpositions.

    Claim: if $(ij),(kl)\in \sg{n}$ are disjoint transpositions, and $v\in M^2_\Z$ is a $-1$-eigenvector for both $(ij)$ and $(kl)$, then $v$ is a multiple of the polytabloid $v_{ik}+v_{jl}-v_{il}-v_{jk}$.

    The point is that the $-1$-eigenspace for $(ij)$ is spanned by 
    \[v_{1i}-v_{1j}, v_{2i}-v_{2j},\dotsc, v_{in}-v_{jn},\]
    (the other subscript ranges over the integers in $\{1,2,\dotsc,n\}$ other than $i$ and $j$).
    Something similar is true for $(kl)$, and the given polytabloid spans the intersection.
    This proves the claim.

    In $S^2_\Z$, the common $-1$-eigenvectors for disjoint transpositions generate, since $S^2_\Z$ is generated by polytabloids (by definition).
    Proposition~\ref{pr:f2M2congruences} implies that if $v$ is a polytabloid and $cv\in f^2(M^2_\Z)$, then $(n-1)(n-2)b$ divides $c$.
    So the common $-1$-eigenvectors of disjoint transpositions generate $(n-1)(n-b)bS^2_\Z$, a proper submodule.
    So $S^2_\Z$ and $f^2(M^2_\Z)$ cannot be isomorphic as $\Z\sg{n}$-modules.
\end{proof}

\subsection{The case when $n=3$}
In the preceding part of this section, we have a standing assumption that $n\geq 4$.
Many of our results are still true if $n=3$, but the overall setup is different, so we state the results carefully here.
First of all, for $n=3$, the definition of $M^2_R$ in terms of tabloids doesn't really work.
However, we can still define $M^2_R$ as the free $R$-module on $\{v_{12},v_{13},v_{23}\}$, with the obvious action by $\sg{3}$.
Then 
\[M^2_\Q\cong M^1_\Q\cong S^0_\Q\oplus S^1_\Q,\]
which is different from the deccomposition when $n\geq 4$.
The definition of $S^2_R$ specializes to the trivial submodule when $n=3$, and the formulas we give for $\pi^2$ and $f^2$ define the zero map.

Let $n=3$.
Let $i\in \{0,1\}$ and let $j\in \{0,1\}$ be the other element.
Define $N_i<PB_3$ to be the kernel of $\pi^j|_{M^2_\Z}\circ \pi$.
Then $N_0$ and $N_1$ are the \emph{Specht subgroups} of $PB_3$.
We note that Propositions~\ref{pr:normaliffsubmodule},~\ref{pr:N0nfg} and~\ref{pr:f1image} are still true if $n=3$, with no changes.
For the other results, some changes to the statements are needed; we collect all the modified statements in the following theorem.
After appropriate changes to the statements are made, no changes are needed to the arguments, so we give no further proof.
\begin{theorem}
    The following results are true when $n=3$.
    \begin{itemize}
        \item A version of Lemma~\ref{le:tensorwithQ}: For $i\in\{0,1\}$, $(N_i/PB_3')\otimes_\Z\Q\cong S^i_\Q$, and if $j\in \{0,1\}$ is the other element, $(PB_3/N_i)\otimes_\Z\Q\cong S^j_\Q$.
        \item A version of Lemma~\ref{le:pairwiseincomm}: The subgroups $PB_3'$, $N_0$, $N_1$, and $PB_3$ are pairwise incommensurable.
        \item A version of Theorem~\ref{th:classification}: Any normal subgroup $N\lhd B_3$ with $PB_3'\leq N\leq PB_3$ is a finite-index subgroup of exactly one of $PB_3'$, $N_0$, $N_1$, or $PB_3$.
        \item A version of Proposition~\ref{pr:windeqns}:
        \begin{itemize}
            \item For $\gamma\in PB_3$, $\gamma\in N_0$ if and only if 
            \[\omega_{12}(\gamma)=\omega_{13}(\gamma)=\omega_{23}(\gamma).\]
            \item For $\gamma\in PB_3$, $\gamma\in N_1$ if and only if
            \[\omega_{12}(\gamma)+\omega_{13}(\gamma)+\omega_{23}(\gamma)=0.\]
        \end{itemize}
        \item A version of Proposition~\ref{pr:biggensets}:
        \begin{itemize}
            \item $N_0$ is generated by the union of $PB_3'$ with the single element $a_{12}a_{13}a_{23}$.
            \item $N_1$ is generated by the union of $PB_3'$ with 
            \[\{a_{23}a_{13}^{-1},a_{23}a_{12}^{-1}\}.\]
        \end{itemize}
        \item A version of Proposition~\ref{pr:N12fg}: $N_1$ is finitely generated.
        \item A version of Corollary~\ref{co:f1M2index}:
        $PB_3/N_0$ is isomorphic to a $\Z \sg{3}$-submodule of $N_1/PB_3'$ with index $3$.
        \item A version of Corollary~\ref{co:f1M2isomorphism}:
        $PB_3/N_0$ and $N_1/PB_3'$ are isomorphic as abelian groups, and $(PB_3/N_0)\otimes_\Z\Q$ and $(N_1/PB_3')\otimes_\Z\Q$ are isomorphic as $\Q\sg{3}$-modules, but $PB_3/N_0\not\cong N_1/PB_3'$ as $\Z\sg{3}$-modules.
    \end{itemize}
\end{theorem}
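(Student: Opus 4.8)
The plan is to exploit the fact that, for $n=3$, the entire development of the preceding subsections runs essentially verbatim once the decomposition $M^2_\Q\cong S^0_\Q\oplus S^1_\Q\oplus S^2_\Q$ is replaced by the degenerate decomposition $M^2_\Q\cong S^0_\Q\oplus S^1_\Q$. First I would record the structural facts that take the place of Lemma~\ref{le:M2basis} and Proposition~\ref{pr:SESS2K12S1}: the formulas for $\pi^0$ and $\pi^1$ still define commuting idempotent $\Q\sg{3}$-module endomorphisms of $M^2_\Q$ with $\pi^0+\pi^1=\mathrm{id}$ and $\pi^i(M^2_\Q)\cong S^i_\Q$, the formula for $\pi^2$ becomes the zero map (consistent with $S^2_\Q=0$), the lattice $\ker(\pi^1|_{M^2_\Z})$ is spanned by $u=v_{12}+v_{13}+v_{23}$, and $\ker(\pi^0|_{M^2_\Z})$ is the rank-two sublattice cut out by the vanishing of the coordinate sum. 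These are one-line checks. With them in hand, $f^1$ is defined as before (Definition~\ref{de:fimaps}, with $a=1$) and $\ker(f^1\co M^2_\Z\to S^1_\Z\text{ composed with }\pi)=N_0$.

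Next I would rerun the classification arguments. The version of Lemma~\ref{le:tensorwithQ} follows exactly as written: $\pi$ induces $PB_3/N_i\cong\pi^j(M^2_\Z)$ for $\{i,j\}=\{0,1\}$, and tensoring the defining short exact sequence with the flat $\Z$-module $\Q$ gives both isomorphisms. The version of Lemma~\ref{le:pairwiseincomm} is then immediate, using also $(PB_3/PB_3')\otimes\Q\cong S^0_\Q\oplus S^1_\Q$. The version of Theorem~\ref{th:classification} is the proof of Theorem~\ref{th:classificationprecise} with ``three simple summands'' replaced by ``two'': Proposition~\ref{pr:normaliffsubmodule} (valid for $n=3$) realizes $PB_3/N$ as a $\Z\sg{3}$-quotient of $M^2_\Z$, its rationalization is a quotient of $S^0_\Q\oplus S^1_\Q$, and Schur's lemma leaves the four possibilities $0$, $S^0_\Q$, $S^1_\Q$, $M^2_\Q$, corresponding to $K$ equal to one of $PB_3$, $N_1$, $N_0$, $PB_3'$.

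Then I would handle the concrete descriptions. For the version of Proposition~\ref{pr:windeqns}: $N_0=\ker(\pi^1\circ\pi)$ with $\ker(\pi^1)=\Q u$ forces all three winding numbers equal, while $N_1=\ker(\pi^0\circ\pi)$ with $\pi^0$ reading off the coordinate sum forces $\omega_{12}+\omega_{13}+\omega_{23}=0$. For the version of Proposition~\ref{pr:biggensets}: $u$ lifts to the center generator $z=a_{12}a_{13}a_{23}$ of Proposition~\ref{pr:PBncenter}, so $N_0=\genby{PB_3',z}$; for $N_1$, the basis change replacing $v_{13},v_{23}$ by $v_{13}-v_{12},v_{23}-v_{12}$ exhibits a $\Z$-basis of $\ker(\pi^0|_{M^2_\Z})$ lifting, modulo $PB_3'$, to the stated pair. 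The version of Proposition~\ref{pr:N12fg} uses the same congruence-subgroup trick, with $\widetilde N_1=\ker(PB_3\to\Z/3\Z)$ (finite index in $PB_3$, hence finitely generated), $z\in\widetilde N_1\setminus N_1$ generating $\widetilde N_1/N_1\cong\Z$, and a finite generating set of $\widetilde N_1$ corrected by powers of $z$. Finally, Proposition~\ref{pr:f1image} holds verbatim (here $na=3$), so $PB_3/N_0\cong f^1(M^2_\Z)$ is the kernel of the surjection $S^1_\Z\to S^1_{\Z/3\Z}/\Delta$ onto a free $(\Z/3\Z)$-module of rank $n-2=1$, giving index $3$ in $N_1/PB_3'\cong S^1_\Z$; and the non-isomorphism of the version of Corollary~\ref{co:f1M2isomorphism} follows because the $(-1)$-eigenvectors of transpositions generate $S^1_\Z$ but only generate $3\,S^1_\Z\subsetneq f^1(M^2_\Z)$ inside the latter.

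The only genuine obstacle is bookkeeping: one must verify that every point where the $n\geq4$ arguments invoked the three-summand structure still makes sense, and in particular that because $S^2_R=0$ the subgroups $N_2$ and the $N_{ij}$ of the general theory collapse, leaving exactly $N_0$ and $N_1$, and that the handful of Section~\ref{ss:structure} lemmas stated for $n\geq4$ are replaced by their trivial $n=3$ analogues described in the first paragraph. Once this degeneracy is tracked through, no argument needs any substantive change.
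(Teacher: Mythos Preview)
Your proposal is correct and follows exactly the approach the paper intends: the paper's own ``proof'' consists of the single sentence that, after modifying the statements for the degenerate decomposition $M^2_\Q\cong S^0_\Q\oplus S^1_\Q$, ``no changes are needed to the arguments, so we give no further proof.'' You have carefully traced through each of those unchanged arguments and verified that the only adjustments are the bookkeeping ones you describe, so your write-up is in fact more detailed than what the paper provides.
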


\section{Preliminaries for cohomology}

\subsection{Low-dimensional cohomology in permutation modules}

In computing the cohomology of a group $G$, any projective resolution of $\Z$ as a $\Z G$-module can be used to build a cochain complex.
In the study of group extensions, the bar resolution is usually used, since bar $2$-cochains can be used directly to construct group extensions.
However, describing bar $2$-cochains can be difficult, since the domain is $G\times G$ and there is usually no good way to define a $2$-cochain on generators.

For this paper, we use the $3$-dimensional truncation of a free resolution coming from the presentation for $\sg{n}$ given above.
This is the same chain complex from Day-Nakamura~\cite{DN3skeleton}.
In that paper, this chain complex is given as the cellular chain complex of the universal cover of a $3$-dimensional cell complex $X$ with $\pi_1(X)=\sg{n}$ and with the universal cover $\widetilde X$ being $2$-connected.  
In this paper, we directly describe $P_*=C_*(\widetilde X)$ instead of building $X$.
The deck transformations of $\widetilde X\to X$ make $P_*$ into a chain complex of $\Z \sg{n}$-modules.

\begin{definition}[The complex $P_*$]\label{de:Snres}
Let $P_0$ be the free $\Z \sg{n}$-module on a single generator $*$.
Let $P_1$ be the free $\Z \sg{n}$-module on $n-1$ generators $e_1,\dotsc, e_{n-1}$.
For all $i$, $\partial e_i=(s_i-1)*$; this defines the chain boundary map $\partial\co P_1\to P_0$.
Now $P_2$ will be a free $\Z \sg{n}$-module on the following set of three classes of free generators.
We define $\partial\co P_2\to P_1$ simultaneously by giving the boundaries of the generators.
\begin{itemize}
    \item For each $i$ from $1$ to $n-1$, we have a generator $c_i$ for $P_2$, with $\partial c_i=(s_i+1)e_i$.
    \item For each $i$ from $1$ to $n-2$, we have a generator $b_i$ for $P_2$, with 
    \[\partial b_i= (1-s_i+s_{i+1}s_i)e_{i+1}-(1-s_{i+1}+s_is_{i+1})e_i.\]
    \item For each $i$ and $j$ with $1\leq i<j\leq n-1$ and $j\geq i+2$, we have a generator $d_{ij}$ for $P_2$, with $\partial d_{ij}=(s_j-1)e_i-(s_i-1)e_j.$
\end{itemize}
The specific $3$-cells in $P_*$ are not needed in the present paper, so their definitions are suppressed.
\end{definition}

\begin{remark}
The cells in this complex have geometric interpretations; for details and illustrations, see Remark~1.4 and Figures~1, 2, and~3 in Day--Nakamura~\cite{DN3skeleton}
\end{remark}

The following is Theorem~1.1 from Day--Nakamura~\cite{DN3skeleton}.
\begin{theorem}
    The complex $P_*$ is the $3$-dimensional truncation of a free resolution for $\Z$ as a $\Z \sg{n}$-module.
\end{theorem}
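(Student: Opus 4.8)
The plan is to recover $P_*$ as the $3$-skeleton of the cellular chain complex of the universal cover of an explicit $3$-dimensional cell complex with fundamental group $\sg{n}$ whose universal cover is $2$-connected; once that is set up, the theorem becomes the standard fact that such a chain complex is exact in low degrees. First I would take $X^{(2)}$ to be the presentation $2$-complex of the presentation \eqref{eq:Snpres}: one $0$-cell, a $1$-cell $e_i$ for each $s_i$, and one $2$-cell for each relator, namely $c_i$ attached along $s_i^2$, $b_i$ along the braid relator $s_is_{i+1}s_is_{i+1}^{-1}s_i^{-1}s_{i+1}^{-1}$, and $d_{ij}$ along the commutator relator (for $j\ge i+2$). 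Then $\pi_1(X^{(2)})\cong\sg{n}$ by van Kampen. The cellular chain complex of the universal cover $\widetilde{X^{(2)}}$ is, through degree $2$, free over $\Z\sg{n}$ on the lifts of these cells, with $\partial e_i=(s_i-1)*$ and with the boundary of the $2$-cell attached along a relator $r$ equal to $\sum_i\overline{\partial r/\partial s_i}\,e_i$, the image in $\Z\sg{n}$ of the vector of Fox free derivatives of $r$. A direct computation of these derivatives, simplifying the group-ring coefficients using the defining relations of $\sg{n}$, reproduces the formulas for $\partial c_i$, $\partial b_i$, $\partial d_{ij}$ in Definition~\ref{de:Snres} up to the orientation conventions for the attaching maps (i.e.\ up to replacing some generators of $P_2$ by their negatives). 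Since the universal cover of a connected complex is simply connected, this already gives $H_0(P_*)\cong\Z$ via the augmentation and $H_1(P_*)=0$.

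The substantive step is exactness at $P_2$, i.e.\ $H_2(P_*)=0$. Here $H_2(P_*)=H_2(\widetilde{X^{(2)}})\cong\pi_2(X^{(2)})$ by the Hurewicz theorem, the $\Z\sg{n}$-module of identities among the relations of \eqref{eq:Snpres}; note it is finitely generated because $\Z\sg{n}$ is Noetherian (being finitely generated as a $\Z$-module) and $\pi_2(X^{(2)})=\ker(\partial_2)$ sits inside a finitely generated free module. I would then attach one $3$-cell along each member of a finite $\Z\sg{n}$-generating set of $\pi_2(X^{(2)})$, obtaining $X^{(3)}$ with $\pi_2(X^{(3)})=0$; since $\widetilde{X^{(3)}}$ is then simply connected with $H_2=0$ it is $2$-connected, and setting $P_3=C_3(\widetilde{X^{(3)}})$ (these are exactly the $3$-cells whose description is suppressed in the present paper) makes $P_3\to P_2\to P_1\to P_0\to\Z\to 0$ exact. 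Taking any free module surjecting onto $\ker(\partial_3\colon P_3\to P_2)$ then continues $P_*$ to a free resolution of $\Z$, which is precisely the assertion. Carrying this out honestly requires exhibiting the spherical classes that generate $\pi_2(X^{(2)})$ — for a Coxeter-type presentation like \eqref{eq:Snpres} these are the usual ``relations among relations'': inserting $s_i^2$ twice, commuting a squaring cell past a disjoint squaring or commutator cell, the interaction between $c_i$ and the braid cell $b_i$, the relation among the braid cells for a consecutive triple $s_i,s_{i+1},s_{i+2}$, and the commutation relations among four pairwise-disjoint generators — and then proving that this finite list suffices.

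I expect that last point — verifying that the chosen finite family of $3$-cells kills \emph{all} of $\pi_2(X^{(2)})$ — to be the main obstacle; it is essentially a Tietze-transformation argument at the level of $2$-complexes (or a picture/diagram-calculus verification of $2$-connectivity), and it is the geometric content carried out in the companion paper~\cite{DN3skeleton}. Everything else is either formal (working with a presentation complex, Hurewicz, extending an exact partial complex to a resolution) or a bookkeeping computation of Fox derivatives. For the uses made of this theorem in the present paper, where the explicit $3$-cells never appear, it would in fact suffice to cite \cite{DN3skeleton}, or simply to note that \emph{some} finite set of $3$-cells with the stated property must exist by the Noetherian argument above.
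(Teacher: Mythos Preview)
The paper does not prove this theorem at all: it simply states that this is Theorem~1.1 of the companion paper~\cite{DN3skeleton} and moves on. Your proposal is therefore strictly more than what the paper provides; it correctly sketches the standard route (presentation $2$-complex for~\eqref{eq:Snpres}, Fox derivatives to recover the boundary formulas in Definition~\ref{de:Snres}, Noetherian finiteness of $\pi_2$, attaching $3$-cells to kill $\pi_2$, then extending to a resolution), and you correctly identify that the only substantive step---exhibiting and verifying a finite generating set for the module of identities among relations---is exactly the content deferred to~\cite{DN3skeleton}. Your closing observation that a citation would suffice for the present paper's purposes is precisely the choice the authors make.
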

This means that, for any $\Z \sg{n}$-module $M$, we may compute low-dimensional homology and cohomology by
\[H_i(\sg{n};M)=H_i(P_*\otimes_{\Z \sg{n}} M)\quad\text{ and }\quad H^i(\sg{n};M)=H^i(\hom_{\Z \sg{n}}(P_*,M))\]
for $i\in\{0,1,2\}$.

Because we will need to do explicit computations later, we define cocycles representing these cohomology classes.
Since $P_1$ and $P_2$ are free $\Z \sg{n}$-modules, to define our cocycles it suffices to define them on the bases given above.
For an abelian group $A$ and a positive integer $k$, $A[k]$ denotes the $k$-torsion of $A$.
\begin{definition}
    Let $A$ be an abelian group, considered as a $\Z \sg{n}$-module with the trivial action. 
    For $r\in A[2]$,
    define $\kappa^0_r\co P_1\to A$ by $\kappa^0_r(e_i)=r$ for all $i$ from $1$ to $n-1$.

    For $r\in A$, define $\alpha^0_r\co P_2\to A$ by $\alpha^0_r(c_i)=r$, $\alpha^0_r(d_{ij})=0$, and $\alpha^0_r(b_i)=0$, for all choices of $i$ and $j$ that make sense.

    Now assume $n\geq 4$.
    For $r\in A[2]$, define $\beta^0_r\co P_2\to A$ by
    $\beta^0_r(c_i)=0$, $\beta^0_r(d_{ij})=r$, and $\beta^0_r(b_i)=0$, for all choices of $i$ and $j$ that make sense.
\end{definition}
The reason for the superscript $0$ is to fit with notation that we use for Specht modules.

\begin{proposition}\label{pr:untwistedcohomreps}
    For all choices of $r$ as above, the functions $\kappa^0_r$, $\alpha^0_r$, and $\beta^0_r$ are cocycles.
    
    The map $A[2]\to H^1(\sg{n};A)$ by $r\mapsto [\kappa^0_r]$ is an isomorphism.

    If $n\geq 4$, then the map $A[2]\oplus A/2A\to H^2(\sg{n};A)$ by $(r,[s])\mapsto [\beta^0_r+\alpha^0_s]$ is an isomorphism.
    Similarly, if $n$ is $2$ or $3$, then $A/2A\to H^2(\sg{n};A)$ by $[s]\mapsto [\alpha^0_s]$ is an isomorphism.
\end{proposition}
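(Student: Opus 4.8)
The plan is to compute directly with the cochain complex built from $P_*$, using that $A$ has trivial action. Since $A$ is trivial, $\hom_{\Z\sg{n}}(P_*,A)$ is naturally isomorphic to $\hom_\Z(C_*,A)$, where $C_*=P_*\otimes_{\Z\sg{n}}\Z$ is the complex of free abelian groups obtained by applying the augmentation to every group-ring coefficient in the boundary maps. In degrees $\le 2$ this is completely explicit: $\partial_1=0$, and on the given free generators $\partial_2$ sends $c_i\mapsto 2e_i$, $b_i\mapsto e_{i+1}-e_i$, and $d_{ij}\mapsto 0$. Dualizing, $\delta^0=0$, and a cochain $\psi$ with $\psi(e_i)=a_i$ satisfies $(\delta^1\psi)(c_i)=2a_i$, $(\delta^1\psi)(b_i)=a_{i+1}-a_i$, and $(\delta^1\psi)(d_{ij})=0$. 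First I would settle $H^1$: this formula shows $\psi$ is a cocycle exactly when all $a_i$ are equal and lie in $A[2]$, i.e.\ exactly when $\psi=\kappa^0_r$ for some $r\in A[2]$; since there are no $1$-coboundaries, $\kappa^0_r$ is a cocycle and $r\mapsto[\kappa^0_r]$ is the asserted isomorphism $A[2]\xrightarrow{\sim}H^1(\sg{n};A)$.

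For $H^2$ I would first dispose of the cocycle claims. Applying the functional $e_k\mapsto 1$ to $\partial_2$ of any element of $\ker\partial_2$ gives twice the sum of its $c$-coefficients (the $b$- and $d$-terms contribute $0$), so that sum vanishes; hence $\alpha^0_r$, which evaluates to $r$ times the sum of $c$-coefficients, vanishes on $\ker\partial_2\supseteq\im\partial_3$, and so is a cocycle, for every $r\in A$. That $\beta^0_r$ is a cocycle for $r\in A[2]$ is the one place the $3$-cells are used: from the boundary map $\partial_3$ described in~\cite{DN3skeleton} one checks that the total multiplicity of the $d$-cells in the boundary of each $3$-cell is even, so $\beta^0_r$ — which evaluates to $r$ times this multiplicity — again kills $\im\partial_3$. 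Next, the formula for $\delta^1$ identifies $B^2$ as the set of cochains $\phi$ admitting a tuple $(a_i)$ with $\phi(c_i)=2a_i$, $\phi(b_i)=a_{i+1}-a_i$, $\phi(d_{ij})=0$. In particular $\alpha^0_{2t}=\delta^1\psi$ for $\psi(e_i)=t$, so $[\alpha^0_s]$ depends only on $[s]\in A/2A$ and $\Phi\colon(r,[s])\mapsto[\beta^0_r+\alpha^0_s]$ is well defined; and if $\beta^0_r+\alpha^0_s=\delta^1\psi$ then evaluating on the $d_{ij}$, then the $b_i$, then the $c_i$ forces in turn $r=0$, then $\psi$ constant, then $s\in 2A$, so $\Phi$ is injective.

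Surjectivity of $\Phi$ I would get by naturality and a count, rather than by an explicit cochain-level description of $Z^2$. The families $\kappa^0$, $\alpha^0$, $\beta^0$ are natural in $A$, so $\Phi$ is a natural transformation between the functors $A\mapsto A[2]\oplus A/2A$ and $A\mapsto H^2(\sg{n};A)$; since $\sg{n}$ is finite, both functors commute with filtered colimits and with finite direct sums, so it suffices to treat $A=\Z$ and $A=\Z/p^k\Z$. For such $A$ both sides are finite, so by injectivity it is enough to match orders: the universal coefficient theorem for $\hom_\Z(C_*,A)$, together with the classical facts $H_1(\sg{n};\Z)\cong\Z/2\Z$ (all $n\ge 2$) and $H_2(\sg{n};\Z)\cong\Z/2\Z$ (for $n\ge 4$), gives $|H^2(\sg{n};A)|=|\ext(\Z/2\Z,A)|\cdot|\hom(\Z/2\Z,A)|=|A/2A|\cdot|A[2]|=|A[2]\oplus A/2A|$. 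The statement for $n\in\{2,3\}$ is handled the same way with $\beta^0$ absent: there $H_2(\sg{n};\Z)=0$ and $\ker\partial_2$ is either $0$ ($n=2$) or the cyclic group on $c_1-c_2+2b_1$ ($n=3$), which must then equal $\im\partial_3$, so nothing about $3$-cells is needed; $[s]\mapsto[\alpha^0_s]$ is again injective, and matches orders with $H^2(\sg{n};A)\cong A/2A$ when $A$ is $\Z$ or $\Z/p^k\Z$.

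I expect the main obstacle to be the cocycle property of $\beta^0_r$, i.e.\ understanding $\partial_3$ on the $d$-cells: this is the only point at which the explicit $3$-skeleton of~\cite{DN3skeleton} is really needed, and it is also the reason surjectivity is obtained through the universal coefficient theorem instead of from a direct description of $Z^2$.
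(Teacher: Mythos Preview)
Your argument is correct. The paper itself does not give a proof here: it simply cites Proposition~4.7 of the companion paper~\cite{DN3skeleton}. So you have supplied substantially more than the paper does.

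A brief comparison. The paper treats this result as a black box imported from~\cite{DN3skeleton}, where the computation is done directly from the explicit $3$-skeleton. You instead reduce to the coinvariant chain complex $C_*=P_*\otimes_{\Z\sg{n}}\Z$, read off $\partial_2$ explicitly, and then split the work: the $H^1$ statement and the injectivity and well-definedness of $\Phi$ are genuinely elementary cochain computations, while surjectivity is obtained by naturality plus the universal coefficient theorem and the classical values of $H_1(\sg{n};\Z)$ and $H_2(\sg{n};\Z)$. This is a clean way to avoid an explicit description of $Z^2$, and it isolates the single point where the $3$-cells are truly needed, namely the cocycle check for $\beta^0_r$. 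Your functional argument for $\alpha^0_r$ (showing it vanishes on all of $\ker\partial_2$, hence on $\im\partial_3$) is a nice touch that makes that case independent of the $3$-skeleton. The only residual dependence on~\cite{DN3skeleton} is the parity of the $d$-multiplicity in each $3$-cell boundary, which is unavoidable given that the $3$-cells are suppressed in the present paper.
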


\begin{proof}
    This is Proposition~4.7 from Day--Nakamura~\cite{DN3skeleton}.
\end{proof}

Now we fix a commutative ring $R$.
We adopt the convention that, if the group is suppressed in the cohomology notation, but only the module is  given, then it is cohomology of $\sg{n}$ in that module.
We also suppress the ring $R$ from the notation in the names of modules, unless we are dealing with multiple rings.

Again, we need to work explicitly, so we define cocycles that will represent our cohomology classes.
As above, we use $R[2]$ as notation for the $2$-torsion in $R$.
\begin{definition}
    For $r\in R[2]$, define $\kappa^1_r\co P_1\to M^1$ by $\kappa^1_r(e_i)=r\sum_{k=1}^nt_k$.

    For $r\in R$, define $\alpha^1_r\co P_2\to M^1$ by $\alpha^1_r(c_i)=r\sum_{k=1}^nt_k$, $\alpha^1_r(d_{ij})=0$, and $\alpha^1_r(b_i)=0$.
    
    Now assume $n\geq 4$. 
    For $r\in R[2]$, define $\beta^1_r\co P_2\to M^1$ by $\beta^1_r(c_i)=0$, $\beta^1_r(d_{ij})=r\sum_{k=1}^nt_k$, and $\beta^1_r(b_i)=0$.
\end{definition}

\begin{proposition}\label{pr:M1cohomology}
    For all choices of $r$ as above, the functions $\kappa^1_r$, $\alpha^1_r$, and $\beta^1_r$ are cocycles.

    We have $H^0(M^1)=R$, spanned by $\sum_{k=1}^nt_k$.

    If $n\geq 3$, we have $H^1(M^1)\cong R[2]$, and $R[2]\to H^1(M^1)$ by $r\mapsto [\kappa^1_r]$ is an isomorphism.  For $n=2$, $H^1(M^1)=0$.

    If $n\geq 5$, we have $H^2(M^1)\cong R[2]\oplus R/2R$, and $R[2]\oplus R/2R\to H^2(M^1_R)$ by $(r,[s])\mapsto [\alpha^1_s+\beta^1_r]$ is an isomorphism.
    For $n\in\{3,4\}$, $H^2(M^1)\cong R/2R$, and $R/2R\to H^2(M^1)$ by $[s]\mapsto[\alpha^1_s]$ is an isomorphism.
    If $n=2$, then $H^2(M^1)=0$.
\end{proposition}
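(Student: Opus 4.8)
The plan is to compute $H^*(\sg{n};M^1_R)$ by recognizing $M^1_R$ as a coinduced module and applying Shapiro's lemma, and then to recover the stated explicit cocycle representatives by chasing them through the Shapiro isomorphism. Let $\sg{n-1}=\genby{s_1,\dots,s_{n-2}}\leq\sg{n}$, which is the stabilizer of the point $n$ for the canonical action of $\sg{n}$ on $\{1,\dots,n\}$. Then $M^1_R$ is the permutation module $R[\sg{n}/\sg{n-1}]$, and since $[\sg{n}:\sg{n-1}]<\infty$ it coincides with the coinduced module $\coind_{\sg{n-1}}^{\sg{n}}R$, where $R$ carries the trivial $\sg{n-1}$-action; under this identification $t_i$ is the delta function supported on the coset of permutations $g$ with $g^{-1}(n)=i$, and the canonical evaluation $\mathrm{ev}\co M^1_R\to R$ (evaluation at $1\in\sg{n}$ of a coinduced function) is the map $\sum_k a_kt_k\mapsto a_n$, ``coefficient of $t_n$''. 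This map is $\sg{n-1}$-equivariant, and $\mathrm{ev}\circ\iota=\mathrm{id}_R$, where $\iota\co R\to M^1_R$ is the $R\sg{n}$-module map $r\mapsto r\sum_k t_k$ (well defined since $\sum_kt_k$ is $\sg{n}$-fixed).

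Three easy observations come first. The $H^0$ computation is immediate: $\sum_k a_kt_k$ is $\sg{n}$-fixed exactly when all $a_k$ agree, so $(M^1_R)^{\sg{n}}=R\sum_kt_k\cong R$. The cocycle assertions hold by functoriality: $\kappa^1_r=\iota\circ\kappa^0_r$, $\alpha^1_r=\iota\circ\alpha^0_r$, and $\beta^1_r=\iota\circ\beta^0_r$, and since $\iota$ induces a cochain map on $\hom_{\Z\sg{n}}(P_*,-)$, these are cocycles because $\kappa^0_r$, $\alpha^0_r$, $\beta^0_r$ are cocycles by Proposition~\ref{pr:untwistedcohomreps}. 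And for $n=2$ the subgroup $\sg{n-1}$ is trivial, so $M^1_R\cong R[\sg{2}]$ is free and $H^i(\sg{2};M^1_R)=0$ for $i>0$.

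For $n\geq3$, Shapiro's lemma gives isomorphisms $H^i(\sg{n};M^1_R)\xrightarrow{\ \sim\ }H^i(\sg{n-1};R)$, realized on cochains by restriction along $\sg{n-1}\hookrightarrow\sg{n}$ followed by $\mathrm{ev}_*$. I would make this concrete with the resolution $P_*$: the analogous complex for $\sg{n-1}$ includes into $P_*$ by sending each generator $e_i,c_i,b_i,d_{ij}$ with indices in $\{1,\dots,n-2\}$ to the generator of the same name, and by Definition~\ref{de:Snres} the boundary formulas of those generators involve only $s_1,\dots,s_{n-2}$, so this inclusion is a chain map of $\Z\sg{n-1}$-resolutions of $\Z$, and restriction of a cochain is precomposition with it. Precomposing $\kappa^1_r$ with the inclusion and then applying $\mathrm{ev}$ gives the $\sg{n-1}$-cochain $e_i\mapsto r$, which is exactly $\kappa^0_r$ for $\sg{n-1}$ with $R$-coefficients; likewise $\alpha^1_r$ and $\beta^1_r$ restrict to $\alpha^0_r$ and $\beta^0_r$ for $\sg{n-1}$. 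So the Shapiro isomorphism sends $[\kappa^1_r]\mapsto[\kappa^0_r]$ and $[\alpha^1_s+\beta^1_r]\mapsto[\alpha^0_s+\beta^0_r]$ in $H^*(\sg{n-1};R)$. Now I would invoke Proposition~\ref{pr:untwistedcohomreps} applied to $\sg{n-1}$: for $n\geq3$ (so $n-1\geq2$), $r\mapsto[\kappa^0_r]$ is an isomorphism $R[2]\to H^1(\sg{n-1};R)$, hence $r\mapsto[\kappa^1_r]$ is an isomorphism $R[2]\to H^1(\sg{n};M^1_R)$; for $n\geq5$ (so $n-1\geq4$), $(r,[s])\mapsto[\beta^0_r+\alpha^0_s]$ is an isomorphism $R[2]\oplus R/2R\to H^2(\sg{n-1};R)$, hence $(r,[s])\mapsto[\alpha^1_s+\beta^1_r]$ is an isomorphism onto $H^2(\sg{n};M^1_R)$; and for $n\in\{3,4\}$ (so $n-1\in\{2,3\}$), $[s]\mapsto[\alpha^0_s]$ is an isomorphism $R/2R\to H^2(\sg{n-1};R)$, giving $[s]\mapsto[\alpha^1_s]$ as stated. (When $n=4$, $\beta^1_r$ restricts to $0$ on $\sg{3}$, consistently with its absence from the statement for $n\in\{3,4\}$.)

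The routine parts — identifying $M^1_R\cong\coind_{\sg{n-1}}^{\sg{n}}R$ with its evaluation map, and checking that the small copy of the resolution sits in $P_*$ as a chain map — are not difficult. The step requiring care, which I expect to be the main obstacle, is that one really must push the explicit cocycles through the Shapiro map rather than only quote it for the abstract group structure: one cannot instead argue via the coefficient map $\iota_*\co H^*(\sg{n};R)\to H^*(\sg{n};M^1_R)$, since, for example, when $n=4$ that map in degree $2$ annihilates every class $[\beta^0_r]$ and so fails to be injective when $R[2]\neq0$. Keeping the normalizations of the Shapiro isomorphism and of the cochains $\kappa^0,\alpha^0,\beta^0$ mutually consistent is where the bookkeeping must be done honestly.
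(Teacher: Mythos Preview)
Your proposal is correct and follows the same strategy the paper itself indicates: the paper's proof here is just a citation to Lemma~5.3 and Proposition~5.4 of the companion paper~\cite{DN3skeleton}, and the introduction to the present paper explicitly says that the cohomology in $M^1_R$ and $M^2_R$ is computed there ``using Shapiro's lemma, since these are isomorphic to coinduced modules.'' You have essentially reconstructed that argument: identify $M^1_R\cong\coind_{\sg{n-1}}^{\sg{n}}R$, realize the Shapiro isomorphism concretely by restricting to the obvious copy of the $\sg{n-1}$-resolution inside $P_*$ and applying the coefficient-of-$t_n$ evaluation, and then read off the answer from Proposition~\ref{pr:untwistedcohomreps} applied to $\sg{n-1}$. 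Your bookkeeping on the explicit cocycles (in particular the observation that $\beta^1_r$ restricts to zero when $n=4$, explaining its disappearance there) is exactly what is needed to match the representatives, and your separate treatment of $n=2$ via freeness of $R[\sg{2}]$ is fine.
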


\begin{proof}
    These functions are cocycles by Lemma~5.3 of Day--Nakamura~\cite{DN3skeleton}, where $k=1$ in that lemma.
    The statements about the first and second cohomology are Proposition~5.4 of~\cite{DN3skeleton}, with $k=1$.  
    The computation of $H^0(M^1)$ is a straightforward exercise.
\end{proof}

Now we do the same thing for $M^2$.
We recall that, as above, $u=\sum_{i<j}v_{ij}\in M^2$.
\begin{definition}
    For $r\in R[2]$, define $\kappa^2_r\co P_1\to M^2$ by $\kappa^2_r(e_i)=ru$.
    Define $\hat \kappa^2_r\co P_1\to M^2$ by $\hat\kappa^2_r(e_i)=rv_{i,i+1}$.
    
    For $r\in R$, define $\alpha^2_r\co P_2\to M^2$ by $\alpha^2_r(c_i)=ru$, $\alpha^2_r(b_i)=0$, and $\alpha^2_r(d_{ij})=0$.
    Define $\hat \alpha^2_r\co P_2\to M^2$ by $\hat\alpha^2_r(c_i)=rv_{i,i+1}$, $\hat\alpha^2_r(b_i)=0$, and $\hat\alpha^2_r(d_{ij})=0$.
    For $r\in R[2]$, define $\beta^2_r\co P_2\to M^2$ by $\beta^2_r(c_i)=0$, $\beta^2_r(b_i)=0$, and $\beta^2_r(d_{ij})=ru$.
    Define $\hat \beta^2_r\co P_2\to M^2$ by $\hat \beta^2_r(c_i)=0$, $\hat \beta^2_r(b_i)=0$, and
    \[\hat \beta^2_r(d_{ij})=r(v_{i,i+1}+v_{j,j+1}+v_{i,j}+v_{i,j+1}+v_{i+1,j}+v_{i+1,j+1}).\]
\end{definition}

\begin{proposition}\label{pr:M2cohomology}
For all choices of $r$ as above, the functions $\kappa^2_r$, $\hat\kappa^2_r$, $\alpha^2_r$, $\hat\alpha^2_r$, $\beta^2_r$, and $\hat\beta^2_r$ are cocycles.

    We have $H^0(M^2)=R$, spanned by $u$.
    
    If $n\geq4$, we have $H^1(M^2)\cong R[2]^2$, and $R[2]^2\to H^1(M^2)$ by $(r,s)\mapsto [\kappa^2_r+\hat\kappa^2_s]$ is an isomorphism.
    If $n\in \{2,3\}$, then $H^1(M^2)\cong R[2]$, and $r\mapsto [\kappa^2_r]$ is an isomorphism.

    If $n\geq6$, we have $H^2(M^2)\cong R[2]^2\oplus (R/2R)^2$, and $R[2]^2\oplus (R/2R)^2\to H^2(M^2)$ by $(q, r,[s],[t])\mapsto [\alpha^2_s+\hat\alpha^2_t+\beta^2_q+\hat\beta^2_r]$ is an isomorphism.
    For $n\in\{4,5\}$, we have $H^2(M^2)\cong R[2]\oplus (R/2R)^2$, and there is a similar isomorphism without the $\beta^2_q$.
    For $n\in \{2,3\},$ we have $H^2(M^2)\cong R/2R$, and there is an isomorphism by $r\mapsto [\alpha^2_r]$.
\end{proposition}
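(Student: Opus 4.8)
The plan is to determine the isomorphism type of each cohomology group by Shapiro's lemma together with the Künneth theorem, and then to identify the explicit cocycle representatives by a direct computation with the complex $P_*$ of Definition~\ref{de:Snres}. First I would observe that $M^2_R$ is the permutation module $R[\Omega]$ on the set $\Omega$ of two-element subsets of $\{1,\dotsc,n\}$, that $\sg{n}$ acts transitively on $\Omega$, and that the stabilizer of $\{1,2\}$ is $H=\langle s_1\rangle\times\langle s_3,\dotsc,s_{n-1}\rangle\cong\sg{2}\times\sg{n-2}$. (For $n=3$ this is $H=\langle s_1\rangle\cong\sg{2}$, and for $n=2$ it is $\sg{2}$ acting trivially on $M^2_R=Rv_{12}$; these degenerate cases must be tracked separately.) Since $[\sg{n}:H]=\binom{n}{2}$ is finite, $M^2_R\cong\operatorname{Ind}_H^{\sg{n}}R\cong\operatorname{Coind}_H^{\sg{n}}R$, so Shapiro's lemma gives $H^i(\sg{n};M^2_R)\cong H^i(\sg{2}\times\sg{n-2};R)$ for all $i$. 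In degree $0$ this is the submodule of invariants, which one checks directly is $Ru$.

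Next I would compute $H^i(\sg{2}\times\sg{n-2};R)$ for $i\in\{1,2\}$ from the homology Künneth theorem followed by the universal coefficients theorem. The needed input is the low-degree integral homology of symmetric groups, namely $H_1(\sg{m};\Z)=\Z/2$ for $m\geq 2$ (and $0$ for $m\leq 1$) and $H_2(\sg{m};\Z)=\Z/2$ for $m\geq 4$ (and $0$ for $m\leq 3$); these are classical, and also follow from Proposition~\ref{pr:untwistedcohomreps} by universal coefficients. The Künneth formula then gives $H_1(\sg{2}\times\sg{n-2};\Z)\cong(\Z/2)^2$ for $n\geq 4$ (and $\Z/2$ for $n\leq 3$), while $H_2(\sg{2}\times\sg{n-2};\Z)$ is $(\Z/2)^2$ for $n\geq 6$, is $\Z/2$ for $n\in\{4,5\}$, and is $0$ for $n\leq 3$: the $H_1\otimes H_1$ summand always contributes one $\Z/2$ once $n\geq 4$, the $H_0(\sg{2})\otimes H_2(\sg{n-2})$ summand contributes another once $n-2\geq 4$, and the $\operatorname{Tor}_1$ summands vanish because $H_0$ is free. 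Feeding these into $H^n(G;R)\cong\hom_\Z(H_n(G;\Z),R)\oplus\operatorname{Ext}^1_\Z(H_{n-1}(G;\Z),R)$ turns each $\Z/2$ in $H_n$ into a copy of $R[2]$ and each $\Z/2$ in $H_{n-1}$ into a copy of $R/2R$, producing exactly the groups $R$, $R[2]^2$ (or $R[2]$), and $R[2]^2\oplus(R/2R)^2$ (or its stated small-$n$ variants) in the ranges claimed.

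It then remains to verify that the listed functions are cocycles and that the prescribed maps out of $R[2]^2$, respectively $R[2]^2\oplus(R/2R)^2$ and its degenerate variants, are isomorphisms. Checking the cocycle condition is a bounded computation: precompose each of $\kappa^2_r,\hat\kappa^2_r,\alpha^2_r,\hat\alpha^2_r,\beta^2_r,\hat\beta^2_r$ with the boundary maps of Definition~\ref{de:Snres} and see that the result vanishes --- for $\hat\beta^2_r$ this is precisely what forces the six-term value on $d_{ij}$, using $\partial d_{ij}=(s_j-1)e_i-(s_i-1)e_j$ together with $\partial c_i=(s_i+1)e_i$ and the formula for $\partial b_i$. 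Because the previous paragraph already pins down the order and module structure of $H^1$ and $H^2$, it now suffices to check that the explicit cocycle maps are injective modulo coboundaries --- that no nonzero combination $\alpha^2_s+\hat\alpha^2_t+\beta^2_q+\hat\beta^2_r$ equals $\delta\varphi$ for $\varphi\in\hom_{\Z\sg{n}}(P_1,M^2)$, and similarly for $\kappa^2_r+\hat\kappa^2_s$ in degree $1$ --- and then to conclude that an injection between modules of the same computed structure is onto. I would do the injectivity check by setting $\varphi(e_i)=x_i\in M^2$, expanding $\delta\varphi$ on the generators $c_i$, $b_i$, $d_{ij}$ of $P_2$, and extracting enough linear relations on the $x_i$ and on $q,r,s,t$ to force $q=r=0$ in $R[2]$ and $[s]=[t]=0$ in $R/2R$, exactly as in the companion computation underlying Proposition~\ref{pr:M1cohomology}; surjectivity then follows by comparing the image with the structure obtained from Shapiro.

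The main obstacle I expect is the bookkeeping in this last step. There are two essentially independent ``flavours'' of cocycle present --- the ones valued in the submodule $Ru$ (the copy of $S^0_R$ inside $M^2_R$) and the ones built from the basis vectors $v_{i,i+1}$ --- and one must show that they remain linearly independent in cohomology and that together they exhaust it, while simultaneously accounting for the small values $n\in\{2,3,4,5\}$ where one flavour collapses into the other or disappears entirely. Organizing the coboundary computation so that the $u$-flavour and the $v_{i,i+1}$-flavour decouple (the $c_i$-equations governing the $\alpha$-terms, the $d_{ij}$-equations the $\beta$-terms), and keeping the case analysis disciplined, is where the real effort lies; the Shapiro/Künneth half is essentially formal.
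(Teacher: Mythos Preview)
Your approach is essentially the same as the paper's: the paper defers to Lemma~5.3 and Proposition~5.4 of the companion paper~\cite{DN3skeleton}, and the surrounding text makes clear that the method there is precisely Shapiro's lemma for the coinduced module $M^2_R\cong\coind_H^{\sg{n}}R$ together with an explicit cocycle verification on the complex $P_*$. Your K\"unneth/universal-coefficients computation of $H^i(\sg{2}\times\sg{n-2};R)$ is correct and matches the stated answers in every range of~$n$.

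Two points deserve care. First, checking that the $2$-cochains $\alpha^2_r,\hat\alpha^2_r,\beta^2_r,\hat\beta^2_r$ are cocycles requires the boundary map $\partial\colon P_3\to P_2$, which is suppressed in Definition~\ref{de:Snres} of this paper; you will need the $3$-cells from~\cite{DN3skeleton}. The formulas you quote ($\partial c_i$, $\partial b_i$, $\partial d_{ij}$) are only $\partial\colon P_2\to P_1$, which suffices for $\kappa^2_r$ and $\hat\kappa^2_r$ but not for the degree-$2$ cocycles.

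Second, and more substantively, your final step---``an injection between modules of the same computed structure is onto''---is not valid for a general commutative ring $R$. The target $R[2]^2\oplus(R/2R)^2$ need not be a finite set, nor a vector space over a field, nor even a finitely generated $R$-module, so there is no a priori reason an injective $R$-linear self-map must be surjective. The clean fix is to make the Shapiro isomorphism explicit: the map $H^i(\sg{n};M^2_R)\to H^i(H;R)$ given by restriction to $H$ followed by evaluation at $v_{12}\in M^2_R$ realizes the Shapiro isomorphism, and one checks directly that it carries $[\alpha^2_s],[\hat\alpha^2_t],[\beta^2_q],[\hat\beta^2_r]$ to the standard K\"unneth generators of $H^2(\sg{2}\times\sg{n-2};R)$ coming from Proposition~\ref{pr:untwistedcohomreps}. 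Alternatively, track the cocycles through the universal-coefficients short exact sequence $0\to\ext^1_\Z(H_1,R)\to H^2\to\hom_\Z(H_2,R)\to 0$ and show that the $\alpha$-classes span the $\ext$ subgroup while the $\beta$-classes map isomorphically to the $\hom$ quotient. Either route closes the gap without much extra work.
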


\begin{proof}
    Again, these statements follow from Lemma~5.3 and Proposition~5.4 of Day--Nakamura~\cite{DN3skeleton}.
    This time, we use $k=2$ in those statements.
    We also leave the computation of $H^0(M^2)$ as an exercise.
\end{proof}

\subsection{Extensions and the splitting obstruction}
We recall the connection between second cohomology and group extensions with abelian kernel.
Brown~\cite{Brown}, Chapter IV.3 is a reference.

For a group $G$ and a $\Z G$-module $M$, an \emph{extension} of $G$ by $M$ is a group $E$ with an exact sequence
\[0\to M\to E\to G\to 1,\]
such that the action of $E$ on $M$ by conjugation descends to the $G$-module structure on $M$.
Two extensions $E_1, E_2$ are \emph{equivalent} if they are isomorphic by an isomorphism fitting into a commuting diagram of extensions with the maps from $M$ and $G$ to themselves being the identity:
\[
\xymatrix{
     0 \ar[r] & M \ar[r] \ar@{=}[d] & E_1 \ar[r] \ar[d]^{\cong} & G \ar[r] \ar@{=}[d] & 1 \\   
     0 \ar[r] & M \ar[r] & E_2 \ar[r] & G \ar[r] & 1. \\   
}
\]
\begin{theorem}\label{th:EM}
Let $G$ be a group and let $M$ be a $\Z G$-module.
There is a canonical bijection from $H^2(G;M)$ to the set of equivalence classes of extensions 
\[0\to M\to E\to G\to 1.\]
There is a unique equivalence class of split extensions and this corresponds to $0\in H^2(G;M)$.
\end{theorem}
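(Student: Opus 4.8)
The plan is to construct mutually inverse correspondences between $H^2(G;M)$ and the set of equivalence classes of extensions, which is the classical Eilenberg--MacLane argument (see Brown~\cite{Brown}, Chapter~IV.3). First I would go from extensions to cohomology. Given $0\to M\to E\xrightarrow{p} G\to 1$, choose a set-theoretic section $s\co G\to E$ with $s(1)=1$, and identify $M$ with its image in $E$. Since $s(g)s(h)$ and $s(gh)$ have the same image under $p$, the element $f(g,h)=s(g)s(h)s(gh)^{-1}$ lies in $M$; associativity of multiplication in $E$, together with the fact that conjugation by $s(g)$ realizes the given $G$-action on $M$, yields exactly the normalized $2$-cocycle identity $g\cdot f(h,k)-f(gh,k)+f(g,hk)-f(g,h)=0$ (up to the sign conventions used in the paper). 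Replacing $s$ by another normalized section $s'$ changes $f$ by the coboundary of the $1$-cochain $g\mapsto s'(g)s(g)^{-1}$, so the class $[f]\in H^2(G;M)$ depends only on $E$, and one checks it depends only on the equivalence class of the extension.

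Next I would go the other direction. Given a normalized $2$-cocycle $f$, define $E_f=M\times G$ as a set with multiplication $(m,g)(m',g')=(m+g\cdot m'+f(g,g'),\,gg')$. The cocycle identity is precisely what is needed for associativity; normalization gives identity $(0,1)$ and inverses. Then $m\mapsto(m,1)$ and $(m,g)\mapsto g$ exhibit $E_f$ as an extension of $G$ by $M$, and conjugating $(m',1)$ by $(0,g)$ gives $(g\cdot m',1)$, so the induced conjugation action is the given module structure. Cohomologous cocycles produce equivalent extensions via $(m,g)\mapsto(m+c(g),g)$ for a suitable $1$-cochain $c$, so this descends to a map from $H^2(G;M)$ to equivalence classes.

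Then I would verify the two maps are mutually inverse. Starting from $E_f$, the section $g\mapsto(0,g)$ recovers the cocycle $f$, so one composite is the identity. Starting from an extension $E$ with chosen section $s$ and resulting cocycle $f$, the map $E\to E_f$ sending $e$ to $\bigl(e\,s(p(e))^{-1},\,p(e)\bigr)$ is an isomorphism of extensions (one checks it is a homomorphism using the definition of $f$), so the other composite is the identity on equivalence classes. This establishes the canonical bijection.

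Finally, for the splitting statement: the semidirect product $M\rtimes G$ is exactly $E_f$ for $f=0$, so the class corresponding to $0\in H^2(G;M)$ contains a split extension. Conversely, if an extension splits with splitting homomorphism $r\co G\to E$, then taking $s=r$ as the section gives $f(g,h)=r(g)r(h)r(gh)^{-1}=1$, so its structure class is $0$; hence every split extension lies in the class of $0$, and that class is the unique one containing a split extension. I expect the main obstacle to be purely organizational rather than conceptual: checking at each stage that the constructions respect the correct notion of equivalence --- in particular that an equivalence of extensions translates exactly into a coboundary change of cocycle, which is what makes the extension-to-cohomology assignment well defined and injective on equivalence classes --- and keeping the normalization conventions consistent throughout.
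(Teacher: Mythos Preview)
Your proposal is correct and is precisely the classical Eilenberg--MacLane argument. Note, however, that the paper does not give its own proof of this theorem: it is stated with Brown~\cite{Brown}, Chapter~IV.3 as a reference, and the subsequent remark discusses attribution rather than supplying an argument. Your sketch is essentially what one finds in that reference, so there is nothing to compare.
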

Given an extension with cokernel $G$ and kernel $M$, we call the corresponding element of $H^2(G;M)$ its \emph{structure class}.

\begin{remark}
    We like to attribute Theorem~\ref{th:EM} to Eilenberg and MacLane, but it appears that the structure of extensions with abelian kernel was already well understood at the time that Eilenberg and MacLane defined cohomology of groups.  
    The generalization of Theorem~\ref{th:EM} to extensions with nonabelian kernel is due to Eilenberg--MacLane in~\cite{EM}.
\end{remark}

There is one aspect of this theorem that we need to make concrete: given an extension, we can find a representative of its structure class by using a section.
If $\pi\co E\to G$ is the projection from an extension, a \emph{section} to $\pi$ is a set-map $s\co G\to E$ with $\pi\circ s=\mathrm{id}_G$.
We also need the \emph{bar chains} for group homology; let $C_k(G)$ be the free $\Z G$-module with basis $G^k=G\times \dotsm\times G$.
A basis element is denoted $[g_1|\dotsm|g_k]$.
We will not be using the boundary map, so we do not define it here.
The following proposition is implicit in the exposition in Brown~\cite{Brown}:
\begin{proposition}\label{pr:EMexplicit}
    Let $0\to M\to E\to G\to 1$ be an extension and let $s\co G\to E$ be a section to the projection $E\to G$ (not necessarily a homomorphism).
    Let $c\co C_2(G)\to M$ be defined by the formula
    \[s(g)s(h)=i(c([g|h]))s(gh),\]
    for $g,h\in G$, where $i$ is the inclusion map $i\co M\to E$.
    Then $c$ is a cocycle, $[c]\in H^2(G;A)$ does not depend on the choice of $s$, and $[c]$ is the structure class of this extension.
\end{proposition}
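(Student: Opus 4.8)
The plan is to run the standard argument from Brown~\cite{Brown}, IV.3, making each of the four assertions explicit. Write $\pi\colon E\to G$ for the projection and $i\colon M\to E$ for the inclusion. First I would check that the formula defines a cochain at all: since $\pi(s(g)s(h))=gh=\pi(s(gh))$, the element $s(g)s(h)s(gh)^{-1}$ lies in $\ker\pi=i(M)$, so injectivity of $i$ produces a unique $c([g|h])\in M$, and extending $\Z G$-linearly over the basis $G\times G$ of $C_2(G)$ yields a $2$-cochain $c\colon C_2(G)\to M$. Two facts are used throughout: $i$ is injective, and conjugation by any $s(g)$ realizes the module action, i.e.\ $s(g)\,i(m)=i(g\cdot m)\,s(g)$ for all $m\in M$ — this is the hypothesis that the conjugation action of $E$ on $M$ descends to the given $G$-module structure, combined with $\pi(s(g))=g$.

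The cocycle identity comes from associativity in $E$. Expanding $s(g)\bigl(s(h)s(k)\bigr)$ and $\bigl(s(g)s(h)\bigr)s(k)$ with the defining formula and the commutation rule $s(g)i(m)=i(g\cdot m)s(g)$, both products take the form $i(x)\,s(ghk)$ for suitable $x\in M$; equating them, cancelling $i$ by injectivity and right-cancelling $s(ghk)$, gives
\[g\cdot c([h|k])-c([gh|k])+c([g|hk])-c([g|h])=0,\]
which is exactly $(\delta c)([g|h|k])=0$ for the bar coboundary $\delta$. So $c$ is a cocycle. For independence of the section, given a second section $s'$ write $s'(g)=i(b(g))s(g)$, defining a $1$-cochain $b\colon C_1(G)\to M$; the same expansion of $s'(g)s'(h)$ shows the associated cocycle $c'$ satisfies $c'([g|h])=c([g|h])+\bigl(g\cdot b(h)-b(gh)+b(g)\bigr)$, i.e.\ $c'=c+\delta b$, so $[c']=[c]$ in $H^2(G;M)$.

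Finally, to identify $[c]$ with the structure class, I would exhibit an equivalence of extensions between $E$ and the standard extension $E_c$ attached to $c$ — the set $M\times G$ with product $(m,g)(m',g')=(m+g\cdot m'+c([g|g']),\,gg')$, whose associativity is precisely the cocycle condition and which is the extension realizing the bijection of Theorem~\ref{th:EM}. The map $E_c\to E$, $(m,g)\mapsto i(m)s(g)$, is a homomorphism by the same manipulation used above, is a bijection (its inverse sends $e$ to $(m,\pi(e))$, where $i(m)=e\,s(\pi(e))^{-1}$), and is checked to commute with the maps to and from $M$ and $G$; hence $[c]$ is the structure class of $E$.

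I expect the only real obstacle to be bookkeeping: tracking where the $G$-action is inserted when $i(m)$-factors are moved past $s(g)$-factors, matching the variance and sign conventions of the bar coboundary used elsewhere in the paper so that associativity reads literally as $\delta c=0$, and being slightly careful with base points when the section is not normalized (one has $s(1)=i(c([1|1]))$, which is what makes $M\to E_c\to E$ equal $i$). There is no conceptual difficulty.
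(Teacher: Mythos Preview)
Your proposal is correct and is precisely the standard argument from Brown~\cite{Brown}, IV.3; the paper itself gives no proof of this proposition, simply stating that it is implicit in Brown's exposition. So your write-up supplies exactly the details the paper omits, via the same route.
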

We need the following corollary:
\begin{corollary}\label{co:pushfowardstructure}
    Let $M_1$ and $M_2$ be $\Z G$-modules and let $f\co M_1\to M_2$  be $\Z G$-linear.
    Suppose that $E_1$ and $E_2$ are extensions of $G$ by $M_1$ and $M_2$ respectively, and let $\psi\co E_1\to E_2$ be a homomorphism, such that the following diagram commutes:
    \[
\xymatrix{
     0 \ar[r] & M_1 \ar[r] \ar[d]^f & E_1 \ar[r] \ar[d]^{\psi} & G \ar[r] \ar@{=}[d] & 1 \\   
     0 \ar[r] & M_2 \ar[r] & E_2 \ar[r] & G \ar[r] & 1. \\   
}
\]
    Let $\kappa\in H^2(G;M_1)$ denote the structure class of $M_1\to E_1\to G$.
    Then $f_*\kappa\in H^2(G;M_2)$  is the structure class of $M_2\to E_2\to G$.
\end{corollary}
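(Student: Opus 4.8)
The plan is to chase the explicit cocycle formula of Proposition~\ref{pr:EMexplicit} through the map $\psi$, so the argument is essentially a diagram chase on bar $2$-chains. Write $i_1\co M_1\to E_1$ and $i_2\co M_2\to E_2$ for the inclusions and $\pi_1\co E_1\to G$, $\pi_2\co E_2\to G$ for the projections. First I would choose a section $s\co G\to E_1$ of $\pi_1$ and let $c\co C_2(G)\to M_1$ be the cocycle of Proposition~\ref{pr:EMexplicit}, defined by $s(g)s(h)=i_1(c([g|h]))s(gh)$ for $g,h\in G$; by that proposition, $[c]=\kappa$.

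Next I would push the section forward: set $s'=\psi\circ s\co G\to E_2$. The commutativity of the right-hand square of the diagram (with the identity on $G$) gives $\pi_2\circ\psi=\pi_1$, hence $\pi_2\circ s'=\pi_1\circ s=\mathrm{id}_G$, so $s'$ is a section of $\pi_2$ (it need not be a homomorphism, but Proposition~\ref{pr:EMexplicit} does not require that). Applying the homomorphism $\psi$ to the defining relation for $c$, and using that the left-hand square commutes, i.e.\ $\psi\circ i_1=i_2\circ f$, I obtain
\[s'(g)s'(h)=\psi\bigl(i_1(c([g|h]))\bigr)\,s'(gh)=i_2\bigl(f(c([g|h]))\bigr)\,s'(gh).\]
Thus the cocycle produced by Proposition~\ref{pr:EMexplicit} for the extension $0\to M_2\to E_2\to G\to 1$ with respect to the section $s'$ is exactly $f\circ c\co C_2(G)\to M_2$, so the structure class of this extension is $[f\circ c]$.

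Finally I would identify $[f\circ c]$ with $f_*\kappa$: functoriality of group cohomology in the coefficient module is realized at the cochain level by post-composition, so the map $f_*\co H^2(G;M_1)\to H^2(G;M_2)$ sends $[c]$ to $[f\circ c]$, giving $f_*\kappa=[f\circ c]$. Combining this with the previous paragraph yields that $f_*\kappa$ is the structure class of $M_2\to E_2\to G$, as claimed. I do not expect a real obstacle here; the only points needing care are that $s'$ is only a set-section, and that one should invoke the standard description of $f_*$ on cochains rather than rederive it.
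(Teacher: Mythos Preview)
Your proof is correct and follows essentially the same approach as the paper: choose a section $s$ for $E_1$, push it forward via $\psi$ to get a section $s'=\psi\circ s$ for $E_2$, and use commutativity of the left square ($\psi\circ i_1=i_2\circ f$) to see that the resulting cocycle for $E_2$ is $f\circ c$, which represents $f_*\kappa$. The paper phrases the computation via $i_1^{-1}$ and $i_2^{-1}$ rather than the multiplicative defining relation, but the argument is the same.
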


\begin{proof}
    Let $s\co G\to E_1$ be a section to the projection $E_1\to G$.
    Then since the diagram commutes, $\psi\circ s\co G\to E_2$ is a section to the projection $E_2\to G$.
    Let $i_1\co M_1\to i_1(M_1)\subseteq E_1$ and $i_2\co M_2\to i_2(M_2)\subseteq E_2$ be the restrictions of the inclusions.
Let $c\co C_2(G)\to M_1$ be given by $c([g|h])=i_1^{-1}(s(g)s(h)s(gh)^{-1})$; then by Proposition~\ref{pr:EMexplicit}, we have $[c]=\kappa$.
Since the diagram commutes, we have $i_2\circ f=\psi\circ i_1$, and therefore $f\circ i_1^{-1}=i_2^{-1}\circ \psi$.
The class $f_*\kappa$ is represented by $f\circ c$, but for $[g|h]\in C_2(G)$, we have
\[
\begin{split}
f(c([g|h]))&=f(i_1^{-1}(s(g)s(h)s(gh)^{-1}))=i_2^{-1}(\psi(s(g)s(h)s(gh)^{-1}))\\
&=i_2^{-1}(\psi(s(g))\psi(s(h))\psi(s(gh))^{-1}).
\end{split}
\]
Again by Proposition~\ref{pr:EMexplicit}, since $\psi\circ s$ is a section, this says that $f_*\kappa$ corresponds to the second extension.
\end{proof}

The extension from Proposition~\ref{pr:recognizepairmodule}
\begin{equation}\label{eq:initialextension}
0\to M^2_\Z\to \frac{B_n}{PB_n'}\to \sg{n}\to 1    
\end{equation}
is initial in the category of extensions of the symmetric group that are quotients of the braid group.
So to understand these extensions better, we should start with this one.
This was one of the topics of the previous paper of the second author~\cite{Nakamura}.
However, the results there were given in terms of a different presentation of the symmetric group, so we need to take a moment to translate.
The other presentation gives us a partial free resolution $R_*$ of $\Z$ as a $\Z\sg{n}$-module, defined in dimensions $0$, $1$, and $2$, so that we can give elements of $H^2(\sg{n};M)$ for any module $M$ using $\hom(R_2,M)$.
Let $R_*$ be the complex of free $\Z \sg{n}$-modules defined by:
\begin{itemize}
    \item $R_0$ is free on a single element $*$;
    \item $R_1$ has a free basis consisting of $\binom{n}{2}$ elements $\tilde x_{ij}$ for $1\leq i<j\leq n$, with $\partial \tilde x_{ij}=((ij)-1)*$ (here $(ij)\in \sg{n}$ is the transposition),
    \item and $R_2$ has a free basis consisting of three classes of elements:
    \begin{itemize}
        \item $\binom{n}{2}$ elements $\tilde c_{ij}$, for $1\leq i<j\leq n$, with $\partial \tilde c_{ij}=((ij)+1)x_{ij}$, 
        \item $\binom{n}{2}\binom{n-2}{2}$ elements $\tilde d_{ijkl}$ for $1\leq i<j\leq n$, $1\leq k<l\leq n$, $\{i,j\}\cap\{k,l\}=\varnothing$, with
        \[\partial \tilde d_{ijkl}=(1-(kl))\tilde x_{ij}-(1-(ij))\tilde x_{kl},\]
        \item and $n(n-1)(n-2)$ elements $\tilde e_{ikj}$ for $1\leq i,j,k\leq n$ with $i,j,k$ distinct, and 
        \[\partial \tilde e_{ikj}=(ij)\tilde x_{jk}+(1-(ik))\tilde x_{ij}-\tilde x_{ik}.\]
    \end{itemize}
\end{itemize}

    This is the same as the definition given in Section~3 of Nakamura~\cite{Nakamura}.
We can now state one of the main results of~\cite{Nakamura}, Theorem~4.10 of that paper.
(We translate to this paper's notation for $M^2_\Z$, but the formulas are the same.)
\begin{theorem}[Nakamura~\cite{Nakamura}]\label{th:Nakamura}
Let $\phi\co R_2\to M^2_\Z$ be the $\Z \sg{n}$-module homomorphism defined by
\[\phi(\tilde c_{ij})=v_{ij},\]
\[\phi(\tilde d_{ijkl})=
\left\{
\begin{array}{cl}
v_{ik}-v_{il}-v_{kj}+v_{jl} & \text{if $i<k<j<l$,} \\
-v_{ik}+v_{kj}+v_{il}-v_{lj} & \text{if $k<i<l<j$,} \\
0 & \text{otherwise,}
\end{array}
\right.
\]
\[
\phi(\tilde e_{ikj})=
\left\{
\begin{array}{cl}
v_{ij}-v_{kj} & \text{if $i<k<j$, or $j<i<k$, or $k<j<i$,} \\
0 & \text{otherwise.}
\end{array}
\right.
\]
Then $\phi$ is a cocycle, and $[\phi]\in H^2(\sg{n};M^2_\Z)$ is the structure class of the extension~\eqref{eq:initialextension}.
\end{theorem}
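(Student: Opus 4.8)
The statement bundles two claims: that $\phi$ is a $2$-cocycle for the resolution $R_*$, and that $[\phi]$ is the structure class of the extension~\eqref{eq:initialextension}. The plan is to get both at once, by identifying $\phi$ with the cocycle that Proposition~\ref{pr:EMexplicit} attaches to this extension, transported from the bar resolution $C_*(\sg{n})$ to $R_*$. First I would fix a chain map $\tau_*\co R_*\to C_*(\sg{n})$ lifting $\mathrm{id}_\Z$; such a chain map exists degree by degree because $R_*$ is a complex of free modules and $C_*(\sg{n})$ is acyclic. In low degrees it can be taken to be $\tau_0=\mathrm{id}$, $\tau_1(\tilde x_{ij})=[(ij)]$ --- which has boundary $((ij)-1)\ast=\partial\tilde x_{ij}$ --- and, for each of the three families of $2$-cells, $\tau_2$ of that cell equal to any bar $2$-chain whose boundary is $\tau_1$ applied to the cell's boundary. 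Pulling the bar cocycle $c$ of Proposition~\ref{pr:EMexplicit} back along $\tau_*$ produces a $2$-cocycle $c\circ\tau_2$ on $R_*$ that represents the structure class. Since a bar $2$-cocycle kills boundaries of $3$-chains, $c\circ\tau_2$ is independent of the filling choices, and unwinding the definitions --- the usual passage from Fox derivatives to relator words --- shows that its value on the $2$-cell attached to a relator word $r$ in the transpositions is obtained by evaluating $r$ using a fixed choice of lifts of the transpositions to $B_n$, which lands in $PB_n$, and then reducing modulo $PB_n'$; by Proposition~\ref{pr:recognizepairmodule} this target is $M^2_\Z$.

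With this reduction in hand, I would choose the lifts and run the three computations. Lift the transposition $(ij)$, with $i<j$, to the band generator $\tau_{ij}=(\sigma_{j-1}\cdots\sigma_{i+1})\,\sigma_i\,(\sigma_{i+1}^{-1}\cdots\sigma_{j-1}^{-1})\in B_n$. A short manipulation with the braid relations gives $\tau_{ij}^2=a_{ij}$, so the cell $\tilde c_{ij}$, whose relator is $(ij)^2$, evaluates to $a_{ij}\bmod PB_n'=v_{ij}$, matching the displayed value of $\phi(\tilde c_{ij})$. For $\tilde d_{ijkl}$, whose relator is the commutator $[(ij),(kl)]$ of disjoint transpositions, I would compute $[\tau_{ij},\tau_{kl}]$ modulo $PB_n'$: when the index pairs $\{i,j\}$ and $\{k,l\}$ are unlinked (nested, or with disjoint spans, in the order $1<\cdots<n$) a braid computation shows $\tau_{ij}$ and $\tau_{kl}$ already commute in $B_n$, so the value is $0$; when they are linked ($i<k<j<l$ or $k<i<l<j$) the commutator is a product of whole twists, and reducing modulo $PB_n'$ gives $\pm(v_{ik}-v_{il}-v_{kj}+v_{jl})$ with the sign recorded in the theorem. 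For $\tilde e_{ikj}$, whose relator the boundary formula identifies with $(ij)(jk)(ij)^{-1}(ik)^{-1}$, I would evaluate $\tau_{ij}\tau_{jk}\tau_{ij}^{-1}\tau_{ik}^{-1}$ modulo $PB_n'$ and check that it equals $v_{ij}-v_{kj}$ in exactly the three orderings of $\{i,j,k\}$ listed there, and $0$ in the other three. Since the map so produced is the structure cocycle of~\eqref{eq:initialextension} and its values on the free generators of $R_2$ are exactly those displayed for $\phi$, both parts of the theorem follow.

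The main obstacle is the middle step: the braid-group evaluations of $[\tau_{ij},\tau_{kl}]$ and of the conjugate $\tau_{ij}\tau_{jk}\tau_{ij}^{-1}$ modulo $PB_n'$, organized over all the ways $i,j,k,l$ can interleave. Each individual case is a routine application of the braid and commutation relations together with the defining word for $a_{ij}$, but the bookkeeping --- keeping orientations consistent so that the $i<k<j<l$ and $k<i<l<j$ cases come out with opposite signs, that every non-crossing configuration dies, and that the conjugate relators yield exactly $v_{ij}-v_{kj}$ in the three correct orderings of $i,j,k$ --- takes care, and must be reconciled with the sign and $\sg{n}$-twisting conventions built into Proposition~\ref{pr:EMexplicit}. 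A purely formal secondary point is the claim that pulling a bar cocycle back along $\tau_*$ computes the structure class relative to $R_*$; this is immediate from $\tau_*$ being a chain map over $\mathrm{id}_\Z$ together with the explicit boundary maps of $R_*$ recorded in the excerpt.
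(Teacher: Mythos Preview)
The paper does not prove this theorem; it is quoted from Nakamura~\cite{Nakamura} (Theorem~4.10 there) and used as a black box. So there is no proof in the present paper to compare against.

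Your proposal is the natural approach, and almost certainly the one taken in~\cite{Nakamura}: pick the set-theoretic section of $B_n\to\sg{n}$ that sends each transposition $(ij)$ to the band half-twist $\tau_{ij}$, form the bar cocycle $c$ of Proposition~\ref{pr:EMexplicit}, and transport it to $R_*$ along an augmentation-preserving chain map. Your observation that the value of $c\circ\tau_2$ on a $2$-cell is independent of the choice of filling---because two fillings differ by a bar $2$-cycle, hence a bar $2$-boundary, which $c$ annihilates---is correct, and it is exactly what lets you read off the value on the cell for a relator $r$ as the image of the word $r$ evaluated in the lifts, reduced modulo $PB_n'$. Your identifications of the relators attached to $\tilde c_{ij}$, $\tilde d_{ijkl}$, and $\tilde e_{ikj}$ are consistent with the boundary formulas listed for $R_*$.

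The part you flag as the main obstacle is the substance of the proof: the braid computations of $\tau_{ij}^2$, of $[\tau_{ij},\tau_{kl}]$ in the linked and unlinked cases, and of $\tau_{ij}\tau_{jk}\tau_{ij}^{-1}\tau_{ik}^{-1}$ in the six orderings of $i,j,k$, all reduced modulo $PB_n'$. These are genuine case checks---routine in principle but requiring care with conventions---and you would need to carry them out to have a complete proof. Nothing in your outline is wrong, but be aware that the displayed formula for $\phi(\tilde d_{ijkl})$ carries a sign that depends on the orientation of the $2$-cell (equivalently, whether the relator is $[(ij),(kl)]$ or $[(kl),(ij)]$), so when you do the linked-case commutator computation you must match your convention for $\partial\tilde d_{ijkl}$ exactly.
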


We state the following corollary to highlight the importance of $[\phi]$ to the current paper.
In short, the push-forward of $[\phi]$ is the splitting obstruction for an extension of the type that we are studying.
\begin{corollary}\label{co:splittingobstruction}
    Let $N\lhd B_n$ with $PB_n'\leq N\leq PB_n$, and let $M=PB_n/N$, as a $\Z\sg{n}$-module.
    Let $\pi\co M^2_\Z\to M$ be the quotient projection (identifying $M^2_\Z$ with $PB_n/PB_n'$ via Proposition~\ref{pr:recognizepairmodule}).
    Then $\pi_*[\phi]\in H^2(\sg{n};M)$ is the structure class of the extension
    \[0\to M\to B_n/N\to \sg{n}\to 1,\]
    and therefore this extension splits if and only $\pi_*[\phi]=0$.
\end{corollary}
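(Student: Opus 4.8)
The plan is to deduce this directly from the push-forward formula for structure classes, Corollary~\ref{co:pushfowardstructure}, by feeding it the identification of the structure class of the initial extension~\eqref{eq:initialextension} with $[\phi]$ from Theorem~\ref{th:Nakamura}, and then invoking the last sentence of Theorem~\ref{th:EM} to translate vanishing of the structure class into splitting. So the real work is just to exhibit the correct ladder of extensions.

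First I would set up that ladder. Since $PB_n'\leq N\leq PB_n$ and $N\lhd B_n$, the projection $B_n\to B_n/N$ factors through $B_n\to B_n/PB_n'$, giving a surjective homomorphism $\psi\co B_n/PB_n'\to B_n/N$. On kernels, $\psi$ restricts to the quotient map $PB_n/PB_n'\to PB_n/N$, which, under the identification $PB_n/PB_n'\cong M^2_\Z$ of Proposition~\ref{pr:recognizepairmodule}, is exactly $\pi\co M^2_\Z\to M$; and on cokernels $\psi$ induces the identity of $\sg{n}$, since both projections to $\sg{n}$ are induced by $\rho$. Thus we obtain a commuting diagram
\[
\xymatrix{
0\ar[r] & M^2_\Z \ar[r]\ar[d]^{\pi} & B_n/PB_n' \ar[r]\ar[d]^{\psi} & \sg{n}\ar[r]\ar@{=}[d] & 1\\
0\ar[r] & M \ar[r] & B_n/N \ar[r] & \sg{n}\ar[r] & 1.
}
\]
The one point that needs checking is that the bottom row is an extension with the \emph{stated} $\Z\sg{n}$-module structure on $M$, i.e. that the conjugation action of $B_n/N$ on $M=PB_n/N$ agrees with the action making $\pi$ equivariant: this is immediate because $\pi$ is a surjective $\Z\sg{n}$-module homomorphism and conjugation is computed by lifting and conjugating, so the action on $M$ is the push-forward along $\pi$ of the action on $M^2_\Z$ (which is the one of Proposition~\ref{pr:recognizepairmodule}).

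Then I would apply Corollary~\ref{co:pushfowardstructure} with $M_1=M^2_\Z$, $M_2=M$, $E_1=B_n/PB_n'$, $E_2=B_n/N$, $G=\sg{n}$, $f=\pi$, and $\psi$ as above: the structure class of the bottom row equals $\pi_*$ of the structure class of the top row. By Theorem~\ref{th:Nakamura}, the structure class of the top row, which is precisely extension~\eqref{eq:initialextension}, is $[\phi]$. Hence $\pi_*[\phi]\in H^2(\sg{n};M)$ is the structure class of $0\to M\to B_n/N\to \sg{n}\to 1$, and by Theorem~\ref{th:EM} this extension splits if and only if its structure class vanishes, that is, if and only if $\pi_*[\phi]=0$. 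The proof is essentially all bookkeeping; the only substantive step is the identification of the induced map on kernels with $\pi$ together with the compatibility of module structures just noted, and I anticipate no genuine obstacle there.
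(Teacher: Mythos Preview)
Your proposal is correct and follows essentially the same approach as the paper: exhibit the commuting ladder of extensions with $\pi$ on kernels and the identity on $\sg{n}$, apply Corollary~\ref{co:pushfowardstructure}, and invoke Theorem~\ref{th:Nakamura} for the identification of the top structure class with $[\phi]$. The paper's proof is terser (it omits the verification that the induced map on kernels is $\pi$ and the compatibility of module structures), but the argument is the same.
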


\begin{proof}
The point is that we have a commuting diagram
      \[
\xymatrix{
     0 \ar[r] & M^2_\Z \ar[r] \ar[d]^\pi & B_n/PB_n' \ar[r] \ar[d] & \sg{n} \ar[r] \ar@{=}[d] & 1 \\   
     0 \ar[r] & M \ar[r] & B_n/N \ar[r] & \sg{n} \ar[r] & 1, \\   
}
\] 
so that Corollary~\ref{co:pushfowardstructure} applies.
Since $[\phi]$ is the structure class of the first extension by Theorem~\ref{th:Nakamura}, this proves the corollary.
\end{proof}

We need to transfer the statement of Theorem~\ref{th:Nakamura} over to the chain complex that we are using in the current paper.
After proving this result, we use $P_*$ only (and not $R_*$) for the rest of the paper.
\begin{proposition}\label{pr:Nakamuracocycle}
There is an augmentation-preserving partial chain homotopy equivalence $\psi\co P_*\to R_*$, such that $\phi\circ\psi\co P_2\to M^2_\Z$ is the map $c_i\mapsto v_{i,i+1}$, $d_{ij}\mapsto 0$, $b_i\mapsto 0$.
In particular $\phi\circ\psi$ is a cocycle using $P_*$
with $[\phi\circ\psi]=[\hat\alpha^2_1]\in H^2(\sg{n};M^2_\Z)$.
So $[\hat\alpha^2_1]$ is the structure class of the extension~\eqref{eq:initialextension}.
\end{proposition}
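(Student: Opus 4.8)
The plan is to build the comparison map $\psi$ explicitly in degrees $0$, $1$, $2$, evaluate $\phi\circ\psi$ on the free $\Z\sg{n}$-generators of $P_2$, and recognize the answer as $\hat\alpha^2_1$. That \emph{some} augmentation-preserving chain map $\psi\co P_*\to R_*$ exists, unique up to chain homotopy, is the fundamental lemma of homological algebra applied to two free resolutions of $\Z$; since both complexes are resolutions through the degrees relevant to $H^2$, $\psi$ is a partial chain homotopy equivalence and the induced map canonically identifies the two models of $H^2(\sg{n};M^2_\Z)$. Granting the explicit value of $\phi\circ\psi$, the cochain in the statement is literally $\hat\alpha^2_1$, which is a cocycle by Proposition~\ref{pr:M2cohomology}; and $[\hat\alpha^2_1]=[\phi\circ\psi]=\psi^*[\phi]$ equals the structure class of~\eqref{eq:initialextension}, since $[\phi]$ is that structure class by Theorem~\ref{th:Nakamura}. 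So the real work is the construction.

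First I would take $\psi_0(*)=*$ and $\psi_1(e_i)=\tilde x_{i,i+1}$; these are legitimate choices since $\partial\tilde x_{i,i+1}=((i,i+1)-1)*=\psi_0(\partial e_i)$. The degree-two generators are lifted one family at a time. For $c_i$, matching boundaries gives $\psi_2(c_i)=\tilde c_{i,i+1}$, whence $\phi\circ\psi(c_i)=\phi(\tilde c_{i,i+1})=v_{i,i+1}$. For the commutation cell $d_{ij}$ (with $j\ge i+2$), matching boundaries gives $\psi_2(d_{ij})=-\tilde d_{i,i+1,j,j+1}$; as $\{i,i+1\}$ and $\{j,j+1\}$ are not interleaved, this $\tilde d$ falls in the ``otherwise'' line of the formula for $\phi$ in Theorem~\ref{th:Nakamura}, so $\phi\circ\psi(d_{ij})=0$.

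The one real obstacle is the braid-relation cell $b_i$, whose boundary involves the Coxeter word of length three and so must be matched against a $\Z\sg{n}$-combination of the cells $\tilde e_{\bullet\bullet\bullet}$ of $R_*$. Writing $p=i$, $q=i+1$, $r=i+2$, I would search for $\psi_2(b_i)$ among combinations of $\tilde e_{prq}$, $\tilde e_{qpr}$, $\tilde e_{rqp}$ --- exactly the three orderings of $\{p,q,r\}$ on which Theorem~\ref{th:Nakamura} makes $\phi$ vanish. Expanding $\partial\tilde e_{prq}$, $\partial\tilde e_{qpr}$, $\partial\tilde e_{rqp}$ and equating coefficients of $\tilde x_{pq}$, $\tilde x_{qr}$, $\tilde x_{pr}$ with those of $\psi_1(\partial b_i)$ singles out $\psi_2(b_i)=(q\,r)\,\tilde e_{prq}+\tilde e_{qpr}$, the only group-theoretic identity needed being $(q\,r)(p\,r)=(p\,q)(q\,r)$ in $\sg{n}$; then $\phi\circ\psi(b_i)=(q\,r)\phi(\tilde e_{prq})+\phi(\tilde e_{qpr})=0$ by $\Z\sg{n}$-linearity of $\phi$. (Because $\phi$ is a cocycle, the value $\phi\circ\psi(b_i)$ is the same for every valid lift, so exhibiting this one lift suffices.) Collecting the three values, $\phi\circ\psi\co P_2\to M^2_\Z$ is the cochain $c_i\mapsto v_{i,i+1}$, $d_{ij}\mapsto 0$, $b_i\mapsto 0$, i.e.\ $\hat\alpha^2_1$, and the proposition follows. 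I expect the coefficient matching for $b_i$, together with the routine case-checking of $\phi$ on the relevant $\tilde d$- and $\tilde e$-cells, to be the only laborious part.
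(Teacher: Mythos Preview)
Your argument is correct and follows the same overall strategy as the paper: build $\psi$ explicitly in degrees $0,1,2$ and evaluate $\phi\circ\psi$ on the free generators of $P_2$.  Two points of comparison are worth noting.

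First, for $d_{ij}$ you take $\psi_2(d_{ij})=-\tilde d_{i,i+1,j,j+1}$, which is the correct sign (the paper writes $+\tilde d_{i,i+1,j,j+1}$, a harmless slip since $\phi$ vanishes on this cell anyway).

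Second, your lift of $b_i$ is genuinely different and in fact simpler.  The paper tiles the hexagon with four cells,
\[
\psi(b_i)=\tilde e_{i+2,i,i+1}-\tilde e_{i,i+2,i+1}-s_is_{i+1}\tilde c_{i,i+1}+s_{i+1}s_i\tilde c_{i+1,i+2},
\]
and then must check that the four $\phi$-values cancel (two are nonzero).  You instead use only two cells,
\[
\psi_2(b_i)=s_{i+1}\,\tilde e_{i,i+2,i+1}+\tilde e_{i+1,i,i+2},
\]
both chosen from orderings on which $\phi$ vanishes identically, so no cancellation is needed.  The boundary check reduces to the single identity $(i+1,i+2)(i,i+2)=(i,i+1)(i+1,i+2)$, exactly as you say.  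Your construction therefore gives a slightly cleaner proof of the same statement.
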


\begin{proof}
    The following formulas work to define $\psi$:
    \begin{itemize}
        \item $\psi(*)=*$,
        \item $\psi(e_i)=\tilde x_{i,i+1}$,
        \item $\psi(c_i)=\tilde c_{i,i+1}$,
        \item $\psi(d_{ij})=\tilde d_{i,i+1,j,j+1}$,
        \item $\psi(b_i)=\tilde e_{i+2,i,i+1}-\tilde e_{i,i+2,i+1}-s_is_{i+1}\tilde c_{i,i+1}+s_{i+1}s_i\tilde c_{i+1,i+2}$.
    \end{itemize}
    (Pictorially, $b_i$ is a hexagon, and $\psi$ tiles it with two $\tilde e$-quadrilaterals and two $\tilde c$-bigons.  The bigons flip the directions on two of the edges so that they match between the hexagon and the quadrilaterals.)
    
    Let $\epsilon_P\co P_0\to \Z$ and $\epsilon_R\co R_0\to \Z$ be the augmentation maps $*\mapsto 1$.
    Then $\epsilon_R\circ \psi=\psi\circ \epsilon_P$.
    Also, for each chain $c$ in $P_1$ or $P_2$, we have $\psi(\partial c)=\partial \psi(c)$ (by direct computation).
    So $\psi$ is an augmentation-preserving chain map, and it follows from Theorem I.7.5 of Brown~\cite{Brown} that $\psi$ is a chain homotopy equivalence, up to dimension $2$.
    (In more detail: if $P_*$ and $R_*$ are independently completed to free resolutions of $\Z$, then there is an extension of $\psi$ that is an augmentation-preserving chain map between them, and this extension is a homotopy equivalence.)

    Now we evaluate $\phi\circ\psi$.
    It is easy to see that $\phi(\psi(c_i))=v_{i,i+1}$.
    Since $i<i+1<j<j+1$ (by our conventions on $d_{ij}$), we have that $\phi(\psi(d_{ij}))=0$.
    Since $i<i+1<i+2$, but $(i,i+2,i+1)$ is in the reverse cyclic order, we have that $\phi(\tilde e_{i,i+2,i+1})=0$ and $\phi(e_{i+2,i,i+1})=v_{i+1,i+2}-v_{i,i+1}$.
    But $\phi(s_is_{i+1}\tilde c_{i,i+1})=v_{i+1,i+2}$ and $\phi(s_{i+1}s_i\tilde c_{i+1,i+2})=v_{i,i+1}$.
    So $\phi(\psi(b_i))=0$.
\end{proof}

One of the goals of the current paper is to provide context for the second author's results in~\cite{Nakamura}, in which he describes the structure class for extension~\eqref{eq:initialextension} and the extension for the quotient by the level-4 subgroup:
\begin{equation}\label{eq:level4ext}
0\to M^2_{\Z/2\Z}\to B_n/B_n[4] \to \sg{n}\to 1.
\end{equation}
Having described the cohomology in $M^2_R$ for general $R$, we can provide that context now.
The following corollary is an immediately consequence of Proposition~\ref{pr:M2cohomology}, and uses Theorem~\ref{th:EM} and Theorem~\ref{th:Nakamura}.
It needs no further proof:

\begin{corollary}
    We have that $H^2(M^2_\Z)=(\Z/2\Z)^2$, generated by $[\alpha^2_1]$ and $[\hat\alpha^2_1]$.  
    So there are exactly four equivalence classes of extensions with cokernel $\sg{n}$ and kernel $M^2_\Z$, and the one with structure class $[\hat \alpha^2_1]$ is the class of extension~\eqref{eq:initialextension}.
\end{corollary}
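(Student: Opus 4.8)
The plan is to assemble three ingredients that are already in place. First I would specialize Proposition~\ref{pr:M2cohomology} to the ring $R=\Z$. Since $\Z$ is torsion-free, $\Z[2]=0$, so in the isomorphism $R[2]^2\oplus(R/2R)^2\to H^2(M^2)$ the two summands built from $\beta^2_q$ and $\hat\beta^2_r$ collapse, and what remains is an isomorphism $(\Z/2\Z)^2\to H^2(\sg{n};M^2_\Z)$ given by $([s],[t])\mapsto[\alpha^2_s+\hat\alpha^2_t]$. (For $n\in\{4,5\}$ the statement of Proposition~\ref{pr:M2cohomology} gives $R[2]\oplus(R/2R)^2$, which for $R=\Z$ is again $(\Z/2\Z)^2$ with the same two generators; I would either restrict the corollary to $n\geq 4$ or treat the small cases separately.) Evaluating the isomorphism at $([1],[0])$ and $([0],[1])$ exhibits $[\alpha^2_1]$ and $[\hat\alpha^2_1]$ as a generating set, and since the map is an isomorphism these two classes are independent of order $2$; hence $H^2(\sg{n};M^2_\Z)=(\Z/2\Z)^2$ as claimed.

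Next I would invoke Theorem~\ref{th:EM}: equivalence classes of extensions $0\to M^2_\Z\to E\to\sg{n}\to1$ are in canonical bijection with the elements of $H^2(\sg{n};M^2_\Z)$. Since this group has exactly four elements, there are exactly four equivalence classes of such extensions, with the split one corresponding to $0$.

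Finally, to identify which class is that of extension~\eqref{eq:initialextension}, I would quote Theorem~\ref{th:Nakamura} together with Proposition~\ref{pr:Nakamuracocycle}. The former identifies the structure class of~\eqref{eq:initialextension} as $[\phi]\in H^2(\sg{n};M^2_\Z)$, computed using the resolution $R_*$; the latter transports this through the augmentation-preserving partial chain homotopy equivalence $\psi\co P_*\to R_*$, showing that $\phi\circ\psi$ is the cocycle $c_i\mapsto v_{i,i+1}$, $d_{ij}\mapsto0$, $b_i\mapsto0$ on $P_*$, which is exactly $\hat\alpha^2_1$. Therefore the structure class of~\eqref{eq:initialextension} is $[\hat\alpha^2_1]$, completing the identification.

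I do not expect a genuine obstacle here; the one point requiring care is the bookkeeping across the two resolutions $P_*$ and $R_*$, and Proposition~\ref{pr:Nakamuracocycle} has already carried it out. A secondary caveat is to avoid silently invoking the cases of Proposition~\ref{pr:M2cohomology} that do not yield $(\Z/2\Z)^2$: I would either state the corollary for $n\geq4$ or add a sentence for $n\in\{2,3\}$, where $H^2(\sg{n};M^2_\Z)\cong\Z/2\Z$ is generated by $[\alpha^2_1]$ alone and there are only two equivalence classes of extensions.
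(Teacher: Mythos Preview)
Your proposal is correct and takes essentially the same approach as the paper, which simply declares the corollary an immediate consequence of Proposition~\ref{pr:M2cohomology}, Theorem~\ref{th:EM}, and Theorem~\ref{th:Nakamura} and gives no further proof. Your explicit invocation of Proposition~\ref{pr:Nakamuracocycle} to pass from $R_*$ to $P_*$ is a helpful clarification rather than a departure, and your remarks on the small-$n$ cases are accurate and match the case breakdown in Proposition~\ref{pr:M2cohomology}.
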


\begin{corollary}
    Let $R=\Z/2\Z$.
    We have that $H^2(M^2_R)=R^4$, generated by $[\alpha^2_1]$, $[\hat\alpha^2_1]$, $[\beta^2_1]$, and $[\hat\beta^2_1]$.  
    There are exactly sixteen equivalence classes of extensions with cokernel $\sg{n}$ and kernel $M^2_R$, and the one with structure class $[\hat \alpha^2_1]$ is the class of extension~\eqref{eq:level4ext}.
\end{corollary}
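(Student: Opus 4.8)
The plan is to read both assertions directly off results already established. For the cohomology group, one specializes Proposition~\ref{pr:M2cohomology} to $R=\Z/2\Z$ (under the standing hypothesis $n\geq 6$ that makes that proposition give the rank-$4$ answer). Over $R=\Z/2\Z$ we have $R[2]=R$ and $R/2R=R$, so the isomorphism $R[2]^2\oplus(R/2R)^2\to H^2(M^2_R)$, $(q,r,[s],[t])\mapsto[\alpha^2_s+\hat\alpha^2_t+\beta^2_q+\hat\beta^2_r]$, of Proposition~\ref{pr:M2cohomology} becomes an isomorphism $(\Z/2\Z)^4\to H^2(M^2_R)$. Evaluating it on the four standard basis vectors shows that $H^2(M^2_R)$ is generated by $[\alpha^2_1]$, $[\hat\alpha^2_1]$, $[\beta^2_1]$, and $[\hat\beta^2_1]$. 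Theorem~\ref{th:EM} then provides a canonical bijection between $H^2(\sg{n};M^2_R)$ and the set of equivalence classes of extensions $0\to M^2_R\to E\to\sg{n}\to 1$; since $\abs{(\Z/2\Z)^4}=16$, there are exactly sixteen such classes.

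It remains to single out the class of extension~\eqref{eq:level4ext}. Here I would apply Corollary~\ref{co:splittingobstruction} with $N=B_n[4]$. Since $B_n[4]$ is generated by squares of pure braids, we have $PB_n'\leq B_n[4]\leq PB_n$, and $PB_n/B_n[4]\cong M^2_{\Z/2\Z}$ with the quotient map $\pi\co M^2_\Z\to M^2_{\Z/2\Z}$ equal to reduction modulo $2$ (this is the $k=2$ case of Proposition~\ref{pr:recognizepairmodule}). By Corollary~\ref{co:splittingobstruction}, the structure class of~\eqref{eq:level4ext} is $\pi_*[\phi]$, where $[\phi]\in H^2(\sg{n};M^2_\Z)$ is Nakamura's class from Theorem~\ref{th:Nakamura}. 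By Proposition~\ref{pr:Nakamuracocycle}, after transfer to the complex $P_*$ the class $[\phi]$ is represented by the integral cocycle $\hat\alpha^2_1$, namely $c_i\mapsto v_{i,i+1}$, $d_{ij}\mapsto 0$, $b_i\mapsto 0$. Because that formula is defined verbatim over any commutative ring, it is compatible with coefficient reduction in the sense of Corollary~\ref{co:pushfowardstructure}, so $\pi_*[\hat\alpha^2_1]=[\hat\alpha^2_1]$ in $H^2(\sg{n};M^2_{\Z/2\Z})$, which is the desired identification.

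There is no substantive obstacle here: every input is already proved, so the corollary is \emph{immediate} in the author's sense. The only points deserving a moment's care are that the symbols $\alpha^2_r,\hat\alpha^2_r,\beta^2_r,\hat\beta^2_r$ name cocycles defined over an arbitrary coefficient ring, so that reduction $\Z\to\Z/2\Z$ carries the $\Z$-version of $\hat\alpha^2_1$ to the $\Z/2\Z$-version of the same name (making Corollary~\ref{co:pushfowardstructure} applicable without adjustment), and that the rank-$4$ conclusion tacitly presupposes $n\geq 6$, in parallel with the preceding corollary over $\Z$.
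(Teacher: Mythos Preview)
Your proof is correct and follows essentially the same approach as the paper's own proof: specialize Proposition~\ref{pr:M2cohomology} to $R=\Z/2\Z$, invoke Theorem~\ref{th:EM} to count extensions, and identify the structure class of~\eqref{eq:level4ext} by pushing forward $[\hat\alpha^2_1]\in H^2(\sg{n};M^2_\Z)$ along the mod-$2$ reduction map via Corollary~\ref{co:pushfowardstructure} (or equivalently Corollary~\ref{co:splittingobstruction}). The paper's proof is terser and cites Corollary~\ref{co:pushfowardstructure} directly rather than going through Corollary~\ref{co:splittingobstruction}, but this is only a cosmetic difference; your added remark that the rank-$4$ statement tacitly assumes $n\geq 6$ is a helpful clarification the paper leaves implicit.
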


\begin{proof}
    All we need to explain is why this is the right description of the structure class.
    The point is that the projection $B_n/PB_n/\to B_n/B_n[4]$ restricts to the map $M^2_\Z\to M^2_R$ that reduces the coefficients modulo $2$.  
    This sends the structure class $[\hat\alpha^2_1]\in H^2(M^2_\Z)$ to the class $[\hat\alpha^2_1]\in H^2(M^2_R)$.
    Corollary~\ref{co:pushfowardstructure} then implies that this is the structure class for extension~\eqref{eq:level4ext}.
\end{proof}

\section{Cohomology computations}
\subsection{Cohomology in $S^1$ and $S^2$}
The modules $M^1$ and $S^1$ fit into the following exact sequence
\begin{equation}\label{eq:S1seq}
    0\to S^1\to M^1\to R\to 0;
\end{equation}
the second map is $t_i\mapsto 1$.
Since $S^1$ is defined to be spanned by differences $t_i-t_j$, this sequence is exact.
We use this to find the cohomology of $\sg{n}$ with coefficients in $S^1$.
\begin{proposition}\label{pr:S1cohomology}
    Let $n\geq 2$.
    We have
    $H^0(S^1)\cong R[n],$
    the $n$-torsion.  
    More specifically, it is equal to the submodule $\{r \sum_{i=1}^nt_i\,|\, r\in R[n]\}.$
    
    We have
    \[
    H^1(S^1)=\left\{
    \begin{array}{cl}
    R/nR & \text{if $n$ is odd or $n=2$}\\
    (R/nR)\oplus R[2] & \text{ if $n$ is even and $n\geq 4$}
    \end{array}
    \right.
    \]
    The $R[2]$-factor consists of elements of the form $[\kappa^1_r]$, and the $R/nR$-factor is the image of $H^0(R)$ under the connecting homomorphism from sequence~\eqref{eq:S1seq}.

    We have
    \[H^2(S^1)=\left\{\begin{array}{cl}
    0 & \text{if $n$ is odd} \\
    R[2]^2\oplus R/2R & \text{if $n$ is even and $n\geq 6$} \\
    R[2]\oplus R/2R & \text{if $n=4$} \\
    R[2] & \text{if $n=2$.} \\
    \end{array}\right.
    \]
    The $R/2R$-factor consists of elements that map to elements of the form $[\alpha^1_r]\in H^2(M^1)$, the $R[2]$-factor starting at $n=6$ consists of elements that map to elements of the form $[\beta^1_r]$ in $H^2(M^1)$, and the $R[2]$-factor starting at $n=2$ is the image of $H^1(R)$ under the connecting homomorphism from the sequence~\eqref{eq:S1seq}.
\end{proposition}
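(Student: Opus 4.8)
The plan is to feed the short exact sequence~\eqref{eq:S1seq}, namely $0\to S^1\to M^1\to R\to 0$, into the long exact sequence for $H^*(\sg{n};-)$ in degrees $\le 2$, using Propositions~\ref{pr:untwistedcohomreps} and~\ref{pr:M1cohomology} to identify the outer terms $H^i(R)$ and $H^i(M^1)$, and then to pin down the extension data by computing the connecting homomorphisms and the maps induced by $M^1\to R$ on the explicit cocycle representatives.

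The first step is to evaluate the induced maps $H^i(M^1)\to H^i(R)$. On cocycles, $M^1\to R$ sends $\sum_k t_k$ to $n$, hence it sends $\kappa^1_r\mapsto\kappa^0_{nr}$, $\alpha^1_s\mapsto\alpha^0_{ns}$, $\beta^1_r\mapsto\beta^0_{nr}$, and the map $H^0(M^1)=R\cdot(\sum_k t_k)\to H^0(R)=R$ is multiplication by $n$. Since $nr=r$ for $r\in R[2]$ when $n$ is odd and $nr=0$ when $n$ is even, and likewise $n[s]=[s]$ or $0$ in $R/2R$, the maps $H^1(M^1)\to H^1(R)$ and $H^2(M^1)\to H^2(R)$ are isomorphisms when $n$ is odd and identically zero when $n$ is even. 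This dichotomy controls the whole computation.

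The second step is to read off $H^*(S^1)$. The kernel of multiplication by $n$ on $H^0(M^1)$ gives $H^0(S^1)=R[n]$, realized as $\{\,r\sum_k t_k\mid r\in R[n]\,\}$, and its cokernel $R/nR$ injects into $H^1(S^1)$ via the connecting homomorphism. When $n$ is odd, the isomorphisms from step one force $H^1(S^1)\cong R/nR$ and $H^2(S^1)=0$. When $n$ is even, the vanishing from step one produces short exact sequences $0\to R/nR\to H^1(S^1)\to H^1(M^1)\to 0$ and $0\to H^1(R)\to H^2(S^1)\to H^2(M^1)\to 0$; substituting the values of $H^1(M^1)$, $H^1(R)$, $H^2(M^1)$ from Proposition~\ref{pr:M1cohomology} yields the stated groups, provided these sequences split. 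The small cases $n\in\{2,3,4\}$ are obtained by substituting the corresponding special values from Proposition~\ref{pr:M1cohomology} (for $n=2$ one gets $H^2(S^1)=H^1(R)=R[2]$ at once).

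The main obstacle is splitting these extensions over an arbitrary commutative ring $R$, where one cannot simply pick torsion generators since, e.g., $R/2R$ need not be cyclic. The key point is that for $n$ even the cocycles $\kappa^1_r$ and $\beta^1_r$ with $r\in R[2]$ already take values in $S^1$, because $r\sum_k t_k$ maps to $nr=0$ in $R$; corestricting them gives $R$-linear sections $R[2]\to H^1(S^1)$ and $R[2]\to H^2(S^1)$ of the relevant quotient maps. For the $R/2R$ summand of $H^2(S^1)$ I would instead correct $\alpha^1_s$ by the coboundary of the lift $e_i\mapsto s(n/2)\,t_1$ of a cochain with coboundary $\alpha^0_{ns}$ (note $\alpha^0_{ns}$ is a coboundary since $[ns]=0$ in $R/2R$), then verify that the resulting cocycle $\sigma_s$ is valued in $S^1$, is $R$-linear in $s$, and is a $C^*(S^1)$-coboundary whenever $s\in 2R$ (using the primitive $e_i\mapsto s'\sum_k t_k$ for $\alpha^1_{2s'}$), which exhibits an $R$-linear section $R/2R\to H^2(S^1)$. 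Combining these sections splits all the relevant sequences; the remaining claims about which classes make up each summand — the $R/nR$-factor as the image of the connecting homomorphism from $H^0(R)$, the $R[2]$-factor at $n=2$ as the image of $H^1(R)$, the $R[2]$-factor from $n=6$ on as the preimage of the classes $[\beta^1_r]$, and the $R/2R$-factor as the preimage of the classes $[\alpha^1_r]$ — then follow directly from the long exact sequence together with these explicit representatives.
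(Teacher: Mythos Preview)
Your proposal is correct and follows essentially the same route as the paper: run the long exact sequence for~\eqref{eq:S1seq}, compute the maps $H^i(M^1)\to H^i(R)$ on the explicit cocycle representatives (multiplication by $n$, hence isomorphisms for $n$ odd and zero for $n$ even), and then split the resulting short exact sequences by exhibiting $R$-linear sections built from $\kappa^1_r$, $\beta^1_r$, and a corrected version of $\alpha^1_s$. The only cosmetic difference is your choice of correcting cochain for $\alpha^1_s$: you use $e_i\mapsto s(n/2)t_1$, whereas the paper (somewhat elliptically) rewrites $r\sum_k t_k$ via the alternating sum $(t_1-t_2)+(t_3-t_4)+\cdots+(t_{n-1}-t_n)$; both produce an $S^1$-valued cocycle cohomologous to $\alpha^1_s$ in $M^1$ whose double is an $S^1$-coboundary.
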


\begin{proof}
    The exact sequence~\eqref{eq:S1seq} gives us the following long exact sequence for cohomology:
    \[
        \begin{split}
            0&\to H^0(S^1)\to H^0(M^1) \to H^0(R) \to H^1(S^1) \to H^1(M^1)\to H^1(R) \\
            &\to H^2(S^1)\to H^2(M^1)\to H^2(R)\to \dotsm
        \end{split}
    \]
    We know that $H^0(M^1)$ is spanned by $\sum_{i=1}^nt_i$, and this element maps to $n$ in $H^0(R)=R$.
    The statement about $H^0(S^1)$ follows immediately.

    Next we compute $H^1(S^1)$.
    We start by considering the connecting homomorphism $H^0(R)\to H^1(S^1)$.
    By exactness, the kernel of this map is the image of the map $H^0(M^1)\to H^0(R)$, which is $nR$.
    So the image of the connecting homomorphism is a submodule of $H^1(S^1)$ isomorphic to $R/nR$.
    (The element $1\in R$ lifts to $t_1\in M^1$, and the coboundary of $t_1$, considered as a $0$-cocycle, is the map given by $e_i\mapsto (s_i-1)t_1\in S^1$.
    The $n$-multiple of this cocycle represents the same class as $e_i\mapsto (s_i-1)\sum_{j=1}^nt_j$, which is the zero cocycle.)

    We also need to consider homomorphism $H^1(M^1)\to H^1(R)$.
    By Proposition~\ref{pr:M1cohomology}, $H^1(M^1)$ consists of classes of the 
    form $[\kappa^1_r]$ for $r\in R[2]$, if $n\geq 3$.  (If $n=2$, $H^1(M^1)=0$.)
    The class $[\kappa^1_r]$ maps to a class represented by $e_i\mapsto nr$.
    Since $r$ has order $2$, we know that $nr=0$ if $n$ is even, and $nr=r$ if $n$ is odd.
    So if $n$ is even, every $[\kappa^1_r]$ is in the kernel, and therefore these are classes in $H^1(S^1)$.
    (If $n$ is even, then since $r$ has order $2$, $r\sum_{i=1}^nt_i=r((t_1-t_2)+(t_3-t_4)+\dotsm+(t_{n-1}-t_n))$, which is in $S^1$.)
    Since $2\kappa^1_r$ is the zero function, considered as a function to $S^1$ or to $M^1$, we have that $2[\kappa^1_r]=0$, and therefore $H^1(S^1)$ decomposes as a direct sum.
    Of course, if $n$ is odd, the kernel of $H^1(M^1)\to H^1(R)$ contributes nothing, and this proves the statement about $H^1(S^1)$.
    
    Now we compute $H^2(S^1)$.
    We just considered the map $H^1(M^1)\to H^1(R)$, and it is trivial if $n$ is even, and by Proposition~\ref{pr:untwistedcohomreps}, it is surjective if $n$ is odd.
    The image of this map is the kernel of the connecting homomorphism $H^1(R)\to H^2(S^1)$, so if $n$ is odd, $H^1(R)$ contributes nothing to $H^2(S^1)$, and if $n$ is even, then $H^1(R)$ maps injectively to $H^2(S^1)$.
    This contributes a submodule of the form $R[2]$ starting at $n=2$.

    Finally, we consider the map $H^2(M^1)\to H^2(R)$.
    For a generator $[\alpha^1_r]$, we have that $\alpha^1_r$ maps to the map $c_i\mapsto nr$, with other cells mapping to $0$.
    Again, this is $0$ if $n$ is even.
    If $n$ is odd, then this is $[\alpha^0_r]$, since $nr=r$.
    For a generator $[\beta^1_r]$, we have that $\beta^1_r$ maps to the map $d_{ij}\mapsto nr$, with other cells mapping to $0$.
    Again, if $n$ is even this is $0$, and if $n$ is odd, this is $\beta^0_r$.
    So $H^2(M^1)\to H^2(R)$ is injective if $n$ is odd, and it is the zero map if $n$ is even.
    If $n$ is even, then a cocycle $\alpha^1_r$ lifts to to $e_i\mapsto r((t_1-t_2)+(t_3-t_4)+\dotsm+(t_{n-1}-t_n))$.
    Then $2\alpha^1_r$ is the coboundary of $s_i\mapsto r((t_1-t_2)+(t_3-t_4)+\dotsm+(t_{n-1}-t_n))$. 
    In particular, a class $[\alpha^1_r]$ or $[\beta^1_r]$ has order $2$ when considered in $H^2(S^1)$, so $H^2(S^1)$ decomposes as a direct sum of the image and the kernel of the adjacent maps in the exact sequence.
    The statement about $H^2(S^1)$ follows immediately.
\end{proof}

Now our goal is to prove a similar statement for $S^2$.
To do this, we need to consider cohomology in the submodule $K_{12}$ of $M^2$.
We recall two exact sequences, one from the statement of Proposition~\ref{pr:SESS2K12S1}, and one from the discussion before that proposition.
They are
\begin{equation}\label{eq:K12seq}
    0\to K_{12}\to M^2\to R\to 0,
\end{equation}
and 
\begin{equation}\label{eq:S2seq}
    0\to S^2\to K_{12}\to S^1 \to 0.
\end{equation}
The map $M^2\to R$ is $v_{ij}\mapsto 1$, and sequence~\eqref{eq:K12seq} defines the submodule $K_{12}$.
The map $K_{12}\to S^1$ is the restriction of $\mu\co S^2\to S^1$, given by $\mu(v_{ij})=t_i+t_j$.
Also recall that for each $i$, $w_i\in M^2$ is given by $w_i=\sum_{j\neq i}v_{ij}$, and $u\in M^2$ is given by $u=\sum_{i,j}v_{ij}$.
In the following proposition, we don't consider the case $n=2$ since if $n=2$, then $K_{12}=0$.
\begin{proposition}\label{pr:Kcohomolgy}
    Let $n\geq 3$.    
    We have
    $H^0(K_{12})\cong R[\binom{n}{2}],$
    the $\binom{n}{2}$-torsion.  
     Specifically, it is equal to the submodule $\{ru \,|\, r\in R[\binom{n}{2}]\}.$

    We have
    \[
    H^1(K_{12})=\left\{
    \begin{array}{cl}
    R/3R & \text{if $n=3$}\\
    R[2]\oplus R/\binom{n}{2}R & \text{ if $n\geq 4$}
    \end{array}
    \right.
    \]
    The $R[2]$-factor consists of elements of the form $[\kappa^2_r]$ if $\binom{n}{2}$ is even, and elements of the form $[\kappa^2_r-\hat\kappa^2_r]$ if $\binom{n}{2}$ is odd.
    The $R/\binom{n}{2}R$-factor is the image of $H^0(R)$ under the connecting homomorphism from sequence~\eqref{eq:K12seq}.

    We have
    \[H^2(K_{12})=\left\{\begin{array}{cl}
    0 & \text{if $n=3$} \\
    R[2]\oplus R/2R & \text{if $n\in\{4,5\}$, or if $n\geq 6$ and $\binom{n}{2}$ is odd} \\
    R[2]^2\oplus R/2R & \text{if $n\geq 6$ and $\binom{n}{2}$ is even.} \\
    \end{array}\right.
    \]
    The $R/2R$-factor consists of elements of the form $[\alpha^2_r]$ if $\binom{n}{2}$ is even, and of the form $[\alpha^2_r-\hat\alpha^2_r]$ if $\binom{n}{2}$ is odd.
    One  of the $R[2]$-factors consists of elements of the form $[\hat\beta^2_r]$.
    If $n\geq 6$ and $\binom{n}{2}$ is even, then the other $R[2]$-factor consists of elements of the form $[\beta^2_r]$.
\end{proposition}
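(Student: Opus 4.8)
The plan is to run the long exact sequence in cohomology attached to the short exact sequence~\eqref{eq:K12seq}, feeding in the known cohomology of $M^2$ (Proposition~\ref{pr:M2cohomology}) and of $R=S^0_R$ (Proposition~\ref{pr:untwistedcohomreps}); the other sequence~\eqref{eq:S2seq} is not needed for this proposition (it enters only when we compute $H^*(S^2)$). The relevant stretch is
\[
0\to H^0(K_{12})\to H^0(M^2)\to H^0(R)\to H^1(K_{12})\to H^1(M^2)\to H^1(R)\to H^2(K_{12})\to H^2(M^2)\to H^2(R).
\]
The heart of the argument is to push the generating cocycles of Proposition~\ref{pr:M2cohomology} forward along the augmentation $M^2\to R$, $v_{ij}\mapsto 1$. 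The only facts needed are $u\mapsto\binom{n}{2}$, $v_{i,i+1}\mapsto 1$, and that the six-term expression defining $\hat\beta^2_r$ maps to $6r=0$ for $r\in R[2]$. Hence $[\kappa^2_r]\mapsto[\kappa^0_{\binom{n}{2} r}]$, $[\hat\kappa^2_s]\mapsto[\kappa^0_s]$, $[\alpha^2_s]\mapsto[\alpha^0_{\binom{n}{2} s}]$, $[\hat\alpha^2_t]\mapsto[\alpha^0_t]$, $[\beta^2_q]\mapsto[\beta^0_{\binom{n}{2} q}]$, and $[\hat\beta^2_r]\mapsto 0$; using $\binom{n}{2} r\equiv 0$ or $r$ (mod $2R$) according to the parity of $\binom{n}{2}$, each induced map $H^i(M^2)\to H^i(R)$ becomes an explicit linear map between the direct-sum descriptions of Propositions~\ref{pr:M2cohomology} and~\ref{pr:untwistedcohomreps}, whose kernel and image can be read off by inspection.

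Concretely: for $H^0$, since $H^0(M^2)=Ru$ and $u\mapsto\binom{n}{2}$, the kernel is $R[\binom{n}{2}]u$ and the image is $\binom{n}{2}R$, so the connecting map embeds $R/\binom{n}{2}R$ into $H^1(K_{12})$. The remaining part of $H^1(K_{12})$ is $\ker(H^1(M^2)\to H^1(R))$: for $n=3$ this map is an isomorphism $R[2]\to R[2]$, so the kernel is $0$; for $n\geq 4$ it is the surjection $R[2]^2\to R[2]$, $(r,s)\mapsto\binom{n}{2} r+s$, with kernel $\cong R[2]$, spanned by $[\kappa^2_r]$ if $\binom{n}{2}$ is even and by $[\kappa^2_r-\hat\kappa^2_r]$ if $\binom{n}{2}$ is odd. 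To see the resulting extension of $R$-modules splits, I would observe that $\kappa^2_r$ (resp.\ $\kappa^2_r-\hat\kappa^2_r$) is actually valued in $K_{12}$ in these cases---$ru\in K_{12}$ because $\binom{n}{2}r=0$, and $r(u+v_{i,i+1})\in K_{12}$ because $(\binom{n}{2}+1)r=0$---so it represents a class in $H^1(K_{12})$ which the map $H^1(K_{12})\to H^1(M^2)$ carries to the chosen generator, giving a section. For $H^2$, the key observation is that $H^1(M^2)\to H^1(R)$ is surjective for every $n\geq 3$ (it hits $[\kappa^0_s]$ via $\hat\kappa^2_s$, and is an isomorphism when $n=3$), so the connecting homomorphism $H^1(R)\to H^2(K_{12})$ vanishes and $H^2(K_{12})\cong\ker(H^2(M^2)\to H^2(R))$. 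It then remains to identify this kernel: for $n=3$ the map $H^2(M^2)\to H^2(R)$ is the identity $R/2R\to R/2R$, so $H^2(K_{12})=0$; for $n\geq 4$ the $\hat\alpha^2_t$-coordinate is always killed (leaving $R/2R$ spanned by $[\alpha^2_r]$, or by $[\alpha^2_r-\hat\alpha^2_r]$ when $\binom{n}{2}$ is odd), the $\hat\beta^2_r$-coordinate always survives (one $R[2]$-factor), and the $\beta^2_q$-coordinate survives exactly when $n\geq 6$ and $\binom{n}{2}$ is even (the second $R[2]$-factor). In each case $\ker(H^2(M^2)\to H^2(R))$ is a coordinate subspace (or, for $\binom{n}{2}$ odd, the graph of $[s]\mapsto[s]$) inside the direct sum of Proposition~\ref{pr:M2cohomology}, hence a direct summand, which yields the stated direct-sum decompositions.

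I expect the only real difficulty to be organizational: keeping the many parity-and-range cases straight and checking that the short exact sequences of $R$-modules split in the claimed way rather than forming nontrivial extensions. The subtle points are the $\binom{n}{2}$-odd cases, where the relevant kernel is a ``diagonal'' submodule rather than a literal coordinate subspace, and the small cases $n\in\{3,4,5\}$, where the descriptions of $H^1(M^2)$ and $H^2(M^2)$ in Proposition~\ref{pr:M2cohomology} have fewer summands. I would structure the write-up as: (1) the long exact sequence; (2) a table of the images of the six generators under $M^2\to R$; (3) the $H^0$ and $H^1$ computation together with the splitting argument; (4) the $H^2$ computation, split by the cases $n=3$, $n\in\{4,5\}$, and $n\geq 6$.
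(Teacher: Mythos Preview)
Your proposal is correct and follows essentially the same approach as the paper: run the long exact sequence for~\eqref{eq:K12seq}, push the explicit generating cocycles of Proposition~\ref{pr:M2cohomology} through the augmentation $v_{ij}\mapsto 1$, and read off kernels and images case by case. Your splitting argument for $H^1(K_{12})$---observing that the kernel cocycles are literally valued in $K_{12}$ and hence define order-$2$ classes there---is exactly what the paper does (stated tersely in the proof and spelled out in Remark~\ref{re:K12cocycles}).
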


\begin{proof}
    We need the beginning of the cohomology long exact sequence from the sequence~\eqref{eq:K12seq}:
    \[
        \begin{split}
            0&\to H^0(K_{12})\to H^0(M^2) \to H^0(R) \to H^1(K_{12}) \to H^1(M^2)\to H^1(R) \\
            &\to H^2(K_{12})\to H^2(M^2)\to H^2(R)\to \dotsm
        \end{split}
   \]
    The map $H^0(M^2)\to H^0(R)$ sends the generator $u$ to $\binom{n}{2}$, and this proves the statement about $H^0(K_{12})$.
    This image is a copy of $\binom{n}{2}R$, and is equal to the kernel of $H^0(R)\to H^1(K_{12})$.
    So this map contributes a submodule of $H^1(K_{12})$ isomorphic to $R/\binom{n}{2}R$.

    Next consider the map $H^1(M^2)\to H^1(R)$, and temporarily assume $n\geq 4$.
    By Proposition~\ref{pr:M2cohomology}, we know that $H^1(M^2)$ is generated by classes of the form $[\kappa^2_r]$ and $[\hat\kappa^2_r]$.
    We see that $[\kappa^2_r]$ maps to $[\binom{n}{2}\kappa^0_{r}]$, and $[\hat\kappa^2_r]$ maps to $[\kappa^0_r]$.
    These classes are defined for $r\in R[2]$, so $\binom{n}{2}r$ is $r$ if $\binom{n}{2}$ is odd, and is $0$ if $\binom{n}{2}$ is even.
    So if $\binom{n}{2}$ is even, the kernel consists of classes of the form $[\kappa^2_r]$, and if $\binom{n}{2}$ is odd, the kernel consists of classes of the form $[\kappa^2_r-\hat\kappa^2_r]$.
    These representative cocycles have order $2$, so $H^1(K_{12})$ decomposes as a direct sum.
    If $n=3$, then $H^1(M^2)$ consists only of classes of the form $[\kappa^2_r]$, and the kernel of $H^1(M^2)\to H^1(R)$ is trivial.
    
    By Proposition~\ref{pr:untwistedcohomreps}, we know $H^1(R)$ consists of classes of the form $[\kappa^0_r]$.
    Since $[\hat\kappa^2_r]$ always hits this class, the map $H^1(M^2)\to H^1(R)$ is surjective, and $H^1(R)$ contributes nothing to $H^2(K_{12})$.

    Finally we compute the kernel of $H^2(M^2)\to H^2(R)$.
    Temporarily assume that $n\geq 6$.
    We use the representatives from Proposition~\ref{pr:M2cohomology}.
    We compute that $[\alpha^2_r]\mapsto [\binom{n}{2}\alpha^0_{r}]$, 
    $[\hat\alpha^2_r]\mapsto[\alpha^0_r]$, $[\beta^2_r]\mapsto[\binom{n}{2}\beta^0_{r}]$, and $[\hat\beta^2_r]\mapsto [\beta^0_{6r}]=0$ (since $r\in R[2]$ if $\beta^2_r$ is defined).
    First suppose that $\binom{n}{2}$ is even.
    Then the kernel is generated by elements of the form $[\alpha^2_r]$, $[\beta^2_r]$, and $[\hat\beta^2_r]$.
    (We see that $[\alpha^2_r]$ maps to a multiple of $2[\alpha^0_r]=0$).
    Next suppose that $\binom{n}{2}$ is odd.
    The kernel is generated by elements of the form $[\alpha^2_r-\hat\alpha^2_r]$ and $[\hat\beta^2_r]$.
    If $n$ is $4$ or $5$, then we leave out the classes of the form $[\beta^2_r]$, but our description of the kernel is otherwise the same as if $n\geq 6$ and $\binom{n}{2}$ is even.
    If $n=3$, then $H^2(M^2)$ only has classes of the form $[\alpha^2_r]$, and the kernel is trivial.
    This proves the description of $H^2(K_{12})$ in the statement.
\end{proof}

\begin{remark}\label{re:K12cocycles}
    Above, we give representatives for cohomology classes in the kernel of $H^1(M^2)\to H^1(R)$.
    These cocycles actually map to $K_{12}$, and there is a nice way to see this.
    If $\binom{n}{2}$ is even, write $u=x+y$ where each of $x$ and $y$ is a sum of $\frac{1}{2}\binom{n}{2}$ basis vectors.  Since $r\in R[2]$, we can think of $\kappa^2_r$ as sending $s_i$ to $r(x-y)$, and $r(x-y)\in K_{12}$.
    If $\binom{n}{2}$ is odd, then for each $i$, we can write $u-v_{i,i+1}$ as $x+y$ where each of $x$ and $y$ is a sum of $\frac{1}{2}(\binom{n}{2}-1)$ basis vectors.
    We then recognize $(\kappa^2_r-\hat\kappa^2_r)(s_i)=r(x-y)\in K_{12}$.

    We also give representatives for the classes in the kernel of $H^2(M^2)\to H^2(R)$.
    We can use tricks similar to what we just did to show that $\hat\beta^2_r$ maps to $K_{12}$, and $\beta^2_r$ if $\binom{n}{2}$ is even, but $\alpha^2_r$ and $\alpha^2_r-\hat\alpha^2_r$ don't actually map to $K_{12}$ for some choices of $r$.  
    The trick with these is to correct the cocycle by a coboundary to get the image to land in $K_{12}$.
    Specifically, consider the function $\hat\kappa^2_r$, but without the requirement that $r\in R[2]$.
    Then $\partial\hat\kappa^2_r=2\hat\alpha^2_r$.
    If $\binom{n}{2}$ is even, then $\alpha^2_r-\binom{n}{2}\hat\alpha^2_r$ is in the same class as $\alpha^2_r$, but maps to $K_{12}$.
    If $\binom{n}{2}$ is odd, then $\alpha^2_r-\binom{n}{2}\hat\alpha^2_r$ is in the same class as $\alpha^2_r-\hat\alpha^2_r$, but maps to $K_{12}$.
\end{remark}

In the following theorem, we assume that $n\geq 4$ since $S^2$ is trivial if $n<4$.
As in Section~\ref{se:ssquot}, let $b$ be $1$ if $n$ even and $1/2$ if $n$ is odd.
\begin{theorem}\label{th:S2cohomology}
    Let $n\geq 4$.  Then $H^0(S^2)\cong R[b(n-1)]$,
    and as a submodule of $S^2$, it is $\{ru\,|\,r\in R[b(n-1)]\}$.

    If $n$ is odd, we have
    \[H^1(S^2)=R[2]\oplus \ker(\frac{R}{\binom{n}{2}R}\stackrel{2}{\longrightarrow}\frac{R}{nR}).\]
    The $R[2]$-factor consists of classes of the form $[\kappa^2_r]$ if $\binom{n}{2}$ is even, and $[\kappa^2_r-\hat\kappa^2_r]$ if $\binom{n}{2}$ is odd.
    The factor that is a kernel inside $R/\binom{n}{2}R$ consists of classes coming from the connecting homomorphism from sequence~\eqref{eq:S1seq}.
    
    If $n$ is even, then $H^1(S^2)$ fits in a short exact sequence
    \[0\to \frac{R[n]}{(n-1)(R[\binom{n}{2}])} \to H^1(S^2) \to  \ker(\frac{R}{\binom{n}{2}R}\stackrel{2}{\longrightarrow}\frac{R}{nR})\to 0\]
    The factor of the form $R[n]/(n-1)(R[\binom{n}{2}])$ comes from the connecting homomorphism from sequence~\eqref{eq:S2seq}, and the other factor again comes from the connecting homomorphism from sequence~\eqref{eq:S1seq}.

    \[
    H^2(S^2)= 
    \left\{
    \begin{array}{cl}
    R/2R & \text{if $n\equiv 0 \pmod{4}$} \\
    R[2]^2\oplus R/2R & \text{if $n\equiv 1 \pmod{4}$ and $n\geq 9$} \\
    R[2]\oplus R/2R & \text{if $n\equiv 2 \pmod{4}$, $n\equiv 3 \pmod{4}$, or $n=5$.}
    \end{array}
    \right.
    \]
    If $n$ is odd, then the inclusion $S^2\to K_{12}$ induces an isomorphism $H^2(S^2)\to H^2(K_{12})$, and we can find representatives by modifying our representatives for $H^2(K_{12})$.
    If $n$ is even, then the $R/2R$-factor comes from the connecting homomorphisms of both sequence~\eqref{eq:S1seq} and sequence~\eqref{eq:S2seq}, and the $R[2]$-factor consists of classes of the form $[\hat\beta^2_r]$.
\end{theorem}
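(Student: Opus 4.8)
The plan is to run the long exact cohomology sequence associated to the short exact sequence~\eqref{eq:S2seq}, namely
\[0\to S^2\to K_{12}\stackrel{\mu}{\longrightarrow} S^1 \to 0,\]
since the cohomology of both outer terms is already in hand: $H^*(K_{12})$ from Proposition~\ref{pr:Kcohomolgy} and $H^*(S^1)$ from Proposition~\ref{pr:S1cohomology}. The sequence reads
\[\cdots\to H^i(S^2)\to H^i(K_{12})\stackrel{\mu_*}{\longrightarrow} H^i(S^1)\stackrel{\delta}{\longrightarrow} H^{i+1}(S^2)\to\cdots,\]
so the whole computation reduces to identifying the three maps $\mu_*\co H^i(K_{12})\to H^i(S^1)$ for $i=0,1,2$ on the explicit generators listed in those two propositions, and then resolving the resulting extension problems
\[0\to \mathrm{coker}(\mu_*^{i-1})\to H^i(S^2)\to \ker(\mu_*^i)\to 0.\]
The one extra structural input is the commuting ladder of short exact sequences linking~\eqref{eq:K12seq}, \eqref{eq:S1seq}, and the quotient map of~\eqref{eq:S2seq}, induced by $\mu\co M^2\to M^1$, $v_{ij}\mapsto t_i+t_j$ (which covers multiplication by $2$ on the bottom copy of $R$); this makes the connecting homomorphisms compatible and lets us recognize the images of the ``$\delta$-classes'' in $H^*(K_{12})$.

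First I would do degree $0$. Since $\mu(u)=(n-1)\sum_k t_k$, the map $\mu_*^0$ sends $ru\mapsto (n-1)r\sum_k t_k$, so $H^0(S^2)=\ker(\mu_*^0)=\{ru\,:\,r\in R[\binom{n}{2}]\cap R[n-1]\}=\{ru\,:\,r\in R[\gcd(\binom n2,n-1)]\}$; evaluating the gcd by the parity of $n$ gives $R[b(n-1)]$, as claimed. The cokernel of $\mu_*^0$ is $R[n]/((n-1)R[\binom n2])$, and one checks it vanishes when $n$ is odd because $n-1$ is invertible modulo $n$; this is the $R[n]/(n-1)R[\binom n2]$ factor appearing in the $n$ even case of $H^1(S^2)$. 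Next, for $H^1(S^2)$, I would compute $\mu_*^1$: the $R/\binom n2R$ summand of $H^1(K_{12})$ (the image of $\delta_{K_{12}}\co H^0(R)\to H^1(K_{12})$) maps, via the ladder, by ``multiply by $2$, then project'' into the $R/nR$ summand of $H^1(S^1)$, so its kernel is $\ker(R/\binom n2R\xrightarrow{\,2\,}R/nR)$; the $R[2]$ summand (classes $[\kappa^2_r]$ or $[\kappa^2_r-\hat\kappa^2_r]$) maps to $0$ when $n$ is odd (since $(n-1)r=0$ there) and, when $n$ is even, to the $[\kappa^1_r]$-classes in $H^1(S^1)$, which one checks are not hit from the cokernel — this identifies $\ker(\mu_*^1)$. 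Since these representing cocycles have order $2$, the $R[2]$ contribution splits off, giving the stated direct-sum description for $n$ odd and the short exact sequence for $n$ even.

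For $H^2(S^2)$ the two cases diverge. When $n$ is odd, $H^2(S^1)=0$, so $\mu_*^2=0$ and $\ker(\mu_*^2)=H^2(K_{12})$; moreover $\mu_*^1$ is surjective (its $R/\binom n2R$-part surjects onto $R/nR$ because $\gcd(2,n)=1$, and $H^1(S^1)$ has no other part), so $\mathrm{coker}(\mu_*^1)=0$ and the inclusion $S^2\to K_{12}$ induces an isomorphism $H^2(S^2)\cong H^2(K_{12})$; translating Proposition~\ref{pr:Kcohomolgy} using that $\binom n2$ is even exactly when $n\equiv 0,1\pmod 4$ produces the $n\equiv1,3\pmod4$ rows. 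When $n$ is even, I would evaluate $\mu_*^2$ on the generators $[\alpha^2_r],[\hat\alpha^2_r],[\beta^2_r],[\hat\beta^2_r]$ of $H^2(K_{12})$, compare with the generators $[\alpha^1_s],[\beta^1_r]$ of $H^2(S^1)$ (using $\mu(v_{i,i+1})=t_i+t_{i+1}$ and $\mu(u)=(n-1)\sum t_k$), see that the $[\hat\beta^2_r]$-classes survive in $\ker(\mu_*^2)$, and combine this with the cokernel of $\mu_*^1$ carried over from $H^1(S^1)$; resolving that extension (again most classes have order $2$, so it splits) yields the $n\equiv0$ and $n\equiv2\pmod4$ rows, with the small cases $n=4,5$ handled separately using the small-$n$ clauses of Propositions~\ref{pr:M1cohomology}, \ref{pr:M2cohomology}, \ref{pr:Kcohomolgy}, \ref{pr:S1cohomology}. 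The main obstacle I anticipate is the explicit cocycle bookkeeping: for each class one must decide whether $\mu$ of a representing cocycle is a coboundary in the target, and when it is, exhibit the bounding cochain (as in the proofs of Propositions~\ref{pr:S1cohomology} and~\ref{pr:Kcohomolgy}, and using the correction-by-coboundary trick of Remark~\ref{re:K12cocycles} to arrange that $\mu_*$-preimages land in $K_{12}$ in the first place), all while keeping straight the four-way split by $n\bmod 4$ that is forced by the parity of $\binom n2$.
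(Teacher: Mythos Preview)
Your proposal is correct and follows essentially the same route as the paper: run the long exact sequence of~\eqref{eq:S2seq}, feed in Propositions~\ref{pr:S1cohomology} and~\ref{pr:Kcohomolgy}, use the compatibility of connecting maps under the ladder induced by $\mu$ (which the paper phrases as the direct computation $\mu_*\delta_2(1)=2\delta_1(1)$), and then track the explicit cocycle classes through $\mu_*$ in the four cases $n\bmod 4$. One small point to be careful about: in the $n\equiv 3\pmod 4$ case, the $R[2]$-summand of $H^1(K_{12})$ is represented by $[\kappa^2_r-\hat\kappa^2_r]$, and your justification ``$(n-1)r=0$'' only kills the $\kappa^2_r$ part; the paper handles the $\hat\kappa^2_r$ part by writing $\mu_*\hat\kappa^2_r$ as the coboundary of $rt_{\mathrm{odd}}$ and reading off the parity of $\lfloor\tfrac{n-1}{2}\rfloor+1$---exactly the kind of bookkeeping you flag at the end.
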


\begin{proof}
    We write out the beginning of the long exact sequence for cohomology in the sequence~\eqref{eq:S2seq}:
        \[
        \begin{split}
            0&\to H^0(S^2)\to H^0(K_{12}) \to H^0(S^1) \to H^1(S^2) \to H^1(K_{12})\to H^1(S^1) \\
            &\to H^2(S^2)\to H^2(K_{12})\to H^2(S^1)\to \dotsm
        \end{split}
   \]
   From Propositions~\ref{pr:S1cohomology} and~\ref{pr:Kcohomolgy}, we know that $H^0(K_{12})=R[\binom{n}{2}]$, and $H^0(S^1)=R[n]$.
   The map $K_{12}\to S^1$ is the restriction of $\mu\co v_{ij}\mapsto t_i+t_j$.
   An element of $H^0(K_{12})$ has the form $ru$ with $r\in R[\binom{n}{2}]$.
   We compute that $\mu(u)=(n-1)\sum_{i=1}^n t_i$.  (To see this, note that each $i$ appears in exactly $(n-1)$ of the $v_{ij}$.)
   So $ru\in H^0(K_{12})$ maps to $(n-1)r\sum_{i=1}^nt_i\in H^0(S^1)$.
   This is the map $R[\binom{n}{2}]\to R[n]$ given by scaling by $n-1$.
   Its kernel is then $R[\binom{n}{2}]\cap R[n-1]$.
   We note that
   \[R[\binom{n}{2}]\cap R[n-1]=R[\gcd(\binom{n}{2},n-1)]=R[b(n-1)].\]
   This proves the statement about $H^0(S^2)$.

   Now we consider the cokernel of the map $H^0(K_{12})\to H^0(S^1)$.
   From the last paragraph, this can be described as $R[n]/(n-1)(R[\binom{n}{2}])$.
   Momentarily suppose $n$ is odd, and let $r\in R[n]$.
   Since $n$ is odd, $n$ divides $\binom{n}{2}$, and therefore $r\in R[\binom{n}{2}]$.
   Since $r$ has order $n$, $(n-1)r=-r$.
   This implies that scaling by $n-1$ maps surjectively from $R[\binom{n}{2}]$ to $R[n]$ if $n$ is odd.
   Since the cokernel is trivial in this case, and we omit it from our description of $H^1(S^2)$.
   If $n$ is even, then this quotient can be nontrivial (for example, if $R=\Z/n\Z$), and this cokernel is a submodule of $H^1(S^2)$.
   
   Next we consider the map $\mu_*\co H^1(K_{12})\to H^1(S^1)$.
   The plan is to use Propositions~\ref{pr:S1cohomology} and~\ref{pr:Kcohomolgy}, and describe the map in four cases, depending on the congruence class of $n$ modulo $4$.
   In all cases, $H^1(K_{12})$ is isomorphic to $R/\binom{n}{2}R\oplus R[2]$, but the generators depend on the parity of $\binom{n}{2}$.
   We need two connecting homomorphisms, which we temporarily name $\delta_1\co H^0(R)\to H^1(S^1)$ and $\delta_2\co H^0(R)\to H^1(K_{12})$.
   We compute $\mu_*$ on three kinds of cocycles ahead of this.
   First of all, $\delta_2(1)$ is represented by $s_i\mapsto (s_i-1)v_{12}$.
   This maps to $s_i\mapsto (s_i-1)(t_1+t_2)$, which we recognize as representing $2\delta_1(1)$.
   We also consider the cocycles $\kappa^2_r$ and $\hat\kappa^2_r$, for $r\in R[2]$.
   Since $\mu(u)=(n-1)\sum_{i=1}^nt_i$, we have $\mu_*\kappa^2_r=(n-1)\kappa^1_r$.
   Evaluating $\mu_*\hat\kappa^2_r$ gives the cocycle $s_i\mapsto r(t_i+t_{i+1})$.
   Since $r$ has order $2$, we can recognize this as the coboundary in $M^1$ of $rt_{\mathrm{odd}}$, where
   \[t_{\mathrm{odd}}=t_1+t_3+t_5+\dotsm+t_{2\lfloor\frac{n-1}{2}\rfloor+1}.\]
    Since there are $\lfloor\frac{n-1}{2}\rfloor+1$ many terms in this sum, we can observe that $[\mu_*\hat\kappa^2_r]=r(\lfloor\frac{n-1}{2}\rfloor+1)\delta_1(1)$.
   
   Case: $n\equiv 0\pmod{4}$.
   Then $n$ is even and $\binom{n}{2}$ is even.
   So $H^1(K_{12})$ is generated by $\delta_2(1)$ (which generates the $R/\binom{n}{2}R$-factor) and elements of the form $[\kappa^2_r]$, with $r\in R[2]$.
   We have that $\delta_2(1)$ maps to $2\delta_1(1)$, and $[\kappa^2_r]$ maps to $(n-1)[\kappa^1_r]=[\kappa^1_r]$, because $n-1$ is odd and $r$ has order $2$.
   Since $n$ is even, $H^1(S^1)$ is $R/nR\oplus R[2]$.  
   We see that the $R[2]$-factor maps isomorphically to the $R[2]$-factor, contributing nothing to the kernel or cokernel.
   So the kernel and cokernel of $H^1(K_{12})\to H^1(S^1)$ are the same as that of $R/\binom{n}{2}R\to R/nR$, with the map being scaling by $2$.

   Case: $n\equiv 1\pmod{4}.$
   Then $n$ is odd and $\binom{n}{2}$ is even.
   Like in the previous case, $H^1(K_{12})$ is generated by $\delta_2(1)$ and elements of the form $[\kappa^2_r]$.
   Since $n$ is odd, $H^1(S^1)$ is $R/nR$.  
   Since $r$ has order $2$, $[\kappa^2_r]$ maps to $0=(n-1)[\kappa^1_r]$.
   So the kernel and cokernel of $H^1(K_{12})\to H^1(S^1)$ are the same as that of $R[2]\oplus R/\binom{n}{2}R\to R/nR$,  by $(r,[s])\mapsto [2s]$.
   
   Case: $n\equiv 2\pmod{4}.$
   Then $n$ is even and $\binom{n}{2}$ is odd.
   In this case, $H^1(K_{12})$ is generated by $\delta_2(1)$ and elements of the form $[\kappa^2_r-\hat\kappa^2_r]$.
   $H^1(S^1)$ is $R/nR\oplus R[2]$.
   In this case, $[\kappa^2_r-\hat\kappa^2_r]$ maps to $(n-1)[\kappa^1_r]+r(\lfloor\frac{n-1}{2}\rfloor+1)$, but since $n\equiv 2\pmod{4}$, we have $(\lfloor\frac{n-1}{2}\rfloor+1)=\frac{n}{2}$ is odd.
   So $[\kappa^2_r-\hat\kappa^2_r]$ maps to $[\kappa^1_r]+r\delta_1(1)$.
   So the kernel and cokernel of $H^1(K_{12})\to H^1(S^1)$ are the same as that of $R[2]\oplus R/\binom{n}{2}R\to R[2]\oplus R/nR$, by $(r,[s])\mapsto (r,[2s+r])$.
   Note that for $(r,[s])$ to be in the kernel, we must have $r=0$, and therefore $[s]$ would be in the kernel of the doubling map.

   Case: $n\equiv 3\pmod{4}.$
   Then $n$ is odd and $\binom{n}{2}$ is odd, $H^1(K_{12})$ is generated by $\delta_2(1)$ and elements of the form $[\kappa^2_r-\hat\kappa^2_r]$, and
   $H^1(S^1)$ is $R/nR$.
   Like in the last case, $[\kappa^2_r-\hat\kappa^2_r]$ maps to $(n-1)[\kappa^1_r]+r(\lfloor\frac{n-1}{2}\rfloor+1)$, but since $n\equiv 3\pmod{4}$, we have $(\lfloor\frac{n-1}{2}\rfloor+1)=\frac{n+1}{2}$ is even.
   So $[\kappa^2_r-\hat\kappa^2_r]$ maps to $0=(n-1)[\kappa^1_r]+\frac{n+1}{2}r\delta_1(1)$.
   So the kernel and cokernel of $H^1(K_{12})\to H^1(S^1)$ are the same as that of $R[2]\oplus R/\binom{n}{2}R\to R/nR$, by $(r,[s])\mapsto [2s]$.

   Putting all this together, we see that if $n$ is even, then the quotient of $H^1(S^2)$ by the image of $H^0(R)$ is isomorphic to the kernel of the doubling map $R/\binom{n}{2}\to R/nR$, and if $n$ is odd, it is the same thing but summed with $R[2]$.
   This proves the statement about $H^1(S^2)$.

   At this point, we collect our observations about the cokernel of $\mu_*\co H^1(K_{12})\to H^1(S^1)$.
   If $n$ is congruent to $0$, $1$, or $3$ modulo $4$, then this cokernel is the same as the cokernel of $R/\binom{n}{2}R\to R/nR$ by scaling by $2$.
   So if $n$ is odd, the map is surjective and the cokernel is trivial.
   We also see that if $n\equiv 0\pmod{4}$, then the cokernel is $R/2R$.
   Now assume that $n\equiv 2\pmod{4}$.
   The cokernel is the same as the cokernel of $R[2]\oplus R/\binom{n}{2}R\to R[2]\oplus R/nR$ by $(r,[s])\mapsto (r,[r+2s])$.
   Since $n$ is even, this is the same as the cokernel of $R[2]\to R[2]\oplus R/2R$ by $r\mapsto (r,[r])$.
   One can check that the following sequence is exact:
   \[0\to R[2]\to R[2]\oplus R/2R \to R/2R\to 0,\]
   with the maps being $r\mapsto (r,[r])$ and $(r,[s])\mapsto [r+s]$.
   This means that we can recognize $\mathrm{coker}(mu_*)$ as being $R/2R$ in this case.
   So $\mathrm{coker}(\mu_*)$ is $R/2R$ if $n$ is even, and $0$ if $n$ is odd, and this is a subgroup of $H^2(S^2)$.

   Finally we compute the kernel of the map $\mu_*\co H^2(K_{12})\to H^2(S^1)$.
   Assume that $n\geq 6$.
   If $\binom{n}{2}$ is odd, then $H^2(K_{12})$ is $R[2]\oplus R/2R$, and if $\binom{n}{2}$ is even, then $H^2(K_{12})$ is $R[2]^2\oplus R/2R$.
   We have that $H^2(S^1)$ is $0$  if $n$ is odd, and $R[2]^2\oplus R/2R$ if $n$ is even.
   There is nothing more to say in the case that $n$ is odd, so we assume $n$ is even.
   We consider generators for $H^2(K_{12})$.
   
   As explained in Remark~\ref{re:K12cocycles}, for each $r\in R$, the cocycle $\alpha^2_r-\binom{n}{2}\hat\alpha^2_r$ maps to $K_{12}$, and the $R/2R$-factor of $H^2(K_{12})$ consists of cocycles of this form.
   This cocycle sends $c_i$ to $r(u-\binom{n}{2}v_{i,i+1})$ (and other cells map to $0$).
   Pushing this forward by $\mu$ yields the cocycle 
   \[c_i \mapsto r((n-1)(\sum_{j=1}^n t_j)-\binom{n}{2}(t_i+t_{i+1})).\]
   Considering this as a cocycle in $M^1$, we see that it represents $(n-1)[\alpha^1_r]$.
   Since $n$ is even and $[\alpha^1_r]\in H^2(M^1)$ has order $2$, we see that the cocycle $\alpha^2_r-\binom{n}{2}\hat\alpha^2_r$ represents a class in the kernel of $\mu_*$ if and only if $[\alpha^1_r]$ represents $0$ in $H^2(M^1)$, which is true if and only if $r\in 2R$.
   So the $R/2R$-factor of $H^2(K_{12})$ contributes nothing to the kernel.
   
   Next we consider the cocycle $\hat\beta^2_r$, for $r\in R[2]$.
   This maps $d_{ij}\mapsto r\chi(\{i,i+1,j,j+1\})$, and other cells map to $0$, where $\chi(\{i,i+1,j,j+1\})$ is the sum of the six $v_{kl}$ vectors with $k,l,\in\{i,i+1,j,j+1\}$.
   Under $\mu$, this becomes $d_{ij}\mapsto r(t_i+t_{i+1}+t_j+t_{j+1})$ (each summand appears $3$ times, but $3r=r$).
   Define a chain $f\co P_1\to M^1$ by:
   \[f(e_i)=r(t_1+t_3+t_5+\dotsm+t_{2\lfloor\frac{n-1}{2}\rfloor+1}-t_{2\lfloor\frac{i}{2}\rfloor+1}).\]
   (So the sum here is over all odd indices, but with $t_i$ or $t_{i+1}$ omitted, whichever is odd.)
   We can check that the coboundary $\delta f=\mu_*\hat\beta^2_r$.
   If $n\equiv 2\pmod{4}$, then each $f(e_i)$ is a sum of an even number of terms, and since $r$ has order $2$, this means that $f\co P_1\to S^1$, and $[\mu_*\hat\beta^2_r]=0$ in $H^1(S^1)$.
   If $n\equiv 0\pmod{4}$, then each $f(e_i)$ is a sum of an odd number of terms, and we can recognize $\mu_*[\hat\beta^2_r]=\delta_1([\kappa^0_r])$, where $\delta_1\co H^1(R)\to H^2(S^1)$ is the connecting homomorphism from sequence~\eqref{eq:S1seq}.

   Now assume $\binom{n}{2}$ is even and consider $\beta^2_r$, for $r\in R[2]$.
   This maps $d_{ij}\mapsto ru$, and other cells to $0$.
   So $\mu$ pushes this forward to $d_{ij}\mapsto (n-1)r\sum_{j=1}^nt_j$.
   Since $n$ is even and $r$ has order $2$, this means $\mu_*[\beta^2_r]=[\beta^1_r]$, and is nontrivial.
   
   Since our classes are represented by cycles of the required order, the decomposition splits, and we get the direct sum decomposition in the statement.
   If $n=4$, the same arguments work, but it is not necessary to consider $[\beta^2_r]$.
   If $n=5$, then $H^2(S^2)\cong H^2(K_{12})$, but the $[\beta^2_r]$-classes are not distinct from the $[\hat\beta^2_r]$-classes.
   This proves the statement about $H^2(S^2)$.
   \end{proof}

\subsection{Quotients by Specht subgroups and split extensions}
In this section, we assume that $n\geq 4$.
As in the preceding section, all cohomology is of $\sg{n}$ in various coefficient modules, and we suppress $\sg{n}$ from the notation.
We recall the maps $f^i\co M^2_R\to S^i_R$, for $i\in\{0,1,2\}$, from Definition~\ref{de:fimaps}.
For each $f^i$, the image of $f^i$ is isomorphic to a quotient of $M^2_R$.
If $R$ is a quotient of $\Z$, then $f^i(M^2_R)$ is a quotient of $M^2_\Z$, and therefore a quotient of $PB_n$.
We let $f^i_R$ denote this map $PB_n\to M^2_R$.
We have an extension
\begin{equation}\label{eq:fiext}
0\to f^i(M^2_R)\to B_n/\ker(f^i_R)\to \sg{n}\to1.
\end{equation}
Similarly, we have maps $f^i\oplus f^j\co M^2_R\to S^i_R\oplus S^j_R$.
Let $f^{ij}_R$ denote the map $PB_n\to M^2_R$.
We get an extension
\begin{equation}\label{eq:fijext}
0\to (f^i\oplus f^j)(M^2_R)\to B_n/\ker(f^{ij}_R)\to \sg{n}\to1.
\end{equation}
Finally, if $m$ is a positive integer,  
we let $R=\Z/m\Z$ and we have the reduction map $M^2_\Z\to M^2_R$.
Let $\pi_m\co PB_n\to M^2_R$ be the projection.
We have an extension
\begin{equation}\label{eq:pimext}
0\to M^2_R\to B_n/\ker(\pi_m)\to \sg{n}\to1.
\end{equation}
Note that, although $\ker(\pi_2)=B_n[4]$, there is no close connection between the level subgroups and these kernels for other $m$.

We determine exactly when these extensions split.
\begin{theorem}\label{th:QbySpechtSplitting}
    Let $R$ be $\Z$ or $\Z/m\Z$ for a positive integer $m$.
    \begin{itemize}
    \item
    If $i$ is $0$, then extension~\ref{eq:fiext} splits if and only if $R=\Z/m\Z$ and $m$ is odd.
    \item 
    If $i=1$, then extension~\ref{eq:fiext} splits if $n$ is odd, or if $R=\Z/m\Z$ and $m$ is odd, but does not split otherwise.
    \item 
    If $i=2$, then extension~\ref{eq:fiext} splits if $R=\Z/m\Z$ and $m$ is odd.
    It does not split otherwise.
    \item
    For any distinct $i,j\in\{0,1,2\}$, extension~\ref{eq:fijext} splits if and only if $R=\Z/m\Z$ and $m$ is odd.
    \item 
    Extension~\ref{eq:pimext} splits if and only if $m$ is odd.
    \end{itemize}
\end{theorem}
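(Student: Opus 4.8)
The whole theorem reduces to tracking a single cohomology class. By Corollary~\ref{co:splittingobstruction} -- which packages Theorem~\ref{th:Nakamura} and Proposition~\ref{pr:Nakamuracocycle} -- each of the extensions~\eqref{eq:fiext}, \eqref{eq:fijext}, \eqref{eq:pimext} is the push-forward of the initial extension~\eqref{eq:initialextension} along a $\Z\sg{n}$-module map out of $M^2_\Z$: the map $f^i$, the map $f^i\oplus f^j$, or the reduction $M^2_\Z\to M^2_{\Z/m\Z}$ (post-composed with the relevant $f$-type map in the $\Z/m\Z$ versions of~\eqref{eq:fiext} and~\eqref{eq:fijext}). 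Hence each structure class is the image of $[\hat\alpha^2_1]\in H^2(\sg{n};M^2_\Z)$ under the induced map, and the extension splits exactly when that image vanishes. So the plan is to compute these images, using the known cohomology of the permutation modules (Propositions~\ref{pr:untwistedcohomreps}, \ref{pr:M1cohomology}, \ref{pr:M2cohomology}, \ref{pr:Kcohomolgy}) and of the Specht module (Theorem~\ref{th:S2cohomology}).

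The ``splits'' half for $R=\Z/m\Z$ with $m$ odd is immediate: the computation behind Remark~\ref{re:K12cocycles} gives $\partial\hat\kappa^2_1=2\hat\alpha^2_1$, so $\hat\alpha^2_1$ is a coboundary over any ring in which $2$ is invertible; thus $[\hat\alpha^2_1]$ already dies in $H^2(\sg{n};M^2_{\Z/m\Z})$ for $m$ odd, and so does every push-forward of it. For the non-splitting statements I would push $[\hat\alpha^2_1]$ one step further, into a module whose cohomology is known, and use that a class with nonzero push-forward is itself nonzero. For $i=0$ the image in $H^2(\sg{n};R)$ is $[\alpha^0_1]$, nonzero unless $1\in 2R$. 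For~\eqref{eq:pimext} the image in $H^2(\sg{n};M^2_{\Z/m\Z})$ is $[\hat\alpha^2_1]$ itself, which is the nonzero $\hat\alpha^2$-coordinate in the decomposition of Proposition~\ref{pr:M2cohomology} exactly when $m$ is even. For $i=1$ with $n$ even, a short coboundary adjustment -- subtracting $\delta$ of the cochain $e_i\mapsto\tfrac{n}{2}t_i-\sum_k t_k$ into $M^1_\Z$ -- carries $f^1\circ\hat\alpha^2_1$ onto $\alpha^1_1$, a generator of $H^2(\sg{n};M^1_\Z)\cong\Z/2\Z$. And for $i=2$ with $n$ odd, the analogous adjustment (by $e_i\mapsto-\tfrac{n-1}{2}w_i$) carries $f^2\circ\hat\alpha^2_1$ onto $\tfrac{(n-1)(n-2)}{2}\hat\alpha^2_1+\alpha^2_1$, whose $\alpha^2$-coordinate is the nonzero class $[\alpha^2_1]$ in $H^2(\sg{n};M^2_\Z)$. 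The mixed extensions~\eqref{eq:fijext} then need no new work: \eqref{eq:fiext} is obtained from~\eqref{eq:fijext} by quotienting out a submodule of the abelian kernel (from the first-coordinate projection $(f^i\oplus f^j)(M^2_R)\to f^i(M^2_R)$), so a splitting of~\eqref{eq:fijext} would split~\eqref{eq:fiext}; since the latter fails unless $R=\Z/m\Z$ with $m$ odd, so does the former, and the converse is the easy direction.

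The only positive statement to establish directly is $i=1$ with $n$ odd. A set-section $s_i\mapsto\sigma_i\xi_i$ of~\eqref{eq:fiext} compatible with the relations~\eqref{eq:Snpres} is exactly a $1$-cochain $\xi\colon P_1\to f^1(M^2_\Z)$ with $\delta\xi=-f^1\circ\hat\alpha^2_1$; using Proposition~\ref{pr:f1image} to certify that the values lie in $f^1(M^2_\Z)$, the choice $\xi(e_i)=\sum_{k=1}^n t_k-n\,t_{i+1}$ works, as one checks against the boundaries of $c_i$, $b_i$ and $d_{ij}$ in Definition~\ref{de:Snres}. This is the cocycle-level shadow of the index-$n^{n-2}$ embedding in Corollary~\ref{co:f1M2index}, and it also yields splitting of the $\Z/m\Z$ versions when $n$ is odd, by reduction.

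The remaining case, $i=2$ with $n$ even, is the main obstacle. Here the coarse method collapses: pushing $[\hat\alpha^2_1]$ all the way into $H^2(\sg{n};M^2_\Z)$ -- or into $H^2(\sg{n};K_{12,\Z})$, which injects into it -- gives zero, because $2$ and $(n-1)(n-2)$ are both even. One must therefore detect the class one stage earlier, in $H^2(\sg{n};S^2_\Z)\cong\Z/2\Z$, where the inclusion $S^2_\Z\hookrightarrow K_{12,\Z}$ has a nontrivial kernel. Concretely: since $f^2\circ\hat\alpha^2_1$ is a coboundary $\delta h$ already in $K_{12,\Z}$, the structure class equals the image of $[\mu\circ h]\in H^1(\sg{n};S^1_\Z)$ under the connecting homomorphism of sequence~\eqref{eq:S2seq}, so the work is to pin down such an $h$ and to check, against the description of $H^2(\sg{n};S^2_\Z)$ in Theorem~\ref{th:S2cohomology}, that $[\mu\circ h]$ does not lie in the image of $H^1(\sg{n};K_{12,\Z})$ -- equivalently, that it represents a nontrivial class in $\mathrm{coker}(\mu_*)\cong\Z/2\Z$. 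Getting this bookkeeping right, and then descending to $R=\Z/m\Z$ with $m$ even by reduction, is where the real effort lies.
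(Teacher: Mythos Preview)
Your overall strategy is sound and closely parallels the paper's. The treatment of the easy splitting cases (odd $m$), and of non-splitting for $i=0$, for~\eqref{eq:pimext}, for $i=1$ with $n$ even, for $i=2$ with $n$ odd, and for the mixed cases~\eqref{eq:fijext}, all match the paper's arguments up to cosmetic differences in the cobounding cochains. Your explicit section for $i=1$ with $n$ odd is in fact cleaner than the paper's one-line appeal to $H^2(S^1)=0$.

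The genuine gap is the case $i=2$ with $n$ even. Your plan---push the class forward along $f^2(M^2_\Z)\hookrightarrow S^2_\Z$ and detect it in $H^2(\sg{n};S^2_\Z)\cong\Z/2\Z$ via the connecting map of~\eqref{eq:S2seq}---does not work: the image there is always zero. Concretely, with $\zeta$ as in Lemma~\ref{le:coboundf2hatalpha} one finds
\[
\mu\circ\zeta(e_i)=\tbinom{n-1}{2}(t_{i+1}-t_i),
\]
and in $H^1(\sg{n};S^1_\Z)\cong\Z/n\Z$ the class $[e_i\mapsto t_{i+1}-t_i]$ equals $-\tfrac{n}{2}\,\delta_1(1)$ (lift along $M^1\to\Z$ using the element $-\sum_k kt_k$, which has coefficient-sum $-\binom{n+1}{2}\equiv-\tfrac{n}{2}\pmod n$). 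Thus $[\mu\circ\zeta]=-\tfrac{n}{2}\binom{n-1}{2}\delta_1(1)$, and a parity check shows $\tfrac{n}{2}\binom{n-1}{2}$ is even for every even $n$: if $n\equiv 0\pmod 4$ then $\tfrac{n}{2}$ is even, while if $n\equiv 2\pmod 4$ then $\tfrac{n-2}{2}$ is even and hence so is $\binom{n-1}{2}$. Hence $[\mu\circ\zeta]$ lies in the image of $\mu_*$ (which is $2\Z/n\Z$ over $\Z$, by the analysis in the proof of Theorem~\ref{th:S2cohomology}), and its image under the connecting map vanishes. The obstruction is therefore invisible in $S^2_\Z$, even though it is genuinely nonzero in $H^2(f^2(M^2_\Z))$; reducing modulo $m$ will not rescue this.

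The paper avoids this dead end by not using $S^2$ at all in this case. Instead it introduces an intermediate module $M^2_\equiv\subset M^2$ cut out by the congruence systems~\eqref{eq:coeffcongruence} and~\eqref{eq:polytabloidcongruence} of Proposition~\ref{pr:f2M2congruences}, together with (for $n$ even) a parity homomorphism $\sigma\colon M^2_\equiv\to\Z/2\Z$ recording condition~\eqref{eq:paritybit}. The class $f^2_*[\hat\alpha^2_1]$ is detected by showing that its preimage under the relevant connecting homomorphisms is nontrivial in $H^1$ of a quotient of $M^2_\equiv$ (or of $\ker\sigma$), and the argument splits into three sub-cases depending on the arithmetic of $m$ relative to $\binom{n-1}{2}$. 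This is precisely where the congruence description of $f^2(M^2_R)$ inside $M^2_R$---rather than inside $S^2_R$---earns its keep.
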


We will prove the statements in this theorem separately, as lemmas.
First, we prove a proposition that handles most of the non-splitting cases.
\begin{proposition}\label{pr:oddsplitting}
    Let $m$ be an odd positive integer.
    Suppose $N\lhd B_n$ with $PB_n'\leq N\leq PB_n$, and the image of $N$ under $\pi\co PB_n\to M^2_\Z$ contains $mM^2_\Z$ (in other words, $N$ contains all the $m$th powers of whole twists $a_{ij}$).
    Then
    \[0\to \frac{PB_n}{N}\to \frac{B_n}{N}\to \sg{n}\to 1\]
    is a split extension.
\end{proposition}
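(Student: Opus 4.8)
The plan is to identify the structure class of this extension explicitly and show it vanishes because $2$ is invertible in the coefficient ring. First I would observe that, since $\pi(N)$ contains $mM^2_\Z$, the quotient $M:=PB_n/N\cong M^2_\Z/\pi(N)$ is annihilated by $m$, so it is a $\Z/m\Z$-module; as $m$ is odd, $2$ is a unit modulo $m$. Consequently $H^2(\sg{n};M)$, being the cohomology of $\hom_{\Z\sg{n}}(P_*,M)$ (a complex of $\Z/m\Z$-modules), is itself a $\Z/m\Z$-module, and multiplication by $2$ is an automorphism of it.

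By Corollary~\ref{co:splittingobstruction}, the structure class of $0\to M\to B_n/N\to \sg{n}\to 1$ is $\pi_*[\phi]\in H^2(\sg{n};M)$, where $[\phi]$ is the structure class of the initial extension~\eqref{eq:initialextension}; and by Proposition~\ref{pr:Nakamuracocycle}, computing with the resolution $P_*$, that structure class is represented by $\hat\alpha^2_1\co P_2\to M^2_\Z$. So it suffices to show $\pi_*[\hat\alpha^2_1]=0$ in $H^2(\sg{n};M)$, where $\pi\co M^2_\Z\to M$ is the quotient map. The key input is the cochain identity $\partial\hat\kappa^2_1=2\hat\alpha^2_1$ (the $r=1$ case of the relation recorded in Remark~\ref{re:K12cocycles}; the defining formula for $\hat\kappa^2$ still makes sense even though $1\notin R[2]$ in general, and a direct check on the generators $c_i$, $b_i$, $d_{ij}$ of $P_2$ confirms it). Applying the $\Z\sg{n}$-linear map $\pi$, which commutes with the coboundary, gives
\[
2\,(\pi\circ\hat\alpha^2_1)=\pi\circ(2\hat\alpha^2_1)=\pi\circ(\partial\hat\kappa^2_1)=\partial(\pi\circ\hat\kappa^2_1),
\]
so $2\pi_*[\hat\alpha^2_1]=0$ in $H^2(\sg{n};M)$. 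Since multiplication by $2$ is an automorphism of $H^2(\sg{n};M)$, this forces $\pi_*[\hat\alpha^2_1]=0$, and the extension splits by Corollary~\ref{co:splittingobstruction} (equivalently, by Theorem~\ref{th:EM}).

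I do not expect a genuine obstacle here: the argument is short once the structure class is pinned down. The two points needing care are (i) verifying $\partial\hat\kappa^2_1=2\hat\alpha^2_1$ as cochains into $M^2_\Z$, which is the routine computation underlying Remark~\ref{re:K12cocycles} carried out without the $2$-torsion hypothesis on $r$, and (ii) the bookkeeping that pushing cochains forward along the $\Z\sg{n}$-linear map $\pi$ is compatible with the coboundary and with multiplication by $2$, so that the vanishing of $2\pi_*[\hat\alpha^2_1]$ together with invertibility of $2$ on $H^2(\sg{n};M)$ really does give the conclusion.
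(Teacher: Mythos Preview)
Your proof is correct and follows essentially the same approach as the paper: both identify the structure class as $\pi_*[\hat\alpha^2_1]$ via Corollary~\ref{co:splittingobstruction} and Proposition~\ref{pr:Nakamuracocycle}, and then kill it using that the class is $2$-torsion while $2$ is invertible modulo $m$. The only difference is in execution: the paper factors $\pi$ through $M^2_{\Z/m\Z}$ and invokes Proposition~\ref{pr:M2cohomology} to conclude that the entire group $H^2(\sg{n};M^2_{\Z/m\Z})$ vanishes, whereas you work directly in $H^2(\sg{n};M)$ using the explicit coboundary identity $\partial\hat\kappa^2_1=2\hat\alpha^2_1$, which is slightly more self-contained.
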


\begin{proof}
    Consider the quotient map $\rho\co M^2_\Z\to PB_n/N$.
    By Corollary~\ref{co:splittingobstruction}, the extension splits if and only if $\rho_*[\phi]=0$.
    By Proposition~\ref{pr:Nakamuracocycle}, $[\phi]=[\hat\alpha^2_1]\in H^2(M^2_\Z)$, and by Proposition~\ref{pr:M2cohomology}, we know that $[\hat\alpha^2_1]$ has order $2$ (it generates a factor of $\Z/2\Z$).

    Now let $R=\Z/m\Z$.
    Notice that $\rho$ factors through the reduction map $M^2_\Z\to M^2_R$.
    But Proposition~\ref{pr:M2cohomology} implies that $H^2(M^2_R)=0$, since $m$ is odd.
    So $\rho_*[\phi]=0\in H^2(PB_n/N)$, and therefore the extension splits.
\end{proof}

\begin{lemma}
    Extension~\eqref{eq:pimext} splits if and only if $m$ is odd.
\end{lemma}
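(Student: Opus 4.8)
The statement splits into two directions. For the ``if'' direction, when $m$ is odd, I would simply invoke Proposition~\ref{pr:oddsplitting}. Writing $R=\Z/m\Z$, the map $\pi_m$ is the composition of $\pi\co PB_n\to M^2_\Z$ with reduction modulo $m$, so $N:=\ker(\pi_m)=\pi^{-1}(mM^2_\Z)$ satisfies $PB_n'\leq N\leq PB_n$ and $\pi(N)=mM^2_\Z$; in particular $N$ contains every $m$th power of a whole twist $a_{ij}$. Proposition~\ref{pr:oddsplitting} then applies verbatim and gives a splitting of extension~\eqref{eq:pimext}.

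For the ``only if'' direction I would prove the contrapositive: if $m$ is even then extension~\eqref{eq:pimext} does not split. The tool is Corollary~\ref{co:splittingobstruction}. Identifying $PB_n/\ker(\pi_m)$ with $M^2_R$ via $\pi_m$, the quotient projection $M^2_\Z\to M^2_R$ is reduction modulo $m$, call it $\rho$, and the extension splits if and only if $\rho_*[\phi]=0$ in $H^2(M^2_R)$. By Proposition~\ref{pr:Nakamuracocycle} the structure class $[\phi]$ equals $[\hat\alpha^2_1]\in H^2(M^2_\Z)$, so it suffices to show $\rho_*[\hat\alpha^2_1]\neq 0$.

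The computation is short. The cocycle $\rho\circ\hat\alpha^2_1\co P_2\to M^2_R$ is, by inspection of its values on the generators $c_i$, $b_i$, $d_{ij}$, exactly the cocycle $\hat\alpha^2_1$ over the ring $R$. Since $n\geq 4$, Proposition~\ref{pr:M2cohomology} describes $H^2(M^2_R)$ (in the two cases $n\in\{4,5\}$ and $n\geq 6$) as a direct sum in which the classes $[\hat\alpha^2_t]$, for $[t]\in R/2R$, span a direct summand isomorphic to $R/2R$; thus $[\hat\alpha^2_1]$ corresponds to the class of $1$ in this $R/2R$. When $m$ is even, $R/2R\cong\Z/2\Z$ is nontrivial and the class of $1$ is its nonzero element, so $\rho_*[\hat\alpha^2_1]\neq 0$, and hence the extension does not split.

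I do not expect a genuine obstacle here: the only points requiring care are (a) checking that $\rho\circ\hat\alpha^2_1$ is literally the named generator $\hat\alpha^2_1$ over $R$ rather than merely cohomologous to it (it is equal on the nose, since $\rho(v_{i,i+1})=v_{i,i+1}$), and (b) tracking the small cases $n\in\{4,5\}$, where the description of $H^2(M^2_R)$ in Proposition~\ref{pr:M2cohomology} omits a $\beta^2$-summand but still contains the relevant $R/2R$-factor of $\hat\alpha^2$-classes, so the argument goes through unchanged.
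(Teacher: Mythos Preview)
Your proposal is correct and follows essentially the same approach as the paper's proof: both directions are handled identically, invoking Proposition~\ref{pr:oddsplitting} for odd $m$ and, for even $m$, using Corollary~\ref{co:splittingobstruction} together with Proposition~\ref{pr:Nakamuracocycle} to identify the structure class as $[\hat\alpha^2_1]$ and then Proposition~\ref{pr:M2cohomology} to see it is nontrivial in $H^2(M^2_R)$. Your extra remarks about the cases $n\in\{4,5\}$ and about $\rho\circ\hat\alpha^2_1$ being equal on the nose are fine clarifications but not needed beyond what the paper does.
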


\begin{proof}
    If $m$ is odd, then Proposition~\ref{pr:oddsplitting} applies and the extension splits.

    So assume $m$ is even.
    Let $\rho\co M^2_\Z\to M^2_R$ by reduction modulo $m$.
    Again by Corollary~\ref{co:splittingobstruction}, it is enough to show that $\rho_*[\phi]$ is nontrivial in $H^2(M^2_R)$. 
    But $\rho_*[\phi]$ is $[\hat\alpha^2_1]\in H^2(M^2_R)$.
    Since $[\hat\alpha^2_1]$ generates a submodule of $H^2(M^2_R)$ isomorphic to $R/2R$, we have that $\rho_*[\phi]$ is nontrivial.
\end{proof}

\begin{lemma}
    If $i=0$, then extension~\eqref{eq:fiext} splits if and only if $R=\Z/m\Z$ and $m$ is odd.
\end{lemma}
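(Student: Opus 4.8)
The plan is to identify extension~\eqref{eq:fiext} with $i=0$ as an extension of $\sg{n}$ by the trivial module $R$, compute its structure class using Corollary~\ref{co:splittingobstruction} together with the cocycle descriptions from the previous section, and then read off the answer from the known structure of $H^2(\sg{n};R)$. First I would note that $f^0\co M^2_R\to S^0_R$ is surjective, since it sends every $v_{ij}$ to $1$; hence $f^0(M^2_R)=S^0_R=R$, the trivial $\sg{n}$-module, and $PB_n/\ker(f^0_R)\cong R$, where $\ker(f^0_R)$ is the kernel of the composite $PB_n\to M^2_\Z\to M^2_R\stackrel{f^0}{\to}S^0_R$ (this contains $PB_n'$, as it is the preimage of a submodule). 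By Corollary~\ref{co:splittingobstruction}, the structure class of extension~\eqref{eq:fiext} is then $\pi_*[\phi]\in H^2(\sg{n};R)$, where $\pi\co M^2_\Z\to R$ is the quotient projection, which under our identification is the augmentation sending every basis vector $v_{ij}$ to $1\in R$ (the composite of reduction modulo $m$ with $v_{ij}\mapsto 1$ when $R=\Z/m\Z$, and just $v_{ij}\mapsto 1$ when $R=\Z$).

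Next I would push the cocycle forward explicitly. By Proposition~\ref{pr:Nakamuracocycle}, $[\phi]=[\hat\alpha^2_1]\in H^2(\sg{n};M^2_\Z)$, where $\hat\alpha^2_1\co P_2\to M^2_\Z$ is the cocycle with $c_i\mapsto v_{i,i+1}$ and $b_i,d_{ij}\mapsto 0$. Composing with $\pi$ yields the cocycle $P_2\to R$ with $c_i\mapsto 1$ and $b_i,d_{ij}\mapsto 0$, which is exactly $\alpha^0_1$ (the case $r=1\in R$ of the cocycles defined before Proposition~\ref{pr:untwistedcohomreps}). Therefore $\pi_*[\phi]=[\alpha^0_1]\in H^2(\sg{n};R)$.

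Finally I would invoke Proposition~\ref{pr:untwistedcohomreps}, which applies since $n\geq 4$: the map $R[2]\oplus R/2R\to H^2(\sg{n};R)$ by $(r,[s])\mapsto[\beta^0_r+\alpha^0_s]$ is an isomorphism, so $[\alpha^0_1]$ corresponds to the pair $(0,[1])$ and vanishes precisely when $1\in 2R$, i.e.\ when $2$ is a unit in $R$. This fails for $R=\Z$, and for $R=\Z/m\Z$ it holds exactly when $m$ is odd. Hence extension~\eqref{eq:fiext} with $i=0$ splits if and only if $R=\Z/m\Z$ with $m$ odd. (Alternatively, the ``if'' direction follows directly from Proposition~\ref{pr:oddsplitting}.) There is no serious obstacle here: the only points requiring care are the identification $PB_n/\ker(f^0_R)\cong R$ as the trivial module and the recognition of $\pi\circ\hat\alpha^2_1$ as $\alpha^0_1$; everything else is a direct application of the cited results.
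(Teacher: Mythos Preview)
Your proof is correct and follows essentially the same approach as the paper: identify the structure class as $[\alpha^0_1]$ by computing $f^0\circ\hat\alpha^2_1$ on the generators of $P_2$, then invoke Proposition~\ref{pr:untwistedcohomreps} to determine when this class vanishes. The only cosmetic difference is that the paper first disposes of the odd-$m$ case via Proposition~\ref{pr:oddsplitting} and then shows nontriviality for $R=\Z$ or $R=\Z/m\Z$ with $m$ even, whereas you handle both directions at once via the criterion $1\in 2R$.
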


\begin{proof}
    The proof is similar to the previous lemma.  
    We assume that $R=\Z$ or $R=\Z/m\Z$ with $m$ even, since otherwise Proposition~\ref{pr:oddsplitting} applies.
    By evaluating on basis elements from $P_2$, we recognize that $f^0\circ \hat\alpha^2_1=\alpha^0_1$.
    So extensions~\ref{eq:fiext} splits if and only if $[\alpha^0_1]=0$.
    But $R$ is $\Z$ or $\Z/m\Z$ with $m$ even, so $[\alpha^0_1]$ generates a copy of $\Z/2\Z$ in $H^2(f^0(M^2))$ by Proposition~\ref{pr:untwistedcohomreps}.
    So it is nontrivial, and the extension does not split.
\end{proof}

\begin{lemma}
    Let $i=1$.
    Extension~\eqref{eq:fiext} splits if $n$ is odd, or if $R=\Z/m\Z$ and $m$ is odd, but does not split otherwise.
\end{lemma}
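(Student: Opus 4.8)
The plan is to compute the structure class of extension~\eqref{eq:fiext} for $i=1$ directly. By Corollary~\ref{co:splittingobstruction} together with Proposition~\ref{pr:Nakamuracocycle}, this class equals $\pi_*[\hat\alpha^2_1]$, where $\pi\co M^2_\Z\to f^1(M^2_R)$ is the composition of the reduction map $M^2_\Z\to M^2_R$ with $f^1\co M^2_R\to f^1(M^2_R)$; explicitly it is represented by the cocycle $\omega\co P_2\to f^1(M^2_R)$ sending $c_i$ to the image of $v_{i,i+1}$ under this composition, and sending $b_i,d_{ij}$ to $0$. If $R=\Z/m\Z$ with $m$ odd, then $\ker(f^1_R)$ contains every $m$th power $a_{ij}^m$ of a whole twist, since $a_{ij}^m$ maps to $mv_{ij}$ in $M^2_\Z$ and hence to $0$ in $M^2_R$; so Proposition~\ref{pr:oddsplitting} applies and the extension splits. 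It therefore remains to treat $R=\Z$ and $R=\Z/m\Z$ with $m$ even.

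For $n$ odd we have $a=1$, so $\omega(c_i)=n(t_i+t_{i+1})-2\sum_k t_k$, and I claim $\omega$ is a coboundary. I would define a $\Z\sg{n}$-module map $g\co P_1\to f^1(M^2_R)$ by
\[g(e_i)=n\,t_{i+1}-\sum_{k=1}^n t_k.\]
This does land in $f^1(M^2_R)$ by Proposition~\ref{pr:f1image}, since its coefficients sum to $0$ and are all congruent to $-1$ modulo $n=na$. Then I would verify $\delta g=\omega$ on each class of generators of $P_2$ using the boundary formulas of Definition~\ref{de:Snres}: on $c_i$ one computes $\delta g(c_i)=(1+s_i)g(e_i)=n(t_i+t_{i+1})-2\sum_k t_k$; on $d_{ij}$ (with $j\geq i+2$) one gets $0$ because $g(e_i)$ is fixed by $s_j$ and $g(e_j)$ by $s_i$; and on $b_i$ one gets $0$ after a short computation with $\partial b_i=(1-s_i+s_{i+1}s_i)e_{i+1}-(1-s_{i+1}+s_is_{i+1})e_i$. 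Hence $[\omega]=0$ and the extension splits.

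For $n$ even (and $R=\Z$ or $m$ even) I would show $\omega$ is not a coboundary. Here $a=\tfrac12$, so $\omega(c_i)=\tfrac n2(t_i+t_{i+1})-\sum_k t_k$. Suppose $\omega=\delta g$ for some $\Z\sg{n}$-module map $g\co P_1\to f^1(M^2_R)$, and write $g(e_i)=\sum_k c_k t_k$. Then $\delta g(c_i)=(1+s_i)g(e_i)$, whose coefficient on $t_k$ equals $2c_k$ for every $k\notin\{i,i+1\}$; such a $k$ exists because $n\geq 4$. Comparing with the coefficient $-1$ of $t_k$ in $\omega(c_i)$ forces $2c_k=-1$ in $R$, which is impossible when $R=\Z$ or $R=\Z/m\Z$ with $m$ even. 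So $\omega$ represents a nonzero class and the extension does not split.

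I expect the main obstacle to be the verification $\delta g=\omega$ in the odd case, specifically the computation of $\delta g(b_i)$, which uses the relatively intricate boundary of the hexagonal $2$-cell $b_i$. One must also keep track of the fact that $f^1$ — and hence $\omega$ and the correct choice of $g$ — depends on the parity of $n$ through the constant $a$, so the cochain that splits the odd case is genuinely unavailable in the even case, which is consistent with the non-splitting there.
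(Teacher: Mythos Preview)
Your proof is correct, and it takes a different route from the paper's.

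For the odd-$m$ case you both invoke Proposition~\ref{pr:oddsplitting}. For $n$ odd, the paper simply quotes Proposition~\ref{pr:S1cohomology} to say $H^2(\sg{n};S^1_R)=0$; strictly speaking this shows the cocycle cobounds in $S^1_R$, whereas the extension needs it to cobound in the submodule $f^1(M^2_R)$. Your explicit cochain $g(e_i)=nt_{i+1}-\sum_k t_k$, together with the check via Proposition~\ref{pr:f1image} that it lands in $f^1(M^2_R)$, handles this point directly. The verification $\delta g(b_i)=0$ goes through exactly as you indicate. For $n$ even, the paper pushes the class along the inclusion $f^1(M^2_R)\hookrightarrow M^1_R$, recognizes it there as $-[\alpha^1_1]$ modulo a coboundary, and then quotes Proposition~\ref{pr:M1cohomology} to see this is nonzero. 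Your argument is more elementary: the $t_k$-coefficient of any $(1+s_i)g(e_i)$ is even for $k\notin\{i,i+1\}$, while the corresponding coefficient of $\omega(c_i)$ is $-1$, which has no solution for $2c_k=-1$ in $\Z$ or in $\Z/m\Z$ with $m$ even.

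What your approach buys is self-containment: you avoid appealing to the structure of $H^2(\sg{n};S^1_R)$ and $H^2(\sg{n};M^1_R)$, at the modest cost of the explicit check on the hexagon cell $b_i$. What the paper's approach buys is that the non-splitting step identifies the obstruction class concretely inside $H^2(\sg{n};M^1_R)$, which fits the paper's broader program of naming cocycle representatives.
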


\begin{proof}
    This is similar to the previous two lemmas, but is more involved.
    Again, we start by assuming that $R=\Z$ or $R=\Z/m\Z$ with $m$ even.
    By Proposition~\ref{pr:S1cohomology}, we see that $H^2(S^1)$ is trivial if $n$ is odd.
    So extension~\ref{eq:fiext} splits in this case.

    So we assume that $n$ is even.
    We find $f^1_*\hat\alpha^2_1$ has $d_{ij}\mapsto 0$, $b_i\mapsto 0$, and 
    \[c_i\mapsto n(t_i+t_{i+1})-\sum_{k=1}^nt_k.\]
    To show that this cocycle represents a nontrivial class in $H^2(f^1(M^2))$, we consider it as a cocycle for $H^2(M^1)$.
    In $M^1$, we can recognize this as $-\alpha^1_1$, plus a coboundary.
    Specifically, the cochain $P_1\to M^1$ by $e_i\mapsto \frac{n}{2}(t_i+t_{i+1})$, has coboundary $c_i\mapsto n(t_i+t_{i+1})$, $d_{ij}\mapsto 0$, $b_i\mapsto 0$.
    So in $H^2(M^1)$, we have $f^1_*[\hat\alpha^2_1]=-[\alpha^1_1]$.
    This is nontrivial since $[\alpha^1_1]$ generates a copy of $\Z/2\Z$ in $H^2(M^1)$, by Proposition~\ref{pr:M1cohomology}, since $R$ is $\Z$ or $\Z/m\Z$ with $m$ even.
    Since $f^1_*[\hat\alpha^2_1]$ maps to a nontrivial class in $H^2(M^1)$, it is nontrivial in $H^2(f^1(M^2))$, and the extension does not split.
\end{proof}

\begin{lemma}
    Suppose $n$ is odd and $R=\Z$ or $R=\Z/m\Z$ for $m$ even.
    Then for $i=2$, extension~\eqref{eq:fiext} does not split.
\end{lemma}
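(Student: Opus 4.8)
The plan is to produce a nonzero element of $H^2$ and invoke Corollary~\ref{co:splittingobstruction}. By that corollary, the structure class of extension~\eqref{eq:fiext} with $i=2$ is $\rho_*[\phi]\in H^2(f^2(M^2_R))$, where $\rho\co M^2_\Z\to f^2(M^2_R)$ is the quotient projection (namely $f^2$ if $R=\Z$, or $f^2$ precomposed with reduction mod $m$ if $R=\Z/m\Z$), and by Proposition~\ref{pr:Nakamuracocycle} we may take $[\phi]=[\hat\alpha^2_1]$. So it suffices to show $\rho_*[\hat\alpha^2_1]\neq 0$. Composing further with the $\Z\sg{n}$-linear inclusion $f^2(M^2_R)\hookrightarrow M^2_R$, and noting that the resulting map on $H^2$ sends $\rho_*[\hat\alpha^2_1]$ to the class of the cocycle $g\co P_2\to M^2_R$ given by $g(c_i)=f^2(v_{i,i+1})$ and $g(b_i)=g(d_{ij})=0$, it is enough to prove that $[g]\neq 0$ in $H^2(M^2_R)$.

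Since $n$ is odd, $b=\tfrac12$, so Definition~\ref{de:fimaps} gives $f^2(v_{i,i+1})=\tfrac{(n-1)(n-2)}{2}v_{i,i+1}-\tfrac{n-1}{2}(w_i+w_{i+1})+u$. The next step is to absorb the $w$-terms into a coboundary. Using the identities $s_iw_i=w_{i+1}$ and $s_iw_{i+1}=w_i$ in $M^2_R$, together with the fact that $s_j$ fixes $w_i$ whenever $i\notin\{j,j+1\}$, one checks that the $1$-cochain $h\co P_1\to M^2_R$ with $h(e_i)=\tfrac{n-1}{2}w_i$ (an integer multiple, as $n$ is odd) has coboundary $\delta h$ supported only on the $c_i$-generators, with $(\delta h)(c_i)=\tfrac{n-1}{2}(w_i+w_{i+1})$; in particular $(\delta h)(b_i)=0$ and $(\delta h)(d_{ij})=0$. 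Hence $g-\delta h$ equals $\tfrac{(n-1)(n-2)}{2}\hat\alpha^2_1+\alpha^2_1$ as a cochain, so in $H^2(M^2_R)$ we get $[g]=\tfrac{(n-1)(n-2)}{2}[\hat\alpha^2_1]+[\alpha^2_1]$.

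Finally, since $n$ is odd and $n\geq 5$, Proposition~\ref{pr:M2cohomology} identifies $[\alpha^2_1]$ and $[\hat\alpha^2_1]$ with generators of two distinct $R/2R$-summands of $H^2(M^2_R)$, and these generators are nonzero because $R/2R\neq 0$ for $R=\Z$ and for $R=\Z/m\Z$ with $m$ even. When $n\equiv 1\pmod 4$ the integer $\tfrac{(n-1)(n-2)}{2}$ is even, so $[g]=[\alpha^2_1]\neq 0$; when $n\equiv 3\pmod 4$ it is odd, so $[g]=[\alpha^2_1]+[\hat\alpha^2_1]$, a sum of two nonzero elements in different summands, hence nonzero. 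Either way $[g]\neq 0$, so the structure class of extension~\eqref{eq:fiext} is nonzero and the extension does not split.

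The one point requiring genuine care is the claim in the second paragraph that the correcting cochain $h$ changes only the values on the $c_i$-generators, i.e. that $\delta h$ vanishes on the $b_i$ and $d_{ij}$; this reduces entirely to the transformation rules for the $\sg{n}$-action on the vectors $w_k$ applied to the boundary formulas in Definition~\ref{de:Snres}. Once that is verified, the rest is bookkeeping with the explicit cocycles of Proposition~\ref{pr:M2cohomology} and the parity of $\tfrac{(n-1)(n-2)}{2}$ modulo $2$.
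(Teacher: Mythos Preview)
Your proof is correct and follows essentially the same approach as the paper: push $f^2_*[\hat\alpha^2_1]$ into $H^2(M^2_R)$, correct by the coboundary of $e_i\mapsto \tfrac{n-1}{2}w_i$, and read off the class as $\binom{n-1}{2}[\hat\alpha^2_1]+[\alpha^2_1]$, which is nonzero since $[\alpha^2_1]$ and $[\hat\alpha^2_1]$ generate independent $R/2R$-summands. Your parity case-split on $n\bmod 4$ is unnecessary, since the $[\alpha^2_1]$-component always has coefficient $1$ and that alone forces nontriviality; but the extra care does no harm.
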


\begin{proof}
    Since $n$ is odd, $b=1/2$ in our formula for $f^2$.
    We consider $f^2\circ \hat\alpha^2_1$, which has $d_{ij}\mapsto 0$, $b_i\mapsto 0$, and 
    \[c_i\mapsto \binom{n-1}{2}(v_{i,i+1})-\frac{n-1}{2}(w_i+w_{i+1})+u.\]
    (It would be twice this if $n$ were even.)
    Our goal is to show that this represents a nontrivial class in $H^2(f^2(M^2))$.
    First we consider $f^2\circ \hat\alpha^2_1$ as a cocycle in $M^2$.
    The cochain $P_1\to M^2$ defined by $e_i\mapsto w_i$ has coboundary given by $d_{ij}\mapsto 0$, $b_i\mapsto 0$, and $c_i\mapsto w_i+w_{i+1}$.
    So in $H^2(M^2)$, 
    \[f^2_*[\hat\alpha^2_1]=\binom{n-1}{2}[\hat\alpha^2_1]+[\alpha^2_1].\]
    By Proposition~\ref{pr:M2cohomology}, the classes $[\hat\alpha^2_1]$ and $[\alpha^2_1]$ generate a copy of $(R/2R)^2$ in $H^2(M^2)$.
    By our hypothesis on $R$, we have that $R/2R\cong \Z/2\Z$.
    So this is a nontrivial class in $H^2(M^2)$.
    Then $f^2_*[\hat\alpha^2_1]$ must also be nontrivial in $H^2(f^2(M^2))$, and we are done.
\end{proof}

\begin{lemma}\label{le:coboundf2hatalpha}
    If $n$ is even, then $f^2_*\hat\alpha^2_1$ is the coboundary of the cochain $\zeta\co P^1\to K_{12}$ defined by 
    \[\zeta\co e_i\mapsto \binom{n-1}{2}v_{i,i+1}-(n-1)w_i+u.\]
\end{lemma}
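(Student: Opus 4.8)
The plan is to verify the claim by a direct cochain computation on the free generators of $P_2$, which is legitimate because $P_2$ is a free $\Z\sg{n}$-module on the generators $c_i$, $b_i$, $d_{ij}$ of Definition~\ref{de:Snres}, and both $f^2_*\hat\alpha^2_1$ and $\delta\zeta$ are $\Z\sg{n}$-linear. First I would check that $\zeta$ really is a cochain valued in $K_{12}$: under the augmentation $M^2_R\to R$ sending every $v_{ij}$ to $1$, the elements $v_{i,i+1}$, $w_i$, and $u$ map to $1$, $n-1$, and $\binom{n}{2}$ respectively, so $\zeta(e_i)$ maps to $\binom{n-1}{2}-(n-1)^2+\binom{n}{2}$, which one checks equals $0$; hence $\zeta(e_i)\in K_{12}$. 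Recall also that, since $n$ is even (so $b=1$ in Definition~\ref{de:fimaps}), the cochain $f^2_*\hat\alpha^2_1$ sends $c_i\mapsto f^2(v_{i,i+1})=(n-1)(n-2)v_{i,i+1}-(n-1)(w_i+w_{i+1})+2u$ and sends each $b_i$ and $d_{ij}$ to $0$.

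For the $c_i$: from $\partial c_i=(s_i+1)e_i$ one gets $(\delta\zeta)(c_i)=(s_i+1)\zeta(e_i)$, and since $s_i$ fixes $v_{i,i+1}$ and $u$ and carries $w_i$ to $w_{i+1}$, this equals $2\binom{n-1}{2}v_{i,i+1}-(n-1)(w_i+w_{i+1})+2u$, which is exactly $f^2(v_{i,i+1})$ because $2\binom{n-1}{2}=(n-1)(n-2)$. For the $d_{ij}$ with $j\geq i+2$: from $\partial d_{ij}=(s_j-1)e_i-(s_i-1)e_j$ I would observe that $s_j$ fixes both indices $i$ and $i+1$, hence fixes $v_{i,i+1}$, $w_i$, and $u$, so $(s_j-1)\zeta(e_i)=0$; symmetrically $s_i$ fixes $j$ and $j+1$, so $(s_i-1)\zeta(e_j)=0$. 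Therefore $(\delta\zeta)(d_{ij})=0$, matching $f^2_*\hat\alpha^2_1$.

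The main obstacle is the computation on the $b_i$, where $\partial b_i=(1-s_i+s_{i+1}s_i)e_{i+1}-(1-s_{i+1}+s_is_{i+1})e_i$ produces six twisted copies of $\zeta(e_i)$ and $\zeta(e_{i+1})$. The delicate point is tracking how the two three-cycles $s_{i+1}s_i=(i\;i+2\;i+1)$ and $s_is_{i+1}=(i\;i+1\;i+2)$ act on each of $v_{i,i+1}$, $v_{i+1,i+2}$, $w_i$, $w_{i+1}$, and $u$ — especially the $w_k$, which genuinely move. After substituting, I would collect the six contributions according to whether they involve $v$-basis vectors, a $w_k$, or $u$; each of these three groups telescopes to $0$, so $(\delta\zeta)(b_i)=0$. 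This step is a finite, bookkeeping-heavy verification with no conceptual content, but it is where an error is most likely, so I would organize it carefully (e.g. by tabulating the six images before summing). Once $\delta\zeta$ and $f^2_*\hat\alpha^2_1$ agree on all three families of generators, the two cochains coincide by $\Z\sg{n}$-linearity, which is the assertion.
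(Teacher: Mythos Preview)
Your proposal is correct and follows essentially the same approach as the paper: both verify directly on the three families of generators $c_i$, $d_{ij}$, $b_i$, checking the $K_{12}$-valuedness of $\zeta$ via the augmentation and handling the $b_i$ case by grouping the six terms according to whether they involve $v$-vectors, $w_k$'s, or $u$ and observing each group cancels. The paper's write-up of the $b_i$ step is only slightly more explicit, displaying the identities $(1-s_i+s_{i+1}s_i)v_{i+1,i+2}-(1-s_{i+1}+s_is_{i+1})v_{i,i+1}=0$ and the analogous one for the $w$'s, but this is exactly the bookkeeping you outline.
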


\begin{proof}
    Since $n$ is even, $b=1$ in our formula for $f^2$.
    Then $f^2\circ \hat\alpha^2_1$ has $d_{ij}\mapsto 0$, $b_i\mapsto 0$, and 
    \[c_i\mapsto (n-1)(n-2)(v_{i,i+1})-(n-1)(w_i+w_{i+1})+2u.\]
    To verify that $\delta\zeta=f^2\circ\hat\alpha^2_1$, we use the formulas for the boundaries of $c_i$, $d_{ij}$, and $b_i$ from Definition~\ref{de:Snres}.
    It is almost immediate that $\zeta(\partial d_{ij})=0$, because if $j>i+1$, then $s_i$ fixes $\zeta(e_j)$ and $s_j$ fixes $\zeta(e_i)$.
    The computation for $b_i$ is more subtle; it follows from
    \[
    \begin{split}
    (1-s_i&+s_{i+1}s_i)v_{i+1,i+2}-(1-s_{i+1}+s_is_{i+1})v_{i,i+1}\\
    &=v_{i+1,i+2}-v_{i,i+2}+v_{i,i+1}-v_{i,i+1}+v_{i,i+2}-v_{i+1,i+2}=0,
    \end{split}
    \]
    and
    \[(1-s_i+s_{i+1}s_i)w_{i+1}-(1-s_{i+1}+s_is_{i+1})w_{i}
    =w_{i+1}-w_{i}+w_{i}-w_i+w_i-w_{i+1}=0,
    \]
    together with the fact that all $s_i$ fix $u$.
    Finally, we check that
    \[\zeta(\partial e_i)=(s_i+1)\zeta(e_i)=2\binom{n-1}{2}v_{i,i+1}-(n-1)(w_i+w_{i+1})+2u,\]
    since $s_i$ fixes both $v_{i,i+1}$ and $u$.
    Since this is the formula for $f^2\circ \hat\alpha^2_1(e_i)$, this proves that it is the coboundary of $\zeta$.
    
    The lemma also asserts that $\zeta$ takes values in $K_{12}$.
    To verify this, we compose $\zeta$ with the map $M^2\to R$ by $v_{ij}\mapsto 1$.
    It sends $\zeta(e_i)$ to
    \[\binom{n-1}{2}-(n-1)^2+\binom{n}{2}
    =(n-1)(\frac{n-2}{2}-(n-1)+\frac{n}{2})=0.
    \]
    Since $K_{12}$ is defined as the kernel of the this map $M^2\to \R$, our codomain for $\zeta$ is correct.
\end{proof}

We need another submodule of $M^2$, in order to keep track of the difficulties of building cochains that satisfy the congruences from Proposition~\ref{pr:f2M2congruences}.
\begin{definition}
    Let $M^2_\equiv$ be the submodule of $M^2$ consisting of vectors that satisfy the systems of congruences~\eqref{eq:coeffcongruence} and~\eqref{eq:polytabloidcongruence} in Proposition~\ref{pr:f2M2congruences}.
\end{definition}

\begin{lemma}\label{le:M2equivgens}
    $M^2_\equiv$ is generated by vectors of the following three kinds:
    the vector $u$, the vectors $b(n-1)w_i$ for all $i$, and the vectors $b(n-1)(n-2)v_{ij}$ for all choices of $i$ and $j$.
\end{lemma}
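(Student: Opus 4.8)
The plan is to prove equality of $M^2_\equiv$ with the submodule $G$ generated by the three listed families of vectors, by establishing both inclusions. The inclusion $G\subseteq M^2_\equiv$ is a routine check on generators: the vector $u$ has all coefficients equal to $1$, so it trivially satisfies~\eqref{eq:coeffcongruence} and~\eqref{eq:polytabloidcongruence}; for $b(n-1)w_i$ every coefficient is a multiple of $(n-1)b$, which handles~\eqref{eq:coeffcongruence}, while the alternating sum $c_{pq}+c_{rs}-c_{ps}-c_{rq}$ already vanishes over $\Z$ on the coefficient vector of a single $w_i$ (the index $i$ appears in either both or neither of the relevant pairs on each side), so~\eqref{eq:polytabloidcongruence} holds as well; and $b(n-1)(n-2)v_{ij}$ has its one nonzero coefficient divisible by both $(n-1)b$ and $n-2$.

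For the reverse inclusion, start with $v=\sum c_{ij}v_{ij}\in M^2_\equiv$ and reduce modulo $G$. Subtract $c_{12}u$; by~\eqref{eq:coeffcongruence} the new vector $v'$ has every coefficient in $(n-1)bR$, and $v'$ still lies in $M^2_\equiv$ since $u$ does and $M^2_\equiv$ is a submodule. Choose $d_{ij}\in R$ with the coefficient of $v'$ on $v_{ij}$ equal to $(n-1)b\,d_{ij}$, so $v'=(n-1)b\sum d_{ij}v_{ij}$. The key point is that these $d_{ij}$ satisfy the polytabloid congruence modulo $n-2$: applying~\eqref{eq:polytabloidcongruence} to $v'$ gives $(n-1)b(d_{ij}+d_{kl}-d_{il}-d_{kj})\in(n-2)R$, and since $\gcd((n-1)b,n-2)=1$ in $\Z$ (checked separately for $n$ even and $n$ odd), there are integers $p,q$ with $p(n-1)b\equiv 1\pmod{n-2}$; multiplying by the image of $p$ in $R$ removes the factor $(n-1)b$ modulo $n-2$, so $d_{ij}+d_{kl}-d_{il}-d_{kj}\in(n-2)R$ for all distinct $i,j,k,l$.

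The combinatorial heart of the argument is then the following: for $n\ge 4$ and any commutative ring $\bar R$, a symmetric assignment $\{i,j\}\mapsto d_{ij}\in\bar R$ with $d_{ij}+d_{kl}=d_{il}+d_{kj}$ for all distinct $i,j,k,l$ has the form $d_{ij}=a_i+a_j+c$ for suitable $a_i,c\in\bar R$. I would prove this by setting $g(j,l)=d_{ij}-d_{il}$, checking from the relation that $g$ is independent of $i$ (here $n\ge 4$ provides a free index $k$), is antisymmetric, and is additive in the sense $g(j,l)+g(l,m)=g(j,m)$; hence $g(j,l)=h(j)-h(l)$ for some function $h$, so $d_{ij}-h(j)$ is independent of $j$, giving $d_{ij}=k(i)+h(j)$, and symmetry forces $h-k$ to be a constant $c$. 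Applying this with $\bar R=R/(n-2)R$ to the reductions of the $d_{ij}$, and lifting the $a_i$ and $c$ arbitrarily to $R$, we obtain $\sum d_{ij}v_{ij}=\sum_i a_i w_i + cu + (n-2)\sum_{ij} e_{ij}v_{ij}$ for some $e_{ij}\in R$. Multiplying through by $(n-1)b$ expresses $v'$, and hence $v=v'+c_{12}u$, as an $R$-linear combination of $u$, the $b(n-1)w_i$, and the $b(n-1)(n-2)v_{ij}$, which finishes the inclusion $M^2_\equiv\subseteq G$.

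The main obstacle is not the structure lemma for $d$ (that is straightforward additive algebra once $n\ge 4$ guarantees enough indices), but the bookkeeping forced by $R$ being an arbitrary commutative ring: one cannot divide, so the passage from "$(n-1)b$ times the alternating sum lies in $(n-2)R$" to "the alternating sum lies in $(n-2)R$" must be routed through the Bézout identity for $\gcd((n-1)b,n-2)=1$, and the elements $d_{ij}$ must be treated as a non-canonical choice whose ambiguity (the $(n-1)b$-torsion of $R$) never feeds into the final conclusion. Everything else is a direct computation using the formulas for $w_i$, $u$, and the congruences in Proposition~\ref{pr:f2M2congruences}.
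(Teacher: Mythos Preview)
Your proof is correct and takes a genuinely different route from the paper's. The paper argues by a direct reduction: subtract $c_{n-1,n}u$ to zero out one coefficient, then subtract suitable multiples of the $b(n-1)w_i$ to zero out all $c_{i,n}$, then use one more correction to kill $c_{1,n-1}$; with these ``anchor'' coefficients equal to zero, the polytabloid congruences~\eqref{eq:polytabloidcongruence} force every remaining $c_{ij}$ to lie in $(n-2)R$, and together with~\eqref{eq:coeffcongruence} this gives divisibility by $b(n-1)(n-2)$.

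Your approach instead factors out $(n-1)b$ after a single subtraction of $u$, and then proves a clean structural result: over any commutative ring, a symmetric array $d_{ij}$ satisfying $d_{ij}+d_{kl}=d_{il}+d_{kj}$ for all distinct $i,j,k,l$ must have the form $d_{ij}=a_i+a_j+c$. Applied over $R/(n-2)R$, this immediately decomposes $\sum d_{ij}v_{ij}$ as $\sum a_i w_i + cu$ modulo $(n-2)$, and scaling back finishes. The B\'ezout step to strip the factor $(n-1)b$ from the congruence modulo $n-2$ is handled correctly, and your remark that the non-uniqueness of the $d_{ij}$ (coming from $(n-1)b$-torsion in $R$) is harmless is right: any choice yields some decomposition in $G$. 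Your structure lemma is essentially the statement that, modulo $n-2$, the kernel of $\bar\epsilon$ on $M^2$ is spanned by $u$ and the $w_i$; this is close in spirit to the analysis around Lemma~\ref{le:kerepsilon} and Proposition~\ref{pr:S2*presentation}, so your argument connects the lemma to that part of the paper more transparently than the paper's own hands-on reduction does. The paper's proof is shorter and more self-contained; yours isolates a reusable combinatorial fact.
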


\begin{proof}
    We argue by double inclusion between $M^2_\equiv$ and the submodule generated by these elements.
    First note that all three of the given kinds of vectors are actually in $M^2_\equiv$.
    For the other direction, suppose $v=\sum_{ij}c_{ij}v_{ij}$ is in $M^2_\equiv$.
    By subtracting $c_{n-1,n}u$, we may assume that $c_{n-1,n}=0$.
    Then for all $i,j$, we have $b(n-1)|c_{ij}$.
    In particular, for each $i$ from $1$ to $n-2$, we have that $c_{i,n}w_i$ is a multiple of $b(n-1)w_i$.
    So by subtracting these elements from $v$, we may assume $c_{i,n}=0$ for all $i$.
    Likewise, we know that $c_{1,n-1}w_n$ is a multiple of $b(n-1)w_n$.
    By adding $c_{1,n-1}(w_n-u)$, we may assume that $c_{1,n-1}=0$, without undoing our work to make $c_{1,n}=\dotsm=c_{n-1,n}=0$.
    For any $i$ with $1<i<n$, 
    \[c_{1,i}+c_{n-1,n}\equiv c_{i,n}+c_{1,n-1}\pmod{n-2},\]
    meaning that $b(n-1)(n-2)$ divides $c_{1,i}$.
    Now, for any $i,j$ with $1\leq i<j\leq n-1$ and $(i,j)\neq (1,n)$, we have
    \[c_{i,j}+c_{1,n}\equiv c_{1,i}+c_{j,n}\pmod{n-2},\]
    which implies that $b(n-1)(n-2)$ divides $c_{i,j}$.
    Since $i,j$ were arbitrary, this means that $v$ can be expressed as a sum of elements of the form $b(n-1)(n-2)v_{ij}$.
\end{proof}

\begin{lemma}\label{le:congruenceobstruction}
    Suppose $n$ is even and $R$ satisfies the following two conditions:
    \begin{itemize}
        \item $\binom{n-1}{2}$ has order $2$ in $R/(2\binom{n-1}{2}R)$, and
        \item nothing in $R[2]$ maps to the same element as $\binom{n-1}{2}$ in $R/(2\binom{n-1}{2}R)$.
    \end{itemize}
    Then for $i=2$, extension~\eqref{eq:fiext} does not split.
\end{lemma}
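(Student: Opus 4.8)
The plan is to show that the structure class of extension~\eqref{eq:fiext} with $i=2$ is nonzero. By Corollary~\ref{co:splittingobstruction} and Proposition~\ref{pr:Nakamuracocycle} that structure class is $[f^2\circ\hat\alpha^2_1]\in H^2(f^2(M^2_R))$, so I would argue by contradiction: suppose $f^2\circ\hat\alpha^2_1=\delta\eta$ for some cochain $\eta\co P_1\to f^2(M^2_R)$. Since $n$ is even, Lemma~\ref{le:coboundf2hatalpha} also supplies a cochain $\zeta\co P_1\to K_{12}$, with $\zeta(e_i)=\binom{n-1}{2}v_{i,i+1}-(n-1)w_i+u$, such that $\delta\zeta=f^2\circ\hat\alpha^2_1$. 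As $f^2(M^2_R)\subseteq S^2_R\subseteq K_{12}\subseteq M^2_R$, the difference $\eta-\zeta$ is a $1$-cocycle valued in $M^2_R$, so by Proposition~\ref{pr:M2cohomology} (using that a $1$-coboundary is exactly a map $e_i\mapsto(s_i-1)m$ with $m\in M^2_R$) one may write $\eta-\zeta=\kappa^2_r+\hat\kappa^2_s+\delta h$ for some $r,s\in R[2]$ and some $h\co P_0\to M^2_R$; write $h(*)=\sum h_{kl}v_{kl}$. This gives the explicit formula
\[\eta(e_i)=\left(\binom{n-1}{2}+s\right)v_{i,i+1}-(n-1)w_i+(1+r)u+(s_i-1)h(*).\]

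The next step is to feed this back into the constraint that each $\eta(e_i)$ lies in $f^2(M^2_R)$ and hence, by Proposition~\ref{pr:f2M2congruences}, satisfies the coefficient congruences~\eqref{eq:coeffcongruence} and the polytabloid congruences~\eqref{eq:polytabloidcongruence}. The key point is that $s_i-1$ contributes nothing to the coefficient of $v_{i,i+1}$ (which $s_i$ fixes), nor to the coefficient of any $v_{kl}$ disjoint from $\{i,i+1\}$; so comparing the $v_{i,i+1}$-coefficient $\binom{n-1}{2}-(n-1)+1+r+s$ of $\eta(e_i)$ with the coefficient $1+r$ on such a $v_{kl}$ via~\eqref{eq:coeffcongruence}, and using $\binom{n-1}{2}\equiv 0\pmod{n-1}$ (valid since $n$ is even), forces $s\equiv 0\pmod{n-1}$. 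For the modulus $n-2$, I would apply~\eqref{eq:polytabloidcongruence} to the four indices $i,i+1,k,l$ with $k,l$ distinct and outside $\{i,i+1\}$; after the many repeated terms cancel, what remains has the form
\[\binom{n-1}{2}+s+(h_{i,l}-h_{i+1,l})+(h_{i+1,k}-h_{i,k})\equiv 0\pmod{n-2}.\]
Comparing this relation for three distinct choices of the outside index makes all the differences $h_{i,j}-h_{i+1,j}$ agree modulo $n-2$, and the relation then collapses to $s\equiv-\binom{n-1}{2}\pmod{n-2}$.

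Combining $s\equiv 0\pmod{n-1}$ with $s\equiv-\binom{n-1}{2}\pmod{n-2}$ — legitimate since $\gcd(n-1,n-2)=1$ and $\binom{n-1}{2}$ is divisible by $n-1$ — gives $s\equiv-\binom{n-1}{2}\pmod{(n-1)(n-2)R}$. Because $2\binom{n-1}{2}=(n-1)(n-2)$, the classes of $-\binom{n-1}{2}$ and of $\binom{n-1}{2}$ coincide in $R/(n-1)(n-2)R=R/(2\binom{n-1}{2}R)$; thus the element $s\in R[2]$ has the same image there as $\binom{n-1}{2}$ — which is precisely what the hypotheses on $R$ forbid (the condition that $\binom{n-1}{2}$ have order $2$ rules out the degenerate possibility, and the condition on $R[2]$ rules out the rest). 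Hence no such $\eta$ exists and extension~\eqref{eq:fiext} does not split.

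The main obstacle is the coefficient bookkeeping in the middle step: one has to choose exactly which coefficients of $\eta(e_i)$ and which instances of~\eqref{eq:polytabloidcongruence} to play off each other so that the unknown coboundary term $(s_i-1)h(*)$ drops out, leaving a congruence in $s$ and $\binom{n-1}{2}$ alone. The comparison of three distinct outside indices needs $n\geq 6$; the remaining case $n=4$, where $f^2(M^2_R)$ has rank $2$, is small enough to be settled by writing out the module and its second cohomology directly (or by also invoking congruence~\eqref{eq:paritybit} there).
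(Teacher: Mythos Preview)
Your argument for $n\geq 6$ is correct and is essentially the paper's proof unpacked into explicit coefficient bookkeeping. The paper introduces the submodule $M^2_\equiv\subseteq M^2$ of vectors satisfying systems~\eqref{eq:coeffcongruence} and~\eqref{eq:polytabloidcongruence}, describes its generators (so that $M^2_\equiv/IM^2$ with $I=(n-1)(n-2)R$ can be analyzed), and then runs a connecting-homomorphism argument through the quotient $M^2_\sim=M^2/M^2_\equiv$: the class $f^2_*[\hat\alpha^2_1]$ is the image of $[\pi_*\zeta]\in H^1(M^2_\sim)$, and this equals $\rho_*[\hat\kappa^2_{\binom{n-1}{2}}]$, which is shown to be nontrivial and not in the image of $H^1(M^2)$. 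Your contradiction is exactly the hands-on version of that last sentence: modifying $\zeta$ by any $1$-cocycle $\kappa^2_r+\hat\kappa^2_s+\delta h$ forces $s\equiv\binom{n-1}{2}$ modulo $2\binom{n-1}{2}$, which the hypotheses forbid. So the two arguments are the same idea, differently packaged.

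What the paper's packaging buys is uniformity in $n$: the structural description of $M^2_\equiv$ and the long exact sequences go through for every even $n\geq 4$ without a case split. Your three-outside-indices trick genuinely needs $n\geq 6$, and the $n=4$ case is not as immediate as your final sentence suggests. With only two outside indices the polytabloid congruences~\eqref{eq:polytabloidcongruence} collapse to $\binom{n-1}{2}+s+(D_l-D_k)\equiv 0\pmod 2$ and its swap, which together give nothing. To close $n=4$ by your method you must also bring in the conditions $\eta(e_i)\in S^2\subseteq K_{12}$ (which, for instance, force $(n-1)r=0$ and hence $r=0$) and the parity system~\eqref{eq:paritybit}, and track these across several $e_i$; this is doable but amounts to a separate computation, not a one-line remark. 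If you want a proof that is uniform in $n$, it is cleaner to phrase the obstruction as the paper does, via $M^2_\equiv$ and the image of $\zeta$ in $H^1(M^2/M^2_\equiv)$.
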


\begin{remark}
    The conditions are satisfied if $R=\Z$.
    If $R=\Z/m\Z$, the first condition says that $2$ divides $m$ to at least the same power that it divides $2\binom{n-1}{2}$.
    The second condition says that $\binom{n-1}{2}$ is not congruent to $m$ or $m/2$ modulo $\gcd(m,2\binom{n-1}{2})$.
    These conditions are independent of each other.
\end{remark}

\begin{proof}[Proof of Lemma~\ref{le:congruenceobstruction}]
    By Proposition~\ref{pr:f2M2congruences},  $f^2(M^2)\subseteq M^2_\equiv.$
    Let $M^2_\sim$ denote the quotient $M^2/M^2_\equiv$.
    Let $I$ be the ideal $(n-1)(n-2)R$.
    Let $L$ denote $M^2_\equiv/IM^2$.
    Since $IM^2\leq f^2(M^2)$, we have the following commuting diagram with exact rows
    \[
    \xymatrix{
    0 \ar[r] & M^2_\equiv \ar[r] \ar[d] & M^2  \ar[r] \ar[d] & M^2_\sim \ar[r] \ar@{=}[d] & 0 \\
    0 \ar[r] & L \ar[r] & M^2_{R/I} \ar[r] & M^2_\sim \ar[r] & 0.
    }    
    \]

    We want to show that $f^2_*[\hat\alpha^2_1]\in H^2(M^2_\equiv)$ is nontrivial.
    Let $\pi\co M^2\to M^2_\sim$ be the quotient.
    We know that $f^2_*[\hat\alpha^2_1]$ maps to the trivial class in $M^2$, so it is the image under the connecting homomorphism of $[\pi_*\zeta]\in H^1(M^2_\sim)$.
    We want to show that $[\pi_*\zeta]$ is nontrivial, so we will show that it is the image of a nontrivial class from $H^1(M^2_{R/I})$ that is not in the kernel of the map induced by $M^2_{R/I}\to M^2_\sim$.
    Let $\rho\co M^2_{R/I}\to M^2_\sim$ be the quotient.
    
    We claim that $\rho_*[\hat\kappa^2_{\binom{n-1}{2}}]=[\pi_*\zeta]$.
    This class is well defined since $\binom{n-1}{2}$ has order $2$ in $R/I$ (by our hypothesis on $R$).
    We know that $\hat\kappa^2_{\binom{n-1}{2}}$ sends $e_i$ to $\binom{n-1}{2}v_{i,i+1}$, so 
    \[\pi_*\zeta(e_i)-\rho_*\hat\kappa^2_{\binom{n-1}{2}}(e_i)=u-(n-1)w_i.\]
    But $u$ and $(n-1)w_i$ are both in $M^2_\equiv$, so these elements are equal in $M^2_\sim$.
    Further, $[\hat\kappa^2_{\binom{n-1}{2}}]$ is nontrivial, since by Proposition~\ref{pr:M2cohomology}, it sits inside a submodule of the form $(R/I)[2]$ in $H^2(M^2_{R/I})$, and our hypothesis on $R$ ensures that $\binom{n-1}{2}$ is nontrivial in $R/I$.

    Now we show that $[\hat\kappa^2_{\binom{n-1}{2}}]$ is not in the image of $H^1(L)\to H^1(M^2_R/I)$.
    We analyze the structure of $L$.
    By Lemma~\ref{le:M2equivgens}, since $n$ is even, we know that $M^2_\equiv$ is generated by elements of the following three forms: $u$, $(n-1)w_i$, and $(n-1)(n-2)v_{ij}$.
    Since $L$ is the image of $M^2_\equiv$ after taking the quotient by $IM^2$, we have eliminated the elements of the form $(n-1)(n-2)v_{ij}$.
    So there is an injective map $R/I\to L$ by  $1\mapsto u$.
    We can recognize the cokernel of this map.
    The remaining elements of $L$ are linear combinations of elements $(n-1)w_i$, and we recognize a map from $M^1$ to $L/Ru$.
    Let $\Delta\subseteq (n-1)M^1_{R/(n-2)R}$ denote the submodule spanned by $\sum_{i=1}^nt_i$; we have an exact sequence
    \[0\to R/I\to L \to \frac{(n-1)M^1_{R/(n-2)R}}{\Delta}\to 0.\]
    However, by Proposition~\ref{pr:M1cohomology}, we know that $H^0$, $H^1$, and $H^2$ of $(n-1)M^1_{R/(n-2)R}$ are given by representatives that take values in $\Delta$.
    So the long exact sequence for
    \[0\to \Delta\to (n-1)M^1_{R/(n-2)R} \to \frac{(n-1)M^1_{R/(n-2)R}}{\Delta}\to 0\]
    tells us that $H^0$ and $H^1$ of $\frac{(n-1)M^1_{R/(n-2)R}}{\Delta}$ are trivial.
    Then the long exact sequence for the preceding short exact sequence tells us that the map $R/I\to L$ induces an isomorphism on $H^1$.
    So $H^1(L)$ consists of classes of the form $[\kappa^2_r]$, for $r\in (R/I)[2]$.
    In particular, no such class can map to $[\hat\kappa^2_{\binom{n-1}{2}}]$ (by Proposition~\ref{pr:M2cohomology}, they are in distinct summands of $H^2(M^2_{R/I})$).
    This implies that $[\pi_*\zeta]\in H^1(M^2_\sim)$ is nontrivial.
    
    To finish the argument, we need to show that $[\pi_*\zeta]$ is not in the image of $H^1(M^2)\to H^1(M^2_\sim)$.
    By Proposition~\ref{pr:M2cohomology}, we know that $H^2(M^2)$ consists of classes of the form $[\kappa^2_r+\hat\kappa^2_s]$ for $r,s\in R[2]$.
    For such a class to map to $[\pi_*\zeta]$, we would need to have $r=0$ and $s=\binom{n-1}{2}$ in $R/I$.
    But no element congruent to $\binom{n-1}{2}$ modulo $I$ can have order two in $R$, by our hypothesis on $R$.
    So $[f^2_*\hat\alpha^2_1]$ is the image of a nontrivial class that is not in the kernel of the connecting homomorphism.
    So it is nontrivial in $H^2(M^2_\equiv)$, and therefore it is also nontrivial in $H^2(f^2(M^2))$.
\end{proof}

\begin{lemma}
    Suppose $n$ is even, $m$ is divisible by $4$, $R=\Z/m\Z$, and $R$ fails the hypotheses of Lemma~\ref{le:congruenceobstruction}.
    Then for $i=2$, extension~\eqref{eq:fiext} does not split.
\end{lemma}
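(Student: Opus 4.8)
The plan is to reduce at once to $R=\Z/4\Z$. Since $4\mid m$, the ring surjection $\Z/m\Z\to\Z/4\Z$ induces compatible reductions $M^2_{\Z/m\Z}\to M^2_{\Z/4\Z}$ and $f^2(M^2_{\Z/m\Z})\to f^2(M^2_{\Z/4\Z})$, hence a morphism of the extensions~\eqref{eq:fiext}, and by Corollary~\ref{co:pushfowardstructure} the structure class of the $\Z/m\Z$-extension pushes forward to that of the $\Z/4\Z$-extension; so it is enough to show the $\Z/4\Z$-extension does not split. By the remark following Lemma~\ref{le:congruenceobstruction}, the hypothesis that $R=\Z/m\Z$ with $4\mid m$ fails those conditions forces $n\equiv2\pmod 4$, and $\Z/4\Z$ then also fails them, so Lemma~\ref{le:congruenceobstruction} is genuinely unavailable (whereas if $n\equiv 0\pmod4$ the ring $\Z/4\Z$ satisfies its hypotheses and there is nothing to prove). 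So fix $R=\Z/4\Z$ with $n\equiv2\pmod4$.

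By Corollary~\ref{co:splittingobstruction} and Proposition~\ref{pr:Nakamuracocycle}, the obstruction to splitting is $f^2_*[\hat\alpha^2_1]\in H^2(\sg{n};f^2(M^2))$, and the point is that — unlike in Lemma~\ref{le:congruenceobstruction} — its image in $H^2(\sg{n};M^2_\equiv)$ vanishes, because the $\tfrac{n-2}{2}$-torsion of $\Z/4\Z$ is nontrivial ($4\mid n-2$), so Lemma~\ref{le:removefourthcondition} fails and $f^2(M^2)$ is a \emph{proper} submodule of $M^2_\equiv$; it is the parity congruence~\eqref{eq:paritybit} that carries the obstruction. I would first record three structural facts for $R=\Z/4\Z$, $n\equiv2\pmod4$: (i) the generators $(n-1)(n-2)v_{ij}$ of $M^2_\equiv$ from Lemma~\ref{le:M2equivgens} vanish, so $M^2_\equiv$ is generated by $u$ and the $(n-1)w_i$ and fits in an exact sequence $0\to\Z/4\Z\to M^1_{\Z/4\Z}\oplus S^0_{\Z/4\Z}\to M^2_\equiv\to0$ with the $\Z/4\Z$ a trivial module; (ii) the differences $(n-1)(w_i-w_j)$ lie in $f^2(M^2)$ (being differences of values of $f^2$), so $V:=M^2_\equiv/f^2(M^2)$ is a finite \emph{trivial} $\sg{n}$-module; and (iii) a vector satisfies~\eqref{eq:paritybit} exactly when its reduction mod $2$ lies in the image of $M^1_{\Z/2\Z}\to M^2_{\Z/2\Z}$, $t_i\mapsto w_i$ — because~\eqref{eq:paritybit} says that reduction, read as an edge-labelling of $K_n$, is a cocycle on the contractible full simplex on $n$ vertices, hence a coboundary.

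The argument then runs as follows. Since $\binom{n-1}{2}$ is even over $\Z/4\Z$ it lies in $R[2]$, so $\hat\kappa^2_{\binom{n-1}{2}}$ is a cocycle (Proposition~\ref{pr:M2cohomology}); subtracting it from the cochain $\zeta$ of Lemma~\ref{le:coboundf2hatalpha} produces a $1$-cochain $\zeta'$ with $\delta\zeta'=f^2\circ\hat\alpha^2_1$ and $\zeta'(e_i)=u-(n-1)w_i\in M^2_\equiv$. Hence $f^2_*[\hat\alpha^2_1]=\partial''[\bar\zeta']$, where $\partial''\co H^1(\sg{n};V)\to H^2(\sg{n};f^2(M^2))$ is the connecting map of $0\to f^2(M^2)\to M^2_\equiv\to V\to0$ and $\bar\zeta'$ is the reduction of $\zeta'$. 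By (ii) the classes $[u-(n-1)w_i]$ in $V$ all equal a single element $v_0\in V[2]$, so $\bar\zeta'=\kappa^0_{v_0}$; and $v_0\neq0$ in $V$, since by (iii) the element $u-(n-1)w_1$ fails~\eqref{eq:paritybit} (its mod-$2$ reduction $\bar u-\bar w_1$ is not in the image, as $\bar u$ is not) and hence is not in $f^2(M^2)$. By Proposition~\ref{pr:untwistedcohomreps}, $[\kappa^0_{v_0}]$ is then a nonzero class in $H^1(\sg{n};V)$.

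It remains to show $\partial''[\kappa^0_{v_0}]\neq0$, i.e. that $[\kappa^0_{v_0}]$ is not in the image of $q_*\co H^1(\sg{n};M^2_\equiv)\to H^1(\sg{n};V)$; this is the heart of the matter. Using the sequence in (i) together with Propositions~\ref{pr:M1cohomology} and~\ref{pr:untwistedcohomreps}, one checks that $\Z/4\Z\to M^1_{\Z/4\Z}\oplus S^0_{\Z/4\Z}$ is injective on $H^2$, so $H^1(\sg{n};M^2_\equiv)$ equals the image of $H^1(\sg{n};M^1_{\Z/4\Z}\oplus S^0_{\Z/4\Z})$; and the composite $M^1_{\Z/4\Z}\oplus S^0_{\Z/4\Z}\to M^2_\equiv\to V$ is zero on the $M^1$-summand of $H^1$ (it factors through the augmentation to $\Z/4\Z$, which kills $H^1$ since $2n\equiv0\pmod4$) and, on the $S^0$-summand, has image the order-$2$ subgroup of $H^1(\sg{n};V)$ generated by the image of $[\kappa^0_2]\in H^1(\sg{n};S^0_{\Z/4\Z})$, which does not contain $[\kappa^0_{v_0}]$. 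Hence $q_*$ misses $[\kappa^0_{v_0}]$, so $f^2_*[\hat\alpha^2_1]\neq0$ and the extension does not split. The main obstacle in carrying this out is precisely the last step's bookkeeping with $2$-torsion in the cohomology of $M^2_\equiv$ and $V$; the rest is routine once the structural facts (i)–(iii) are established.
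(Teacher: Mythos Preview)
Your argument is correct and takes a genuinely different route from the paper's.

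The paper works directly over $R=\Z/m\Z$ for general $m$ divisible by $4$, and uses the parity map $\sigma\colon M^2_\equiv\to\Z/2\Z$ coming from condition~\eqref{eq:paritybit}.  It shows that $f^2_*[\hat\alpha^2_1]$, viewed in $H^2(\ker\sigma)$, is the image under the connecting map of $[\kappa^0_1]\in H^1(\Z/2\Z)$, and then rules out $[\kappa^0_1]$ being hit by $\sigma_*$ from $H^1(M^2_\equiv)$ by noting that any $1$-cocycle for $M^2_\equiv$ is already a $1$-cocycle for $M^2$, hence of the explicit form $\kappa^2_r+\hat\kappa^2_s+\delta v$, and evaluating $\sigma$ on these forces $s=0$ and then $r$ odd in $\Z/2\Z$; but $r\in R[2]=\{0,m/2\}$ and $m/2$ is even, a contradiction.

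Your approach differs in three ways. First, you reduce at once to $R=\Z/4\Z$ via Corollary~\ref{co:pushfowardstructure}, and observe that the hypothesis forces $n\equiv 2\pmod 4$ (this is correct; when $n\equiv 0\pmod 4$ one checks $\binom{n-1}{2}$ is odd, so $\Z/m\Z$ with $4\mid m$ automatically satisfies both conditions of Lemma~\ref{le:congruenceobstruction}). Second, instead of the intermediate quotient $\Z/2\Z$, you work with the full quotient $V=M^2_\equiv/f^2(M^2)$, which you show is a trivial module. Third, for the ``not in the image of $H^1(M^2_\equiv)$'' step, rather than using the embedding $M^2_\equiv\hookrightarrow M^2$ and explicit cocycle forms, you use the structural sequence $0\to\Z/4\Z\to M^1_{\Z/4\Z}\oplus S^0_{\Z/4\Z}\to M^2_\equiv\to 0$ (valid precisely because your reduction makes $(n-1)(n-2)\equiv 0$) together with the known cohomology of $M^1$ and $S^0$.

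Both approaches ultimately pivot on the parity obstruction~\eqref{eq:paritybit}, but the paper isolates it as a single $\Z/2\Z$-valued map while you carry it inside the larger quotient $V$. The paper's approach is slicker at the cokernel end; your reduction to $\Z/4\Z$ is a clean simplification that makes the structure of $M^2_\equiv$ transparent. One small omission: you assert that the image of $q_*$ (generated by $[\kappa^0_{2\bar u}]$) does not contain $[\kappa^0_{v_0}]$, which requires checking $v_0\neq 2\bar u$, i.e.\ $u+(n-1)w_1\notin f^2(M^2)$; but this follows by the same parity check (using three indices distinct from $1$) that you used for $v_0\neq 0$.
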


\begin{proof}
    Our terms are the same as in the previous lemma.
    We want to show that, under these hypotheses, we can produce a cochain $\zeta'$ in $M^2_\equiv$ that cobounds $f^2_*\hat\alpha^2_1$.
    First suppose that $\binom{n-1}{2}=0$ in $R/I$.
    By the definition of $\zeta$ in Lemma~\ref{le:coboundf2hatalpha}, all the summands are in $M^2_\equiv$: $(n-1)w_i$ and $u$ are in $M^2_\equiv$ regardless of this hypothesis, and $\binom{n-1}{2}v_{i,i+1}$ is in $M^2_\equiv$ since $\binom{n-1}{2}\in I$. 
    So in this case, we take $\zeta'=\zeta$.

    Now suppose that there is $r\in R[2]$ with $r=\binom{n-1}{2}$ in $R/I$.
    If $r=0$, then $\binom{n-1}{2}\in I$, and the previous case applies.
    So suppose $r=m/2$.
    Define $\zeta'=\zeta-\hat\kappa^2_r\co P_1\to M^2$.
    Then $\zeta'$ still cobounds $f^2_*\hat\alpha^2_1$, but 
    \[\zeta'(e_i)=(\binom{n-1}{2}-r)v_{i,i+1}-(n-1)w_i+u.\]
    Since $\binom{n-1}{2}-r\in I$, this is in $M^2_\equiv$.

    So in either case, we have defined $\zeta'\co P_1\to M^2_\equiv$  with $\delta\zeta'=f^2_*\hat\alpha^2_1$.
    We want to show that it is impossible to modify $\zeta'$ to satisfy the last condition in Proposition~\ref{pr:f2M2congruences}; this amounts to showing that $f^2_*\hat\alpha^2_1$ maps to a nontrivial class in a submodule of $M^2_\equiv$.

    Define a function $\sigma\co M^2_\equiv\to \Z/2\Z$ as follows: for $v=\sum_{ij}c_{ij}v_{ij}$ in $M^2_\equiv$, pick any three distinct $i,j,k$ from $\{1,2,\dotsc,n\}$, and define $\sigma(v)=c_{ij}+c_{ik}+c_{jk}$.
    We claim that $\sigma$ is a well defined $\Z\sg{n}$-module homomorphism.
    (Really $\sigma$ maps to $R/2R$, but since $m$ is even, this is $\Z/2\Z$.)
    To see that it is well defined, let $i,j,k,l$ be distinct indices.
    Since $v\in M^2_\equiv$, we have $c_{ij}+c_{kl}\equiv c_{il}+c_{kj}\pmod{n-2}$.
    Since $n$ is even, these are also congruent modulo $2$.
    So $c_{ij}+c_{jk}\equiv c_{il}+c_{kl}\pmod{2}$, and therefore $c_{ij}+c_{jk}+c_{ik}=c_{il}+c_{kl}+c_{ik}$ in $\Z/2\Z$.
    In particular, $\sigma(v)$ is the same whether we compute it using $i,j,k$ or $i,l,k$.
    Repeated substitutions show us that all triples give the same value.
    So $\sigma$ is well defined, and we might as well have defined it as $\sigma(v)=c_{12}+c_{13}+c_{23}$.
    This makes it clear that it is a homomorphism of abelian groups.
    Since it is the same using any triple, this implies that it is $\sg{n}$-equivariant.

    Further, we know that $\sigma$ is surjective since $\sigma(u)=1$.
    So we have a short exact sequence
    \[0\to \ker(\sigma)\to M^2_\equiv \to \Z/2\Z\to 0.\]
    By Proposition~\ref{pr:f2M2congruences}, $f^2(M^2)\subseteq \ker(\sigma)$.
    So $f^2_*[\hat\alpha^2_1]\in H^2(\ker(\sigma))$ is the image of $[\sigma_*\zeta']\in H^1(\Z/2\Z)$ under the connecting homomorphism.
    In evaluating $\sigma(\zeta'(e_i))$, we can always choose $j$ and $k$ distinct from $i$ and $i+1$, and evaluate $\sigma$ using the coefficients on $v_{ij}$, $v_{ik}$, and $v_{jk}$.
    From the formula for $\zeta'$, these are always $2-n$, $2-n$, and $1$, which tells us that $\sigma(\zeta'(e_i))=1$ for all $i$.
    So $\sigma_*\zeta'=\kappa^0_1$, which is nontrivial.
    
    We need to show that $[\kappa^0_1]$ is not in the image of $\sigma_*\co H^1(M^2_\equiv)\to H^1(\Z/2\Z)$.
    Suppose that $\gamma\co P^1\to M^2_\equiv$ is a cocycle.
    Then $\gamma$ is also a cocycle $P^1\to M^2$, and by Proposition~\ref{pr:M2cohomology}, it is $\gamma=\kappa^1_r+\hat\kappa^1_s+\delta v$, for some $r,s\in R[2]$ and $v\in M^2$.
    Then
    \[\gamma(e_i)=ru+sv_{i,i+1}+(s_i-1)v.\]
    Notice that $(s_i-1)v$ contributes nothing to $\sigma(\gamma(e_i))$, since $\sigma$ is equivariant and $\Z/2\Z$ has the trivial action.
    If we compute $\sigma$ using $i$ and $i+1$, then we get $\sigma(\gamma(e_i))=3r+s$, but if we compute $\sigma$ using $i$, $j$, and $k$ with $j,k\notin\{i,i+1\}$, then we get $\sigma(\gamma(e_i))=3r$.
    Since $\sigma$ is well defined, we deduce that $s$ maps to $0$ in $\Z/2\Z$.
    So for $\sigma_*[\gamma]$ to equal $[\kappa^0_1]$, we must have $r\in R[2]$ with $3r=1$ in $\Z/2\Z$.
    The only nontrivial element of $R[2]$ is $m/2$, but our hypothesis that $m$ is divisible by $4$ implies that $m/2$ is even, and therefore that $3m/2$ is $0$ in $\Z/2\Z$.
    So $[\kappa^0_1]$ is not in the image of $\sigma_*$, and therefore $[f^2_*p\hat\alpha^2_1]\in H^2(\ker(\sigma))$ is the image of a nontrivial class that is not in the kernel of the connecting homomorphism.
    So $f^2_*[\hat\alpha^2_1]$ is nontrivial in both $H^2(\ker(\sigma))$ and in $H^2(f^2(M^2))$.
\end{proof}

\begin{lemma}
    Suppose $n$ is even, $m\equiv 2\pmod{4}$, $R=\Z/m\Z$, and $\binom{n-1}{2}$ or $\binom{n-1}{2}-m/2$ is divisible by $\gcd(m,\binom{n-1}{2})$. 
    Then for $i=2$, extension~\eqref{eq:fiext} does not split.
\end{lemma}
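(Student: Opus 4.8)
By Corollary~\ref{co:splittingobstruction} together with Proposition~\ref{pr:Nakamuracocycle}, the extension splits exactly when the pushforward $f^2_*[\hat\alpha^2_1]$ is zero in $H^2(f^2(M^2_R))$, so the plan is to show this class is nonzero. A preliminary simplification: writing $m=2m'$ with $m'$ odd gives $R\cong(\Z/2\Z)\times(\Z/m'\Z)$ as a ring, under which every module and homomorphism here decomposes as a direct sum, and the $\Z/m'\Z$-component of the structure class vanishes by Proposition~\ref{pr:oddsplitting}, since $\ker(f^2_{\Z/m'\Z})$ contains all $m'$-th powers of whole twists. So it is enough to treat $R=\Z/2\Z$, in which case the hypothesis on $\binom{n-1}{2}$ is automatic. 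Then, exactly as in the preceding lemma, the hypothesis produces a cochain $\zeta'\co P_1\to M^2_\equiv$ with $\delta\zeta'=f^2_*\hat\alpha^2_1$: take $\zeta'=\zeta$, the cochain of Lemma~\ref{le:coboundf2hatalpha}, when $\binom{n-1}{2}\in(n-1)(n-2)R$, and $\zeta'=\zeta-\hat\kappa^2_{m/2}$ when $\binom{n-1}{2}-m/2\in(n-1)(n-2)R$; Lemma~\ref{le:M2equivgens} shows that in each case all summands of $\zeta'(e_i)$ land in $M^2_\equiv$. In particular $f^2_*[\hat\alpha^2_1]$ already vanishes in $H^2(M^2_\equiv)$, so it must be detected one level further down, in $H^2(f^2(M^2_R))$.

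The obstacle is that the parity homomorphism $\sigma\co M^2_\equiv\to\Z/2\Z$ used in the proof of the case $4\mid m$ is now useless: since $m/2$ is odd, $\kappa^2_{m/2}$ is a cocycle with values in $M^2_\equiv$ (as $u\in M^2_\equiv$) and $\sigma_*[\kappa^2_{m/2}]=[\kappa^0_1]$, so the class $[\kappa^0_1]$ that previously did the detecting now lies in the image of $\sigma_*$ and its image under the connecting homomorphism into $H^2(\ker\sigma)$ is zero. To get around this I would instead use the remaining defining condition of $f^2(M^2_R)$ --- that its vectors lie in $S^2_R=\ker(\mu|_{K_{12}})$, where $\mu\co M^2\to M^1$ is $v_{ij}\mapsto t_i+t_j$ --- and argue by contradiction. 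If the extension split there would be $\eta\co P_1\to f^2(M^2_R)$ with $\delta\eta=f^2_*\hat\alpha^2_1$, making $\zeta'-\eta$ a $1$-cocycle valued in $M^2_\equiv\subseteq M^2$, so by Proposition~\ref{pr:M2cohomology} $[\zeta'-\eta]=[\kappa^2_r+\hat\kappa^2_s]$ for some $r,s\in R[2]$; applying $\mu$, which annihilates $\eta$, $f^2_*\hat\alpha^2_1$ and $S^2_R$, and using $\mu\kappa^2_r=\kappa^1_r$ (as $n-1$ is odd), the fact that $\mu\hat\kappa^2_s$ is a coboundary in $M^1$, and the computation $\mu\zeta'(e_i)=c(t_{i+1}-t_i)$ with $c\in(n-1)(n-2)R$ (again a coboundary in $M^1$), Proposition~\ref{pr:M1cohomology} forces $r=0$. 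One is left with $\eta=\zeta'-\hat\kappa^2_s-\delta w$ for some $w\in M^2$, and the requirement that $\eta$ take values simultaneously in $M^2_\equiv$, in $S^2_R$ --- which, since $S^2_R$ is strictly contained in $\ker\mu$ over $\Z/m\Z$, means $\eta(e_i)$ must also lie in the kernel of the finer $\Z/2\Z$-quotient $\ker\mu/S^2_R$ --- and in $\ker\sigma$, pins down $\mu w$ up to $\sg{n}$-invariants together with $s$, and a now-routine but intricate analysis, run through the long exact sequences of these various submodules together with Propositions~\ref{pr:S1cohomology} and~\ref{pr:f2M2congruences}, shows there is no admissible choice; hence no such $\eta$ exists.

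The hard part will be precisely that last analysis. The module $f^2(M^2_R)$ is cut out of $M^2_R$ by three conditions that interact --- the coefficient congruences~\eqref{eq:coeffcongruence} and~\eqref{eq:polytabloidcongruence}, membership in $S^2_R$, and the parity relation~\eqref{eq:paritybit} --- and over $\Z/m\Z$ the submodule $S^2_R$ is not a direct summand of $M^2_R$, so the quotient $M^2_\equiv/f^2(M^2_R)$ is a genuinely nontrivial extension assembled from a submodule of $M^1$ and a copy of $\Z/2\Z$. Keeping careful track of which of these layers each cochain representative actually lands in, and identifying the targets of the connecting homomorphisms along the chain $f^2(M^2_R)\subseteq S^2_R\subseteq K_{12}\subseteq M^2_\equiv\subseteq M^2_R$, is where essentially all of the work goes; the point of the divisibility hypothesis on $\binom{n-1}{2}$ (respectively $\binom{n-1}{2}-m/2$) is exactly to make $\zeta'$ correctable into $M^2_\equiv$ while leaving it uncorrectable into $f^2(M^2_R)$.
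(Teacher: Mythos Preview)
Your reduction to $R=\Z/2\Z$ via the Chinese remainder theorem is correct and is a genuine simplification the paper does not make; it in fact renders the divisibility hypothesis irrelevant, since nonsplitting over $\Z/2\Z$ forces nonsplitting over every $\Z/m\Z$ with $m\equiv 2\pmod 4$. Your deduction that $r=0$ by applying $\mu$ is also correct. The problem is that everything after that point is only a sketch: you describe the remaining step as a ``routine but intricate analysis'' through $\ker\mu/S^2_R$ and long exact sequences, but you never carry it out, and the way you frame it is more tangled than the situation warrants.

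In fact, once $r=0$ the finish is short and does not require the filtration you describe. You have $\eta=\zeta'-\hat\kappa^2_s-\delta w$; over $\Z/2\Z$ one computes $\zeta'(e_i)=w_i+u$ in both parity cases of $n$. Since $f^2(M^2_R)\subseteq K_{12}$, apply the augmentation $v_{ij}\mapsto 1$: the images of $w_i$, $u$, $v_{i,i+1}$, $(s_i-1)w$ are respectively $1$, $\binom{n}{2}$, $s$, $0$, which forces $s=0$ if $n\equiv 2\pmod 4$ and $s=1$ if $n\equiv 0\pmod 4$. Now apply $\sigma$, choosing the three indices away from $\{i,i+1\}$: the contribution of $w_i+u$ (and of $v_{i,i+1}$ if present) is $1$, while $\sigma((s_i-1)w)=0$ by equivariance, contradicting $f^2(M^2_R)\subseteq\ker\sigma$. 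So no such $\eta$ exists.

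The paper takes a different route: it does not reduce to $\Z/2\Z$, and instead of your $\mu$-argument it adjusts $\zeta$ by $\kappa^2_{m/2}$ (and $\hat\kappa^2_{m/2}$ when $n\equiv 0\pmod 4$) so that the resulting $\zeta'$ lands directly in $\ker\sigma$, then detects the class via the connecting homomorphism for $0\to K_{12}\cap\ker\sigma\to\ker\sigma\to(n-1)R\to 0$, showing that $\zeta'$ maps to the nontrivial class $[\kappa^0_{m/2}]\in H^1((n-1)R)$ while every cocycle for $\ker\sigma$ maps to zero there. Your CRT reduction combined with either completion gives the cleanest proof; the machinery you invoke in your sketch (the quotient $\ker\mu/S^2_R$, Proposition~\ref{pr:S1cohomology}) is not needed.
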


\begin{proof}
    With notation as in the previous lemma, let $K_{\equiv}=K_{12}\cap \ker(\sigma)$.
    We have an exact sequence
    \[0\to K_\equiv \to \ker(\sigma) \to (n-1)R\to 0.\]
    The second map is the restriction of $v_{ij}\mapsto 1$.
    The image of this map is correct since $(n-1)w_1$ and $2u$ are both in $\ker(\sigma)$, and $(n-1)w_1\mapsto (n-1)^2$, and $2u\mapsto 2\binom{n}{2}$, and the greatest common divisor of these is $n-1$.
    We know $f^2(M^2)\subseteq K_\equiv$, so we aim to show that $f^2_*[\hat\alpha^2_1]$ is nontrivial in $H^2(K_\equiv)$.

    First we show that $f^2_*[\hat\alpha^2_1]$  is the image of a nontrivial class in $H^1((n-1)R)$.
    We break into cases.
    If $n\equiv 0\pmod{4}$, then $\binom{n}{2}$ is even and $\binom{n-1}{2}$ is odd.
    We know that $\binom{n-1}{2}=0\in R/I$ if and only if it is a multiple of $\gcd(m,2\binom{n-1}{2})$, but this is impossible, since it is odd.
    So we are in the case that $\binom{n-1}{2}-m/2$ is in $I$.
    Set $\zeta'$ equal to $\zeta+\hat\kappa^2_{m/2}+\kappa^2_{m/2}$.
    Then $\partial\zeta'=f^2_*\hat\alpha^2_1$, and $\zeta'$ maps to $\ker(\sigma)$.
    But since $\binom{n}{2}$ is even, $[\zeta']$ maps to $[\kappa^0_{m/2}]$ in $H^1((n-1)R)$.

    Next suppose $n\equiv 2\pmod{4}$.
    Then $\binom{n}{2}$ is odd and $\binom{n-1}{2}$ is even.
    Then $\binom{n-1}{2}-m/2$ is odd, so we must have $\binom{n-1}{2}\in I$.
    Set $\zeta'$ equal to $\zeta+\kappa^2_{m/2}$.
    Then $\partial\zeta'=f^2_*\hat\alpha^2_1$, and $\zeta'$ maps to $\ker(\sigma)$.
    But since $\binom{n}{2}$ is odd, $[\zeta']$ maps to $[\kappa^0_{m/2}]$ in $H^1((n-1)R)$.

    To finish the argument, we argue like in the last lemma.
    If $\gamma\co P_1\to \ker(\sigma)$ is a cocycle, then it is also a cocycle for $M^2$, and is $\kappa^2_r+\hat \kappa^2_s+\delta v$, with $r,s\in R[2]$ and $v\in M^2$.
    However, since $\gamma$ maps to $M^2_\equiv$, we must have $s=0$, since the other option is $s=m/2$, and the polytabloid $v_{12}+v_{34}-v_{14}-v_{23}$ would have a nontrivial inner product with $\gamma(e_1)$, modulo $n-2$.
    But for $\gamma$ to map to $\ker(\sigma)$, we must also have $r=0$, since otherwise $\sigma(\gamma(e_1))=3r=r\neq 0$.
    So $\gamma$ is a coboundary, and $[\gamma]$ maps to $0$ under $H^1(\ker(\sigma))\mapsto H^1((n-1)R)$.
    
    This shows that $f^2_*[\hat\alpha^2_1]$ is nontrivial in $H^2(K_\equiv)$ and therefore also in $H^2(f^2(M^2))$, so that the extension does not split.
\end{proof}

\begin{lemma}
    For any distinct $i,j\in\{0,1,2\}$, extension~\eqref{eq:fijext} splits if and only if $R=\Z/m\Z$ and $m$ is odd.
\end{lemma}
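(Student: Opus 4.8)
The plan is to deduce both directions from results already in hand, without any new computation. For the ``if'' direction, I would take $N=\ker(f^{ij}_R)$; this is a normal subgroup of $B_n$ with $PB_n'\leq N\leq PB_n$, and when $R=\Z/m\Z$ with $m$ odd the map $f^{ij}_R$ factors through reduction modulo $m$, so $\pi(N)\supseteq mM^2_\Z$. Proposition~\ref{pr:oddsplitting} then applies verbatim and extension~\eqref{eq:fijext} splits.

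For the ``only if'' direction, suppose $R=\Z$, or $R=\Z/m\Z$ with $m$ even; I would show extension~\eqref{eq:fijext} does not split. The key point is that every two-element subset of $\{0,1,2\}$ contains $0$ or $2$, so I may fix $k\in\{i,j\}\cap\{0,2\}$. The coordinate projection $S^i_R\oplus S^j_R\to S^k_R$ restricts to a surjective $\Z\sg{n}$-module homomorphism $p_k\co (f^i\oplus f^j)(M^2_R)\to f^k(M^2_R)$ with $p_k\circ(f^i\oplus f^j)=f^k$, and $p_k$ fits into a commuting diagram of the extensions~\eqref{eq:fijext} and~\eqref{eq:fiext} (for index $k$), since $\ker(f^{ij}_R)\leq\ker(f^k_R)$. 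By Corollary~\ref{co:splittingobstruction} and Proposition~\ref{pr:Nakamuracocycle}, the structure class of~\eqref{eq:fijext} is the pushforward of $[\hat\alpha^2_1]$ along the quotient $M^2_\Z\to(f^i\oplus f^j)(M^2_R)$; applying $(p_k)_*$ and using Corollary~\ref{co:pushfowardstructure} (or just naturality together with $p_k\circ(f^i\oplus f^j)=f^k$) identifies its image with the structure class of extension~\eqref{eq:fiext} for index $k$.

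Finally I would invoke the lemmas proven above that establish the $i=0$ and $i=2$ cases of the theorem: in both of those cases the extension fails to split when $R=\Z$ or $m$ is even, so its structure class is nonzero. Hence the structure class of~\eqref{eq:fijext} has nonzero image under $(p_k)_*$ and is therefore itself nonzero, so~\eqref{eq:fijext} does not split. I do not expect a genuine obstacle here: one direction is Proposition~\ref{pr:oddsplitting} applied unchanged, and the other is a short naturality argument; the only thing requiring care is verifying that $p_k$ is $\Z\sg{n}$-linear, surjective, satisfies $p_k\circ(f^i\oplus f^j)=f^k$, and makes the relevant square of group extensions commute. The real content of the lemma is simply that the non-splitting already proven for the single-factor quotients $f^0$ and $f^2$ propagates to the two-factor quotients $f^{ij}$.
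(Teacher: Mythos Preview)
Your proposal is correct and matches the paper's argument essentially line for line: the paper also invokes Proposition~\ref{pr:oddsplitting} for the ``if'' direction, then for the converse chooses (by relabeling) an index in $\{i,j\}\cap\{0,2\}$ and uses the coordinate projection $(f^i\oplus f^j)(M^2_R)\to f^k(M^2_R)$ to push the structure class forward to the already--known--nontrivial class $f^k_*[\hat\alpha^2_1]$. Your phrasing via $p_k$ and explicit appeal to Corollary~\ref{co:pushfowardstructure} is slightly more careful than the paper's, but there is no substantive difference.
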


\begin{proof}
    Again we start by assuming that $R=\Z$ or $R=\Z/m\Z$ with $m$ even, since otherwise the extension splits by Proposition~\ref{pr:oddsplitting}.
    Without loss of generality, assume that $i\neq 1$.
    By the other parts of Theorem~\ref{th:QbySpechtSplitting}, we have that $f^i_*[\hat\alpha^2_1]$ is nontrivial in $H^2(f^i(M^2_R))$.
    But we have the first coordinate projection $(f^i\oplus f^j)(M^2_R)\to f^i(M^2_R)$, and $f^i\co M^2_\R\to f^i(M^2_R)$ factors through this.
    So if $(f^i\oplus f^j)_*[\hat\alpha^2_1]$ is trivial, then so is $f^i_*[\hat\alpha^2_1]$, a contradiction.
    So extension~\eqref{eq:fijext} does not split.
\end{proof}

\section{Further questions}
In this section, we assume $n\geq 4$.
First of all, are there other ways of getting at the Specht subgroups?
\begin{question}
    Are there natural ways to define the Specht subgroups in terms of topology of surfaces, or in terms of elementary considerations about the group theory of $B_n$?
    Are there connections between the Specht subgroups and matrix representations of braid groups?
\end{question}

Further, we know very little about the basic properties of the Specht subgroups.
\begin{question}
    Which of $N_1$, $N_2$, $N_{01}$, and $N_{02}$ are finitely generated?
    Are any of the Specht subgroups finitely presentable?
\end{question}
We know from Proposition~\ref{pr:N12fg} that $N_{12}$ is finitely generated, and we know from Proposition~\ref{pr:N0nfg} that $N_0$ is not finitely generated, but this is all we know.

\begin{question}
    What are the abelianizations of the Specht subgroups?
\end{question}
Again, we know from Proposition~\ref{pr:N0nfg} that $N_0$ has an infinite-rank abelianization, but the picture is less clear for the other Specht subgroups.
Of course, there are many other group-theoretic questions we can ask, but we think these questions are a good place to start.

One question, which gives context for the structure of extensions of the form~\eqref{eq:topic}, is the following.
\begin{question}
    What is the structure of $H^2(\sg{n};PB_n/N)$, where $N$ is a Specht subgroup?
\end{question}
We have partial progress on this question, but for brevity we have left it out of this paper.
There are also things to explore about the cohomology of $\sg{n}$ in other subquotients of $PB_n$, and there is much to explore in the total range of subgroups $N\lhd B_n$ with $PB'_n\leq N\leq PB_n$.

We end with a general question that we have barely started to consider.
\begin{question}
    What can we say about the set of extensions
    \[1\to PB_n/N\to B_n/N\to \sg{n}\to 1,\]
    where $N\lhd B_n$ with $N\leq PB_n$, but with $PB'_n$ not assumed to be contained in $N$, so that $PB_n/N$ is not necessarily abelian?
\end{question}

\bibliography{qbgesgbib}
\bibliographystyle{acm}

\vspace{0.2cm}
\small
 \noindent \textbf{Matthew B. Day,} {\sc Department of Mathematical Sciences, 309 SCEN, University of Arkansas, Fayetteville, AR 72701, U.S.A.} \\
\noindent  e-mail: {\tt matthewd@uark.edu}
\\
\\
 \noindent \textbf{Trevor Nakamura}, {\sc formerly of the Department of Mathematical Sciences, University of Arkansas}
\\
\noindent  e-mail: {\tt trevornakamura1994@gmail.com}

\end{document}